\newtheorem{theorem}{Theorem}[section]
\newtheorem{lemma}[theorem]{Lemma}
\newtheorem{proposition}[theorem]{Proposition}
\newtheorem{corollary}[theorem]{Corollary}
\theoremstyle{definition}
\newtheorem{definition}[theorem]{Definition}
\theoremstyle{remark}
\newtheorem*{remark}{Remark}
\def\paragraph#1{\noindent \textbf{#1}}
\numberwithin{equation}{section}
\def\d{\mathrm{d}}
\def\<{\langle}
\def\>{\rangle}
\def\a{\alpha}
\def\e{\epsilon}
\def\g{\gamma}
\def\D{\Delta}
\def\S{\Sigma}
\def\R{{\Bbb R}}  
\def\N{{\Bbb N}}  
\def\E{{\Bbb E}}  
\let\cal=\mathcal
\def\GG{{\cal G}}
\def\OO{{\Theta}}
\def \e {{\varepsilon}}
\def \D {{\Delta}}
\def \g {{\gamma}}
\def \d {{\delta}}
\def \a {{\alpha}}
\def \ba {\begin{array}}
	\def \ea {\end{array}}
\newcommand{\be}{\begin{equation}}
\newcommand{\ee}{\end{equation}}
\newcommand{\bea}{\begin{eqnarray}}
\newcommand{\eea}{\end{eqnarray}}
\def\TH(#1){\label{#1}}\def\thv(#1){\ref{#1}}
\def\Eq(#1){\label{#1}}\def\eqv(#1){(\ref{#1})}
\def\sfrac#1#2{{\textstyle{#1\over #2}}}
\def \1{\mathbbm{1}}
\def\BibTeX{{\rm B\kern-.05em{\sc i\kern-.025em b}\kern-.08em
		T\kern-.1667em\lower.7ex\hbox{E}\kern-.125emX}}
\title[The recovery of a recessive allele in a Mendelian diploid model]{The recovery of a recessive allele \\in a Mendelian diploid model}
\author[A. Bovier]{Anton Bovier}
\address{A. Bovier\\Institut f\"ur Angewandte Mathematik\\
Rheinische Friedrich-Wilhelms-Universit\"at\\ Endenicher Allee 60\\ 53115 Bonn, Germany}
\email{bovier@uni-bonn.de }
\author[L. Coquille]{Loren Coquille}
\address{L. Coquille\\Univ. Grenoble Alpes, CNRS, Institut Fourier, F-38000 Grenoble, France}
\email{loren.coquille@univ-grenoble-alpes.fr}
\author[R. Neukirch]{Rebecca Neukirch}
\address{R. Neukirch\\Institut f\"ur Angewandte Mathematik\\
Rheinische Friedrich-Wilhelms-Universit\"at\\ Endenicher Allee 60\\ 53115 Bonn, Germany}
\email{rebecca.neukirch@iam.uni-bonn.de}
\subjclass{60K35,92D25,60J85}
\thanks{We acknowledge financial support from the German Research Foundation (DFG) 
	through the \emph{Hausdorff Center for  Mathematics}, the Cluster of Excellence \emph{ImmunoSensation}, and
	the Priority Programme SPP1590 \emph{Probabilistic Structures in Evolution}. L.C. has been partially supported by the LabEx PERSYVAL-Lab (ANR-11-LABX-0025-01) through the Exploratory Project \textit{CanDyPop},
	 as well as by the Swiss National Science Foundation through  the grant  No.  P300P2\_161031, and by the Chair "Modélisation
	 Mathématique et Biodiversité" of Veolia Environnement - École Polytechnique - Muséum National d'Histoire Naturelle - Fondation X.\\
	We would like to thank Pierre Collet and Vincent Beffara for their help on the theory of dynamical systems and fruitful discussions.
}
\begin{document}
	
	\begin{abstract}
		      
		We study the large population limit of a stochastic individual-based model 
		 which describes the time evolution of a diploid hermaphroditic population reproducing 
		 according to Mendelian rules.  In \cite{BovNeu} it is proved that sexual reproduction allows 
		 unfit  alleles  to  survive  in  individuals  with  mixed
		genotype much longer than they would in populations reproducing asexually. In the present
		 paper we prove that this indeed opens the possibility that
		 individuals  with  a  pure  genotype can reinvade  in  the  population  
		 after the appearance of further mutations.
		We thus expose a rigorous description of a mechanism by which a recessive allele can re-emerge in a  population. This can be seen as a statement of genetic robustness exhibited by diploid populations performing sexual reproduction.
	\end{abstract}

	\maketitle


\section{Introduction}

In  \emph{population genetics}, the study of Mendelian diploid models of fixed population size began 
more than a century ago  (see e.g. \cite{yule06,fisher18,wright31,haldane24a,haldane24b, 
crowkimura,nagylaki92,ewens04,buerger2000}), while their counterparts of variable population size 
models were studied in the context of adaptive dynamics from 1999 onwards \cite{KG99}. The 
approach of adaptive dynamics is to introduce competition kernels to regulate the population size 
instead of maintaining it constant, see \cite{HS90, ML92, MN92}.
	
	Stochastic individual-based versions of these models appeared  in the 1990s, see \cite{DL96,C_CEAD,Cha06,FouMel2004,C_ME,CM11}. They assume single events of reproduction, mutation, 
	natural death, and death by competition happen at random times to each individual in the 
	population.
	An important and interesting feature of these models is that different 
	limiting processes on different time-scales appear as the carrying capacity tends to infinity while mutation rates and mutation step-size  tend to zero (see \cite{Cha06,DL96,MG96,CM11,B14}). 
	One of the major results in this context is the convergence of a properly rescaled process to the 
	so called \emph{Trait Substitution Sequence} (TSS) process, which describes the evolution of 
	a monomorphic population as a jump process between monomorphic equilibria. 
	More generally, Champagnat and Méléard \cite{CM11} obtained the convergence to a 
	\emph{Polymorphic Evolution Sequence} (PES), where jumps occur between equilibria that may 
	include populations that have multiple co-existing phenotypes. The appearance of 
	co-existing phenotypes is, however, exceptional and happens only at so-called 
	\emph{evolutionary singularities}. 
	From a biological point of view, this is 
	somewhat unsatisfactory, as it apparently fails to explain the biodiversity seen in real biological 
	systems. 
	
	Most of the models considered  in this context  assume haploid populations with a-sexual 
	reproduction. Exceptions are the  paper \cite{CMM13} by Collet, Méléard and Metz from 2013 
	and  a series of papers by Coron and co-authors \cite{coron, coron2, coron3} following it. 
	In \cite{CMM13},  the 
	\emph{Trait Substitution Sequence}  is derived in a Mendelian diploid model under the 
	 assumption that the fitter mutant allele and the resident allele are co-dominant. 
	 
	 The main reason why both in haploid models and in the model considered in 
	 \cite{CMM13} the evolution along monomorphic populations is typical is that the time scales for 
	 the fixation of a new trait and the extinction of the resident trait are the same 
	 (both of order $\ln K$) (unless some very special fine-tuning of parameters occurs that allows 
	 for co-existence). This precludes (at least in the rare mutation scenarios considered) that 
	 an initially less fit trait survives long enough for several new mutations to occur, 
	 creating a situation where this trait may become fit again and recover.
	 
	 In a follow-up paper to \cite{CMM13}, two of the present authors \cite{BovNeu}, it was shown that, if instead 
	 one assumes that the resident allele is recessive, the time to extinction of this allele is dramatically increased. 
	  This will be discussed in detail in Section \ref{sec-previous-works} and  paves the way for the appearance of a richer limiting process.

The general framework  in \cite{CMM13} and \cite{BovNeu} is the following. Each individual is characterised by a reproduction and death rate which depend on a phenotypic trait  
determined by its genotype, which here is determined by two alleles (e.g. $A$ and $a$)
 on one single locus. The evolution of the trait distribution of the three genotypes $aa, aA$ and $AA$ is 
 studied under the action of (1) heredity, which transmits traits to new offsprings according to 
 Mendelian rules, (2) mutation, which produces variations in the trait values in the population onto 
 which selection is acting, and (3) competition for resources between individuals.

The paper \cite{BovNeu} proves that sexual reproduction allows unfit  alleles  to  survive  in  individuals  
with  mixed
genotype much longer than they would in populations reproducing asexually. 
This opens the possibility that while this allele is still alive in the population, the appearance of
new mutants alters the fitness landscape in such a way that is favourable for this allele and 
allows it to   reinvade  in  the  population, leading to a new equilibrium with co-existing phenotypes. 
The goal of this paper is to rigorously prove that such a scenario indeed occurs under fairly natural 
assumptions.\\
Recently, Billiard and Smadi \cite{BilSma15} considered related questions for haploid individuals (performing
clonal reproduction). The authors show that a deleterious allele can reinvade after a new mutation, but the range of parameters allowing this behaviour is though very small.

\subsection{The stochastic model} \label{sec-stoch-model} 
The individual-based microscopic Mendelian diploid model is a non-linear  birth-and-death process.
We consider a model for a population of a finite number of hermaphroditic individuals which reproduce 
sexually. Each individual $i$ is characterised by two alleles, $u_1^iu_2^i$, taken from some allele 
space $\mathcal{U}\subset\R$.  These two 
alleles define the genotype of the individual $i$. We suppress parental effects, which means that we identify individuals with genotype $u_1u_2$ and $u_2u_1$. 
 Each 
individual has a Mendelian reproduction rate with possible mutations and a natural death rate. Moreover, there is an 
additional death rate  due to ecological 
competition with the other individuals in the population. 
Let

\begin{center}
	\begin{longtable}{ll}
		$f_{u_1u_2}\in\R_+$ 	&\hspace{-0.2cm}be the per capita birth rate (fertility) of an individual with genotype\\ &\hspace{-0.2cm}$u_1u_2$,\\
		$D_{u_1u_2}\in\R_+$ 	&\hspace{-0.2cm}be the per capita natural death rate of an individual with genotype $u_1u_2$,\\
		$K\in\N$ 				&\hspace{-0.2cm}be the carrying capacity, a parameter which scales the population size,\\
		$\frac{c_{u_1u_2,v_1v_2}}{K}\in\R_+$ & \hspace{-0.2cm}\parbox[t]{1.0\textwidth}{be the competition effect felt by an individual with genotype $u_1u_2$ \\
			from an individual of genotype $v_1v_2$,}\\
		$R_{u_1u_2}(v_1v_2)\!\in\!\{0,\!1\} $&\hspace{-0.2cm}be the reproductive compatibility of the genotype $v_1v_2$ with $u_1u_2$,\\
		$\mu_K\in\R_+$ &\hspace{-0.2cm}be the mutation probability per birth event. Here it is independent of \\
			&\hspace{-0.2cm}the genotype,\\
		$m(u,dh)	$				&\hspace{-0.2cm}\parbox[t]{1.0\textwidth}{be the mutation law of a mutant allelic trait $u+h\in\mathcal{U}$, born from an\\individual with allelic trait $u$.} 
	\end{longtable}
\end{center}
\vspace{-10pt}
 Scaling the competition function $c$ down by a factor $1/K$ amounts to scaling the  population size to order $K$. We are interested in asymptotic  results when $K$ is  large.
 We assume rare mutations, i.e. $\mu_K\ll1$. If a mutation occurs at a birth event, only one allele changes from
 $u$ to $u+ h$ where $h$ is a random variable with law $m(u,dh)$.
			
			At any time $t$, there is  a finite number, $N_t$, of individuals, each with  genotype in $\mathcal{U}^2$. 
			We denote by $u_1^1(t)u_2^1(t),...,u_1^{N_t}(t)u_2^{N_t}(t)$ the genotypes of the population at time $t$. 
			The population, $\nu_t$,  at time $t$ is represented by the rescaled sum of Dirac measures on $\mathcal{U}^2$, 
			\begin{align}
			\nu_t=\frac{1}{K}\sum_{i=1}^{N_t}\delta_{u_1^i(t)u_2^i(t)}.
			\end{align}
			Formally, $\nu_t$ takes values in the set of re-scaled point measures 
			\begin{align}
			\mathcal{M}^K=\left\{\frac{1}{K}\sum_{i=1}^{n}
			\delta_{u_1^iu_2^i}\;\Big|\;n\geq0, u_1^1u_2^1,...,u_1^nu_2^n\in\mathcal{U}^2\right\},
			\end{align}
			on $\mathcal{U}^2$, 
			equipped with the vague topology.

For a  formal 	construction of the corresponding measure valued Markov process, 
see \cite{BovNeu}.
			We just insist on the reproduction rate of an individual of genotype $u_1u_2$ with 
			an individual of genotype $v_1v_2$, which takes the form
			$f_{u_1u_2}\frac{f_{v_1v_2}R_{u_1u_2}(v_1v_2)}{K\langle\nu R_{u_1u_2},f\rangle}$, where $\nu R_{u_1u_2}$ is the population restricted to the pool of potential partners of an individual of genotype $u_1u_2$.
			At the individual level, this form of reproduction rate means that each individual of genotype $u_1u_2$ reproduces at a rate which depends on its genotype through its fertility $f_{u_1u_2}$; more precisely, it bears an exponentially distributed clock with parameter $f_{u_1u_2}$, and when this rings, it chooses a partner at random in the pool determined by the reproductive compatibility $R$, with a weight proportional to the fertility of the partner. The reproductive compatibility can for example be thought as a way of coding if two individuals are or not of the same species. Within a species, the reproduction rate of a pair of (compatible) individuals is given by the product of their fertilities.  \\

			We make the following Assumptions \textbf{(A)}:
			\begin{itemize}
				\item[\textbf{(A1)}]The functions $f,D$ and $c$ are measurable and bounded, which means that there exist constants $\bar f,\bar D,\bar c<\infty$ such that 
				for all $u_1u_2,v_1v_2\in\mathcal{U}^2$, 
				\begin{align}
				0\leq f_{u_1u_2}\leq\bar f, \quad 0\leq D_{u_1u_2}\leq\bar D \quad \text{and}\quad 0\leq c_{u_1u_2,v_1v_2}\leq\bar c.\vspace{0pt}
				\end{align}	
				\item[\textbf{(A2)}] There exists a constant $\underline{c}>0$ such that
				for all $u_1u_2\in\mathcal{U}^2$, 
				$c_{u_1u_2,u_1u_2}\geq\underline{c}$ and
				$f_{u_1u_2}-D_{u_1u_2}>0$, 
				\item[\textbf{(A3)}] There exists a function, $\bar m:\R\rightarrow\R_+$, such that 
				$\int\bar m(h)dh<\infty$ and $m(u,h)\leq\bar m(h)$ for any $u\in\mathcal{U}$ and $h\in\R$.
				\end{itemize}
			
				For fixed $K$, under the Assumptions \textbf{(A1)+(A3)} and assuming that $
				\E(\langle\nu_0,\1\rangle)<\infty$, Fournier and Méléard \cite{FouMel2004} have shown 
				existence and uniqueness in law of the process.
				For $K\rightarrow\infty$, under mild restrictive assumptions, they prove the 
				convergence  of the process in the space 
				$\mathbb{D}(\R_+,\mathcal{M}^K)$ of càdlàg functions from $\R^+$ to
				 $\mathcal M_K$, to a deterministic process, which is the solution to a non-linear 
				 integro-differential equation. Assumption \textbf{(A2)} ensures that the population 
				 does not tend to infinity in finite time or becomes extinct too fast.

\subsection{Previous works}\label{sec-previous-works}

Consider the process starting with a monomorphic $aa$ population, with one additional mutant individual of genotype $aA$. Assume  that the phenotype difference between the mutant and the resident population is small. The phenotype difference is assumed to be a slightly smaller death rate compared to the resident population, namely
\begin{equation}\label{def-Delta}
D_{aa}=D,\quad D_{aA}=D-\Delta,
\end{equation}
for some small enough $\Delta>0$.
The mutation probability for an individual with genotype $u_1u_2$ is given by  $\mu_K$. Hence, the time 
until the next mutation in the whole population is of order $\frac{1}{K\mu_K}$. 
Now assume that the demographic parameters introduced in Section \ref{sec-stoch-model} depend continuously on the phenotype. In particular, they are the same for individuals bearing the same phenotype.

In \cite{CMM13} it is proved that if the two alleles $a$ and $A$ are co-dominant and if the allele $A$ is slightly fitter than the allele $a$, namely
\begin{equation}
D_{aa}=D,\quad D_{aA}=D-\Delta,\quad D_{AA}=D-2\Delta,
\end{equation}
 then in the limit  of large population  and  rare mutations ($\ln K\ll \frac1{\mu_K K}\ll e^{VK}$ for some $V>0$), the suitably time-rescaled process converges to the TSS model of adaptive dynamics, essentially as shown in \cite{Cha06} in the haploid case. In particular, the genotypes containing the unfit allele $a$ decay exponentially fast after the invasion of $AA$ (see Figure \ref{det_collet_vs_bovneu}). 

If in place of co-dominance we assume, as in \cite{BovNeu}, that the  fittest phenotype $A$ is dominant, namely
\begin{equation}
D_{aa}=D,\quad D_{aA}=D-\Delta,\quad D_{AA}=D-\Delta,
\end{equation}
 then this has a dramatic effect on the 
evolution of the population and, in particular, leads to a much prolonged survival of the unfit phenotype $aa$.  
Indeed, it was known for some time (see e.g. \cite{nagylaki92}) that in this case the unique stable 
fixed point $ (0,0,\bar n_{AA})$ corresponding to a monomorphic $AA$ population is degenerate, i.e. its 
Jacobian matrix has  zero-eigenvalue. This implies that in the deterministic system, the $aa$ and $aA$ 
populations decay in time  only polynomially fast to zero, namely like $1/t^2$ and $1/t$, respectively. 
This is in contrast to the exponential decay in the co-dominant scenario  (see Figure \ref{det_collet_vs_bovneu}). In \cite{BovNeu} it was shown that the deterministic system remains a 
good approximation of the stochastic system as long as the size of the $aA$ population remains
much larger than $K^{1/2}$ and therefore that the $a$ allele survives for a time of order at least 
$K^{1/2-\a}$, for any $\a>0$\footnote{The article \cite{BovNeu} only state that survival occurs up to time $K^{1/4-\a}$. However, taking into account that it is really only the survival of the $aA$ population that needs to be ensured, one can easily improve this to $K^{1/2-\a} $.}. 
Note that this statement is a non trivial fact, since it is not a consequence of the law of large numbers, because the time window diverges as $K$ grows. In summary, the unfit recessive $a$ allele survives in the population much longer due to the slow decay of the
$aA$ population. 

\begin{figure}[h!]
	\begin{center}
		\includegraphics[width=0.47\textwidth]{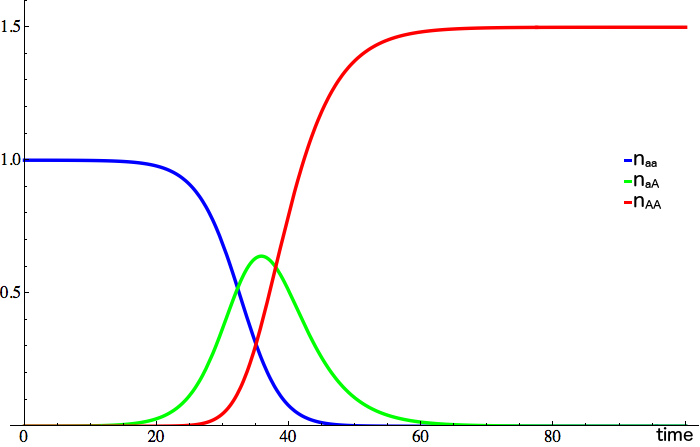}
		\includegraphics[width=0.46\textwidth]{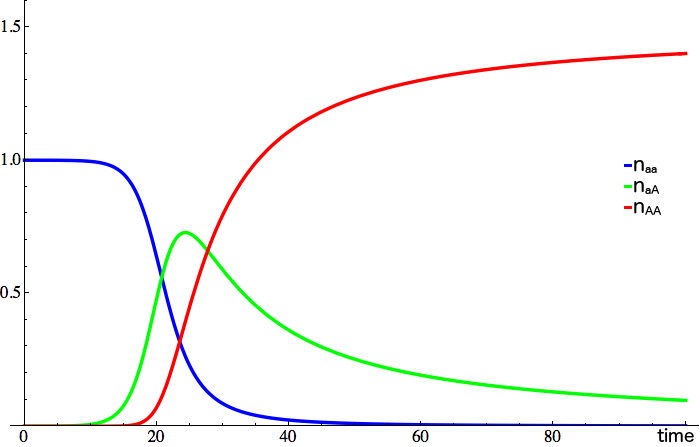}
		\caption{Evolution of the model from a resident $aa$ population at equilibrium with a small amount of mutant $aA$, and when the alleles $a$ and $A$ are co-dominant (left) or when the mutant phenotype $A$ is dominant (right).}
		\label{det_collet_vs_bovneu}
	\end{center}
\end{figure}

It is argued in \cite{BovNeu} that if we choose the mutation time scale in such a way that there remain enough $a$ alleles in the 
population when a new mutation occurs, i.e.
\begin{equation}\label{time_scale}
\ln K\ll \frac1{\mu_K K}\ll K^{1/2-\alpha}\quad\text{as }K\to\infty, \text{for some }\alpha>0,
\end{equation} 
 and if the new mutant can coexist with the unfit $aa$ individuals, then the 
$aa$ population can potentially recover.
This is the starting point of the present paper. 

\subsection{Goal of the paper}

The goal of this paper is to show that under reasonable hypotheses, the prolonged survival of
the $a$ allele after the invasion of the $A$ allele can indeed lead to a recovery of the $aa$-type. 
To do this, we assume that there will occur a new mutant allele (on the same gene), $B$, that on the one hand has 
a higher fitness than the $AA$-phenotype but that (for simplicity) has no competition with the
$aa$-type. The possible genotypes after this mutation are  $aa,aA,AA,aB,AB$, and $BB$, so that 
even for the deterministic system we have now to deal with a $6$-dimensional dynamical system 
whose analysis if far from simple.

Under the assumption of dominance of the fittest phenotype, and  mutation rate satisfying \eqref{time_scale},
we consider the model described in Section \ref{sec-stoch-model} starting at the time of the second mutation,  that is (with probability converging to 1 as $K\to\infty$) the $AA$ population being close to its equilibrium and the $aA$ population having decreased to a size of order $\e_K=O(K\mu_K)$, while the $aa$ population is of the order of the square of the $aA$ population. We assume that there just occurred a mutation to a fitter (and more dominant) allele $B$: we thus start with a quantity $\frac1K$ of genotype $AB$. We will start with a population where $AA$ is close to its equilibrium, the 
populations of $aA$ and $aa$ are already small (of order $\e_K$ and $\e_K^2$), and by mutation a single 
individual of genotype $AB$ appears.

 By using well known techniques \cite{Cha06,CM11, CMM13}, we know that the  $AB$ population behaves as a super-critical branching process and reaches a level $\e_0>0$ with positive probability in a time of order $\ln K$, without perturbing the 3-system $(aa,aA,AA)$.  

We see in numerical solutions to the deterministic system that a reduced fertility together with a reduced competition between $a$ and $B$ phenotypes constitutes a sufficient condition for the recovery of the $aa$ population.
For simplicity and in order to prove rigorous results, we suppose that there can be no  reproduction between individuals of phenotypes $a$ and $B$,
nor competition between them, and we reduce the number of remaining parameters as much as possible (see Section \ref{sec-model}). 
We study the deterministic system which corresponds to the large population limit of the stochastic counterpart, and we show that (for an initial quantity $\e$ of $aA$, $\e^2$ of $aa$ and $\e^3$ of $AB$) the system converges to a fixed point denoted by $p_{aB}$ consisting of the two coexisting populations $aa$ and $BB$. 
{If no further assumptions are made, we show  that the number of individuals bearing an $a$ allele decreases to level {$\e^{1+\D/(1-\D)}$} (where $\Delta$ is defined in \eqref{def-Delta}) before $aa$ grows and stabilises at order 1.}
\begin{remark}
Note that our study considers an initial quantity of $aA$ individuals $n_{aA}(0)=\e>0$, which should be thought as $\e =\e_K=O(K\mu_K)$ for the following discussion about the associated stochastic system. The quantity $\e_K$ satisfies $(\ln K)^{-1}\ll\e_K\ll O(K^{-\frac12+\alpha})$ if the mutation time satisfies \eqref{time_scale}.
\end{remark}

Let us discuss the (conjectural) implications of our results on the stochastic system:\\
{If $\Delta<\frac{\alpha}{1-2\alpha}$, our control on the $a$ allele is in principle sufficient  in order  for the \emph{stochastic} system to exhibit the recovery of $aa$ with positive probability in the large population limit.   
 Indeed, if the mutation time is of order  $K^{\frac12-\alpha}$, then the initial amount of $aA$ and $aa$ genotypes is close to the typical fluctuations of those populations, that is respectively of orders $O(K^{-\frac12+\alpha})$ and $O(K^{-1+2\alpha})$. 
Following the heuristics of \cite{BovNeu} (although the six-dimensional stochastic process is surely much more tedious to study), the deterministic system should constitute a good approximation of the process {if} the typical fluctuations of  populations containing an $a$ allele do not bring them to extinction. 
 If $\Delta<\frac{\alpha}{1-2\alpha}$ this ensures that the population containing an $a$ allele  is not falling below order $K^{-1/2}$ at any time. }\\

In order to go deeper and control the speed of recovery of the $aa$ population, we look for a parameter regime which ensures that the $aa$ population  always grows after the invasion of $B$.
Ensuring this lower bound on $aa$  is not trivial at all,
and the solution we found is to introduce an additional parameter $\eta$, which lowers the competition between the $aA$ and $BB$ populations, compared to the one between $AA$ and $BB$. Note that the competition does not depend only on the phenotype, and can be interpreted as a refinement of a phenotypic competition for resources: the strength (or
ability to get resources) of an individual not only depends  on its phenotype but also on
the dominance of its genotype. A biological interpretation for this kind of competition could be that it is coded in the alleles which food an individual with a given genotype prefers. 
We show that for $\eta$ larger than some positive value (of order $\Delta$),  the $aa$ population always grows after the invasion of $B$. The time of convergence to the coexistence fixed point is thus lowered, see Figure \ref{pic-loglog-eta-var}.
Moreover, we point out the existence of a bifurcation:  for $\eta$ larger than some threshold, the co-existence fixed point $p_{aB}$ becomes unstable and the system converges to another fixed point where all populations coexist.


Our contribution  is a rigorous description of a mechanism by which a recessive allele can re-emerge in a  population. This can be seen as a statement of genetic robustness exhibited by diploid populations performing sexual reproduction.

The structure of the paper is the following. In Section \ref{sec-model} we describe our assumptions on the parameters of the model, and compute the large population limit; in Section \ref{sec-qual} we present our results on the evolution of the deterministic system towards the co-existence fixed point $p_{aB}$, and we give a heuristic of the proof. Section \ref{sec-proof} is dedicated to the proof of these results.
The closing Section \ref{discussion} contain a heuristic considerations and numerical simulations of the model with relaxed assumption on the parameters.

\begin{figure}[h!]
	\includegraphics[width=.7\textwidth]{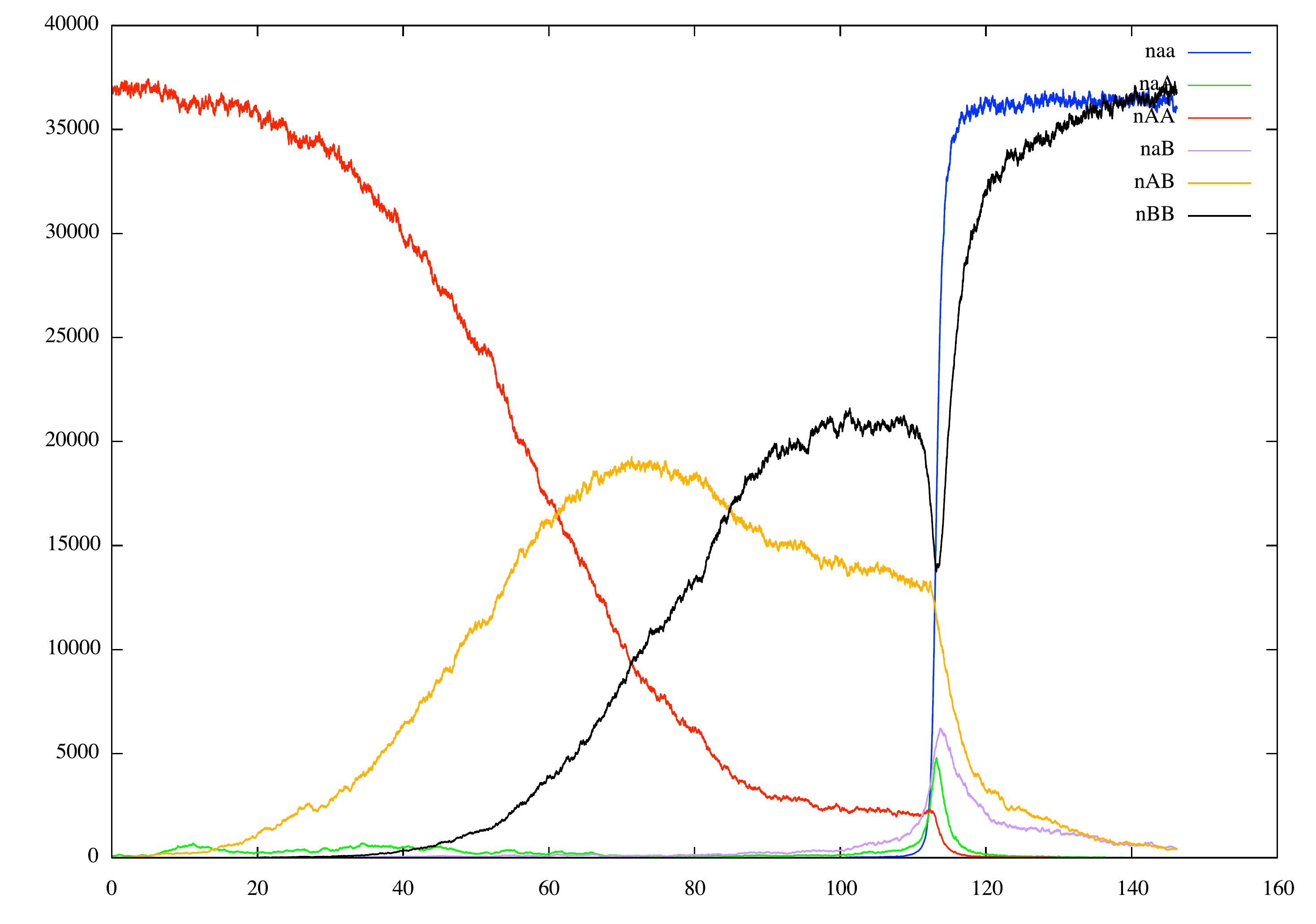}
	\caption{Simulation of the stochastic system for $f=6,D=0.7$, $\Delta=0.1$, $c=1, \eta=0.02$, $\e=0.014$ and $K=7000$.}
	\label{pic-stoch}
\end{figure}

\paragraph{Notation.} We write $x=\OO(y)$ whenever $x=O(y)$ and $y=O(x)$ as $\e\to0.$

\section{Model setup}\label{sec-model}

Let $\mathcal G=\{aa,aA,AA,aB,AB,BB\}$ be the genotype space.
Let $n_{i}(t)$  be the number of individuals with genotype $i\in\mathcal G$ in the population at time $t$ and 
set $n^K_{i}(t)\equiv  \frac{1}{K}n_{i}(t)$.
\begin{definition}
	The \emph{equilibrium size} of a monomorphic $uu$ population, $u\in\{a,A,B\}$, is the fixed point of  a
	1-dimensional Lotka-Volterra 
	equation and is given by 
	\begin{align}
	\label{equi}
	\bar n_u=\frac{f_{uu}-D_{uu}}{c_{uu,uu}}.
	\end{align} 
\end{definition}

\begin{definition}
	For  $u,v\in\{a,A,B\}$, we call
	\begin{align}
	S_{uv,uu}=f_{uv}-D_{uv}-c_{uv,uu}\bar n_{u},
	\end{align}
 the \emph{invasion fitness} of a mutant $uv$ in a resident $uu$ population.
\end{definition}

We take  the phenotypic viewpoint and assume that the $B$ allele is the most dominant one. That means the ascending order of dominance (in the Mendelian sense) is given by $a<A<B$, i.e.
\begin{enumerate}
	\item phenotype $a$ consists of the genotype $aa$,
	\item phenotype $A$ consists of the genotypes $aA,AA$,
	\item phenotype $B$ consists of the genotypes $aB,AB,BB$.\\
\end{enumerate}	

 For simplicity,  we assume that the fertilities are the same for all genotypes, and that natural death rates are the same 
within the three different phenotypes. Moreover, we assume that there can be no reproduction between $a$ and $B$ phenotypes.\\
To sumarize, we make the following Assumptions \textbf{(B)} on the rates: 

\begin{itemize}
	
	\item[\textbf{(B1)}] \emph{Fertilities.} For all  $ i\in\mathcal G$, and some $f>0$
	 \begin{align}f_{i}\equiv  f.\end{align}
	 
	\item[\textbf{(B2)}] \emph{Natural death rates.} The difference in fitness of the three phenotypes is realised by choosing 
	 a slightly higher natural death-rate of the $a$-phenotype and a slightly lower death-rate for the $B$-phenotype. For some $0<\D<D$,
	\begin{align}
	D_{aa}&= D+\D,\\
	D_{AA}\equiv  D_{aA}&=D,\\
	D_{aB}\equiv  D_{AB} \equiv  D_{BB}&=D-\D.
	\end{align}
	
	\item[\textbf{(B3)}] \emph{Competition rates.} We require that phenotypes $a$ and $B$ do not compete with each other. 
	Moreover, we introduce a parameter $\eta\geq0$ which lowers the competition between $BB$ and $aA$.
	For some $0\leq\eta<c$, 
	\begin{equation*}\left(c_{i,j}\right)_{\{i,j\}\in\mathcal G\times\mathcal G}=:
	\begin{tabular}{c|c|c|c}
	& $aa$ & $aA \quad AA$&$aB\quad AB\quad BB$\\
	\hline
	$aa$ & $c$ &$c\quad c$&$0 \quad 0 \quad 0$\\
	\hline
	$aA$ & $c$ &$c\quad c$&$c \quad c \quad c-\eta$\\
	$AA$ & $c$ &$c\quad c$&$c \quad c \quad c$\\
	\hline
	$aB$ & $0$ &$c\quad c$&$c \quad c \quad c$\\
	$AB$ & $0$ &$c\quad c$&$c\quad c \quad c$\\
	$BB$ & $0$ &$c-\eta \quad c$&$c \quad c\quad c$\\
	\hline
	\end{tabular}
	\end{equation*}
	\begin{remark} If $\eta>0$, the competition does not depend only on the phenotype, and can be interpreted as a refinement of a phenotypic competition for resources: the strength (or
		ability to get resources) of an individual not only depends  on its phenotype but also on its genotype. Genetically, it makes sense to assume that (positive) competition rates are decreasing in the "genetic distance" between two individuals. This is the case for the above competition matrix if we assume that the mutations $a\to A\to B$ can only occur successively. Indeed, the Hamming distance between genotypes is then the following graph distance, where $d(aA,BB)>d(AA,BB)$.
		\begin{center}
			\includegraphics[width=6cm]{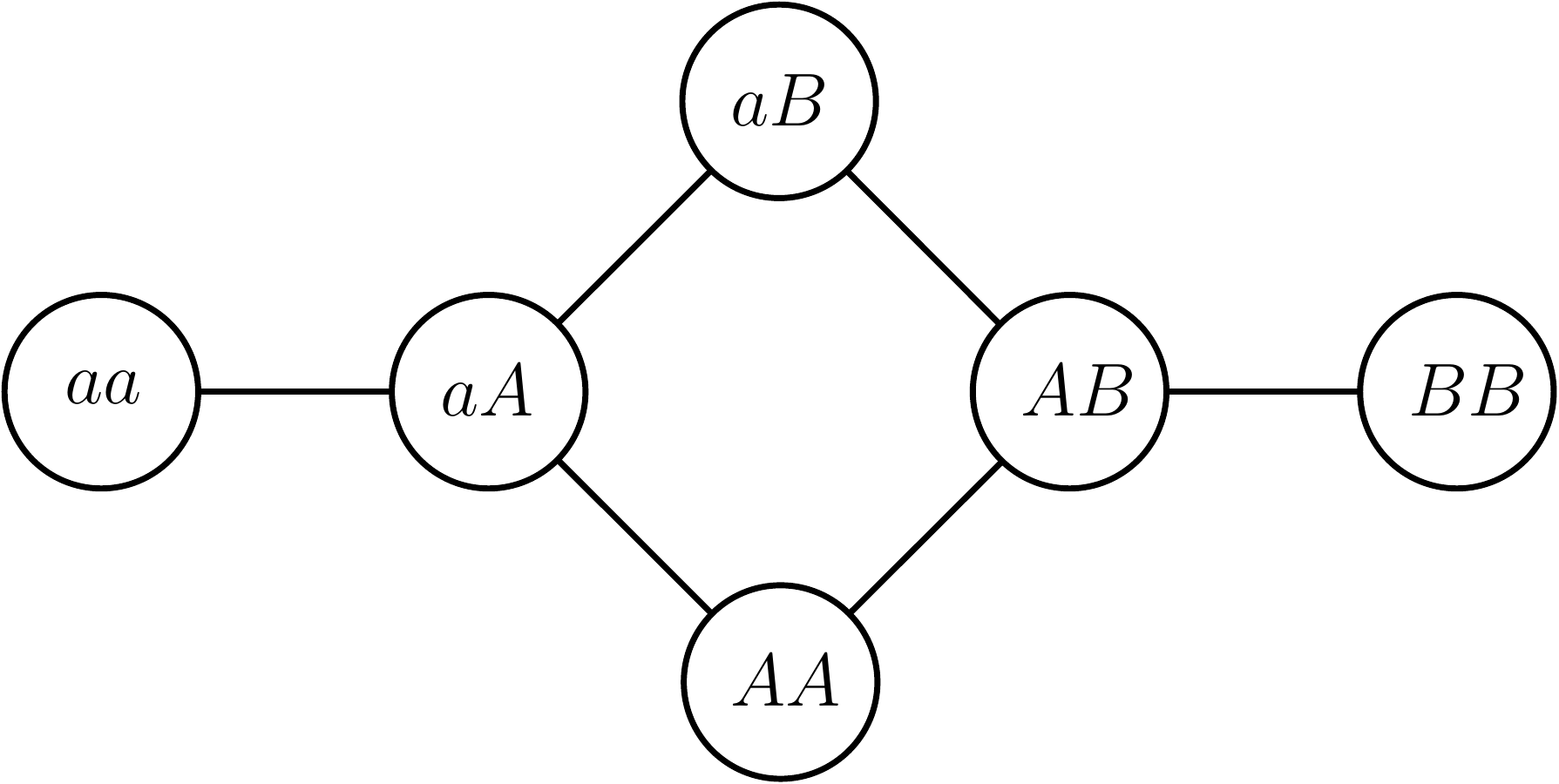}
		\end{center}
	\end{remark}
	
		\item[\textbf{(B4)}] \emph{Reproductive compatibility.} We require that phenotypes $a$ and $B$ do not reproduce with each other. 
	\begin{equation*}\left(R_{i}(j)\right)_{\{i,j\}\in\mathcal G\times\mathcal G}\equiv  
	\begin{tabular}{c|c|c|c}
	& $aa$ & $aA \quad AA$&$aB\quad AB\quad BB$\\
	\hline
	$aa$ & $1$ &$1\quad 1$&$0 \quad 0 \quad 0$\\
	\hline
	$aA$ & $1$ &$1\quad 1$&$1 \quad 1 \quad 1$\\
	$AA$ & $1$ &$1\quad 1$&$1 \quad 1 \quad 1$\\
	\hline
	$aB$ & $0$ &$1\quad 1$&$1 \quad 1 \quad 1$\\
	$AB$ & $0$ &$1\quad 1$&$1\quad 1 \quad 1$\\
	$BB$ & $0$ &$1 \quad 1$&$1 \quad 1\quad 1$\\
	\hline
	\end{tabular}
	\end{equation*}
	
\end{itemize}

Observe that, under Assumptions  \textbf{(B)},
\begin{align}
\label{infit}
S_{AB,AA}&=f-(D-\D)-c\bar n_{AA}=f-D+\Delta-c\frac{f-D}{c}=\D,\\
S_{aa,BB}&=f-D-\D,\\
c\bar n_B&=f-D+\Delta
\end{align}
Therefore,  the mutant $AB$ has a  positive invasion fitness in the population $AA$, as well as $aa$ in the $BB$ population (due to the absence of competition between them).

\subsection{Birth rates}
Since we assume that there is no recombination between phenotypes $a$ and $B$. Thus, 
\begin{enumerate}
	\item the pool of possible partners for the phenotype $a$ consists of phenotypes $a$ and $A$; 
	the total population of this pool is denoted by 
	\begin{equation}
	\Sigma_3:=n_{aa}+n_{aA}+n_{AA},
	\end{equation}
	\item the pool of possible partners for the phenotype $A$ consists of the three phenotypes $a$, $A$, and $B$;
	the total population of this pool is denoted by 
	\begin{equation}
	\Sigma_6:=n_{aa}+n_{aA}+n_{AA}+n_{aB}+n_{AB}+n_{BB},
	\end{equation}
	\item the pool of possible partners for the phenotype $B$ consists of phenotypes $A$ and $B$;
	the total population of this pool is denoted by 
	\begin{equation}
	\Sigma_5:=n_{aA}+n_{AA}+n_{aB}+n_{AB}+n_{BB}.
	\end{equation}
\end{enumerate} 
Computing the reproduction rates with the Mendelian rules as described in \cite{BovNeu} 
leads to the following (time-dependent) birth-rates $b_i=b_i(n(t))$: 

\begin{align}\label{birthratesaa}
b_{aa}=&f\frac{n_{aa} \left(n_{aa}+\frac12n_{aA}\right)}{\S_3}+f\frac{\frac12n_{aB} \left(\frac12n_{aA}+\frac12n_{aB}\right)}{\S_5}
+f\frac{\frac12n_{aA} \left(n_{aa}+\frac12n_{aA}+\frac12n_{aB}\right)}{\S_6},\\[0.5cm]\label{birthratesaA}
b_{aA}=&f\frac{n_{aa} \left(\sfrac12n_{aA}+n_{AA}\right)}{\S_3}+f\frac{\frac12n_{aA} \left(\frac12n_{aB}+\frac12n_{AB}\right)+\frac12n_{aB} \left(n_{AA}+n_{AB}\right)}{\S_5}\nonumber\\[0.2cm]
&+f\frac{\left(\frac12n_{aA}+n_{AA}\right) \left(n_{aa}+n_{aA}+\frac12n_{aB}\right)+\frac14n_{aA} n_{AB}}{\S_6},\\[0.5cm]\label{birthratesAA}
b_{AA}=&f\frac{\frac12n_{AB} \left(\frac12n_{aA}+n_{AA}+\frac12n_{AB}\right)}{\S_5}+f\frac{\left(\frac12n_{aA}+n_{AA}\right) \left(\frac12n_{aA}+n_{AA}+\frac12n_{AB}\right)}{\S_6},\\[0.5cm]\label{birthratesaB}
b_{aB}=&f\frac{\left(\frac12n_{aA}+n_{aB}\right) \left(\frac12n_{aB}+\frac12n_{AB}+n_{BB}\right)}{\S_5}+f\frac{\frac12n_{aA} \left(\frac12n_{aB}+\frac12n_{AB}+n_{BB}\right)}{\S_6},\\[0.5cm]\label{birthratesAB}
b_{AB}=&f\frac{\left(\frac12n_{aA}+n_{AA}+n_{AB}\right) \left(\frac12n_{aB}+\frac12n_{AB}+n_{BB}\right)}{\S_5}+f\frac{\left(\frac12n_{aA}+n_{AA}\right) \left(\frac12n_{aB}+\frac12n_{AB}+ n_{BB}\right)}{\S_6},\\[0.5cm]\label{birthratesBB}
b_{BB}=&f\frac{\frac14\left(n_{aB}+n_{AB}+2 n_{BB}\right)^2}{\S_5}.
\end{align}

\subsection{Death rates}
The death rates are the sum of the natural and competition death rates:
\begin{align}\label{death-ratesaa}
d_{aa}&=n_{aa}(D+\D+c\S_3),\\[0cm]\label{death-ratesaA}
d_{aA}&=n_{aA}(D+c(n_{aa}+n_{aA}+n_{AA}+n_{aB}+n_{AB})+(c-\eta)n_{BB}),\\[0cm]\label{death-ratesAA}
d_{AA}&=n_{AA}(D+c\S_6),\\[0cm]\label{death-ratesaB}
d_{aB}&=n_{aB}(D-\D+c\S_5),\\[0cm]\label{death-ratesAB}
d_{AB}&=n_{AB}(D-\D+c\S_5),\\[0cm]\label{death-ratesBB}
d_{BB}&=n_{BB}(D-\D+(c-\eta)n_{aA}+c(n_{AA}+n_{aB}+n_{AB}+n_{BB})).
\end{align} 

\subsection{Large population limit}

By \cite{FouMel2004} or \cite{CMM13}, for large populations, the behaviour of the stochastic
process is close to the solution of a deterministic equation.

\begin{proposition}[Theorem 2.1 in \cite{EthKur1986}]\emph{}\\ 
	Let $T>0$ and $C\subset\R_+^6$ be a compact set.
	Assume that the initial condition $n^K(0)=\frac{1}{K}(n_{aa}(0),n_{aA}(0),n_{AA}(0),n_{aB}(0),n_{AB}(0),n_{BB}(0))$ converges almost surely to a deterministic vector $x^0=(x^0_1,x^0_2,x^0_3,x^0_4,x^0_5,x^0_6)\in C$, as $K\to\infty$.\\
	Let $\tilde n(t,x^0)$ denote the solution to
	\begin{align}\label{dyn-syst}
	\dot n(t)&=b(n(t))-d(n(t))\equiv F(n(t)),\\
	\hbox{\rm i.e.} \quad \dot n_i(t)&=b_i(n(t))-\left(D_i+\sum_{j\in\GG}c_{i,j}n_j(t)\right)n_i(t),\quad \text{for all } i\in\GG,
	\end{align}
	with initial condition $x^0$,
	where $(b_i)_{i\in\GG}$ and $(d_i)_{i\in\GG}$ are given in \eqref{birthratesaa}-\eqref{birthratesBB} and \eqref{death-ratesaa}-\eqref{death-ratesBB}.
	Then, for all $T>0$,
	\begin{align}
	\label{inva}
	\lim_{K\rightarrow\infty}\sup_{t\in[0,T]}|n^K_{i}(t)-\tilde n_{i}(t,x^0))|=0,\quad \text{a.s.},
	\end{align}
	for all $i\in\GG$.
\end{proposition}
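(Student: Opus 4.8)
The plan is to recognise the statement as Kurtz's law of large numbers for density-dependent population processes \cite{EthKur1986} and to verify its hypotheses for the model at hand. Under Assumptions \textbf{(B)} the genotype space is frozen to the finite set $\GG$ and the reduced dynamics carries no mutation term, so $(n^K(t))_{t\ge0}$ is a pure-jump Markov process on $\tfrac1K\Z_+^6$ whose jumps change a single coordinate by $\pm\tfrac1K$ and occur at total rates $Kb_i(n^K(t))$ (birth of a genotype-$i$ individual) and $Kd_i(n^K(t))$ (death of one), with $b_i,d_i$ the density-form rates \eqref{birthratesaa}--\eqref{death-ratesBB}. The key structural point is that these rates are \emph{exactly} density-dependent: substituting $N\mapsto N/K$ in the microscopic rates reproduces $b_i,d_i$ with no $O(1/K)$ correction, because each birth term is a quadratic-over-linear ratio in the mating-pool sizes and each competition term already carries the explicit factor $1/K$. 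Hence the limiting vector field is \emph{exactly} $F=b-d$ of \eqref{dyn-syst}, and Dynkin's formula applied to the coordinate maps gives the semimartingale decomposition
\begin{equation}\label{eq-semimart-sketch}
n^K_i(t)=n^K_i(0)+\int_0^t F_i(n^K(s))\,\d s+M^K_i(t),\qquad i\in\GG,
\end{equation}
where $M^K_i$ is a c\`adl\`ag martingale with predictable bracket $\langle M^K_i\rangle_t=\tfrac1K\int_0^t(b_i(n^K(s))+d_i(n^K(s)))\,\d s$.

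I would then assemble the two ingredients needed to compare \eqref{eq-semimart-sketch} with the ODE on $[0,T]$. First, an a priori bound on the population size: since $c_{ii}\equiv c>0$ (Assumptions \textbf{(A2)}, \textbf{(B)}), the total mass $\langle n^K(t),\1\rangle$ is dominated, up to a suitable stopping time, by a one-dimensional logistic birth-and-death process --- births bounded by $f\langle n^K,\1\rangle$, quadratic self-competition supplying the restoring term --- which yields, for each $T>0$, a deterministic $R=R(T,x^0)<\infty$ and events of probability tending to $1$ (indeed holding for all large $K$, by the a.s. convergence of $n^K(0)$) on which $\sup_{t\le T}\langle n^K(t),\1\rangle\le R$. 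Second, the vanishing of the martingale part: the bracket formula and the a priori bound give $\E[\sup_{t\le T}|M^K_i(t)|^2]\le C(R)/K$ via Doob's inequality; realising the whole family on one probability space through Kurtz's random time-change representation $n^K_i(t)=n^K_i(0)+\tfrac1K\sum_e\pm Y_e(K\int_0^t\beta^e_i(n^K(s))\,\d s)$ driven by common standard Poisson processes $Y_e$, a Borel--Cantelli argument using exponential deviation bounds for Poisson processes upgrades this to $\sup_{t\le T}|M^K(t)|\to0$ \emph{almost surely}, matching the a.s. mode of convergence asserted. Subtracting \eqref{dyn-syst} from \eqref{eq-semimart-sketch}, on the event that both $n^K(\cdot)$ and $\tilde n(\cdot,x^0)$ stay in a compact set on which $F$ is Lipschitz with constant $L$, Gr\"onwall's lemma gives
\begin{equation}
\sup_{t\le T}|n^K(t)-\tilde n(t,x^0)|\le\bigl(|n^K(0)-x^0|+\sup_{s\le T}|M^K(s)|\bigr)e^{LT},
\end{equation}
and the right-hand side tends to $0$ almost surely, which is \eqref{inva}.

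The main obstacle is hidden in the phrase ``compact set on which $F$ is Lipschitz'': because of the denominators $\Sigma_3,\Sigma_5,\Sigma_6$, the drift $F$ is only \emph{locally} Lipschitz and degenerates when an entire mating pool empties. One must therefore show beforehand that the stochastic trajectories started at $n^K(0)\approx x^0$ keep all three pools macroscopic on $[0,T]$. I would do this with the stopping time $\tau_K=\inf\{t:\ \Sigma_3(n^K(t))\wedge\Sigma_5(n^K(t))\wedge\Sigma_6(n^K(t))<\rho\}$ for suitable $\rho>0$: along the deterministic flow from $x^0$ these quantities stay bounded below on $[0,T]$ (a property of the given solution $\tilde n(\cdot,x^0)$), and combining this with the a priori mass bound in a short continuity/bootstrap argument yields $\P(\tau_K\le T)\to0$; on $\{\tau_K>T\}$ the Lipschitz estimate and hence the Gr\"onwall bound are valid. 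The remaining steps --- computing the bracket, the logistic comparison, and the Gr\"onwall closure --- are routine.
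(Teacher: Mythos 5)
The paper offers no proof of this proposition at all: it is quoted verbatim as Theorem 2.1 (Chapter 11) of Ethier--Kurtz, and your argument is essentially a reconstruction of that standard proof --- density-dependent jump rates, the semimartingale/Poisson time-change representation, the bracket bound of order $1/K$, Borel--Cantelli for the almost-sure statement, and a Gr\"onwall closure --- so in substance you are following the same route as the cited source, correctly. Two remarks on the one place where you go beyond the citation, the mating-pool denominators $\Sigma_3,\Sigma_5,\Sigma_6$. First, your stopping-time/bootstrap fix is sound but stronger than needed: every ratio appearing in \eqref{birthratesaa}--\eqref{birthratesBB} has a numerator which is a product of two linear forms each dominated by the corresponding pool size, i.e.\ is of the type $xy/(x+y+z)$ on $\R_+^3$, whose partial derivatives are bounded on bounded sets; hence $F$ extends to a locally Lipschitz vector field on all of $\R_+^6$ (with value $0$ where a pool is empty), and the Gr\"onwall step goes through on any compact set containing the trajectories, with no need to keep the pools macroscopic. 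Second, this refinement actually matters for the statement as written: the proposition allows an arbitrary $x^0$ in a compact subset of $\R_+^6$, including points where a pool vanishes, and there your argument as formulated (which requires $\Sigma_3\wedge\Sigma_5\wedge\Sigma_6\geq\rho>0$ along the limiting flow) would not apply, whereas the Lipschitz-extension observation covers it. With that adjustment your proof is complete and matches the mode of convergence asserted.
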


\subsection{Initial condition}

Fix $\e>0$  sufficiently small. 
 For the results below, 
we will consider the dynamical system \eqref{dyn-syst} starting with the initial condition: 
\begin{align}
\label{init-condAA}
\bar n_A\geq n_{AA}(0)&\geq \bar n_A-\OO(\e),\\\label{init-condaA}
n_{aA}(0)&=\e,\\\label{init-condaa}
\e ^2\geq n_{aa}(0)&=\OO(\e^2),\\\label{init-condAB}
n_{AB}(0)&=\e^3,\\ \label{init-condBB}
n_{BB}(0)&=0,\\\label{init-condaB}
n_{aB}(0)&=0,
\end{align}
which corresponds to the long time behaviour of the dynamical system considered in \cite{BovNeu} plus a quantity $\e^3$ of the new mutant $AB$.

\begin{remark}
 In all the figures below, the choice of parameters is the following:
\begin{center}
	\begin{tabular}{ccccc}
		$f=6$, & $D=0.7$, & $\Delta=0.1$, & $c=1$, & $\e=0.01$,
	\end{tabular}
\end{center} 
 and the parameter $\eta$ is specified on each picture.
\end{remark}

\section{Results}\label{sec-qual}
We are working with a 6-dimensional dynamical system, and computing all the fixed points analytically  
is impossible for a general choice of the parameters. We can, however, compute those which are 
relevant for our study. We will call $p_A$ (resp. $p_B$) the fixed points corresponding to the 
monomorphic $AA$ (resp. $BB$) population at equilibrium, and $p_{aB}$ the fixed point 
corresponding to the coexisting $aa$ and $BB$ populations. Setting the relevant populations to 0 and 
solving $\dot n(t)=0$, we get:
\begin{align}
	p_A&=(0,0,\bar n_A,0,0,0),\\
	p_B&=(0,0,0,0,0,\bar n_B),\\
	p_{aB}&=(\bar n_a,0,0,0,0,\bar n_B),
\end{align}
where 
$\bar n_a=\frac{f-D-\Delta}c, \bar n_A=\frac{f-D}c,$ and $ \bar n_B=\frac{f-D+\Delta}{c}.$
Note that the $BB$ equilibrium population is the same in $p_B$ and $p_{aB}$. This is due to the non-interaction between phenotypes $a$ and $B$.

\begin{figure}[t]
	\begin{center}\raisebox{1.5cm}{\includegraphics[width=.495\textwidth]{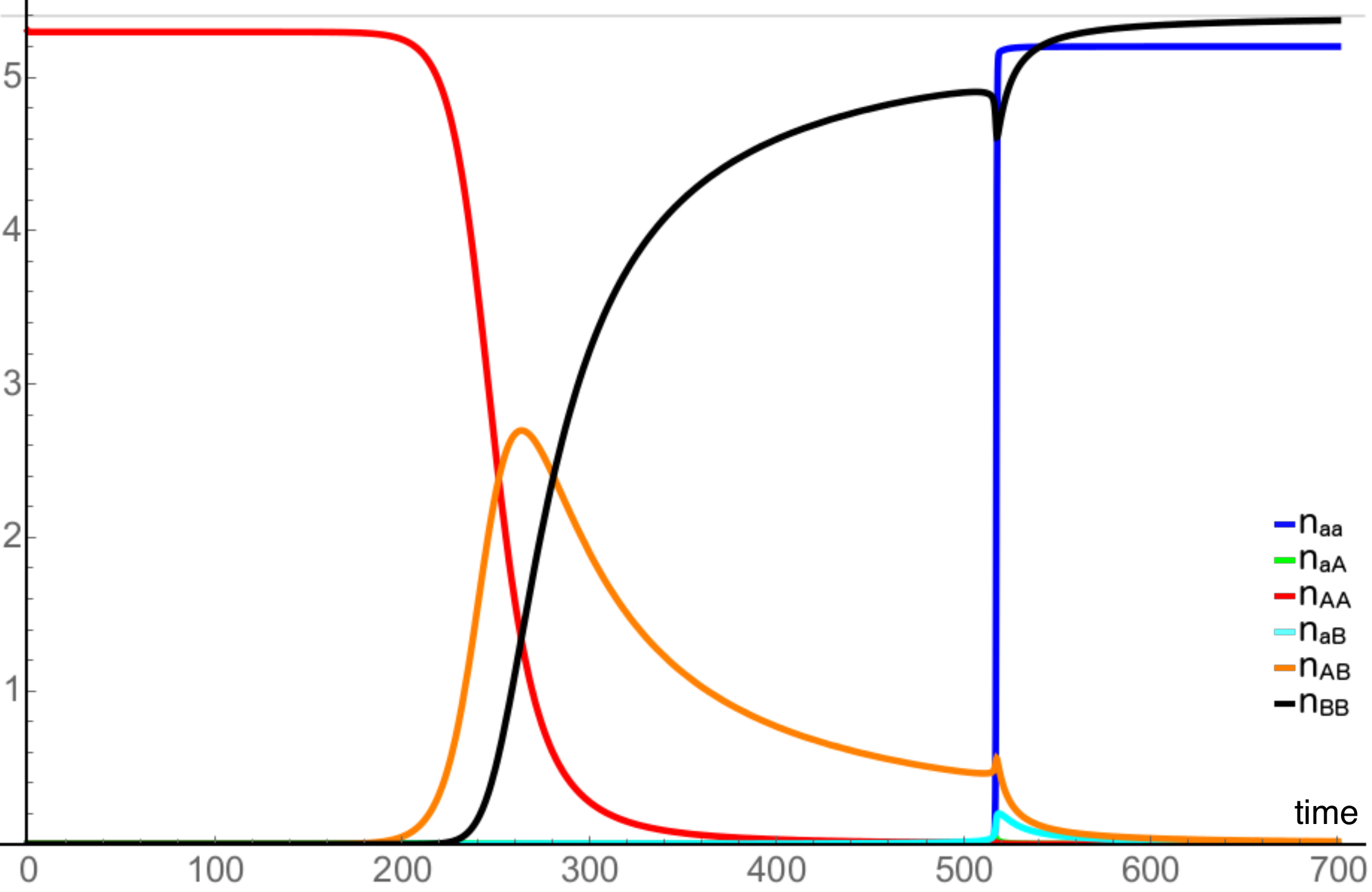}}
		\includegraphics[width=0.45\textwidth]{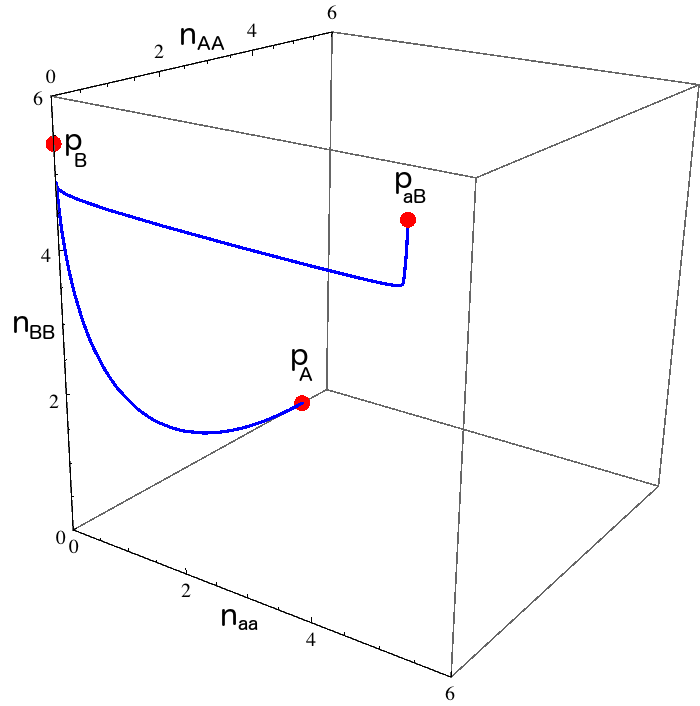}
		\caption{General qualitative behaviour of $\{n_i(t),i\in\mathcal G\}$ and projection of the dynamical system on the coordinates $aa, AA$ and $BB$. The re-invasion of the $aa$ population happens sooner and sooner as $\eta$ grows ($\eta=0.02$ for both pictures). }
		\label{pic-general}
	\end{center}
\end{figure}

Our general result is that starting with initial conditions \eqref{init-condAA}-\eqref{init-condaB}, that is close to $p_A$ (with small coordinates in directions $aa$, $aA$ and $AB$), and under minimal assumptions on the parameters, the system gets very close to  $p_B$ before finally converging to $p_{aB}$, see Figure \ref{pic-general}.

\begin{theorem}\label{main-thm}
	Consider the dynamical system \eqref{dyn-syst} started with initial conditions \eqref{init-condAA}-\eqref{init-condaB}. Suppose the following Assumptions \textbf{(C)} on the parameters hold:
	\begin{enumerate}
		\item[\textbf{(C1)}] 	 $\Delta$ sufficiently small, 
		\item[\textbf{(C2)}] 	 $f$ sufficiently large, 
		\item[\textbf{(C3)}]   $0\leq\eta<c/2$.
	\end{enumerate}
	Then the system converges to the fixed point $p_{aB}$.
	More precisely, for any fixed $\delta>0$, as $\e\to0$, 	it reaches a $\delta$-neighbourhood of $p_{aB}$ in a time of order $\OO(\e ^{-1/(1+\eta\bar n_B-\D)})$. \\
	Moreover:  
	\begin{enumerate}
		\item \label{eta0decay} for $\eta=0$,  the amount of  allele $a$ in the population decays to $\OO(\e^{1+\D/(1+\D)})$ before reaching $\OO(1)$,
	
		\item \label{etalarge} for $\eta>\frac{4\D}{\bar n_B}$, the amount of $a$ allele in the population is  bounded below by $\OO(\e)$ for all $t>0$.
	
	\end{enumerate}
\end{theorem}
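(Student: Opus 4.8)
The plan is to analyze the six-dimensional flow \eqref{dyn-syst} in a sequence of time-windows ("phases"), tracking the order of magnitude of each coordinate as a power of $\e$, and matching the exit conditions of one phase with the entry conditions of the next. First I would record the key smallness structure of the initial condition \eqref{init-condAA}--\eqref{init-condaB}: the populations $AA$ is $\OO(1)$ while $aA, aa, AB$ are $\OO(\e),\OO(\e^2),\OO(\e^3)$. In \textbf{Phase 1} the $AB$ population behaves as a linear birth process with growth rate close to the invasion fitness $S_{AB,AA}=\D>0$ of \eqref{infit}, so it grows from $\e^3$ to a fixed small level $\e_0$ in time $\OO(\ln(1/\e)/\D)$, while the three-system $(aa,aA,AA)$ stays essentially frozen near $p_A$ (by \textbf{(C1)}, $\D$ small, the perturbation is negligible over this window). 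In \textbf{Phase 2} the $B$-phenotype invades and $AA$ is driven toward $0$ while $BB$ rises to $\bar n_B$; the relevant linearization here is essentially the two-species Lotka--Volterra competition between the $A$- and $B$-phenotypes, and one uses \textbf{(C2)} ($f$ large, hence $\bar n_u$ all of the same order and well-separated from $0$) to get exponential convergence of this projected subsystem to the $BB$-equilibrium, again without $aa,aA$ having moved to leading order.

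The heart of the argument is \textbf{Phase 3}, where the slow dynamics of the $a$-allele plays out against the now-established $BB$ background. Here I would derive a closed (approximate) scalar ODE for the total amount of $a$-allele, $q(t):=n_{aa}(t)+\tfrac12 n_{aA}(t)+\tfrac12 n_{aB}(t)$ (or the relevant weighted combination dictated by the Mendelian birth rates \eqref{birthratesaa}--\eqref{birthratesaB}), by showing that during this phase the ratios $n_{aA}/q$, $n_{aB}/q$, $n_{aa}/q^2$ relax quickly to quasi-stationary values determined by the frozen $BB=\bar n_B+o(1)$ background. The net per-capita growth rate of $q$ in this quasi-stationary regime is, to leading order, $\eta\bar n_B-\D$ (compare the death rates \eqref{death-ratesaA} and \eqref{death-ratesBB}: the $\eta$-reduction of competition felt by $aA$ against $BB$ enters with a $+\eta\bar n_B$, the natural-death penalty of $aa$ with a $-\D$). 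This produces $q(t)\sim q(0)\,e^{(\eta\bar n_B-\D)t}$ until $q$ reaches $\OO(1)$, giving the stated time scale $\OO(\e^{-1/(1+\eta\bar n_B-\D)})$ once one tracks that $q(0)=\OO(\e)$ and remembers the $\ln(1/\e)$ corrections are lower order. For claim~\eqref{eta0decay}, when $\eta=0$ the rate is $-\D<0$, so $q$ first decays; the exponent $1+\D/(1+\D)$ then comes from solving the coupled pair (linear decay of $n_{aA}$ at rate $\D$, quadratic source $c\,n_{aA}^2$ feeding $n_{aa}$ which decays at rate $\D$ as well) and extracting the minimum of the total before $n_{aa}$'s invasion fitness $S_{aa,BB}>0$ (which needs $\D<f-D$, guaranteed by \textbf{(C1)}--\textbf{(C2)}) turns the sign around. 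For claim~\eqref{etalarge}, when $\eta>4\D/\bar n_B$ the leading rate $\eta\bar n_B-\D$ is positive with margin, so after a short transient $q$ is monotone increasing; the constant $4$ is slack absorbing the error terms from the not-yet-equilibrated $BB$ background and the Mendelian cross-terms.

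Finally, \textbf{Phase 4} is the convergence to $p_{aB}$ itself: once $(n_{aa},n_{BB})$ are both $\OO(1)$ and $(n_{aA},n_{AA},n_{aB},n_{AB})$ are small, I would check that $p_{aB}$ is a hyperbolic sink for the full six-dimensional system — the non-interaction of $a$ and $B$ phenotypes (Assumptions \textbf{(B3)}, \textbf{(B4)}) block-decouples the linearization into the two stable one-dimensional Lotka--Volterra pieces for $aa$ and $BB$ plus a block governing the decay of the "bridge" genotypes $aA,AA,aB,AB$, whose eigenvalues are negative precisely when $\eta<c/2$ (this is exactly where \textbf{(C3)} is needed, and where the bifurcation mentioned in the introduction occurs when $\eta$ crosses $c/2$). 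A standard stable-manifold / Lyapunov argument then gives convergence. The main obstacle I expect is \textbf{Phase 3}: rigorously justifying the quasi-stationary reduction to the scalar equation for $q$ over a time window of length $\e^{-c}$ that diverges as $\e\to0$ — one cannot simply invoke continuous dependence on initial conditions, and instead needs Gr\"onwall-type estimates with carefully chosen comparison functions (sub- and super-solutions) to control the ratios $n_{aA}/q$, $n_{aa}/q^2$ uniformly, together with a bootstrap to show the $O(\e)$-perturbation of the $BB$ background does not accumulate enough over the long window to change the exponent. The exponents $1+\D/(1+\D)$ and the threshold $4\D/\bar n_B$ will come out of making those comparison functions explicit.
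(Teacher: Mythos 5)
Your overall phase decomposition matches the paper's, but two of your central mechanisms are wrong, and the second of your claimed conclusions contradicts the first. In your Phase 2/3 you treat the invasion of $B$ as an essentially hyperbolic Lotka--Volterra replacement, with exponential relaxation of $(AA,AB,BB)$ to the $BB$-equilibrium and then a scalar equation $\dot q\approx(\eta\bar n_B-\D)q$ for the $a$-allele mass. Because the $B$ phenotype is dominant, the fixed point $p_B$ of the $(AA,AB,BB)$ subsystem is degenerate (zero eigenvalue), so $n_{AB}$ decays only like $1/t$ and $n_{AA}$ like $1/t^2$; this polynomial decay is not a technicality but the source of the theorem's time scale. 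The correct effective equation is $\dot\Sigma_{aA,aB}\approx n_{aA}(\eta n_{BB}-\D)$ with $n_{aA}=\OO(n_{AB}\,\Sigma_{aA,aB})$, i.e.\ a per-capita rate of order $(\eta\bar n_B-\D)/t$, which integrates to \emph{power-law} behaviour $\Sigma_{aA,aB}(t)=\OO(\e)(\OO(1)+\OO(1)t)^{\OO(\eta\bar n_B-\D)}$; the exit time is then fixed by $n_{aA}=\OO(n_{AA})=\OO(n_{AB}^2)\sim t^{-2}$, giving $T_2=\OO\bigl(\e^{-1/(1+\eta\bar n_B-\D)}\bigr)$ and, for $\eta=0$, the level $\OO(\e^{1+\D/(1-\D)})$. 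Your constant-rate exponential picture would instead give a transition time of order $\ln(1/\e)$, which is incompatible with the power of $\e$ you state in the same sentence, and your derivation of the exponent in claim (1) from "linear decay at rate $\D$" cannot be made consistent either. Likewise the threshold $4\D/\bar n_B$ in claim (2) is not error slack: it comes from the fact that once $n_{aB}$ crosses $n_{aA}$ one has $n_{BB}\geq\bar n_B/4-\OO(\D)$ (at the maximum of $n_{AB}$ one has $n_{AA}=n_{BB}=\bar n_B/4+\OO(\D)$), so $\eta n_{BB}-\D>0$ exactly when $\eta>4\D/\bar n_B$.

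The second gap is Phase 4. The fixed point $p_{aB}$ is \emph{not} a hyperbolic sink: its Jacobian has a double zero eigenvalue for all admissible parameters (this is again a consequence of dominance, not of the value of $\eta$), so the block-diagonalisation and stable-manifold argument you propose cannot determine stability, and in particular there is no linear spectral criterion "negative precisely when $\eta<c/2$". The paper has to invoke the Local Center Manifold Theorem, expand the flow on the two-dimensional center manifold to second order, and show (asymptotically for large $f$) that it is attracting for $\eta<c\cdot r_{max}$ with $r_{max}\simeq0.5936$; the resulting convergence is only at speed $1/t$, not exponential. The hypothesis $\eta<c/2$ in \textbf{(C3)} is in fact needed earlier, in the growth phase of $aa$, to prevent $n_{aA}$ from acquiring a positive per-capita growth rate and destroying the $\OO(n_{AB}^2)$ bounds; attributing it to hyperbolicity of $p_{aB}$ misplaces where the condition acts. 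So both the quantitative statements (time scale, exponent in (1), threshold in (2)) and the final stability step need the degenerate, polynomial-rate analysis that your proposal replaces by exponential/hyperbolic heuristics.
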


\begin{remark}
		For $\eta$ large, we prove that the fixed point $p_{aB}$ is unstable.
		We observe numerically that the system is attracted to a fixed point where all the 6 populations coexist, but we do not prove this.
\end{remark}

Let us now briefly discuss the linear stability of the relevant fixed points and give a heuristics of the proof of Theorem \ref{main-thm}.

\subsection{Linear stability analysis}\label{sec-stability}

The Jacobian matrix  $J_F:=(\partial F_i/\partial n_j)_{ij}$ of the map $F$ defined in \eqref{dyn-syst} can be explicitly computed at $p_A$ and $p_{aB}$ and the situation is as follows:
\begin{itemize}
	\item The eigenvalues of $J_F(p_A)$ are $0, \Delta>0$ and $-(f-D), -(f+\Delta), -(f-\Delta)$ (double) which are all strictly negative under Assumptions (C). The fixed point $p_A$ is thus unstable.
	\item The eigenvalues of $J_F(p_{aB})$ are $0$ (double), and $ -(2f-D), -(f-D+\Delta), 
	-(f-D-\Delta), -((f-D)(5f-4D)+f\Delta)/(4(f-D)+\eta\bar n_B)$ which are strictly negative under Assumptions (C). The linear analysis thus does not imply the stability of $p_{aB}$ but the Phase 4 of the proof does (see Section \ref{subsec-phase5}) . 
	\end{itemize}
	
	It turns out that $J_F(p_B)$ is singular but as the invasion fitness of $aa$ is positive, i.e. $S_{aa,BB}>0$ (see \eqref{infit}), this implies that a small perturbation in the first coordinate will be amplified, and thus implies the instability of the fixed point $p_B$.

\subsection{Heuristics of the proof}\label{subsec-heuristics}

Recall we start the dynamical system \eqref{dyn-syst} with initial conditions \eqref{init-condAA}-\eqref{init-condaB}. A numerical solution of the system is provided on Figure \ref{detsys}. 
\begin{remark}
	Assumption C1 of Theorem \ref{main-thm} is needed throughout the proof in order to be able to use the results of \cite{BovNeu} which rely on the Center Manifold Theorem (a line of fixed points becomes an invariant line under small enough perturbation).
\end{remark}

	\begin{enumerate}
				\item[\textbf{Phase 1.}] Time period:  until $n_{AB}=\e_0$.\\
				  The mutant population, consisting of all individuals of phenotype $B$, first grows up to $\e_0$ exponentially fast {with rate $\Delta$} without perturbing	the behaviour of the 3-system $(aa,aA,AA)$. The rate of growth corresponds to the invasion fitness of $AB$ in the resident population $AA$, see \eqref{infit}. Following \cite{BovNeu}, $AA$ stays close to $\bar n_A$, while $aA$ and $aa$ continue to decay like $1/t$ and $1/t^2$ respectively.
				The duration $T_1$ of this phase is such that $\OO(\e^3)e^{t\D}=\OO(1)\Leftrightarrow T_1=\OO(|\log\e|)$.\\
				
				\item[\textbf{Phase 2.}] Time period:  until $n_{aA}=\OO(n_{AA})$.\\
				The evolution  is a perturbation of an effective 3-system $(AA,AB,BB)$ which behaves exactly the same as in \cite{BovNeu}, since the parameters satisfy the same hypotheses (slightly lower death rate for phenotype $B$  than for phenotype $A$, and constant competition parameters). 
				A comparison result (following Theorem \ref{3system} below) shows that this 3-system is almost unperturbed until $n_{aA}=\OO(n_{AA})$. 	If that happens in a time $T_2$ diverging with $\e$ (which we ensure throughout the calculation), we thus know that $BB$ approaches $\bar n_B$, while 	$n_{AB}\propto1/t$ and
				$n_{AA}\propto1/t^2$.\\
				The important fact in this phase is that the amount of  allele $a$ in the population decays 
				for $\eta$ small while it increases for large enough $\eta$. Indeed, let us derive some bounds on $\Sigma_{aA,aB}=n_{aA}+n_{aB}$.
				The population $\Sigma_{aA,aB}$ reproduces by taking the dominant allele in a population of order $\OO(1)$ and the allele $a$ in itself. Thus its birth rate satisfies $b_{\Sigma_{aA,aB}}
				\approx f\Sigma_{aA,aB}$. We can compute its death rate exactly and use that $n_{BB}\approx\Sigma_5\approx\bar n_B$: 
				\begin{align}\nonumber
				d_{\Sigma_{aA,aB}}&=\Sigma_{aA,aB}(D-\D+c\S_5)-\eta n_{aA}n_{BB}+\D n_{aA}\\
				&\approx f\Sigma_{aA,aB}-n_{aA}(\eta\bar n_{B}-\D),\\ \nonumber
				\dot{\Sigma}_{aA,aB}&\approx n_{aA}(\eta\bar n_{B}-\D)\\
				&=\OO({\Sigma}_{aA,aB}\cdot n_{AB})(\eta\bar n_{B}-\D). \label{sigmadot}
				\end{align}
				The last equality comes from the fact that $aA$ newborns have mainly their $a$ allele coming from ${\Sigma}_{aA,aB}$ and their $A$ allele coming from $AB$. 
			   Using the $1/t$ decay of $AB$ we get:
			   \begin{align}
			   	\dot{\Sigma}_{aA,aB}\approx \frac{\OO({\Sigma}_{aA,aB})}{\OO(1)+\OO(1)t}(\eta\bar n_{B}-\D)
				\end{align} 
			    As ${\Sigma}_{aA,aB}(T_1)=\OO(\e)$ we deduce that
			   ${\Sigma}_{aA,aB}(t)={\OO(\e)}(\OO(1)+\OO(1)t)^{\OO(\eta\bar n_{B}-\D)}$,
			   and thus
			   $n_{aA}=\OO(n_{AB}\cdot{\Sigma}_{aA,aB} )={{\OO(\e)}(\OO(1)+\OO(1)t)^{\OO(\eta\bar n_{B}-\D)}}/{(\OO(1)+\OO(1)t)}$.
			   By solving $n_{aA}=\OO(n_{AA})=\OO(n_{AB}^2)$ we get the order of magnitude of $T_2=\OO\left(\e^{-1/(1+\eta\bar n_{B}-\D)}\right)$.
			   Note that for $\eta=0$,  ${\Sigma}_{aA,aB}(T_2)=\OO(\e ^{1+{\D}/{(1-\D)}})$. 
			   Moreover, \eqref{sigmadot} implies that for $\eta>\Delta/\bar n_B$, we have $\dot{\Sigma}_{aA,aB}>0$, which proves points \ref{eta0decay} and \ref{etalarge} of Theorem \ref{main-thm}.\\

				\item[\textbf{Phase 3.}] Time period:  until ${aa}$ reaches equilibrium.\\
				The fact that $n_{aA}=\OO(n_{AA})$ has a crucial effect on the birth rate of $aa$ 
				(see \eqref{birthratesaa}) since the term
				 $(n_{aa}+\frac12n_{aA})/({n_{aa}+n_{aA}+n_{AA}})$ becomes of order 
				 $\OO(1)$. As long as $AA$ stays smaller than $\OO(\e)$, we get a lower bound
				  on $n_{aa}$ which  grows exponentially fast  since $f$ is chosen large enough 
				  (Assumption C2):
				\begin{align}
				b_{aa}&\geq fn_{aa}\OO(1),\\
				d_{aa}&\leq n_{aa}(D+\D+\OO(\e)),\\
				\dot{n}_{aa}&\geq n_{aa}(f\OO(1)-D-\D-\OO(\e)).
				\end{align} 
				As $aa$ grows, it makes ${\Sigma}_{aA,aB}$ grow, and thus $AA$ and $AB$ as 
				well. We have to show  that this could not prevent $aa$ from reaching 
				equilibrium. We do not give a detailed argument here, but essentially, the 
				presence of the macroscopic $BB$ population prevents all the non-$aa$ 
				populations to grow too much. 
				Note that if $\eta$ is too large, then $aA$ could get a positive fitness and grow to
				 a macroscopic level. That is why we have to impose Assumption C3, which will 
				 become clearer heuristically in the next phase.
				We recall that $aa$ does not compete with $BB$ and thus it grows exponentially 
				fast  with rate ${f-(D+\Delta)}$ until an $\e_0$-neighbourhood of the  fixed point 
				where $aa$ and $BB$ coexist. The rate of growth corresponds to the invasion 
				fitness of $aa$ in the resident population $BB$, see \eqref{infit}. Note that, due to 
				Assumption C2, this rate is much larger than the invasion rate of $BB$ into $AA$. 
				That is why the fourth phase looks very steep on Figure \ref{detsys}, see the 
				stretched version on Figure \ref{zoom-in}. This phase lasts a time 
				$T_3=\OO(|\log\e |)$.\\
				
				\item[\textbf{Phase 4.}] 
				The Jacobian matrix of the field \eqref{dyn-syst} at the fixed point $p_{aB}$ has two zero, and 4 negative eigenvalues.
				 $p_{aB}$ is thus a non-hyperbolic equilibrium point of the system and linearisation fails to determine its stability properties. Instead, we use the result of  center manifold theory (\cite{H77, P01}) that asserts that the qualitative behaviour of the dynamical system in a neighbourhood of the non-hyperbolic critical point $p_{aB}$ is determined by its behaviour on the center manifold near $p_{aB}$. Using the Center Manifold Theorem, we show that asymptotically as $f\to\infty$, the field is attractive for $\eta<c\cdot r_{max} $ where $r_{max}\simeq0.593644$ is the maximum of the rational function \eqref{r}.
				 Thus $p_{aB}$ is a stable fixed point which is approached with speed $\frac 1{t}$ as long as $\eta<c\cdot r_{max}$.
				For higher values of $\eta$, numerical solutions show that the system converges to a fixed point where the 6 populations co-exist, but we do not prove this.
	\end{enumerate}

\begin{figure}[t]
	\includegraphics[width=.9\textwidth]{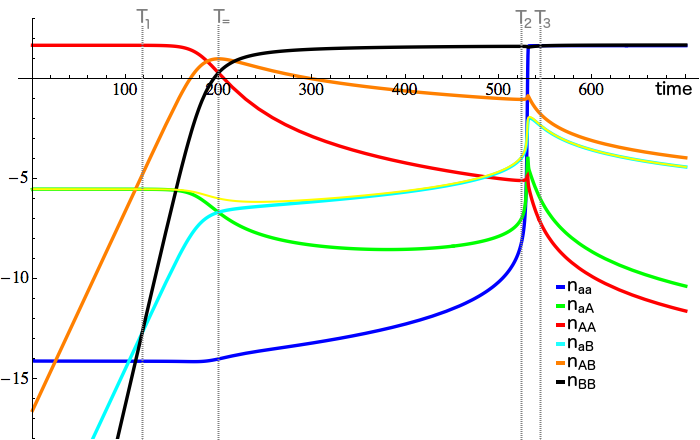}
	\caption{Numerical solution of the deterministic system for $\eta=0.02$, logplot. See Figure \ref{zoom-in} for a zoom around the time interval $[T_2,T_3]$.}
	\label{detsys}
\end{figure}

\section{Proof}\label{sec-proof}

%
%

\begin{definition}\label{def-stop-times}Let $x,y,z\in\{aa,aA,AA,aB,AB,BB\}$ and $h\in\R$. We define
	\begin{align}
	T^{x=y}&=\inf\{t>0:n_x(t)=n_y(t)\},\\
	T^{x=\delta y}&=\inf\{t>0:n_x(t)=\delta n_y(t)\},\\
	T_{h}^{x}&=\inf\{t>0: n_{x}(t)>h\},\\
	T_{h}^{x+y}&=\inf\{t>0: n_{x}(t)+n_{y}(t)>h\},\\
    T_{h}^{x+y+z}&=\inf\{t>0: n_{x}(t)+n_{y}(t)+n_{z}(t)>h\}.
	\end{align}
	Moreover, let
	\begin{equation}
	\label{eps-delta}
		\D>\delta>\e_0>\e>0.
	\end{equation}	
\end{definition}

The value $\e_0$ is the small order 1 level in the Phase 1, see the proof heuristics (Section \ref{subsec-heuristics}). We consider $\Delta$ fixed and sufficiently small, and will  first send $\e\to0$  and then $\e_0\to0$.
A summary of the proof structure can be found in the Appendix \ref{appendix} together with the implications between intermediary results.

\subsection{Preliminaries}
We first state an elementary fact that  will be useful throughout the following analysis.
\begin{lemma}\label{lem-level}
	Let $T>0$ and $f:[0,T]\to\R$ be a differentiable function such that $f(0)\leq0$.
	If,  for all $t\in[0,T]$,  $\left\{f(t)=0\Rightarrow \dot f(t)< 0\right\}$  then $f(t)<0$, for all $t\in(0,T]$.
\end{lemma}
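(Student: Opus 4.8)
The plan is a standard first-crossing argument by contradiction. Suppose the conclusion fails, so that the set $S:=\{t\in(0,T]:f(t)\ge0\}$ is nonempty, and put $t^*:=\inf S\in[0,T]$. First I would establish that $f(t^*)=0$. Choosing points of $S$ that converge to $t^*$ and using continuity of $f$ gives $f(t^*)\ge0$. For the reverse inequality, if $t^*>0$ then minimality of $t^*$ forces $f<0$ on the whole interval $(0,t^*)$, whence $f(t^*)\le0$ by continuity; and if $t^*=0$ then $f(t^*)=f(0)\le0$ by hypothesis. Hence $f(t^*)=0$, and the standing assumption applied at $t^*$ yields $\dot f(t^*)<0$.

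Next I would reach a contradiction by splitting according to the position of $t^*$. If $t^*>0$: since $f<0$ on $(0,t^*)$ and $f(t^*)=0$, for small $h<0$ the difference quotient $\bigl(f(t^*+h)-f(t^*)\bigr)/h=f(t^*+h)/h$ is a ratio of two negative numbers, hence strictly positive; letting $h\uparrow0$ gives $\dot f(t^*)\ge0$, contradicting $\dot f(t^*)<0$. If $t^*=0$: then $f(0)=0$ and $\dot f(0)<0$ (the right derivative), so $f(h)/h<0$ for all sufficiently small $h>0$, i.e.\ $f<0$ on some interval $(0,\delta)$; but $\inf S=0$ forces $S$ to meet $(0,\delta)$, where $f\ge0$ --- a contradiction. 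In both cases $S=\emptyset$, which is precisely the claim that $f(t)<0$ for all $t\in(0,T]$.

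The argument is entirely elementary and I foresee no real obstacle. The only points needing a little care are the bookkeeping at the endpoint $t^*=0$, where $f(0)$ is allowed to vanish, and correctly reading the sign of the one-sided derivative $\dot f(t^*)$ off the sign of $f$ immediately to the left of $t^*$. One could equivalently run the same proof with $t^*:=\sup\{t\in[0,T]:f\le0\text{ on }[0,t]\}$, but the formulation above keeps the case analysis shortest.
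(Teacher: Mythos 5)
Your first-crossing argument is correct and complete: the identification of $t^*=\inf\{t\in(0,T]:f(t)\ge0\}$ with a zero of $f$, the sign analysis of the one-sided difference quotients, and the separate treatment of the endpoint case $t^*=0$ (where $f(0)=0$ and the right derivative forces $f<0$ on a right neighbourhood) are all handled properly. The paper states this lemma as an elementary fact without giving any proof, so there is nothing to contrast; your write-up simply supplies the standard argument the authors left implicit.
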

In the sequel, a part of our majorations and comparison results will rely on this lemma, which ensures that the trajectories of
two dynamical systems do not cross as long as the derivative of the upper trajectory is larger
than the derivative of the lower one, where these trajectories meet. We will often encounter the  situation where $c,T>0$ and $n,g:[0,T]\to\R$ are two differentiable functions such that $c n(0)\leq g(0)$.

Then, Lemma \ref{lem-level} implies that if,  for all $t\in[0,T]$,  $\left\{c n(t)=g(t)\Rightarrow c\dot n(t)<\dot g(t)\right\}$  then $c n(t)\leq g(t)$, for all $t\in[0,T]$.

\begin{proposition}\label{aB<AB}  We have the following ordering relations:
	\begin{enumerate}
		\item  If $n_{AA}(0)\leq \bar n_A$ then $n_{AA}(t)\leq \bar n_{A}$, for all $t>0$.
		\item  If $n_{BB}(0)< \bar n_B$ then $n_{BB}(t)\leq \bar n_{B}$, for all $t>0$.
		\item  If $n_{aB}(0)< n_{AB}(0)$ then $n_{aB}(t)\leq n_{AB}(t)$, for all $t>0$.
	\end{enumerate}
\end{proposition}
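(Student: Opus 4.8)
The plan is to prove each of the three ordering relations by applying Lemma \ref{lem-level} (in the comparison form stated just after it) to a suitably chosen difference of functions. In each case I will check that the relevant difference is $\leq 0$ at time $0$ and that, whenever it vanishes, its derivative is strictly negative; the vanishing of the difference fixes the value of the relevant coordinate, which simplifies the birth and death rates enough to make the sign computation explicit.

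For item (1), set $g(t) = n_{AA}(t) - \bar n_A$. We have $g(0) \leq 0$ by hypothesis. Suppose $g(t)=0$, i.e. $n_{AA}(t) = \bar n_A$. Then $\dot n_{AA} = b_{AA} - d_{AA}$ with $d_{AA} = n_{AA}(D + c\Sigma_6) = \bar n_A(D + c\Sigma_6)$, while from \eqref{birthratesAA} one sees $b_{AA} \leq f\,(\tfrac12 n_{aA}+n_{AA})(\tfrac12 n_{aA} + n_{AA} + \tfrac12 n_{AB})/\Sigma_6 + f\,\tfrac12 n_{AB}(\cdots)/\Sigma_5$; the key point is that $b_{AA}$ is bounded by $f$ times a fraction whose numerator is at most $(\tfrac12 n_{aA} + n_{AA} + \tfrac12 n_{AB})^2$ and whose denominator $\Sigma_6$ is at least that same quantity up to the missing $n_{aa}$ and $n_{BB}$ masses, so $b_{AA} \leq f(\tfrac12 n_{aA} + n_{AA} + \tfrac12 n_{AB}) \leq f \Sigma_6$. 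Hence $\dot n_{AA} \leq f\Sigma_6 - \bar n_A(D + c\Sigma_6) = \Sigma_6(f - c\bar n_A) - \bar n_A D = \Sigma_6 \cdot D^{+}$-type bookkeeping; using $c\bar n_A = f - D$ this is $\leq \Sigma_6 D - \bar n_A D \cdot(\text{const})$, and one checks it is in fact $\leq f \bar n_A - \bar n_A(D + c\bar n_A) = \bar n_A(f - D - c\bar n_A) = 0$ in the worst case, with strict inequality off the fixed point — more carefully, since the other populations are nonnegative and at least one of them is positive along the trajectory (the system does not die out), the inequality is strict, giving $\dot g(t) < 0$. Lemma \ref{lem-level} then yields $g(t) \leq 0$ for all $t$. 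Item (2) is proved identically with $g(t) = n_{BB}(t) - \bar n_B$, using \eqref{birthratesBB} which gives exactly $b_{BB} = \tfrac{f}{4}(n_{aB}+n_{AB}+2n_{BB})^2/\Sigma_5 \leq f\cdot \tfrac12(n_{aB}+n_{AB}+2n_{BB}) \leq f\Sigma_5$ (since $\tfrac12(n_{aB}+n_{AB}+2n_{BB}) \leq \Sigma_5$), and $d_{BB} = n_{BB}(D - \Delta + (c-\eta)n_{aA} + c(\cdots))$; when $n_{BB} = \bar n_B = (f-D+\Delta)/c$ one gets $\dot n_{BB} \leq f\Sigma_5 - \bar n_B(D-\Delta) - \bar n_B((c-\eta)n_{aA} + c(\Sigma_5 - n_{aA})) \leq f\Sigma_5 - \bar n_B(D-\Delta) - \bar n_B c\Sigma_5 + \bar n_B\eta n_{aA} $, and since $c\bar n_B = f - D + \Delta$ the leading terms cancel to leave something $\leq \bar n_B \eta n_{aA} - (\text{positive slack})$; here one must use $\eta < c$ and the fact that $n_{aA}$ is controlled by $\Sigma_5$ to close the sign — this needs a touch more care, but the structure is the same.

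For item (3), set $g(t) = n_{AB}(t) - n_{aB}(t)$, so $g(0) = \e^3 - 0 > 0$ (note: I would double-check the inequality direction — we want $n_{aB} \leq n_{AB}$, so apply Lemma \ref{lem-level} to $-g$, i.e. to $h(t) := n_{aB}(t) - n_{AB}(t)$ with $h(0) < 0$). Suppose $h(t) = 0$, i.e. $n_{aB} = n_{AB} =: m$. The death rates of $aB$ and $AB$ are \emph{equal}: $d_{aB} = n_{aB}(D-\Delta + c\Sigma_5) = m(D-\Delta+c\Sigma_5) = d_{AB}$. So $\dot h = b_{aB} - b_{AB}$ at this time, and I must show $b_{aB} < b_{AB}$ when $n_{aB} = n_{AB}$. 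Comparing \eqref{birthratesaB} and \eqref{birthratesAB} term by term with $n_{aB}=n_{AB}=m$: the $\Sigma_5$-terms are $f(\tfrac12 n_{aA} + m)(\tfrac12 m + \tfrac12 n_{AB} + n_{BB})/\Sigma_5$ versus $f(\tfrac12 n_{aA} + n_{AA} + m)(\tfrac12 m + \tfrac12 n_{AB} + n_{BB})/\Sigma_5$, and the second is larger by $f n_{AA}(\cdots)/\Sigma_5 \geq 0$; the $\Sigma_6$-terms are $f\tfrac12 n_{aA}(\tfrac12 n_{aB}+\tfrac12 n_{AB}+n_{BB})/\Sigma_6$ versus $f(\tfrac12 n_{aA} + n_{AA})(\tfrac12 n_{aB}+\tfrac12 n_{AB}+n_{BB})/\Sigma_6$, again the second larger by $f n_{AA}(\cdots)/\Sigma_6 \geq 0$. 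So $b_{AB} \geq b_{aB}$ always, hence $\dot h \leq 0$ at a zero of $h$; to get \emph{strict} inequality I would argue that along the trajectory $n_{AA} > 0$ strictly (it starts near $\bar n_A$ and, in the relevant time window covered by the proof, stays positive — this is exactly where one invokes that the whole system's coordinates remain strictly positive for $t>0$, which follows from the structure of the ODE and the chosen initial conditions), so the correction terms are strictly positive and $\dot h(t) < 0$. Lemma \ref{lem-level} then gives $h(t) \leq 0$, i.e. $n_{aB}(t) \leq n_{AB}(t)$ for all $t > 0$.

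The main obstacle is establishing the \emph{strictness} $\dot g < 0$ (equivalently $\dot h < 0$) at the contact times in items (1) and (2): the ``worst case'' sign computation there gives $\leq 0$, and upgrading it to $< 0$ requires knowing that enough of the other populations are strictly positive, which in turn relies on a positivity/invariance property of the dynamical system that should be recorded (or is implicit in the well-posedness and the positivity of the initial data); for item (2) there is the additional nuisance of the $+\bar n_B\eta n_{aA}$ term, which must be absorbed into the strict slack using $\eta < c$ together with $n_{aA} \leq \Sigma_5$. I would isolate this positivity fact as a one-line remark before the proof (the open positive orthant is forward-invariant, since on each coordinate hyperplane $n_i = 0$ the birth rate $b_i \geq 0$ and the death rate $d_i = 0$, so $\dot n_i \geq 0$ there) and then the strictness in all three items follows cleanly.
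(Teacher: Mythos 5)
Your item (3) is essentially the paper's proof: equal death rates at contact, and the birth rates differ by nonnegative terms proportional to $n_{AA}$, so Lemma \ref{lem-level} applies. Items (1) and (2), however, contain a genuine gap: after stating the sharper bound $b_{AA}\leq f\bigl(\tfrac12 n_{aA}+n_{AA}+\tfrac12 n_{AB}\bigr)$ you relax it to $b_{AA}\leq f\Sigma_6$ and compute with that, but then at the contact point $n_{AA}=\bar n_A$ one gets
\begin{equation*}
\dot n_{AA}\;\leq\; f\Sigma_6-\bar n_A\bigl(D+c\Sigma_6\bigr)\;=\;\Sigma_6\,(f-c\bar n_A)-\bar n_A D\;=\;D\,(\Sigma_6-\bar n_A)\;\geq\;0,
\end{equation*}
since $\Sigma_6\geq n_{AA}=\bar n_A$; the "cancellation to $0$ in the worst case" you invoke does not occur (the coefficient of $\Sigma_6$ is $+D$, so larger $\Sigma_6$ hurts, not helps). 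The same failure occurs in (2): with $b_{BB}\leq f\Sigma_5$ the contact computation leaves $(D-\Delta)(\Sigma_5-\bar n_B)+\eta\bar n_B n_{aA}\geq0$, so there is no "positive slack" to absorb the $\eta$-term. The sign must come from the fact that the birth rate of $AA$ (resp.\ $BB$) grows only like $fn_{AA}$ plus terms \emph{quadratic} in the small populations, while the competition death rate at contact contains $c\bar n_A$ times the \emph{full} extra mass. This is exactly how the paper argues: it keeps $b_{AA}\leq fn_{AA}+\sfrac f{4\S_5}(n_{aA}^2+n_{AB}^2)$, so that at $n_{AA}=\bar n_A$ one gets $\dot n_{AA}\leq n_{aA}\bigl(\sfrac f{4\S_5}n_{aA}-c\bar n_A\bigr)+n_{AB}\bigl(\sfrac f{4\S_5}n_{AB}-c\bar n_A\bigr)<0$ because $\sfrac f{4\S_5}n_{aA},\sfrac f{4\S_5}n_{AB}\leq f/4<f-D=c\bar n_A$, and analogously for $BB$ with the extra term $-\bar n_B(c-\eta)n_{aA}$ handled by $\eta\leq c$. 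Your own intermediate bound would also close the argument (giving $(n_{aA}+n_{AB})(D-\tfrac f2)<0$ for (1), and for (2) $(n_{aB}+n_{AB})(\tfrac f2-f+D-\Delta)-(c-\eta)\bar n_B n_{aA}<0$), but only if you carry the computation through with it rather than with $f\Sigma_6$, $f\Sigma_5$. Your closing remark on forward-invariance of the positive orthant and the strictness issue at contact is a fair point (the paper is silent on the degenerate case $n_{aA}=n_{AB}=0$ as well), but it does not repair the sign error above.
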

\begin{proof}
	\begin{enumerate}
		\item[(1)] We use Lemma \ref{lem-level} and show that $\dot n_{AA}<0$ at $n_{AA}=\bar n_A$. For this we construct a majorising process on $n_{AA}$ by comparing the birth and the death rates. Since $\S_6\geq\S_5$ we get:		\begin{align}
		b_{AA}&\leq\sfrac f{\S_5}n_{AA}(n_{aA}+n_{AA}+n_{AB})+\sfrac f{4\S_5}\left(n_{aA}^2+n_{AB}^2\right)\nonumber\\
		&\leq fn_{AA}+\sfrac f{4\S_5}\left(n_{aA}^2+n_{AB}^2\right),\\
		d_{AA}&= n_{AA}(D+c\S_6),\\
		\dot n_{AA}&\leq n_{AA}(f-D-c\S_6)+\sfrac f{4\S_5}\left(n_{aA}^2+n_{AB}^2\right).
		\end{align}
		At $n_{AA}=\bar n_A$ we have
		\begin{align}
		\dot n_{AA}&\leq n_{aA}\left(\sfrac f{4\S_5}n_{aA}-c\bar n_A\right)+ n_{AB}\left(\sfrac f{4\S_5}n_{AB}-c\bar n_A\right)\nonumber\\
		&\leq n_{aA}\left(\sfrac f{4\S_5}n_{aA}-f+D\right)+ n_{AB}\left(\sfrac f{4\S_5}n_{AB}-f+D\right)<0,
		\end{align}
		since $\sfrac f{4\S_5}n_{aA}$ and $\sfrac f{4\S_5}n_{AB}$ are smaller than $\sfrac f4$.
		\item[(2)] We proceed similarly using Lemma \ref{lem-level}. We show that $\dot n_{BB}<0$ at $n_{BB}=\bar n_B$ by constructing a majorising process on $n_{BB}$:	
		\begin{align}
		b_{BB}&=\sfrac f{\S_5}n_{BB}(n_{aB}+n_{AB}+n_{BB})+\sfrac f{4\S_5}\left(n_{aB}^2+n_{AB}^2\right)\nonumber\\
		&\leq fn_{BB}+\sfrac f{4\S_5}\left(n_{aB}^2+n_{AB}^2\right),\\
		d_{BB}&= n_{BB}(D-\D+c\S_5-\eta n_{aA}),\\
		\dot n_{BB}&\leq n_{BB}(f-D+\D-c\S_5+\eta n_{aA})+\sfrac f{4\S_5}\left(n_{aB}^2+n_{AB}^2\right).
		\end{align}
		At $n_{BB}=\bar n_B$ we have
		\begin{align}
		\hspace{2cm}\dot n_{BB}&\leq n_{aB}\left(\sfrac f{4\S_5}n_{aB}-c\bar n_B\right)+ n_{AB}\left(\sfrac f{4\S_5}n_{AB}-c\bar n_B\right)-\bar n_B(c-\eta)n_{aA}\nonumber\\
		&\leq n_{aB}\left(\sfrac f{4\S_5}n_{aB}-f+D-\D\right)+ n_{AB}\left(\sfrac f{4\S_5}n_{AB}-f+D-\D\right)-\bar n_B(c-\eta)n_{aA}\nonumber\\&<0,
		\end{align}
		since $\sfrac f{4\S_5}n_{aB}$ and $\sfrac f{4\S_5}n_{AB}$ are smaller than $\sfrac f4$ and $\eta\leq c$.
		\item[(3)] Intuitively, this inequality comes from the fact that  phenotype $a$ individuals 
		cannot reproduce with phenotype $B$. Indeed, 
		if we consider the couples that could give rise to an $AB$ (resp. $aB$) individual, they are of 
		the form $(Ag_1,Bg_2)$ (resp. $(ag_1,Bg_2)$),
		with $g_1,g_2\in\{a,A,B\}$ and the combination $(AA,Bg_2)$ is possible, whereas 
		$(aa,Bg_2)$ is impossible. Here is the rigorous derivation of the result: 
		We compare the birth- and the death-rates of $n_{AB}$ and $n_{aB}$, introduced in 
		\eqref{birthratesAB},\eqref{death-ratesAB}, and \eqref{birthratesaB},\eqref{death-ratesaB} 
		respectively:
		\begin{align}
		\frac{d_{aB}}{n_{aB}}&=D-\D+c\S_5=\frac{d_{AB}}{n_{AB}},\\[0.3cm]
		b_{aB}&=fn_{aB}\frac{\sfrac12n_{aB}+\sfrac12n_{AB}+n_{BB}}{\S_5}+I_{aB},\\
		b_{AB}&=fn_{AB}\frac{\sfrac12n_{aB}+\sfrac12n_{AB}+n_{BB}}{\S_5}+I_{AB}.
		\end{align}
		We see that the death-rates of the two populations are the same, whereas the birth-rates differ only in a factor which comes from the reproduction of the other populations. If we take a closer look to these factors $I_{aB}, I_{AB}$ under the assumption that $n_{aB}=n_{AB}$ we see that
		\begin{align}
		I_{AB}&=f\left(\sfrac12n_{aA}+n_{AA}\right)\left(\frac{\sfrac12n_{aB}+\sfrac12n_{AB}+n_{BB}}{\S_5}+\frac{\sfrac12n_{aB}+\sfrac12n_{AB}+n_{BB}}{\S_6}\right) \nonumber\\
		&=I_{aB}+fn_{AA}\left(\frac{\sfrac12n_{aB}+\sfrac12n_{AB}+n_{BB}}{\S_5}+\frac{\sfrac12n_{aB}+\sfrac12n_{AB}+n_{BB}}{\S_6}\right).
		\end{align}
		Thus $I_{AB}>I_{aB}$. Hence, $\dot n_{AB}>\dot n_{aB}$ and $n_{AB}(t)$ stays above $n_{aB}(t)$ for all $t>0$.
	\end{enumerate}
\end{proof}

\begin{figure}[t]
	\begin{center}
		\includegraphics[width=0.8\textwidth]{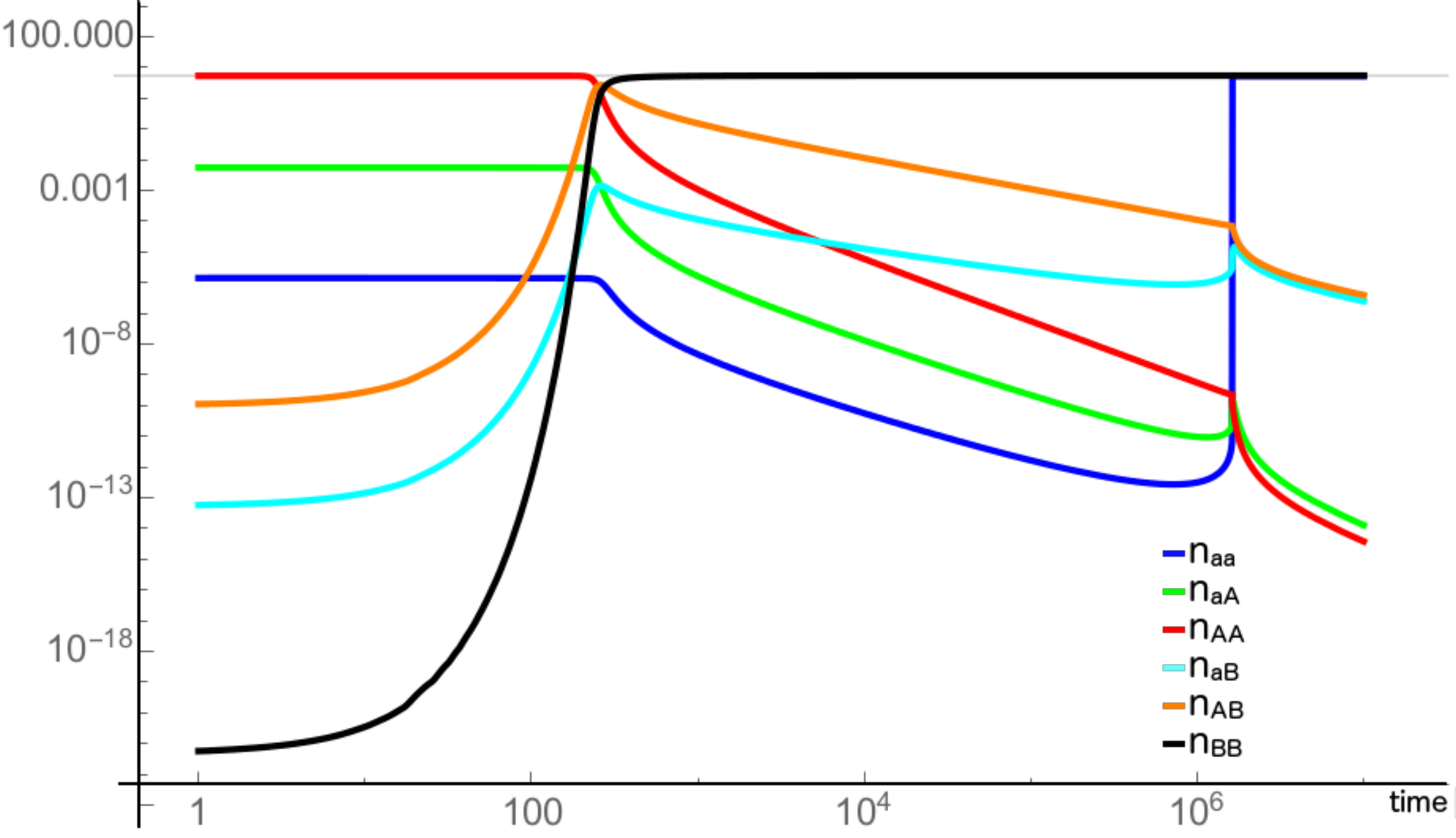}\\
		\includegraphics[width=0.8\textwidth]{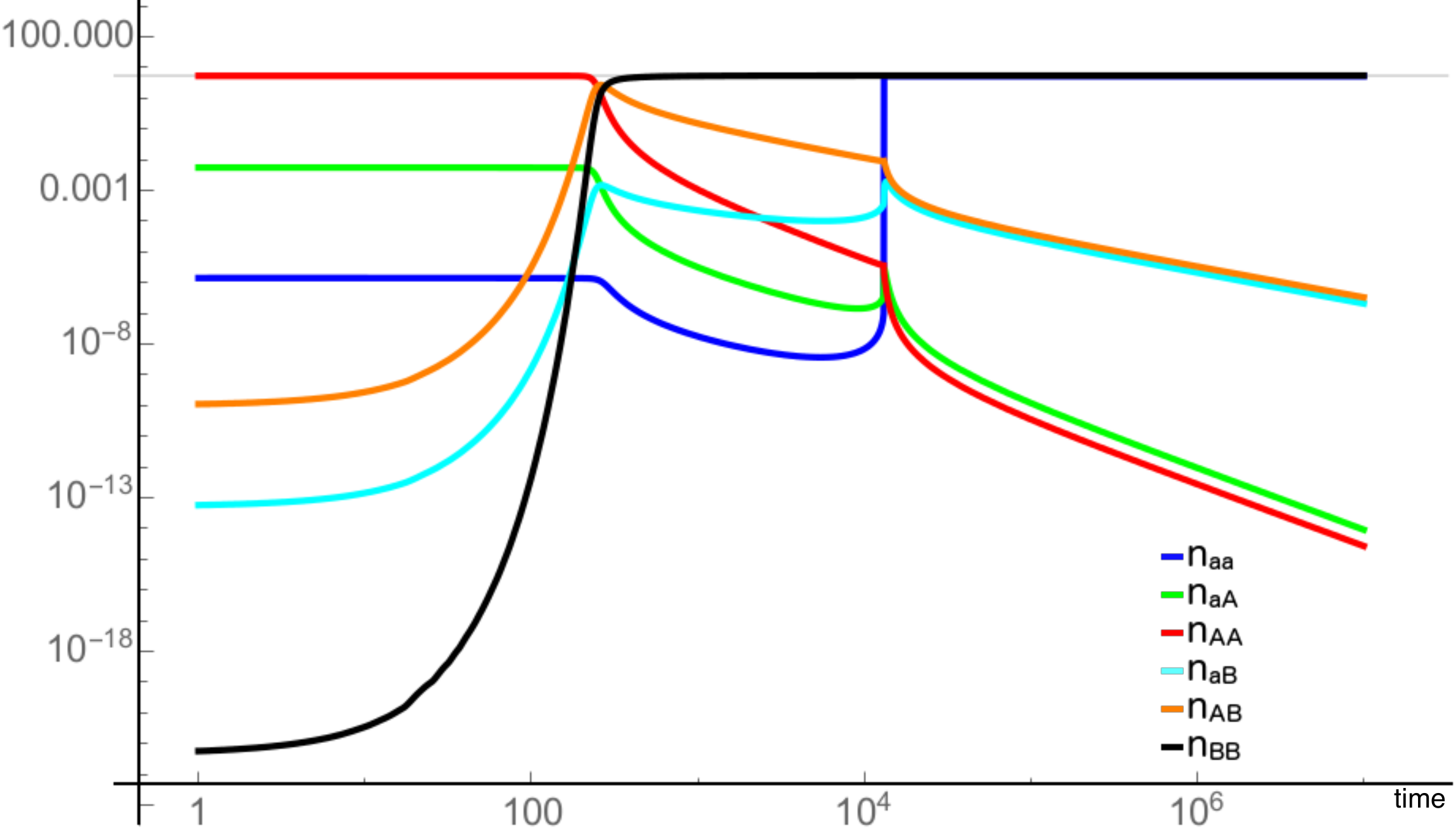}	\\			
		\includegraphics[width=0.8\textwidth]{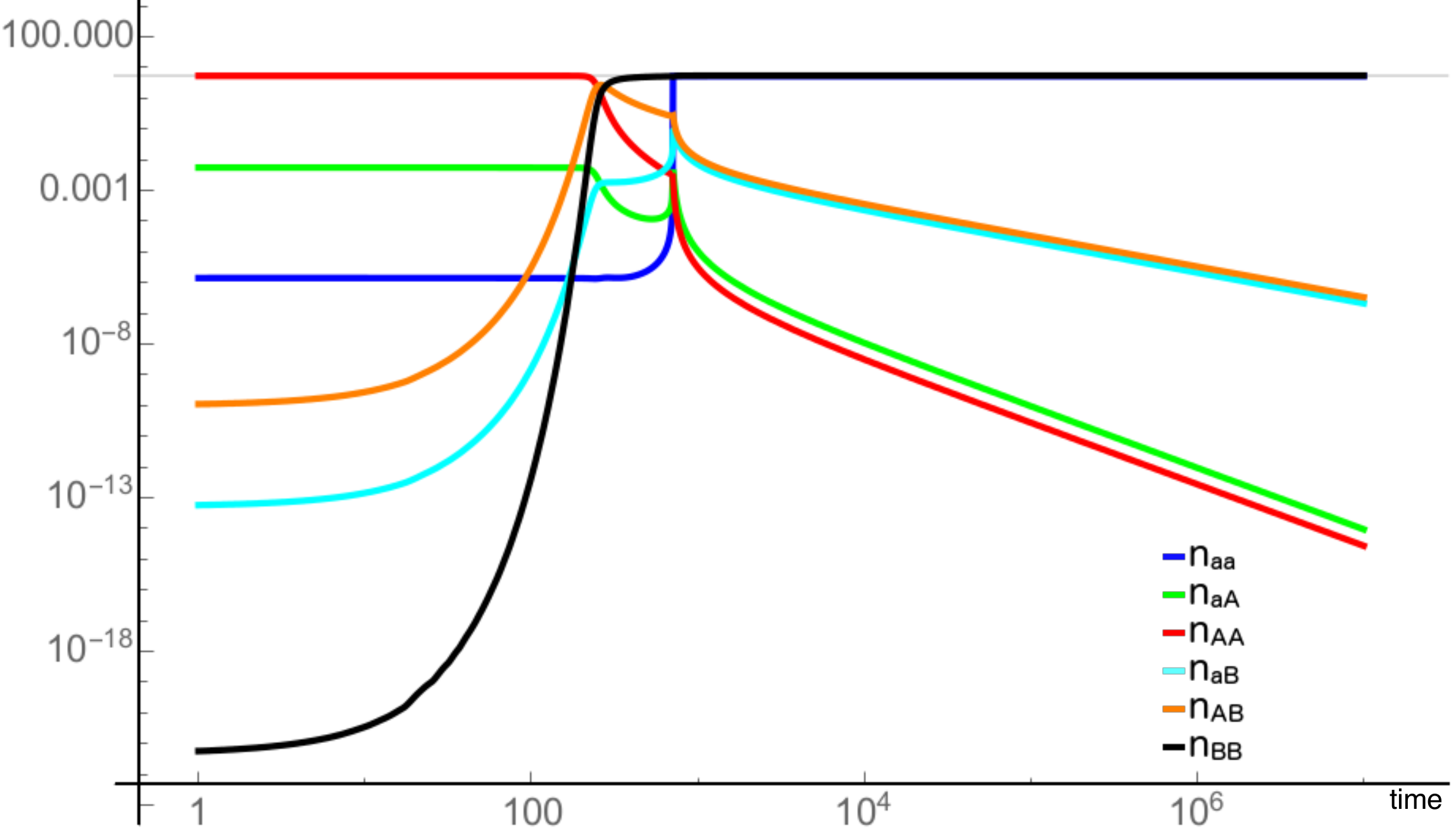}
		\caption{Log-plots 
			of $\{n_i(t),i\in\mathcal G\}$ for $\eta=0$ (top), $\eta=0.003$ (center) and $\eta=0.014$ (bottom).}
		\label{pic-loglog-eta-var}
	\end{center}
\end{figure}

\subsection{Phase 1: Perturbation of the 3-system $(aa,aA,AA)$ until $AB$ reaches $\OO(1)$  }\label{subsec-phase1}\emph{}\\

We start with initial conditions given by \eqref{init-condAA}-\eqref{init-condaB}. 
We will show that the mutant population, consisting of all individuals of phenotype $B$, grows up to some $\e_0>\e$ without perturbing the behaviour of the 3-system $(aa,aA,AA)$ in this time. Let
\begin{equation}
T_1:=T_{\e_0}^{{aa+aA+}aB+AB+BB}
\end{equation}

{
\begin{proposition}\label{prop-bounds}
With the initial conditions \eqref{init-condAA}-\eqref{init-condaB},
\begin{align}
T_1=T_{\e_0}^{AB}=\OO\left(\log(({\e_0}/{\e^3})^{\frac1{\Delta-\OO(\e_0)}}\right).
\end{align}
\end{proposition}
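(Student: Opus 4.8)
Throughout I would work on $[0,T_1]$, on which — by definition of $T_1$ — each of $n_{aa},n_{aA},n_{aB},n_{AB},n_{BB}$ is $\le\e_0$. The first step confines the resident $AA$: Proposition~\ref{aB<AB}(1) gives $n_{AA}(t)\le\bar n_A$, and for a matching lower bound I would apply Lemma~\ref{lem-level} to $g(t)=(\bar n_A-C\e_0)-n_{AA}(t)$ with a suitable $C=C(f,D)$, checking $\dot n_{AA}>0$ at $n_{AA}=\bar n_A-C\e_0$ using only $b_{AA}\ge fn_{AA}^2/\S_6$, $d_{AA}=n_{AA}(D+c\S_6)$ and $\S_6=n_{AA}+\OO(\e_0)$ (this forces $C>2+D/(f-D)$, so the inequality must be run without waste). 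Hence $n_{AA}(t)=\bar n_A+\OO(\e_0)$, whence $\S_3(t),\S_5(t),\S_6(t)=\bar n_A+\OO(\e_0)$ on $[0,T_1]$; in particular $c\S_5=f-D+\OO(\e_0)$.

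Next I would show that the $a$-carrying genotypes stay negligible. By Proposition~\ref{aB<AB}(3), $n_{aB}(t)\le n_{AB}(t)$. Bounding the birth rates \eqref{birthratesaa}--\eqref{birthratesaB} with the confined $\S$'s gives, for a large constant $C$,
\[
\dot n_{aa}\le-\tfrac12 f\,n_{aa}+Cf\e_0(n_{aA}+n_{aB}),\qquad \dot n_{aB}\le-\tfrac12 f\,n_{aB}+Cf\,n_{aA}n_{AB},\qquad \dot n_{aA}\le C\e_0\,n_{aA}+Cf(n_{aa}+n_{aB}),
\]
the key points being that $aa$ and $aB$ die at rate $\asymp f$ (and the source of $aB$ is a product involving $n_{aA}$), while for $aA$ birth and death nearly cancel, leaving only an $\OO(\e_0)$ net rate. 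Solving these coupled inequalities by Gronwall over the window $T_1=\OO(|\log\e|)$ yields $\max_{[0,T_1]}n_{aA}\le\e^{1-\OO(\e_0)}$, hence $n_{aB}(t)\le\e^{1-\OO(\e_0)}n_{AB}(t)$ and $n_{aa}(t)=o(\e_0)$; in particular all three $a$-carrying genotypes are $o(\e_0)$ and $n_{aB}=o(n_{AB})$ (the harmless factor $\e^{-\OO(\e_0)}$ is just the integral of an $\OO(\e_0)$ rate over a window of length $\asymp|\log\e|$, which is absorbed because $\e\to0$ precedes $\e_0\to0$). Independently, keeping one half of each of the two terms in $b_{AB}$ gives $b_{AB}\ge fn_{AA}n_{AB}/\S_6\ge fn_{AB}(1-\OO(\e_0))$, so $\dot n_{AB}\ge(\Delta-\OO(\e_0))n_{AB}>0$, and using $\dot n_{AB}\ge0$ in Lemma~\ref{lem-level} applied to $n_{BB}-C_1n_{AB}^2$ (with $C_1$ large) yields $n_{BB}(t)\le C_1n_{AB}(t)^2$ on $[0,T_1]$.

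With these a priori bounds I would extract the sharp growth rate of the $B$-phenotype $\S_B:=n_{aB}+n_{AB}+n_{BB}$. Writing $\beta:=\tfrac12n_{aB}+\tfrac12n_{AB}+n_{BB}$, summation of \eqref{birthratesaB}--\eqref{birthratesBB} and of \eqref{death-ratesaB}--\eqref{death-ratesBB} gives the identities $b_{\S_B}=f\beta+f\beta\tfrac{\S_B-\beta}{\S_5}+f\beta\tfrac{n_{aA}+n_{AA}}{\S_6}$ and $d_{\S_B}=(D-\Delta)\S_B+c\S_5\S_B-\eta n_{aA}n_{BB}$. Inserting $n_{aB}=o(n_{AB})$ and $n_{BB}\le C_1n_{AB}^2\le C_1\e_0n_{AB}$ (so $\S_B=n_{AB}(1+o(1))$ and $2\beta=\S_B+n_{BB}=\S_B(1+o(1))$), $\tfrac{n_{aA}+n_{AA}}{\S_6}=1+\OO(\e_0)$ and $c\S_5=f-D+\OO(\e_0)$, the leading terms collapse to the invasion-fitness cancellation $f-(D-\Delta)-(f-D)=\Delta$, leaving $\dot\S_B=(\Delta+\OO(\e_0))\S_B$. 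Since $\S_B(0)=n_{AB}(0)=\e^3$, this integrates to $\e^3e^{(\Delta-C\e_0)t}\le\S_B(t)\le\e^3e^{(\Delta+C\e_0)t}$ on $[0,T_1]$. At $t=T_1$ one has $n_{aa}(T_1)+n_{aA}(T_1)+\S_B(T_1)=\e_0$ with $n_{aa}(T_1)+n_{aA}(T_1)=o(\e_0)$, hence $\S_B(T_1)=\e_0(1-o(1))$; plugging this into the two-sided bound gives
\[
\frac{\log\bigl(\e_0(1-o(1))/\e^3\bigr)}{\Delta+\OO(\e_0)}\ \le\ T_1\ \le\ \frac{\log(\e_0/\e^3)}{\Delta-\OO(\e_0)},
\]
which is exactly the claimed order of magnitude. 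Finally $n_{AB}\le\S_B\le\e_0$ on $[0,T_1)$ gives $T_{\e_0}^{AB}\ge T_1$, while $n_{AB}(T_1)=\S_B(T_1)-n_{aB}(T_1)-n_{BB}(T_1)=\e_0(1-o(1))$ together with $\dot n_{AB}\ge(\Delta-\OO(\e_0))n_{AB}>0$ shows $n_{AB}$ reaches $\e_0$ within an extra time $o(1)$, so $T_{\e_0}^{AB}=T_1+o(1)$ and both have the asserted order of magnitude.

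The main difficulty is the self-referential character of these estimates — every population is an $\OO(\e_0)$-perturbation of the others — so the bootstrap must be ordered so that no bound is invoked before it is proved: confine $n_{AA}$ first using only the crude $\le\e_0$ bounds on the five small populations; obtain $\dot n_{AB}>0$ by brute-force positivity of $b_{AB}$ before bounding $n_{BB}$; and control the slowly drifting $n_{aA}$ over the diverging time window by Gronwall, tolerating the $\e^{-\OO(\e_0)}$ loss. A secondary obstacle is that the differential inequality yielding the lower bound on $n_{AA}$ cannot afford any wasteful estimate, since one needs a constant $C$ with $\dot n_{AA}>0$ already at $n_{AA}=\bar n_A-C\e_0$.
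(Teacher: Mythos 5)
Your proposal is correct, and its skeleton is the paper's: confine the resident around $\bar n_A$, show the $a$-carrying genotypes only drift at rate $\OO(\e_0)$ so they cannot trigger the threshold, and extract the two-sided exponential growth at rate $\D\pm\OO(\e_0)$ coming from the invasion fitness $S_{AB,AA}=\D$. The implementations differ in three places, though. First, for the lower confinement of $n_{AA}$ the paper simply invokes continuity of ODE solutions with respect to initial conditions (implicitly leaning on the stability analysis of the $(aa,aA,AA)$ system in \cite{BovNeu}, since the window diverges), whereas you run an explicit barrier argument via Lemma \ref{lem-level} at $\bar n_A-C\e_0$; your version is more self-contained on this point. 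Second, the paper's proof proceeds through exact pairwise comparisons $n_{BB}\le n_{AB}^2$, $n_{aB}\le n_{aA}n_{AB}$, $n_{aa}\le n_{aA}^2$ (each by a Lemma \ref{lem-level} touching argument), then majorises $n_{aA}$ by $\e e^{\OO(\e_0)t}$; you keep only the first (as $n_{BB}\le C_1n_{AB}^2$) and replace the other two by coupled linear differential inequalities solved by Gronwall. This is fine for the proposition, but note the paper's pairwise bounds are what get recycled into Corollary \ref{phase1}(3) and the Phase 2 initial data ($n_{aa}\le\OO(\e^{2-\OO(\e_0)})$, $n_{aB}\le\OO(\e^{1-\OO(\e_0)}\e_0)$), so your Gronwall outputs, though of the same strength in order of magnitude, would have to be stated in that form to feed the next phase. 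Third, instead of minorising/majorising $n_{AB}$ directly you track the aggregated $B$-phenotype mass $n_{aB}+n_{AB}+n_{BB}$, for which your birth/death identities are correct and collapse to the same cancellation $f-(D-\D)-(f-D)=\D$; this is a slightly slicker bookkeeping of the same computation. Two harmless loose ends, both shared with the paper: your bound $\dot n_{aB}\le-\tfrac12 f n_{aB}+Cfn_{aA}n_{AB}$ silently uses $n_{BB}\le C_1n_{AB}^2$ (which you prove in the same paragraph, so the bootstrap order you describe must indeed be respected), and you get $T^{AB}_{\e_0}=T_1+o(1)$ rather than literal equality — but the paper's own chain of identifications $T_1=T^{aa+aA+aB+AB+BB}_{\e_0}=\dots=T^{AB}_{\e_0}$ carries exactly the same order-of-magnitude meaning, so neither is a genuine gap.
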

\begin{proof}
	Until $T_1$ the perturbation of the dynamics of the 3-system $(aa,aA,AA)$ is at most of order $\e_0$, by continuity of the solutions of an ODE with respect to initial conditions. We thus have $\bar n_A-\OO(\e_0)\leq n_{AA}(t)\leq\bar n_A+\OO(\e_0)$, as well as $n_{aa},n_{aA}\leq\OO(\e_0)$. With this rough bounds we will find finer bounds.
Heuristically,  the newborns of genotype $aA$ are still  in majority produced by recombination
		of $AA$ and $aA$, because the mutant population is not large enough to contribute.
		The newborns of genotype $aB$ are in majority produced by reproduction of the $aA$ population with the $B$ population. 
		Finally,  the newborns of genotype $aa$ are in majority produced by recombination of $aA$ and $aA$, because the only mutant population which could perturb their dynamics is $aB$, which is of smaller order.
\begin{enumerate}
\item We show that $n_{BB}\leq n_{AB}^2$. Indeed, we use Lemma \ref{lem-level}, note that $0=n_{BB}(0)\leq n_{AB}(0)^2$, and prove that when $n_{BB}=n_{AB}^2$ we have 
\begin{align}
\label{lhsBB}
\frac d{dt}(n_{BB}-n_{AB}^2)=\dot n_{BB}-2\dot n_{AB}n_{AB}=b_{BB}-2n_{AB}b_{AB}-d_{BB}+2n_{AB}d_{AB}<0.
\end{align}
Indeed, using Proposition \ref{aB<AB} we can bound \eqref{lhsBB} by
\begin{align}
&\sfrac f{\S_5}(n_{AB}+n_{BB})^2-\sfrac{2f}{\S_6}n_{AB}^2n_{AA}-n_{BB}(D-\D+cn_{AA})+2n_{AB}^2(D-\D+cn_{AA}+\OO(\e_0))\nonumber\\
&\leq n_{AB}^2\left(\sfrac f{\bar n_A}-f-\D+\OO(\e_0)\right)<0.
\end{align}
Thus $T_1=T_{\e_0}^{aa+aA+aB+AB}$.

\item We show that $n_{aB}\leq n_{aA}n_{AB}$. Indeed, we use Lemma \ref{lem-level}, note that $0=n_{aB}(0)\leq n_{aA}(0)n_{AB}(0)$, and prove that when $n_{aB}=n_{aA}n_{AB}$ we have 
\begin{align}
\label{lhsaB}
&\frac d{dt}(n_{aB}- n_{aA}n_{AB})=
\dot n_{aB}-\dot n_{aA}n_{AB}-\dot n_{AB}n_{aA}\\
&=b_{aB}-n_{AB}b_{aA}-n_{aA}b_{AB}-d_{aB}+n_{AB}d_{aA}+n_{aA}d_{AB}<0.
\end{align}
Indeed, using (1) we can bound \eqref{lhsaB} by
\begin{align}
&\sfrac f{2\S_5}n_{aA}n_{AB}-\sfrac{2f}{\S_6}n_{aA}n_{AA}n_{AB}+n_{aA}n_{AB}(D+cn_{AA}+\OO(\e_0))\nonumber\\
&\leq n_{aA}n_{AB}\left(\sfrac f{2\bar n_A}-f+\OO(\e_0)\right)<0.
\end{align}
Thus $T_1=T_{\e_0}^{aa+aA+AB}$.

\item We show that $n_{aa}\leq n_{aA}^2$. Indeed, we use Lemma \ref{lem-level}, note that $n_{aa}(0)\leq n_{aA}(0)^2$, and prove that when $n_{aa}=n_{aA}^2$ we have 
\begin{align}
\label{lhsaa}
\frac d{dt}(n_{aa}-n_{aA}^2)=
\dot n_{aa}-2\dot n_{aA}n_{aA}=b_{aa}-2n_{aA}b_{aA}-d_{aa}+2n_{aA}d_{aA}<0.
\end{align}
Indeed, using (1) and (2), we can bound \eqref{lhsaa} by
\begin{align}
&\sfrac f{4\S_6}n_{aA}^2-\sfrac{2f}{S_6}n_{aA}^2n_{AA}-n_{aA}^2(D+\D+cn_{AA})+2n_{aA}^2(D+cn_{AA}+\OO(\e_0))\nonumber\\
&\leq n_{aA}^2\left(\sfrac f{4\bar n_A}-f-\D+\OO(\e_0)\right)<0.
\end{align}
Thus $T_1=T_{\e_0}^{aA+AB}$.

\item Now, we show that $T_1=T_{\e_0}^{AB}$. First, we estimate the maximal time $n_{AB}$ would need to reach the level $\e_0$.
 For this upper bound on the time $T_{\e_0}^{AB}$, we have to construct  a minorising process for $n_{AB}$. Indeed, let us compare the birth and death rates using the results (1)-(3): 
		\begin{align}
		b_{AB}&\geq\sfrac12n_{AB}\frac{2fn_{AA}}{n_{AA}+\OO(\e_0)}=n_{AB}(f-\OO(\e_0)),\\
		d_{AB}&\leq n_{AB}(D-\D+c\bar n_A+\OO(\e_0))=n_{AB}(f-\D+\OO(\e_0)).
		\end{align}
		
		Hence, we get for the minorising process
		\begin{align}
		\dot n_{AB}&\geq n_{AB}(\D-\OO(\e_0)),\\
		n_{AB}(t)&\geq\e^3e^{(\D-\OO(\e_0))t},
		\end{align}
		and the time $T_{\e_0}^{AB}$ is at most of order $\OO\left(\log(({\e_0}/{\e^3})^{\frac1{\Delta-\OO(\e_0)}}\right)$.\\
		In a second step we show that $n_{aA}(t)\leq\e e^{\OO(\e_0)t}$ and that the minimal time $n_{aA}$ would need to reach $\e_0$ would be bigger than $\OO\left(\log(({\e_0}/{\e^3})^{\frac1{\Delta-\OO(\e_0)}}\right)$.
		For this we construct a majorising process on $n_{aA}$. The birth rate and the death rate can be bounded by using (1)-(3):
\begin{align}
b_{aA}&\leq \sfrac f{\S_6}n_{AA}n_{aA} +n_{aA}\OO(\e_0),\\
d_{aA}&\geq n_{aA}(f-\OO(\e_0)),
\end{align}
and we get that 
\begin{align}
\dot n_{aA}&\leq \OO(\e_0)n_{aA},\\
n_{aA}(t)&\leq\e e^{\OO(\e_0)t}.
\end{align}
Thus the minimal time $n_{aA}$ would need to reach $\e_0$ is $\OO\left(\log(\e_0/\e)^{\sfrac 1{\OO(\e_0)}}\right)$ which is bigger than the maximal time $n_{AB}$ would need to reach the level $\e_0$. Thus $T_1=T_{\e_0}^{AB}$.

\item It is left to show that $T_1\geq\OO\left(\log(({\e_0}/{\e^3})^{\frac1{\Delta-\OO(\e_0)}}\right)$. 
 For this lower bound on the time $T_1$, we have to construct a majorising process for $n_{AB}$:
		\begin{align}
		b_{AB}&\leq\sfrac f{\S_5}n_{AB}(n_{aA}+n_{AA}+n_{AB})+\sfrac f{\S_5}(n_{aB}+2n_{BB})(n_{aA}+n_{AA}+n_{AB})\nonumber\\
		&\leq fn_{AB}+ f(n_{aB}+2n_{BB})
		\intertext{Using (1),(2) and (4) we can further bound this by:}
		b_{AB}&\leq fn_{AB}+f(n_{aA}n_{AB}+2n_{AB}^2)\nonumber\\
		&\leq n_{AB}(f+\OO(\e_0)),\\
		d_{AB}&\geq n_{AB}(D-\D+c\bar n_A-\OO(\e_0))=n_{AB}(f-\D-\OO(\e_0)).
		\end{align}
		Hence, we get for the majorising process
		\begin{align}
		\dot n_{AB}\leq n_{AB}(\D+\OO(\e_0)),\\
		n_{AB}(t)\leq\e^3e^{(\D+\OO(\e_0))t},
		\end{align}
		and the time $T_1$ is at least of order $\OO\left(\log(({\e_0}/{\e^3})^{\frac1{\Delta-\OO(\e_0)}}\right)$.
\end{enumerate}
\end{proof}

\begin{corollary}\label{phase1}
With the initial conditions \eqref{init-condAA}-\eqref{init-condaB}, for all $t\in[0,T_1]$, the following 
statements hold:
\begin{enumerate}
\item\label{ABexp} $n_{AB}$ grows exponentially with rate $\D$. 
It reaches the level $\e_0$ in a time at most of order $\OO\left(\log\left(({\e_0}/{\e^3})^{\frac1{\Delta-\OO(\e_0)}}\right)\right)$.
\item\label{BB=AB^2} $ n_{BB} \leq n_{AB}^2 $,
\item $n_{aB}\leq\OO(\e^{1-\OO(\e_0)}\e_0)$, $n_{aA}\leq\OO(\e^{1-\OO(\e_0)})$, $n_{aa}\leq \OO(\e^{2-\OO(\e_0)})$ and\\ $\bar n_A-\OO(\e_0)\leq n_{AA}\leq\bar n_A+\OO(\e_0)$.
\end{enumerate}
\end{corollary}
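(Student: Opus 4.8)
The plan is to assemble the corollary by collecting, and making quantitative in $\e$, the estimates already obtained in the course of the proof of Proposition~\ref{prop-bounds}; no new dynamical argument is needed. First, since $T_1=T_{\e_0}^{AB}$ by Proposition~\ref{prop-bounds}, we have $n_{AB}(t)\leq\e_0$ for all $t\in[0,T_1]$, while the crude continuity-in-initial-condition bound used at the beginning of that proof gives $\bar n_A-\OO(\e_0)\leq n_{AA}(t)\leq\bar n_A+\OO(\e_0)$ on the whole interval (the upper half also follows directly from Proposition~\ref{aB<AB}(1)); this is the statement on $n_{AA}$ in item~(3). Item~(2) is exactly step~(1) of the proof of Proposition~\ref{prop-bounds}, and item~(1) is the content of its steps~(4)--(5): there one constructs a minorising and a majorising process showing $\e^3 e^{(\Delta-\OO(\e_0))t}\leq n_{AB}(t)\leq\e^3 e^{(\Delta+\OO(\e_0))t}$, hence exponential growth with rate $\Delta$ up to the $\OO(\e_0)$ correction (which vanishes once $\e_0\to0$), and solving $\e^3 e^{(\Delta-\OO(\e_0))t}=\e_0$ gives the hitting-time bound $\OO(\log((\e_0/\e^3)^{1/(\Delta-\OO(\e_0))}))$.

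For the remaining three estimates of item~(3) I would feed the pointwise ordering relations $n_{aB}(t)\leq n_{aA}(t)n_{AB}(t)$ and $n_{aa}(t)\leq n_{aA}(t)^2$ from steps~(2)--(3) of the proof of Proposition~\ref{prop-bounds} into the differential inequality $\dot n_{aA}\leq\OO(\e_0)\,n_{aA}$ of step~(4), which integrates to $n_{aA}(t)\leq\e\,e^{\OO(\e_0)t}$. The key observation is that on $[0,T_1]$ one has $t\leq T_1=\OO(\log((\e_0/\e^3)^{1/(\Delta-\OO(\e_0))}))=\OO(|\log\e|)$, so $e^{\OO(\e_0)t}=\OO(\e^{-\OO(\e_0)})$ as $\e\to0$ (the implied constant in the exponent depending only on the fixed parameters $f,D,\Delta$). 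Substituting, $n_{aA}(t)\leq\OO(\e^{1-\OO(\e_0)})$, whence $n_{aa}(t)\leq n_{aA}(t)^2\leq\OO(\e^{2-\OO(\e_0)})$ and $n_{aB}(t)\leq n_{aA}(t)n_{AB}(t)\leq\OO(\e^{1-\OO(\e_0)})\cdot\e_0=\OO(\e^{1-\OO(\e_0)}\e_0)$, which are precisely the claimed bounds.

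The substantive work has all been done in Proposition~\ref{prop-bounds}, so the main obstacle here is purely bookkeeping: one has to check that the several $\OO(\e_0)$ contributions entering the exponents --- the perturbed growth rate $\Delta\pm\OO(\e_0)$ of $n_{AB}$, the $\OO(\e_0)$ in the estimate for $T_1$, and the drift rate $\OO(\e_0)$ of $n_{aA}$ --- really combine into a single correction that remains $\OO(\e_0)$ uniformly on $[0,T_1]$ and does not accumulate when composed, so that after sending $\e\to0$ and then $\e_0\to0$ one recovers exactly the powers $\e$, $\e^2$ (and the factor $\e_0$) stated. This is routine but is where a little care is required.
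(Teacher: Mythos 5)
Your proof is correct and is essentially the paper's own argument: items (1) and (2) are read off directly from steps (4)--(5) and step (1) of the proof of Proposition~\ref{prop-bounds}, and item (3) is obtained, as you do, by integrating $\dot n_{aA}\leq\OO(\e_0)n_{aA}$ over $[0,T_1]=\bigl[0,\OO(|\log\e|)\bigr]$ and feeding $n_{aA}\leq\OO(\e^{1-\OO(\e_0)})$ into the ordering relations $n_{aa}\leq n_{aA}^2$ and $n_{aB}\leq n_{aA}n_{AB}\leq\e_0\,n_{aA}$. The only (minor) difference is that for the $n_{AA}$ estimate the paper does not simply re-quote the continuity-in-initial-conditions bound but proves the refined two-sided bound $\bar n_A-\OO(\e)\leq\S_5\leq\bar n_A+2\D\e_0$ via Lemma~\ref{lem-level}, which also serves as entry data for Phase 2; your shortcut gives the same $\bar n_A\pm\OO(\e_0)$ conclusion.
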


\begin{proof}
\begin{enumerate}
\item[(1)]  is a direct consequence of Proposition \ref{prop-bounds}, see proof of points (4) and (5).
\item[(2)]  is a direct consequence of Proposition \ref{prop-bounds}, see proof of points (1).
\item[(3)]  follows from the proof of Proposition \ref{prop-bounds} together with  $\bar n_A-\OO(\e)\leq\S_5\leq\bar n_A+2\D\e_0$, which we prove now.\\
Using the results of Proposition \ref{prop-bounds} we construct a minorising and a majorising processes on $\S_5$:
\begin{align}
b_{\S_5}&\leq f\S_5+\OO(n_{aa}),\\
b_{\S_5}&\geq f\S_5-\OO(n_{aA}^2),\\
d_{\S_5}&\geq \S_5(D+c\S_5)-(\D+2\eta n_{aA})(n_{aB}+n_{AB}+n_{BB}),\\
d_{\S_5}&\leq \S_5(D+c\S_5+cn_{aa}),\\
\dot\S_5&\leq\S_5(f-D-c\S_5)+(\D+2\eta n_{aA})(n_{aB}+n_{AB}+n_{BB}),\\
\dot\S_5&\geq\S_5(f-D-c\S_5-cn_{aa}).
\end{align}
We use Lemma \ref{lem-level}: at time 0 the bounds are fulfilled; moreover,
at the upper bound we have $\dot\S_5<0$ and at the lower bound $\dot\S_5>0$, which ensures the claimed bounds.
%
\end{enumerate}
\end{proof}
}

Note that Corollary \ref{phase1} 
 implies that 
 \be
 T_1=T_{\e_0}^{aB+AB+BB}=
T_{\e_0}^{AB}=\OO\left(\log\left(\left({\e_0}/{\e^3}\right)^{\frac1{\Delta-\OO(\e_0)}}\right)\right).
\ee
\subsection{Phase 2: Perturbation of the 3-system $(AA,AB,BB)$ until $n_{aA}=\OO(n_{AA})$}\label{subsec-phase2}\emph{}\\
The initial conditions at the beginning of the second phase are:
\begin{align}
&n_{aa}(T_1)\leq\OO\left(\e^{2-\OO(\e_0)}\right)\\
&n_{aA}(T_1)\leq\OO\left(\e^{1-\OO(\e_0)}\right)\\
\bar n_A-\OO(\e_0)\leq&n_{AA}(T_1)\leq\bar n_A\\
&n_{aB}(T_1)\leq\OO\left(\e^{1-\OO(\e_0)}\e_0\right)\\
&n_{AB}(T_1)=\e_0\\
&n_{BB}(T_1)\leq\OO\left(\e_0^{2}\right)
\end{align}
Let $\delta>0$ (to be chosen sufficiently small in the sequel). Let
\begin{equation}
T_2:=T^{aA=\delta AA}\land T^{aB=\delta AB}\land T^{aa= aA\wedge aB}.
\end{equation}
As the process stays uniformly bounded in time, observe that until $T_2$ we automatically have 
\begin{equation}
\label{delta_bound}
n_{aa},n_{aA},n_{aB}\leq\OO(\delta).
\end{equation}
We will show that for $t\in[T_1, T_2]$ the system behaves as a main 3-system $(AA, AB, BB)$ plus perturbations of order $\delta$. The 3-system $(AA, AB, BB)$ behaves exactly the same as in \cite{BovNeu} since the parameters satisfy the same hypotheses (slightly lower death rate for phenotype $B$ than for phenotype $A$ individuals, and constant competition parameters).

Moreover, the crucial role of the parameter $\eta$ is that the population containing an allele $a$ only continues to grow in this phase when $\eta$ is large enough. 
This is due to the smaller competition that $aA$ feels from $BB$, the $aA$ population is thus higher and induces the growth of $aB$.

We start by considering how the growth of $aa, aA$- and $aB$ populations can perturb the 3-system $(AA,AB,BB)$.

\begin{lemma}
	\label{3system}
	Let $n_.^{up}(t)$ be the population of the unperturbed 3- system $(AA,AB,BB)$, that is the solution to \eqref{dyn-syst} with $n_{aa}=n_{aA}=n_{aB}=0$.
	The 3-system $(AA, AB, BB)$ satisfies
	\begin{align}
	\dot{n}_{BB}&\geq\dot{n}_{BB}^{up}-(n_{aA}+n_{aB})\left(\sfrac {f\left(\sfrac12n_{AB}+n_{BB}\right)^2}{(n_{AA}+n_{AB}+n_{BB})^2}+cn_{BB}\right),\\
	{\dot{n}_{BB}}&{\leq\dot{n}_{BB}^{up}+(n_{aA}+n_{aB})\left(\frac{f\left(\sfrac14n_{aB}+\sfrac12n_{AB}+n_{BB}\right)}{\S_5}+cn_{BB}\right)},\\
	\dot{n}_{AB}&\geq\dot{n}_{AB}^{up}-(n_{aa}+n_{aA}+n_{aB})\left(\frac {f(n_{AB}+n_{AA})\left(\sfrac12n_{AB}+n_{BB}\right)}{(n_{AA}+n_{AB}+n_{BB})^2}+cn_{AB}\right),\\
	\dot{n}_{AB}&\leq\dot{n}_{AB}^{up}+\sfrac f{\S_5}n_{aA}\left(\sfrac12n_{aB}+\sfrac12n_{AB}+n_{BB}\right)+\sfrac f{\S_5}n_{aB}\left(\sfrac12n_{AB}+n_{AA}\right),\\
	\dot{n}_{AA}&\geq\dot{n}_{AA}^{up}-(n_{aa}+n_{aA}+n_{aB})\left(\sfrac {f\left(\sfrac12n_{aA}+\sfrac12n_{AB}+n_{AA}\right)^2}{(n_{AA}+n_{AB}+n_{BB})2}+cn_{AA}\right),\\
	\dot{n}_{AA}&\leq\dot{n}_{AA}^{up}+\sfrac f{2\S_5}n_{aA}\left(\sfrac12n_{aA}+n_{AB}+n_{AA}\right).
	\end{align}
\end{lemma}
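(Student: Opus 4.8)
The idea is elementary: write each of $\dot n_{AA}$, $\dot n_{AB}$, $\dot n_{BB}$ from the full six-dimensional system \eqref{dyn-syst} as the corresponding rate of the unperturbed 3-system (in which $n_{aa}=n_{aA}=n_{aB}=0$, so that $\Sigma_5=\Sigma_6=n_{AA}+n_{AB}+n_{BB}=:S$) plus a remainder that collects all the terms involving $n_{aa},n_{aA},n_{aB}$, and then bound that remainder in absolute value. So the plan is, for each of the three coordinates, to (i) isolate the difference $\dot n_i - \dot n_i^{up}$ coming from the birth rate $b_i$ in \eqref{birthratesAA}--\eqref{birthratesBB} and the difference coming from the death (competition) rate $d_i$ in \eqref{death-ratesAA}--\eqref{death-ratesBB}, and (ii) bound each of these two pieces separately.

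For the competition part this is immediate: $d_{AA}=n_{AA}(D+c\Sigma_6)$ and the unperturbed version has $\Sigma_6$ replaced by $S$, so the difference is exactly $-c\,n_{AA}(n_{aa}+n_{aA}+n_{aB})$, which explains the $cn_{AA}$ terms appearing multiplied by $(n_{aa}+n_{aA}+n_{aB})$ in the lower bounds; similarly for $AB$ and $BB$ (where for $BB$ one also uses that the $-\eta n_{aA}n_{BB}$ term only helps the lower bound and is harmless for the upper bound since $\eta\le c$). For the birth part, the cleanest route is: in each $b_i$, replace $\Sigma_5$ or $\Sigma_6$ in every denominator by $S$ (legitimate for a lower bound since $\Sigma_5,\Sigma_6\ge S$, and producing a controlled error for the upper bound), drop all numerator terms containing $n_{aa},n_{aA},n_{aB}$ to get the unperturbed $b_i^{up}$ for the lower bounds, and — for the upper bounds — keep precisely the extra numerator terms that involve one factor of $n_{aA}$ or $n_{aB}$, which are exactly the $\tfrac f{\Sigma_5}n_{aA}(\cdots)$ and $\tfrac f{\Sigma_5}n_{aB}(\cdots)$ expressions written in the statement. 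The factors $\big(\tfrac12 n_{AB}+n_{BB}\big)^2/S^2$ etc. in the lower bounds arise because, when passing from the full birth rate to $b_i^{up}$, the terms one discards are each bounded by $f$ times a ratio of the relevant numerator pieces over $S$; monotonicity of these ratios then yields the stated closed-form majorants.

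The execution is a bookkeeping exercise: for each of the six inequalities one expands the relevant $b_i$ and $d_i$, groups the terms with and without an $a$-allele factor, and estimates. The only mildly delicate point — and the one I would treat carefully — is the two-sided handling of the denominators $\Sigma_5,\Sigma_6$ versus $S$: for the lower bounds one simply uses $\Sigma_5,\Sigma_6\ge S$, but for the upper bounds replacing $\Sigma_6$ by $\Sigma_5$ or by $S$ in the denominator \emph{increases} the bound, so one must check that the resulting overestimate is still of the claimed form (it is, because $b_i^{up}$ is defined with denominator $S$ and the leftover is absorbed into the explicit $n_{aA},n_{aB}$ terms). Since the lemma is stated purely as an inequality with no error estimate, no smallness of $\delta$ or $\e_0$ is needed here; the quantitative smallness of $(n_{aa}+n_{aA}+n_{aB})$ will be invoked only later, when this lemma is combined with a comparison argument (Lemma \ref{lem-level}) to conclude that the 3-system stays close to its unperturbed version throughout Phase 2.
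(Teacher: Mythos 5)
Your plan is essentially the paper's own proof: the paper likewise expands each $b_i$ and $d_i$ from \eqref{birthratesAA}--\eqref{birthratesBB} and \eqref{death-ratesAA}--\eqref{death-ratesBB}, uses $\Sigma_5=(n_{AA}+n_{AB}+n_{BB})+n_{aA}+n_{aB}$ and $\Sigma_6=\Sigma_5+n_{aa}$ to rewrite each rate as the unperturbed field plus explicit correction terms carrying a factor $n_{aa},n_{aA}$ or $n_{aB}$, bounds the denominator deficit via $1/(\Sigma_5 S)\leq 1/S^2$ to get the lower bounds, and absorbs $\eta n_{aA}n_{BB}\leq c\,n_{aA}n_{BB}$ for the upper bound on $\dot n_{BB}$, exactly as you describe. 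One small caution: your parenthetical states the directions backwards --- replacing $\Sigma_5,\Sigma_6$ by $S$ in a denominator is the step that is free for the \emph{upper} bounds (it only increases the main term, recovering $b_i^{up}$), whereas for the \emph{lower} bounds it is precisely what generates the $(n_{aA}+n_{aB})\,f(\cdots)^2/S^2$ correction terms --- but the rest of your outline treats this point correctly, so it is a wording slip rather than a gap.
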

\begin{proof}
	We consider the rates of $AA,AB$ and $BB$ under the perturbation of $aa,aA$ and $aB$:
	\begin{align}\nonumber
	b_{BB}=&\frac f{\S_5}\left(\sfrac12n_{AB}+n_{BB}\right)^2+\frac {fn_{aB}\left(\sfrac14n_{aB}+\sfrac12n_{AB}+n_{BB}\right)}{\S_5}\\
	=&b_{BB}^{up}-\frac {f\left(\left(\sfrac12n_{AB}+n_{BB}\right)^2(n_{aA}+n_{aB})\right)}{\S_5(n_{AA}+n_{AB}+n_{BB})}+\frac {fn_{aB}\left(\sfrac14n_{aB}+\sfrac12n_{AB}+n_{BB}\right)}{\S_5},\\
	d_{BB}=&d_{BB}^{up}+cn_{BB}(n_{aB}+n_{aA})-\eta n_{aA}n_{BB}.
	\end{align}
	Thus,
	\begin{align}
	\dot{n}_{BB}\leq&\dot{n}_{BB}^{up}+ \sfrac f{\Sigma_5}n_{aB}\left(\sfrac14n_{aB}+\sfrac12n_{AB}+n_{BB}\right)+\eta n_{aA}n_{BB},\\
	\dot{n}_{BB}\geq&\dot{n}_{BB}^{up}-\frac {f(n_{aA}+n_{aB})\left(\sfrac12n_{AB}+n_{BB}\right)^2}{\S_5(n_{AA}+n_{AB}+n_{BB})}-cn_{BB}(n_{aA}+n_{aB}).
	\end{align}
		For the $AB$ population we get:
	\begin{align}
	b_{AB}=&\frac {2f\left(\sfrac12n_{AB}+n_{BB}\right)\left(\sfrac12n_{AB}+n_{AA}\right)}{\S_5}-\frac {fn_{aa}n_{AA}\left(\sfrac12n_{aB}+\sfrac12n_{AB}+n_{BB}\right)}{\S_5\S_6}+\frac {f(n_{AA}+n_{AB})}{2\S_5}n_{aB}\nonumber\\\nonumber
	&+\frac {fn_{aB}n_{AA}}{2\S_6}+\frac {fn_{aA}\left(\sfrac12n_{aB}+\sfrac12n_{AB}+n_{BB}\right)}{2\S_5}+\frac {fn_{aA}\left(\sfrac12n_{aB}+\sfrac12n_{AB}+n_{BB}\right)}{2\S_6}\\
	=&b_{AB}^{up}+n_{aA}\left(\sfrac12n_{aB}+\sfrac12n_{AB}+n_{BB}\right)\left(\sfrac f{2\S_5}+\sfrac f{2\S_6}\right)+n_{aB}n_{AA}\left(\sfrac f{2\S_5}+\sfrac f{2\S_6}\right)+\frac {fn_{aB}n_{AB}}{2\S_5}\nonumber\\
	&-\frac {fn_{aa}n_{AA}\left(\sfrac12n_{aB}+\sfrac12n_{AB}+n_{BB}\right)}{\S_5\S_6}-\frac{2f\left(\sfrac12n_{AB}+n_{AA}\right)\left(\sfrac12n_{AB}+n_{BB}\right)(n_{aA}+n_{aB})}{\S_5(n_{AA}+n_{AB}+n_{BB})},\\
	d_{AB}=&d_{AB}^{up}+cn_{AB}(n_{aB}+n_{aA}),\\
	\dot{n}_{AB}\leq&\dot{n}_{AB}^{up}+\sfrac f{\S_5}n_{aA}\left(\sfrac12n_{aB}+\sfrac12n_{AB}+n_{BB}\right)+\sfrac f{\S_5}n_{aB}n_{AA}+\sfrac f{2\S_5}n_{aB}n_{AB},\\
	\dot{n}_{AB}\geq&\dot{n}_{AB}^{up}-\sfrac {f\left(\sfrac12n_{aB}+\sfrac12n_{AB}+n_{BB}\right)}{\S_5\S_6}n_{aa}n_{AA}-\sfrac{2f\left(\sfrac12n_{AB}+n_{AA}\right)\left(\sfrac12n_{AB}+n_{BB}\right)}{\S_5(n_{AA}+n_{AB}+n_{BB})}(n_{aA}+n_{aB})-cn_{AB}(n_{aB}+n_{aA}).
	\end{align}
	And finally for the $AA$ population:
	\begin{align}\nonumber
	b_{AA}=&\frac {f\left(\sfrac12n_{AB}+n_{AA}\right)^2}{\S_5}+\frac {fn_{aA}n_{AB}}{4\S_5}-\frac {fn_{aa}n_{AA}\left(\sfrac12 n_{aA}\!+\!n_{AA}\!+\!\sfrac12n_{AB}\right)}{\S_5\S_6}+\frac {fn_{aA}\left(\sfrac12n_{aA}\!+\!n_{AA}\!+\!\sfrac 12n_{AB}\right)}{2\Sigma_6}\\
	=&b_{AA}^{up}-\frac {f\left(\sfrac12n_{AB}+n_{AA}\right)^2(n_{aA}+n_{aB})}{\S_5(n_{AA}+n_{AB}+n_{BB})}+\frac {fn_{aA}n_{AB}}{4\S_5}-\frac {fn_{aa}n_{AA}\left(\sfrac12 n_{aA}+n_{AA}+\sfrac12n_{AB}\right)}{\S_5\S_6}\nonumber\\
	&+\frac {fn_{aA}\left(\sfrac12n_{aA}+n_{AA}+\sfrac 12n_{AB}\right)}{2\Sigma_6},\\
	d_{AA}=&d_{AA}^{up}+cn_{AA}(n_{aa}+n_{aA}+n_{aB}),\\
	\dot{n}_{AA}\leq&\dot{n}_{AA}^{up}+\frac {fn_{aA}n_{AB}}{4\S_5}+\frac {fn_{aA}\left(\sfrac12n_{aA}+n_{AA}+\sfrac 12n_{AB}\right)}{2\Sigma_6},\\
	\dot{n}_{AA}\geq&\dot{n}_{AA}^{up}-\frac {f\left(\sfrac12n_{aA}+\sfrac12n_{AB}+n_{AA}\right)^2(n_{aa}+n_{aA}+n_{aB})}{\S_5(n_{AA}+n_{AB}+n_{BB})}-cn_{AA}(n_{aa}+n_{aA}+n_{aB}).
	\end{align}
\end{proof}

As solutions of a dynamical system are continuous with respect to its parameters (in particular with respect to $\delta$), the latter lemma shows that until $T_2$, the 3-system $(AA,AB,BB)$ is at most perturbed  by $\OO(\delta)$. We will show that $T_2$ diverges with $\e$. Thus, for small enough $\d$, $AB$ will have time to reach the small fixed value $\sqrt{\e _0}>0$ in this phase, and we can use the asymptotic decay $1/t$ of the $AB$ and $1/t^2$ of the  $AA$ populations,
 which is proved in \cite{BovNeu}. 
We now start to analyse the growth of the small $aa$-, $aA$- and $aB$ populations. The sum-process $\S_5$ plays a crucial role for the behaviour of the system in this phase   and we need finer bounds on it.
\begin{remark}
In the sequel we often use that $\S_5\leq\S_6\leq\S_5+\OO(\d)$, which follows from \eqref{delta_bound} and Lemma \ref{3system}.
\end{remark}

\begin{proposition}\label{prop-S5}
	The sum-process $\S_5=n_{aA}+n_{AA}+n_{aB}+n_{AB}+n_{BB}$ satisfies for all $t\in[T_1,T_2]$: 
	\begin{align}
	\label{prop-S5equ}
	\bar n_B-\frac\D{c\bar n_B}n_{AA}- \frac{\D^2}{c\bar n_B}n_{AA}\leq\S_5\leq\bar n_B-\frac\D{c\bar n_B}n_{AA}+ \frac{\D^2}{c\bar n_B}n_{AA}.
	\end{align}
\end{proposition}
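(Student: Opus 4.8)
The plan is to use Lemma~\ref{lem-level} twice, once for the upper bound and once for the lower bound, applied to the auxiliary functions
$g^\pm(t) = \S_5(t) - \bar n_B + \frac{\D}{c\bar n_B}n_{AA}(t) \mp \frac{\D^2}{c\bar n_B}n_{AA}(t)$.
At the initial time $T_1$ the inequalities are satisfied: by Corollary~\ref{phase1} we have $n_{AA}(T_1)\leq\bar n_A$, $n_{AB}(T_1)=\e_0$, $n_{BB}(T_1)\leq\OO(\e_0^2)$, and $n_{aA},n_{aB}\leq\OO(\e^{1-\OO(\e_0)})$, so that $\S_5(T_1)=\bar n_A+\OO(\e_0)$, while $\bar n_B - \frac{\D}{c\bar n_B}n_{AA}(T_1)$ is $\bar n_B - \frac{\D}{c\bar n_B}\bar n_A + \OO(\e_0)=\bar n_A+\OO(\e_0)$ using $c\bar n_B = f-D+\D$ and $c\bar n_A=f-D$; the $\pm\frac{\D^2}{c\bar n_B}n_{AA}$ term then gives the needed strict slack at $t=T_1$ (after a harmless adjustment of the $\OO(\e_0)$ constants, sending $\e\to0$ first). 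So the crux is the derivative condition: I must show that whenever $g^+(t)=0$ then $\dot g^+(t)<0$, and whenever $g^-(t)=0$ then $\dot g^-(t)>0$.

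The key computation is the differential equation satisfied by $\S_5$. Summing the birth rates \eqref{birthratesaA}--\eqref{birthratesBB} over $\{aA,AA,aB,AB,BB\}$ and the death rates \eqref{death-ratesaA}--\eqref{death-ratesBB}, the dominant structure is $\dot\S_5 = f\S_5 - \S_5(D+c\S_5) + (\text{correction terms})$, where the corrections come from (i) the $\D$-shift in the death rate of $aB,AB,BB$ versus $AA$, giving a $+\D(n_{aB}+n_{AB}+n_{BB})$ contribution, (ii) the $-\eta n_{aA}n_{BB}$ competition term, (iii) the loss of $a$-allele reproduction leaking mass out of $\S_5$ (the $b_{\Sigma_{aA,aB}}\approx f\Sigma_{aA,aB}$ heuristic in Phase~2 is only approximate; in $\S_5$ the $aA,aB$ genotypes pick up an extra $A$-allele contribution from $AA$ and $AB$, which must be accounted as a $-\OO(n_{aA})$ or $-\OO(n_{aB})$ type term). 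Using $\S_5\leq\S_6\leq\S_5+\OO(\d)$ (the Remark), $n_{BB}\leq\bar n_B$, $n_{aA},n_{aB},n_{aa}\leq\OO(\d)$ from \eqref{delta_bound}, and $n_{AB}\leq\OO(\sqrt{\e_0})$ from the Phase~2 decay (which is legitimate since $T_2$ diverges with $\e$), all the correction terms are controlled. At a point where $\S_5 = \bar n_B - \frac{\D}{c\bar n_B}n_{AA} + \OO(n_{AA})$, the leading balance $f - D - c\S_5$ evaluates to $f-D-c\bar n_B + \D\frac{n_{AA}}{\bar n_B} + \OO(n_{AA}) = -\D + \D\frac{n_{AA}}{\bar n_B}+\dots$, wait — more carefully, $c\bar n_B = f-D+\D$ so $f-D-c\bar n_B = -\D$, and the $n_{AA}$ term contributes $+\frac{\D}{\bar n_B}n_{AA}\cdot\frac{c}{c}$; multiplying by $\S_5=\OO(1)$ and matching against $\frac{d}{dt}\big(\frac{\D}{c\bar n_B}n_{AA}\big) = \frac{\D}{c\bar n_B}\dot n_{AA}$, which is negative of size $\OO(n_{AA}/t^2)$ small, one finds the net derivative $\dot g^+$ is dominated by $-\frac{\D^2}{\bar n_B}n_{AA}$ up to lower-order terms, hence strictly negative; symmetrically $\dot g^->0$.

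The main obstacle I anticipate is the bookkeeping of the $a$-allele leakage terms when summing the birth rates: one must verify that the sum of $b_{aA}+b_{AA}+b_{aB}+b_{AB}+b_{BB}$ telescopes cleanly against $f\S_5$ with an error that is genuinely of order $n_{aA}+n_{aB}+n_{aa}$ times $\OO(1)$, and not of order $\S_5$, so that it stays within the $\OO(n_{AA})$ and $\OO(\d)$ slack allotted in \eqref{prop-S5equ} — in particular one needs $n_{AA}$ (not merely $\d$) to dominate these near the boundary curve, which is where the precise $1/t^2$ decay of $n_{AA}$ relative to $1/t$ decay of $n_{AB}$ from \cite{BovNeu}, together with the choice of $\d$ small, does the work. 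The quadratic term $\frac{\D^2}{c\bar n_B}n_{AA}$ in the statement is exactly the margin reserved to absorb these higher-order corrections, so the estimate is tight and the constants must be tracked honestly; I would isolate this as a short sub-lemma on $\dot\S_5$ before invoking Lemma~\ref{lem-level}.
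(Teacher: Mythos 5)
Your overall strategy coincides with the paper's: apply Lemma \ref{lem-level} to the two moving boundary curves, after deriving the differential inequalities $\dot\S_5\lessgtr-c\S_5^2+\S_5(f-D+\D)-\D n_{AA}\pm\OO(\d)$. For the \emph{upper} bound your sketch is essentially complete, because there the boundary term $-\sfrac{\D(1-\D)}{c\bar n_B}\dot n_{AA}$ has the helpful sign ($\dot n_{AA}\leq0$), exactly as in the paper. The genuine gap is in the \emph{lower} bound. Since $n_{AA}$ is decreasing, the curve $\bar n_B-\sfrac{\D(1+\D)}{c\bar n_B}n_{AA}$ is \emph{rising}, so at a contact point it is not enough that $\dot\S_5\geq\D^2 n_{AA}(1-\OO(1/c\bar n_B))-\OO(\d)>0$; you must show that this gain beats $-\sfrac{\D(1+\D)}{c\bar n_B}\dot n_{AA}$, i.e.\ you need the quantitative bound $|\dot n_{AA}|\leq\OO(\D)\,n_{AA}+\OO(\d)$ at that point. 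This does not follow from generalities: $\dot n_{AA}\leq-\sfrac f{\S_5}\bigl(n_{AA}n_{BB}-\sfrac14 n_{AB}^2\bigr)+\OO(\d)$, and the prefactor is $f$, not $\D$, so everything hinges on the near-cancellation $\chi:=n_{AA}n_{BB}-\sfrac14 n_{AB}^2\leq\sfrac{\D\bar n_A}f(c\bar n_B-1)\,n_{AA}$. The paper proves this by a separate application of Lemma \ref{lem-level} to $\chi$ (minorising $AA$, $BB$ and majorising $AB$ to get $\dot\chi\leq-f\chi+\OO(\d)$); this sub-argument is the heart of the proposition and is absent from your proposal.

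Your substitute justification --- that $\dot n_{AA}$ is ``of size $\OO(n_{AA}/t^2)$'' by the $1/t^2$ decay from \cite{BovNeu} --- does not close the gap: that decay is an asymptotic statement about the unperturbed $3$-system after it has entered the neighbourhood of the degenerate fixed point, it is not a pointwise bound valid on all of $[T_1,T_2]$ (in particular not while $n_{AA}$ drops from $\bar n_A$ to order $\D$), and invoking it here is essentially circular, since Proposition \ref{prop-S5} is one of the inputs used to control the perturbed system throughout this phase. A smaller slip: $n_{AB}$ is not $\OO(\sqrt{\e_0})$ on $[T_1,T_2]$ --- by Proposition \ref{maxAB} it rises to $\approx\bar n_B/2$ --- so error terms cannot be discarded on that ground; fortunately the paper's bookkeeping never needs $n_{AB}$ to be small, only $n_{aa},n_{aA},n_{aB}\leq\OO(\d)$ and the $\D$-splitting of the death rates.
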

\begin{proof}
We estimate a minorising process and a majorising process on $\S_5$:
	\begin{align}
	b_{\S_5}\leq&f(n_{AA}+n_{AB}+n_{BB})\nonumber\\
	&+f\frac{(n_{aA}+n_{aB})(\sfrac34n_{aA}+n_{AA}+\sfrac34n_{aB}+n_{AB}+n_{BB})}{\S_5}+\OO(\d)
	\leq f\S_5+\OO(\d),\\
		b_{\S_5}\geq&f(n_{AA}+n_{AB}+n_{BB})\nonumber\\
	&+f\frac{(n_{aA}+n_{aB})(\sfrac34n_{aA}+n_{AA}+\sfrac34n_{aB}+n_{AB}+n_{BB})}{\S_5}-\OO(\d)
	\geq f\S_5-\OO(\d), \\
	d_{\S_5}\leq&\S_5(D-\D+c\S_5)+\D(n_{AA}+n_{aA})-2\eta n_{aA}n_{BB}+\OO(\d),\\
	d_{\S_5}\geq&\S_5(D-\D+c\S_5)+\D(n_{AA}+n_{aA})-2\eta n_{aA}n_{BB}.
	\end{align}
	We get  
	\begin{align}
	\dot{\S}_5&\leq-c\S_5^2+\S_5(f-D+\D)-\D n_{AA}+\OO(\delta)\label{dotS5up},\\
	\dot{\S}_5&\geq-c\S_5^2+\S_5(f-D+\D)-\D n_{AA}-\OO(\delta)\label{dotS5down}.
	\end{align}
	We start with the proof of the upper bound. 
	We use Lemma \ref{lem-level} and show that when $\Sigma_5$ reaches the upper-bound, it decays faster than the latter.
	 Using \eqref{dotS5up} we compute $\dot\Sigma_5$ at the bound. Note that if  $\S_5=\bar n_B-\sfrac\D{c\bar n_B}n_{AA}+\sfrac{\D^2}{c\bar n_B}n_{AA}$, then $\S_5^2=\bar n_B^2-\sfrac{2\D}{c}n_{AA}+\sfrac{\D^2}{c^2\bar n_B^2}n_{AA}^2+\sfrac{2\D^2}cn_{AA}+\OO(\D^4)n_{AA}^2$, thus
	\begin{align}
	\dot\S_5\leq-\D^2n_{AA}-\sfrac{\D^2}{c\bar n_B^2}n_{AA}^2+\OO(\d)<0.
	\end{align}
	It is left to show that $\dot\S_5\leq-\sfrac\D{c\bar n_B}\dot n_{AA}+\sfrac{\D^2}{c\bar n_B}\dot n_{AA}$. Since we already know (cf.\ Lemma \ref{3system}) that $(AA,AB,BB)$ behaves like a 3-system with $\OO(\delta)$ perturbations, then  $\dot n_{AA}\leq0$, this finishes the proof of the upper bound.
	
	Now we check the lower bound. If $\S_5=\bar n_B-\frac\D{c\bar n_B}n_{AA}- \sfrac{\D^2}{c\bar n_B}n_{AA}$ then $\S_5^2=\bar n_B^2-\sfrac{2\D}cn_{AA}-\sfrac{\D^2}{c^2\bar n_B^2}n_{AA}-\sfrac{2\D^2}cn_{AA}$.
	Using  \eqref{dotS5down}, the derivative of $\S_5$ at the lower bound is thus bounded by
	\begin{align}
	\dot\S_5&\geq\D^2n_{AA}-\sfrac{\D^2}{c\bar n_B^2}n_{AA}-\OO\left(\D^3\right)n_{AA}^2-\OO(\d)\geq\D^2n_{AA}\left(1-\sfrac{1+\OO(\D)}{c\bar n_B}\right)-\OO(\d)>0.
	\end{align}
	For the second inequality we use that $n_{AA}\leq\bar n_A$ (Proposition \ref{aB<AB}).
	By \eqref{prop-S5equ} and Lemma \ref{lem-level}, it is enough to show that at the lower bound $\dot\S_5\geq-\sfrac\D{c\bar n_B}\dot n_{AA}$. Since $AA$ is decreasing we have to calculate a majorising process on $AA$:
	\begin{align}
	b_{AA}&\leq\sfrac f{\S_5}n_{AA}(n_{AA}+n_{AB})+\sfrac f{4\S_5}n_{AB}^2+\OO(\d).
	\intertext{For the death rate we use that $\S_5\geq\bar n_A-\OO(\d)$:}
	d_{AA}&\geq (f-\OO(\d))n_{AA},\\
	\dot n_{AA}&\leq-\sfrac{f}{\S_5}n_{AA}n_{BB}+\sfrac f{4\S_5}n_{AB}^2+\OO(\d).
	\end{align}
	Hence we have to show that the slope of $\S_5$ at the lower bound is bigger than the one of the lower bound, namely we show that $\D^2n_{AA}\left(1-\sfrac{1+\OO(\D)}{c\bar n_B}\right)-\OO(\d)\geq\sfrac{\D f}{c\bar n_B\bar n_A}\left(n_{AA}n_{BB}-\sfrac14n_{AB}^2\right)-\OO(\d\D)$, in the case $n_{AA}n_{BB}>\sfrac14n_{AB}^2$. This is equivalent to show that $\chi:= n_{AA}n_{BB}-\sfrac14n_{AB}^2\leq\sfrac{\D\bar n_A}f\left(c\bar n_B-1\right)n_{AA}$.	
	 For this we use once again Lemma \ref{lem-level} and estimate the derivative of $\chi$  from above with the help of minorising processes on $AA$ and $BB$ and a majorising process on $AB$. For bounding the death rates we use Proposition \ref{aB<AB}: 
	 \begin{align}
	b_{AA}&\geq\sfrac f{\S_5}n_{AA}(n_{AA}+n_{AB})+\sfrac f{4\S_5}n_{AB}^2-\OO(\d),\\
	d_{AA}&\leq (f+\D)n_{AA}+\OO(\d),\\
	\dot n_{AA}&\geq-\sfrac{f}{\S_5}n_{AA}n_{BB}-\D n_{AA}+\sfrac f{4\S_5}n_{AB}^2-\OO(\d).\label{LB_dotnAA}\\
	b_{BB}&\geq\sfrac f{\S_5}n_{BB}(n_{AB}+n_{BB})+\sfrac f{4\S_5}n_{AB}^2-\OO(\d),\\
	d_{BB}&\leq fn_{BB},\\
	\dot n_{BB}&\geq-\sfrac{f}{\S_5}n_{AA}n_{BB}+\sfrac f{4\S_5}n_{AB}^2+\OO(\d).\\
	b_{AB}&\leq\sfrac f{\S_5}n_{AB}\left(n_{AA}+\sfrac12n_{AB}+n_{BB}\right)+\sfrac {2f}{\S_5}n_{AA}n_{BB}+\OO(\d),\\
	d_{AB}&\geq (f-\D)n_{AB},\\
	\dot n_{AB}&\leq\sfrac{2f}{\S_5}n_{AA}n_{BB}-\sfrac f{2\S_5}n_{AB}^2+\D n_{AB}+\OO(\d).
	\end{align}
	 The derivative is given by:
		 \begin{align}\nonumber\label{chi}
	 \dot\chi&=\dot n_{AA}n_{BB}+n_{AA}\dot n_{BB}-\sfrac12\dot n_{AB}n_{AB}\\
	 &\leq-f\chi+\OO(\d).
	 \end{align}	
	 At the upper bound we get:
	 \begin{align}
	 \dot\chi\leq-\D\bar n_A(c\bar n_B-1)n_{AA}+\OO(\d)<0.
	 \end{align}
	 It is left to show that at the upper bound $\dot\chi\leq\sfrac{\D\bar n_A}f(c\bar n_B-1)\dot n_{AA}$. Using the minorising process $\dot n_{AA}\geq-\D n_{AA}-\sfrac{f}{\bar n_A}\chi-\OO(\d)$ (see \eqref{LB_dotnAA}), \eqref{chi} and Proposition \ref{aB<AB} we show that
	 \begin{align}
	 \sfrac{\D\bar n_A}f(c\bar n_B-1)\dot n_{AA}+f\chi-\OO(\d)\geq\chi\left(f-\D(c\bar n_B-1)\right)-\sfrac{\D^2\bar n_A}f(c\bar n_B-1)n_{AA}-\OO(\d)>0.
	 \end{align}
	 This finishes the proof of the lower bound.
\end{proof}

%
%
\begin{lemma}\label{lem-expS}
	 For $t\in[T_1,T_2]$ and  for $\D$ sufficiently small,
 		\begin{equation}
		\dot\Sigma_{aA,aB}\geq-\OO(\D)\Sigma_{aA,aB}.
		\end{equation}
\end{lemma}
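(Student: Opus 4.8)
The plan is to make rigorous the heuristic computation sketched around \eqref{sigmadot}: write $\dot\Sigma_{aA,aB}=(b_{aA}+b_{aB})-(d_{aA}+d_{aB})$, compute the death part exactly, bound the birth part from below, and then invoke Proposition~\ref{prop-S5} to control the sign. Summing \eqref{death-ratesaA} and \eqref{death-ratesaB} and using $n_{aa}+n_{aA}+n_{AA}+n_{aB}+n_{AB}=\Sigma_5-n_{BB}+n_{aa}$ gives the exact identity
\begin{equation}
d_{aA}+d_{aB}=\Sigma_{aA,aB}\,(D+c\Sigma_5)+c\,n_{aa}n_{aA}-\eta\,n_{aA}n_{BB}-\Delta\,n_{aB}.
\end{equation}

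The crux is the lower bound on $b_{aA}+b_{aB}$. The conceptual reason it should be $\approx f\,\Sigma_{aA,aB}$ is that a newborn of genotype $aA$ or $aB$ carries exactly one $a$ allele, which must come from an $a$-carrying parent, while the complementary (dominant) population is macroscopic and supplies the other allele. To get this from the formulas, I would sum \eqref{birthratesaA} and \eqref{birthratesaB}, discard the nonnegative $\Sigma_3$-term and the nonnegative piece of the $\Sigma_6$-numerator proportional to $n_{aa}$, and then replace $1/\Sigma_6$ by $1/\Sigma_5$ in the surviving $\Sigma_6$-term. The incurred error is $\OO(\delta)\Sigma_{aA,aB}$: indeed $\Sigma_5\le\Sigma_6\le\Sigma_5+\OO(\delta)$ (as recalled before Proposition~\ref{prop-S5}), $\Sigma_5$ is bounded below by a positive constant (Propositions~\ref{aB<AB} and~\ref{prop-S5}, since $\Delta$ is small), and each surviving numerator term is of the form $\OO(1)\cdot(n_{aA}+n_{aB})$. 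A short expansion then shows that the remaining numerator collapses exactly to $\Sigma_5\,\Sigma_{aA,aB}-\tfrac12\,\Sigma_{aA,aB}^2$, so that, using $\Sigma_{aA,aB}\le\OO(\delta)$ on $[T_1,T_2]$ (by \eqref{delta_bound}),
\begin{equation}
b_{aA}+b_{aB}\ \ge\ f\,\Sigma_{aA,aB}\Big(1-\tfrac{\Sigma_{aA,aB}}{2\Sigma_5}\Big)-\OO(\delta)\,\Sigma_{aA,aB}\ \ge\ f\,\Sigma_{aA,aB}-\OO(\delta)\,\Sigma_{aA,aB}.
\end{equation}

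Subtracting the two displays, and using $\eta\,n_{aA}n_{BB}\ge0$, $\Delta\,n_{aB}\ge0$, and $c\,n_{aa}n_{aA}\le\OO(\delta)\,\Sigma_{aA,aB}$ (since $n_{aa}\le\OO(\delta)$ by \eqref{delta_bound}), I obtain $\dot\Sigma_{aA,aB}\ge\Sigma_{aA,aB}(f-D-c\Sigma_5)-\OO(\delta)\,\Sigma_{aA,aB}$. Proposition~\ref{prop-S5} — which is exactly where the smallness of $\Delta$ enters — gives $\Sigma_5\le\bar n_B$, hence $c\Sigma_5\le c\bar n_B=f-D+\Delta$ by \eqref{infit}, so $f-D-c\Sigma_5\ge-\Delta$ and $\dot\Sigma_{aA,aB}\ge-\Delta\,\Sigma_{aA,aB}-\OO(\delta)\,\Sigma_{aA,aB}$. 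Since $\delta$ may be chosen after $\Delta,f,D,c$, take it small enough that the $\OO(\delta)$-constant above is at most $\Delta$; then $\dot\Sigma_{aA,aB}\ge-2\Delta\,\Sigma_{aA,aB}=-\OO(\Delta)\,\Sigma_{aA,aB}$, as claimed. The one laborious point is the birth-rate bound: one must genuinely carry out the expansion of $b_{aA}+b_{aB}$, check the cancellation to $\Sigma_5\Sigma_{aA,aB}-\tfrac12\Sigma_{aA,aB}^2$, and verify that every discarded or $\Sigma_6\!\to\!\Sigma_5$-approximated quantity is of order $\OO(\delta)\Sigma_{aA,aB}$ and not merely $\OO(\delta)$ — whereas the remaining steps are one-line comparisons. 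If preferred, the same birth-rate bound can be obtained without this algebra by directly bookkeeping the $a$-alleles carried by newborns: each $aA$- or $aB$-individual transmits its $a$ allele at rate $f/2$, the partner being drawn from a macroscopic pool supplies a non-$a$ allele up to an event of relative weight $\OO(\delta)$, and adding the (nonnegative) contributions in which the $a$ allele instead comes from the partner recovers $b_{aA}+b_{aB}\ge(f-\OO(\delta))\Sigma_{aA,aB}$.
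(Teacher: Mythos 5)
Your proposal is correct and follows essentially the same route as the paper's proof: an exact computation of $d_{aA}+d_{aB}$, a lower bound $b_{aA}+b_{aB}\geq (f-\OO(\d))\Sigma_{aA,aB}$ coming from the fact that the $a$ allele of each newborn is supplied by $\Sigma_{aA,aB}$ while the partner pool is macroscopic, and then Proposition \ref{prop-S5} (via $c\S_5\leq c\bar n_B=f-D+\D$) to conclude $\dot\Sigma_{aA,aB}\geq-(\D+\OO(\d))\Sigma_{aA,aB}$. Your write-up is in fact somewhat more explicit than the paper's on the algebraic cancellation in the birth rate and on the $\S_6\to\S_5$ replacement, but the argument is the same.
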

\begin{proof}

	Using Proposition \ref{prop-S5}, we have the following bound on the process:
\begin{align}
b_{\Sigma_{aA,aB}}&\geq f\frac{n_{aA}(\sfrac12n_{aA}+n_{AA}+n_{aB}+n_{AB}+n_{BB})+n_{aB}(n_{AA}+\sfrac12n_{aB}+n_{AB}+n_{BB})}{n_{aA}+n_{AA}+n_{aB}+n_{AB}+n_{BB}}-\OO(\d n_{aA})\nonumber\\
&\geq f\Sigma_{aA,aB}-\OO(\delta n_{aA}),\\
d_{\Sigma_{aA,aB}}&=\Sigma_{aA,aB}(D-\D+c\S_5)-\eta n_{aA}n_{BB}+\D n_{aA}+cn_{aA}n_{aa}\nonumber\\
&\leq f\Sigma_{aA,aB}-n_{aA}(\eta n_{BB}-\D)+\OO(\D^2n_{AA})\S_{aA,aB}^2,\\
\dot{\Sigma}_{aA,aB}&\geq n_{aA}(\eta n_{BB}-\D-\OO(\delta))\OO(\D^2n_{AA})\S_{aA,aB}^2\geq n_{aA}(-\D-\OO(\delta))-\OO(\d\D^2n_{AA})\S_{aA,aB}\nonumber\\
&\geq \Sigma_{aA,aB} (-\D-\OO(\delta)).
\end{align}
\end{proof}

\begin{lemma}
\label{aa-S2}
For all $t\in[T_1,T_2]$ the $aa$ population is bounded by
\begin{align}\label{ub-aa}
\frac{f}{16\bar n_B(f+\D+\OO(\d))}\S_{aA,aB}^2\leq n_{aa}\leq\frac f{\bar n_A(D+\D)}\S_{aA,aB}^2.
\end{align}
\end{lemma}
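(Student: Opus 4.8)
The plan is to apply Lemma~\ref{lem-level} twice---once for each inequality---by comparing the derivative of $n_{aa}$ with the derivatives of $c_\pm\Sigma_{aA,aB}^2$ at the points where they meet, using the fact that $\Sigma_{aA,aB}=n_{aA}+n_{aB}$ and the birth rate $b_{aa}$ from \eqref{birthratesaa}. The key structural observation is that on $[T_1,T_2]$ we have $n_{aa}\leq n_{aA}\wedge n_{aB}$ (by definition of $T_2$ via $T^{aa=aA\wedge aB}$), and also $n_{aB}\leq n_{AB}$, $n_{AA}\leq\bar n_A$, $n_{BB}\leq\bar n_B$ from Proposition~\ref{aB<AB}, together with $\bar n_A-\OO(\d)\leq\S_3,\S_5,\S_6\leq\bar n_B+\OO(\d)$. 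The heuristic to keep in mind is that $aa$ is produced predominantly by recombination of $aA\times aA$ and $aA\times aB$ pairs, so $b_{aa}$ should scale like $f\,\Sigma_{aA,aB}^2/(\text{pool size})$ up to constants, where the pool size for the $\S_3$-term is $\approx\bar n_A$ but could be smaller, and for the $\S_5,\S_6$-terms is $\approx\bar n_B$; this is exactly what produces the constants $f/(\bar n_A(D+\D))$ (upper) and $f/(16\bar n_B(f+\D+\OO(\d)))$ (lower).

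First I would establish the \textbf{upper bound}. Set $g=\frac{f}{\bar n_A(D+\D)}\Sigma_{aA,aB}^2$; at $t=T_1$ we have $n_{aa}(T_1)\leq\OO(\e^{2-\OO(\e_0)})$ and $\Sigma_{aA,aB}(T_1)\geq n_{aA}(T_1)$ with $n_{aA}(T_1)$ of order at most $\e^{1-\OO(\e_0)}$ but (from Phase 1, $n_{aa}\leq n_{aA}^2$) at least comparable, so the initial inequality $n_{aa}(T_1)\leq g(T_1)$ holds for $\e$ small. Assuming $n_{aa}=g$, I bound $b_{aa}$ above: each of the three terms in \eqref{birthratesaa} has numerator a product of two factors each $\leq\Sigma_{aA,aB}+\OO(n_{aa})=\Sigma_{aA,aB}(1+\OO(\d))$, and denominators $\S_3,\S_5,\S_6\geq\bar n_A-\OO(\d)$, giving $b_{aa}\leq\frac{f}{\bar n_A}\Sigma_{aA,aB}^2(1+\OO(\d))$ up to the $\tfrac12$ and $\tfrac14$ weights; meanwhile $d_{aa}=n_{aa}(D+\D+c\S_3)\geq n_{aa}(D+\D)=g(D+\D)$, so $\dot n_{aa}=b_{aa}-d_{aa}\leq\frac{f}{\bar n_A}\Sigma_{aA,aB}^2(1+\OO(\d))-\frac{f}{\bar n_A}\Sigma_{aA,aB}^2\cdot\frac{D+\D}{D+\D}\cdot$\ldots; I then need $\dot n_{aa}\leq\dot g=\frac{2f}{\bar n_A(D+\D)}\Sigma_{aA,aB}\dot\Sigma_{aA,aB}$. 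Here I use Lemma~\ref{lem-expS}: $\dot\Sigma_{aA,aB}\geq-\OO(\D)\Sigma_{aA,aB}$, so $\dot g\geq-\OO(\D)g$, and the claim reduces to checking that the positive part of $b_{aa}$ minus $d_{aa}$ is dominated, which works because the leading $f/\bar n_A$ terms cancel against $d_{aa}/n_{aa}\gtrsim D+\D$ times the prefactor---this is where the specific constant $\bar n_A(D+\D)$ in $g$ is forced.

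Next the \textbf{lower bound}, with $h=\frac{f}{16\bar n_B(f+\D+\OO(\d))}\Sigma_{aA,aB}^2$; at $T_1$, $n_{aa}(T_1)\geq$ (something of order $\e^{2-\OO(\e_0)}$) while $\Sigma_{aA,aB}(T_1)\leq\OO(\e^{1-\OO(\e_0)})$, so $n_{aa}(T_1)\geq h(T_1)$ for the right implicit constants (this initial check is the one place where the $\e_0$-bookkeeping from Phase~1 must be invoked carefully). Assuming $n_{aa}=h$, I bound $b_{aa}$ below by keeping only the $aA\times aA$ contribution from the first term of \eqref{birthratesaa}: $b_{aa}\geq f\frac{n_{aa}(n_{aa}+\frac12 n_{aA})}{\S_3}\geq$ \ldots---no, rather I keep the genuinely quadratic-in-$\Sigma_{aA,aB}$ pieces; since $n_{aB}\leq n_{aA}$ and $n_{aa}\leq n_{aA}$, one has $n_{aA}\geq\tfrac13\Sigma_{aA,aB}$ is false in general, but $n_{aA}\geq\tfrac12\Sigma_{aA,aB}$ when $n_{aB}\leq n_{aA}$, so the $\frac14 n_{aA}^2$-type terms are $\geq\frac{1}{16}\Sigma_{aA,aB}^2$, and with denominator $\leq\bar n_B+\OO(\d)$ this yields $b_{aa}\geq\frac{f}{16\bar n_B}\Sigma_{aA,aB}^2(1-\OO(\d))$; then $d_{aa}=h(D+\D+c\S_3)\leq h(f+\D+\OO(\d))$ (using $c\S_3\leq c\bar n_B+\OO(\d)=f-D+\D+\OO(\d)$, so $D+\D+c\S_3\leq f+\D+\OO(\d)$), hence $\dot n_{aa}\geq\frac{f}{16\bar n_B}\Sigma_{aA,aB}^2(1-\OO(\d))-h(f+\D+\OO(\d))$, and since $h(f+\D+\OO(\d))=\frac{f}{16\bar n_B}\Sigma_{aA,aB}^2$ exactly by choice of the constant, this is $\geq-\OO(\d)\Sigma_{aA,aB}^2$; finally I compare with $\dot h=\frac{2f}{16\bar n_B(f+\D+\OO(\d))}\Sigma_{aA,aB}\dot\Sigma_{aA,aB}$, which by Lemma~\ref{lem-expS} satisfies $\dot h\leq\OO(\D)h=\OO(\D)\Sigma_{aA,aB}^2$, so indeed $\dot n_{aa}\geq\dot h$ whenever $\D,\d$ are small enough, and Lemma~\ref{lem-level} (applied to $n_{aa}-h$) closes the argument.

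\textbf{Main obstacle.} The delicate point is not the rate comparisons themselves but pinning down the exact multiplicative constants $1/(\bar n_A(D+\D))$ and $1/(16\bar n_B(f+\D+\OO(\d)))$ so that at equality the leading-order terms cancel \emph{precisely}, leaving only $\OO(\d)$ or $\OO(\D)$ errors that are absorbed by Lemma~\ref{lem-expS}; in particular the upper bound needs $\S_3\geq\bar n_A$ effectively (which holds since $\S_3=n_{aa}+n_{aA}+n_{AA}$ and $n_{AA}$ is still close to $\bar n_A$), and the lower bound needs the combinatorial factor $\tfrac14$ from the $aA\times aA$ recombination and the bound $n_{aA}\geq\tfrac12\Sigma_{aA,aB}$, which together give the $\tfrac1{16}$. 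Verifying these is routine but must be done with care to avoid an off-by-constant error that would break the subsequent Phase~3 estimates.
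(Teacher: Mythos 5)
Your overall plan---apply Lemma \ref{lem-level} to the two curves $c_\pm\S_{aA,aB}^2$ and compare derivatives at the contact points---is exactly the paper's, and your upper bound is essentially the paper's argument (there Lemma \ref{lem-expS} is indeed the right tool, since one only needs $\dot\S_{aA,aB}\geq-\OO(\D)\S_{aA,aB}$ to control how fast the upper curve can fall). The lower bound, however, has two genuine gaps. First, your lower bound on $b_{aa}$ rests on $n_{aB}\leq n_{aA}$ holding on all of $[T_1,T_2]$, so that $n_{aA}\geq\tfrac12\S_{aA,aB}$ and the $\sfrac f{4\S_6}n_{aA}^2$ term alone yields the $\tfrac1{16}$. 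But this ordering is only available up to $T_=$ (Proposition \ref{prop-T1T=}), it fails beyond that time ($n_{aB}$ overtakes $n_{aA}$ at $T_=$, and $T_=<T_2$ by Lemma \ref{AA=BB}), and in the paper's logical order Proposition \ref{prop-T1T=} is itself proved \emph{using} Lemma \ref{aa-S2}, so invoking it here would be circular. The repair is to keep the cross terms instead of discarding them: the $aB$- and $aA$-contributions to $b_{aa}$ combine as $\sfrac f{4\S_5}n_{aB}(n_{aA}+n_{aB})+\sfrac f{4\S_6}n_{aA}(n_{aA}+n_{aB})\geq\sfrac f{4\S_5}\S_{aA,aB}^2(1-\OO(\d))$, valid whichever of $n_{aA},n_{aB}$ dominates; this is what the paper does (alternatively, a case distinction on $\max(n_{aA},n_{aB})\geq\tfrac12\S_{aA,aB}$ would salvage your version).

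Second, and more seriously, your closing step asserts $\dot h\leq\OO(\D)h$ ``by Lemma \ref{lem-expS}''. That lemma only bounds $\dot\S_{aA,aB}$ from \emph{below}; it gives no control on how fast $\S_{aA,aB}$ can grow, which is what governs $\dot h$ when $n_{aa}$ sits on the lower curve. The only a priori upper bound available at this stage is of order one: a majorising process gives $\dot\S_{aA,aB}\leq(f-D+\D(1+\OO(\D)))\S_{aA,aB}+\OO(\S_{aA,aB}^2)$ (birth at most $f\S_{aA,aB}$ up to quadratic terms, death at least $(D-\D(1+\OO(\D)))\S_{aA,aB}$ since $\eta n_{BB}\leq c\bar n_B$), so $\dot h$ can be as large as $\sfrac{f(f-D+\D)}{8\bar n_B(f+\D)}\S_{aA,aB}^2$, not $\OO(\D)\S_{aA,aB}^2$. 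The comparison $\dot n_{aa}\geq\dot h$ then closes only because the constant $\tfrac1{16}$ carries a factor-$4$ slack against the birth-rate constant $\tfrac14$: at the boundary $\dot n_{aa}\geq\left(\sfrac f{4\S_5}-\sfrac f{16\bar n_B}\right)\S_{aA,aB}^2\geq\sfrac{3f}{16\bar n_B}\S_{aA,aB}^2$, which beats $\sfrac{2f(f-D+\D)}{16\bar n_B(f+\D)}\S_{aA,aB}^2$ up to $\OO(\S_{aA,aB}^3)$ corrections. So the picture in your ``main obstacle'' paragraph---constants tuned so that leading terms cancel precisely, leaving only $\OO(\d),\OO(\D)$ errors---is not how the lower bound works: the constants are chosen with deliberate slack, and the decisive step is a quantitative comparison of order-one quantities that your argument, as written, does not perform.
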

Observe that this implies $T_2=T^{aA=\delta AA}\wedge T^{aB=\delta AB}$.

\begin{proof}
First observe that the inequality is satisfied at $t=T_1$.
We start with the upper bound and show that $n_{aa}$ would decrease at this bound. For this we estimate a majorising process on $aa$:
\begin{align}
b_{aa}&\leq\sfrac f{\S_3}n_{aa}\left(\sfrac12n_{aA}+n_{aa}\right)+\sfrac f{4\S_5}\S_{aA,aB}^2+\sfrac f{2\S_5}n_{aA}n_{aa},\\
d_{aa}&\geq n_{aa}(D+\D),\\
\dot{n}_{aa}&\leq\sfrac f{\S_3}n_{aa}^2+\sfrac {f}{\S_3}n_{aa}n_{aA}+\sfrac f{4\S_5}\S_{aA,aB}^2-n_{aa}(D+\D).
\end{align}
We calculate the slope of this process at the upper bound using Proposition \ref{prop-S5} and \eqref{delta_bound}:
\begin{align}
\dot{n}_{aa}&\leq\sfrac f{4\S_5}\S_{aA,aB}^2-\sfrac f{\bar n_A}\S_{aA,aB}^2+\OO(\S_{aA,aB}^2n_{aA})
\leq-\sfrac{3f-\OO(\d)}{4\bar n_A}\S_{aA,aB}^2<0.
\end{align}
By Lemma \ref{lem-level}, to ensure that \eqref{ub-aa} stays an upper bound it is enough to show that the right-hand side satisfies
\begin{align}
-\sfrac{3f-\OO(\d)}{4\bar n_A}\S_{aA,aB}^2\leq\sfrac{2f}{\bar n_A(D+\D)}\dot{\S}_{aA,aB}\S_{aA,aB}.
\end{align}
This is a consequence of Lemma \ref{lem-expS}.\\
For the lower bound we proceed similarly. Since $\sfrac1\S_6=\sfrac1\S_5-\sfrac{n_{aa}}{\S_5\S_6}$ and with the knowledge of the upper bound, we estimate a minorising process on $aa$:
\begin{align}
b_{aa}&\geq\sfrac f{4\S_5}\S_{aA,aB}^2+\sfrac f{4\S_6}n_{aa}n_{aA}\left(2-\OO(\d)\right)\geq\sfrac f{4\S_5}\S_{aA,aB}^2.\\
\intertext{The estimation of the death rate follows from Proposition \ref{aB<AB} and \eqref{delta_bound}}
d_{aa}&\leq n_{aa}(f+\D+\OO(\d)),\\
\dot{n}_{aa}&\geq\sfrac f{4\S_5}\S_{aA,aB}^2-n_{aa}(f+\D+\OO(\d)).
\end{align}
With Proposition \ref{prop-S5} we see that at the lower bound the process  increases:
\begin{align}
\dot{n}_{aa}&\geq\left(\sfrac f{4\S_5}-\sfrac f{16\bar n_B}\right)\S_{aA,aB}^2>0.
\end{align}
By Lemma \ref{lem-level}, it is left to show that  $\dot n_{aa}\geq\sfrac f{8\bar n_B(f+\D+\OO(\d))}\dot\S_{aA,aB}\S_{aA,aB}$ at the lower bound.
Thus we have to calculate a majorising process on $\S_{aA,aB}$:
\begin{align}
b_{\S_{aA,aB}}&\leq f\S_{aA,aB}+\OO\left(\S_{aA,aB}^2\right),\label{braa}
\intertext{Using Proposition \ref{prop-S5}, Proposition \ref{aB<AB} and that $\eta\leq c$ we get for the death rate}\nonumber
d_{\S_{aA,aB}}&\geq (D-\D+c\S_5)\S_{aA,aB}+n_{aA}(\D-\eta n_{BB})\\\nonumber
&\geq (f-\D(1+\OO(\D)))\S_{aA,aB}-(f-D)\S_{aA,aB}\\
&=(D-\D(1+\OO(\D)))\S_{aA,aB},\\
\dot\S_{aA,aB}&\leq(f-D+\D(1+\OO(\D)))\S_{aA,aB}+\OO\left(\S_{aA,aB}^2\right).
\end{align}
Besides we get at the lower bound using Proposition \ref{prop-S5}
\begin{align}\nonumber
&\sfrac{f(f-D+\D)}{8\bar n_B(f+\D+\OO(\d))}\S_{aA,aB}^2-\left(\sfrac{f}{4\S_5}-\sfrac f{16\bar n_B}\right)\S_{aA,aB}^2+\OO(\S_{aA,aB}^3)&\\
&=-\sfrac{f}{16\bar n_B}\sfrac{f+2D+\D(1-\OO(\D))}{f+\D+\OO(\d)}\S_{aA,aB}^2+\OO\left(\S_{aA,aB}^3\right)<0.
\end{align}
This finishes the proof of the lower bound.
\end{proof}

Let 
\begin{equation}
T_= :=\inf\{t>T_1: n_{aA}(t)=n_{aB}(t)\}.
\end{equation} 

\begin{proposition}\label{prop-T1T=}
	For all $t\in[T_1,T_=]$, 
	\begin{align}
	n_{aB}\leq n_{aA}=\OO(\e).
	\end{align}
\end{proposition}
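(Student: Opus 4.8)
The claim has two parts: the ordering $n_{aB}\le n_{aA}$, which is essentially free, and $n_{aA}=\OO(\e)$, which is the real content. For the ordering, at $t=T_1$ Corollary~\ref{phase1} gives $n_{aB}(T_1)=\OO(\e^{1-\OO(\e_0)}\e_0)$, while $n_{aA}(T_1)\ge\tfrac12\e$ because the $1/t$-decay of $n_{aA}$ imported from \cite{BovNeu} changes $n_{aA}$ by only a factor $1-o(1)$ over Phase~1, whose length $\OO(|\log\e|)$ is negligible against the $\OO(1/\e)$ time scale of that decay. Hence $n_{aB}(T_1)<n_{aA}(T_1)$, and since $T_=$ is by definition the first time after $T_1$ at which $n_{aA}$ and $n_{aB}$ coincide, continuity forces $n_{aB}<n_{aA}$ on $[T_1,T_=)$ with equality at $T_=$. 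In particular $\tfrac12\Sigma_{aA,aB}\le n_{aA}\le\Sigma_{aA,aB}$ on $[T_1,T_=]$, so it suffices to show $\Sigma_{aA,aB}=\OO(\e)$ on that interval.

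The lower bound $\Sigma_{aA,aB}\ge c\,\e$ follows at once from Lemma~\ref{lem-expS}: integrating $\dot\Sigma_{aA,aB}\ge-\OO(\D)\Sigma_{aA,aB}$ gives $\Sigma_{aA,aB}(t)\ge\Sigma_{aA,aB}(T_1)e^{-\OO(\D)(t-T_1)}$, and since $\Sigma_{aA,aB}(T_1)=\OO(\e)$ and $t-T_1\le T_=-T_1=\OO(|\log\e_0|)$ (see below), the exponential factor is a power of $\e_0$, a positive constant once $\e\to0$ is taken before $\e_0\to0$. For the upper bound I would run a majorising process for $\Sigma_{aA,aB}$, estimating $b_{\Sigma_{aA,aB}}$ and $d_{\Sigma_{aA,aB}}$ as in the proof of Lemma~\ref{lem-expS} but with the opposite inequalities, and using Proposition~\ref{prop-S5} together with $c\bar n_B=f-D+\D$ to write $f-D+\D-c\S_5=c(\bar n_B-\S_5)\le\tfrac{\D(1+\D)}{\bar n_B}n_{AA}$ and Proposition~\ref{aB<AB} to bound $-n_{aA}(\D-\eta n_{BB})\le\eta\bar n_B\Sigma_{aA,aB}$; this yields $\dot\Sigma_{aA,aB}\le\big(\tfrac{\D(1+\D)}{\bar n_B}n_{AA}+\eta\bar n_B+\OO(\d+\Sigma_{aA,aB})\big)\Sigma_{aA,aB}$. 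Integrating, $\Sigma_{aA,aB}(t)\le\Sigma_{aA,aB}(T_1)\exp\big(\OO(\D)\int_{T_1}^tn_{AA}+\eta\bar n_B(t-T_1)+o(1)\big)$, and by Lemma~\ref{3system} the triple $(AA,AB,BB)$ is an $\OO(\d)$-perturbation of the \cite{BovNeu} three-system, so $n_{AA}$ stays near $\bar n_A$ for a time $\OO(|\log\e_0|)$ and then decays like $1/t^2$; hence $\int_{T_1}^tn_{AA}=\OO(|\log\e_0|)$. Combined with $t-T_1=\OO(|\log\e_0|)$ and the bootstrapped bound $\Sigma_{aA,aB}=\OO(\e)$, the whole exponential factor is a power of $\e_0$, so $\Sigma_{aA,aB}(t)\le\OO(\e^{1-\OO(\e_0)})=\OO(\e)$.

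The hard part is the time estimate $T_=-T_1=\OO(|\log\e_0|)$. Since $(AA,AB,BB)$ behaves like the \cite{BovNeu} three-system with $B$ dominant over $A$, $n_{AB}$ grows exponentially with rate $\D$ — its invasion fitness in $AA$, see \eqref{infit} — from $\e_0$ up to order one, so within a time $\OO(|\log\e_0|)$ after $T_1$ one has $n_{BB}\ge\bar n_B/2$, say; this uses only Lemma~\ref{3system} and continuity of the solution in the $\OO(\d)$-perturbation. From that moment on, \eqref{birthratesaB} and Proposition~\ref{prop-S5} give $b_{aB}\ge f\cdot\tfrac12 n_{aA}\cdot\tfrac{n_{BB}}{\S_5}\ge\tfrac14 f n_{aA}$ and $d_{aB}\le f n_{aB}$, hence $\dot n_{aB}\ge\tfrac14 f n_{aA}-f n_{aB}$; since by the ordering and Lemma~\ref{lem-expS} the target $n_{aA}\ge\tfrac12\Sigma_{aA,aB}$ decays at most at rate $\OO(\D)$ and is therefore comparable to its own value over any $\OO(1)$-window, $n_{aB}$ is forced up to the level $n_{aA}$ within an additional time $\OO(1)$. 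This gives $T_=-T_1=\OO(|\log\e_0|)$ and closes the argument. The only genuine obstacle is precisely this estimate: the exponential factor above carries a term $e^{\eta\bar n_B(t-T_1)}$ that is harmless solely because the catch-up of $n_{aB}$ to $n_{aA}$ takes a time bounded once $\e_0$ is fixed, and establishing that forces one to interleave the \cite{BovNeu} three-system asymptotics, a lower bound on the source of $n_{aB}$, and the requirement that $n_{aA}$ stay of order $\e$ — a mild circularity resolved by running the whole comparison only up to the first exit of $n_{aA}$ from an $\OO(\e)$-window.
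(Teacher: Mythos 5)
Your treatment of the ordering $n_{aB}\le n_{aA}$ (continuity from the definition of $T_=$, plus $n_{aB}(T_1)\ll n_{aA}(T_1)$ from Phase 1) is fine, and the general idea of showing that the populations carrying an $a$ allele can only grow for a controlled time is in the right spirit. The problem is that you route everything through the time estimate $T_=-T_1=\OO(|\log\e_0|)$, and the justification you give for it does not work. From $n_{BB}\ge\bar n_B/2$ you derive $\dot n_{aB}\ge\tfrac14 f n_{aA}-f n_{aB}$ and conclude that $n_{aB}$ "is forced up to the level $n_{aA}$ within an additional time $\OO(1)$"; but that differential inequality, with $n_{aA}$ essentially frozen, only drives $n_{aB}$ up to its quasi-stationary level $\approx\tfrac14 n_{aA}$ (even keeping the discarded $n_{aB}\,n_{BB}/\S_5$ term in \eqref{birthratesaB} you only reach $\approx\tfrac12 n_{aA}$). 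It gives no crossing at all. The crossing $n_{aB}=n_{aA}$ requires the effective source coefficient $\tfrac f{\S_5}(\tfrac12 n_{AB}+n_{BB})$ (plus the $\S_6$ contribution) to exceed the effective sink coefficient $\tfrac f{\S_5}(\tfrac12 n_{AB}+n_{AA})$, i.e.\ essentially $n_{BB}\gtrsim n_{AA}$ with good constants; this is exactly what the paper establishes only later, in Lemma \ref{AA=BB} — a lemma whose proof \emph{uses} Proposition \ref{prop-T1T=}. So the circularity you call "mild" is in fact the crux: without an independent bound on $T_=-T_1$, the factor $e^{\eta\bar n_B(t-T_1)}$ in your majorisation of $\Sigma_{aA,aB}$ (an order-one rate when $\eta>0$) is uncontrolled, and $\Sigma_{aA,aB}=\OO(\e)$, hence $n_{aA}=\OO(\e)$, does not follow.

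The paper avoids this entirely and its argument is worth contrasting with yours. It majorises $n_{aA}$ directly, using Proposition \ref{prop-S5} and Lemma \ref{aa-S2} to get $\dot n_{aA}\le -n_{aA}\bigl[\tfrac f{\S_5}(\tfrac{D+\D}{f}n_{BB}+\tfrac12 n_{AB})+\D(1-\tfrac{n_{AA}}{\bar n_B})-\OO(\D^2 n_{AA})\bigr]+\tfrac f{\S_5}n_{aB}\bigl[\tfrac12 n_{AB}+n_{AA}+\OO(\d)\bigr]$, and then uses $n_{aB}\le n_{aA}$ — which is free on $[T_1,T_=]$ by the very definition of $T_=$ — together with the $(AA,AB,BB)$ asymptotics (Lemma \ref{3system} and Proposition 3.4 of \cite{CMM13}) to show the first bracket dominates the second after a time $t_0=\OO(1)$. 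Hence $n_{aA}$ is nonincreasing on $[t_0,T_=]$ \emph{however long that interval is}, and on $[T_1,t_0]$ it can grow only by a bounded factor; no estimate of $T_=-T_1$ and no $\eta$-dependent exponential ever enter. If you want to keep your $\Sigma_{aA,aB}$-based route, you would have to first prove the crossing time bound by the quasi-stationary ratio argument of Lemma \ref{AA=BB} without invoking $n_{aA}=\OO(\e)$, which is substantially harder than the paper's direct comparison; as written, the key step fails.
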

\begin{proof}
	In this time interval the newborns of genotype $aA$ are in majority produced by reproductions of a population of order one, namely $AB$ or $AA$, with the population $aA$. Since $n_{aA}$ feels  competition from a macroscopic population ($AA$, $AB$ or $BB$) the $aA$ population stays of order $\OO(\e)$.
We make this more rigorous.
To show this we consider a majorising process on $aA$ and use Proposition \ref{prop-S5}, and Lemma \ref{aa-S2}:
\begin{align}
b_{aA}&\leq fn_{aA}-\sfrac f{\S_5}n_{aA}(n_{BB}+\sfrac12n_{AB})+\sfrac f{2\S_5}n_{aB}(2n_{AA}+n_{AB})+\OO\left(\S_{aA,aB}^2\right),\\
d_{aA}&\geq n_{aA}(f+\D-\sfrac{\D}{\bar n_B}n_{AA}-\eta n_{BB}-\OO\left(\D^2n_{AA})\right),\\\nonumber
\dot{n}_{aA}&\leq-n_{aA}\left(n_{BB}\sfrac{f-\eta\S_5}{\S_5}+\sfrac f{2\S_5}n_{AB}+\D\left(1-\sfrac{n_{AA}}{\bar n_B}\right)-\OO\left(\D^2n_{AA}\right)\right)+\sfrac f{\S_5}n_{aB}(\sfrac12n_{AB}+n_{AA}+\OO(\d))\\\nonumber
&\leq-n_{aA}\left(n_{BB}\sfrac{D+\D}{\S_5}+\sfrac f{2\S_5}n_{AB}+\D\left(1-\sfrac{n_{AA}}{\bar n_B}\right)-\OO\left(\D^2n_{AA}\right)\right)+\sfrac f{\S_5}n_{aB}(\sfrac12n_{AB}+n_{AA}+\OO(\d))\\
&\leq-n_{aA}\left(\sfrac f{\S_5}\left(\sfrac{D+\D}{f}n_{BB}+\sfrac12n_{AB}\right)+\D\left(1-\sfrac{n_{AA}}{\bar n_B}
\right)-\OO\left(\D^2n_{AA}\right)\right)+\sfrac f{\S_5}n_{aB}(\sfrac12n_{AB}+n_{AA}+\OO(\d)).
\end{align}
By Proposition \ref{3system} and Proposition 3.4 in \cite{CMM13} there exists a time $t_0=\OO(1)$ such that the expression in the first bracket becomes bigger than the expression in the second bracket. Thus $n_{aA}$ decreases after $t_0$ and since $aA$ does not exceed $\OO(\e)$ until $t_0$ it will stay smaller or equal to $\OO(\e)$ until $T_=$.
\end{proof}

We show that as soon as $aB$ crosses $aA$ the $BB$ population is already bigger than or equal to the $AA$ population. First we construct a process that provide an upper bound on $aB$:

\begin{lemma}\label{ub-aB-aA}
For all $t\in[T_1,T_2]$ the $aB$ population is upper bounded by
\begin{align}
n_{aB}\leq\frac{n_{AB}+2n_{BB}+\sfrac{2\Delta}c}{n_{AB}+2n_{AA}}n_{aA}\equiv C(t)n_{aA}.
\end{align}
\end{lemma}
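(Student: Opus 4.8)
The plan is to derive this from Lemma~\ref{lem-level}, viewing it as a comparison between the trajectories of $n_{aB}$ and of $C(t)n_{aA}$. Since $n_{AB}$ and $n_{AA}$ stay strictly positive on $[T_1,T_2]$, the denominator $p:=n_{AB}+2n_{AA}$ never vanishes, so the claim is equivalent to $h(t)\le0$ on $[T_1,T_2]$, where
\[
h:=n_{aB}\,p-n_{aA}\,q,\qquad p:=n_{AB}+2n_{AA},\qquad q:=n_{AB}+2n_{BB}+\sfrac{2\D}{c}.
\]
First I would check the initial inequality: by Corollary~\ref{phase1} we have $n_{aB}(T_1)\le n_{aA}(T_1)n_{AB}(T_1)=\e_0\,n_{aA}(T_1)$, together with $n_{AA}(T_1)\le\bar n_A$ and $n_{BB}(T_1)\ge0$, so $h(T_1)\le n_{aA}(T_1)\big(\e_0(\e_0+2\bar n_A)-\sfrac{2\D}{c}\big)<0$ once $\e_0$ is small (recall $\e\to0$ before $\e_0\to0$, $\D$ fixed). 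By Lemma~\ref{lem-level} it then remains to prove $\dot h(t)<0$ whenever $h(t)=0$, $t\in[T_1,T_2]$.

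On $\{h=0\}$ one has $n_{aB}p=n_{aA}q$, hence $\Sigma_{aA,aB}\,p=n_{aA}(p+q)$, and $n_{aB}\dot p-n_{aA}\dot q=-n_{aA}\,p\,\dot C$ with $\dot p=\dot n_{AB}+2\dot n_{AA}$, $\dot q=\dot n_{AB}+2\dot n_{BB}$, so $\dot h=(\dot n_{aB}p-\dot n_{aA}q)-n_{aA}p\,\dot C$. Into the first bracket I would substitute the explicit rates \eqref{birthratesaB}, \eqref{death-ratesaB}, \eqref{birthratesaA}, \eqref{death-ratesaA}; using $\S_5\le\S_6\le\S_5+\OO(\d)$, the bound \eqref{delta_bound}, Lemma~\ref{aa-S2} (which gives $n_{aa}=\OO(\Sigma_{aA,aB}^2)$) and Proposition~\ref{prop-S5} for $\S_5$, the birth rates reduce to $b_{aB}=\sfrac{f}{\S_5}\Sigma_{aA,aB}(\sfrac12 n_{AB}+n_{BB})+\text{l.o.t.}$ and $b_{aA}=\sfrac{f}{\S_5}\Sigma_{aA,aB}(n_{AA}+\sfrac12 n_{AB})+\text{l.o.t.}$, while $d_{aA}/n_{aA}-d_{aB}/n_{aB}=cn_{aa}+\D-\eta n_{BB}$. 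The crux is then the telescoping identity
\[
\big(\sfrac12 n_{AB}+n_{BB}\big)\,p-\big(n_{AA}+\sfrac12 n_{AB}\big)\,q=-\sfrac{\D}{c}\,p,
\]
which is exactly what forces the constant $\sfrac{2\D}{c}$ in the definition of $C$; combining it with $\Sigma_{aA,aB}p=n_{aA}(p+q)$ and $\S_5=\bar n_B+\OO(\D)$ yields, on $\{h=0\}$,
\[
\dot n_{aB}p-\dot n_{aA}q=-\sfrac{f\D}{c\bar n_B}\,n_{aA}(p+q)+n_{aA}\,q\,(\D-\eta n_{BB})+\text{l.o.t.}\ \le\ n_{aA}(p+q)\,\D\Big(1-\sfrac{f}{c\bar n_B}\Big)+\text{l.o.t.},
\]
using $\eta n_{BB}\ge0$ and $q\le p+q$. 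By \eqref{infit}, $c\bar n_B=f-D+\D$, so $1-\sfrac{f}{c\bar n_B}=\sfrac{\D-D}{f-D+\D}<0$ by Assumption~\textbf{(B2)}. Finally $-n_{aA}p\,\dot C$ is non-positive at leading order: the $1/t$ and $1/t^2$ decay of $n_{AB}$ and $n_{AA}$ proved in \cite{BovNeu} (applicable here via Lemma~\ref{3system}) gives $\dot n_{AB}<0$, $\dot n_{AA}<0$ with $|\dot n_{AA}|=o(|\dot n_{AB}|)$, and $\S_5\approx\bar n_B$ forces $\dot n_{BB}\approx-\dot n_{AB}>0$, whence $\dot p<0<\dot q$ and $\dot C=(\dot q\,p-q\,\dot p)/p^2>0$. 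Collecting the estimates gives $\dot h<0$ on $\{h=0\}$ for $\D$ sufficiently small, $f$ sufficiently large and $\e_0,\d$ small, and Lemma~\ref{lem-level} concludes.

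The leading-order mechanism above (in particular the clean cancellation producing $-\sfrac{\D}{c}p$) is the heart of the matter; the genuine obstacle in turning it into a rigorous proof is the \emph{uniform} control of the discarded remainders on the whole interval $[T_1,T_2]$, whose length diverges as $\e\to0$: the $\OO(\d)$ corrections in the birth and death rates, the $n_{aa}=\OO(\Sigma_{aA,aB}^2)$ contributions, the $\OO(\D^2 n_{AA})$ terms entering through Proposition~\ref{prop-S5}, and the subleading parts of $\dot n_{AB},\dot n_{AA},\dot n_{BB}$ and of $\dot C$ must all be shown to be dominated by the strictly negative quantity $n_{aA}(p+q)\D\big(\sfrac{f}{c\bar n_B}-1\big)+n_{aA}p\,\dot C$, even while $p=n_{AB}+2n_{AA}$ shrinks (like $1/t$). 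This is where Assumptions~\textbf{(C1)}--\textbf{(C2)} and the freedom to take $\d$ (and then $\e_0$) small are used, and it is the only part of the argument requiring real care.
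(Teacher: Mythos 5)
Your strategy is the paper's strategy: apply Lemma \ref{lem-level} at a touching point of $n_{aB}$ and $C(t)n_{aA}$, and your identity $(\sfrac12 n_{AB}+n_{BB})p-(n_{AA}+\sfrac12 n_{AB})q=-\sfrac{\D}{c}\,p$ is exactly the cancellation that drives the paper's computation (there, at the boundary, the $n_{AB}+2n_{BB}$ parts of the majorising birth terms cancel and leave the decisive $-\sfrac{f\D}{c\S_5}n_{aA}$). The paper packages the endgame a bit differently -- it shows separately that $\dot n_{aB}\le 0$ and $\dot n_{aA}\ge 0$ at the boundary (both the $\D$-term and the $\eta n_{BB}$-term having the good sign for the latter) and then uses $C(t)>0$, $\dot C(t)\ge0$ -- but that is a reformulation of your inequality $\dot h<0$, not a different route.

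There are, however, two genuine gaps. First, your justification of $-n_{aA}p\,\dot C\le 0$ is invalid on part of $[T_1,T_2]$: you infer $\dot n_{AB}<0$, $\dot n_{AA}<0$ and $\dot n_{BB}\approx-\dot n_{AB}$ from the $1/t$, $1/t^2$ decay, but the interval starts with $n_{AB}(T_1)=\e_0$, and $n_{AB}$ first \emph{grows} up to its maximum $\approx\bar n_B/2$ (Proposition \ref{maxAB}), so $\dot n_{AB}>0$ on that initial stretch and your sign argument collapses exactly where it is needed. The fact you require, $\dot C\ge0$ (monotone decrease of the $A$-allele content and increase of the $B$-allele content of the effective 3-system), is true throughout, but it is precisely what the paper imports from Proposition 3.4 of \cite{CMM13} via the comparison Lemma \ref{3system}; it has to be invoked or proved, not deduced from the late-time asymptotics. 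Second, you explicitly leave the uniform remainder control as an acknowledged ``obstacle''. That control is an essential part of the proof, not an afterthought: the $\OO(\d)$, $\OO(\Sigma_{aA,aB}^2)$ and $\OO(\D^2 n_{AA})$ terms must be beaten by the small negative quantity $-n_{aA}(p+q)\,\D\,\sfrac{(D-\D)}{(f-D+\D)}$ even as $p=n_{AB}+2n_{AA}$ shrinks, and this is where the definition of $T_2$ enters (before $T_2$ one has $n_{aA}\le\d n_{AA}\le\sfrac{\d}{2}p$ and $n_{aB}\le\d n_{AB}\le\d p$, so on the touching set $\Sigma_{aA,aB}=n_{aA}(p+q)/p=\OO(\d)$ uniformly), together with Lemma \ref{aa-S2}, Proposition \ref{prop-S5} and the freedom to take $\d$ small after $\D$ and $f$ are fixed. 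The paper carries these estimates out; as written, your argument establishes only the leading-order mechanism.
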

\begin{proof} First observe that the bound is fulfilled at $t=T_1$.
Similarly to the proof of Lemma \ref{aa-S2} we estimate a majorising process on $aB$ given by:
\begin{align}
\dot{n}_{aB}&\leq-n_{aB}\left(\sfrac f{2\Sigma_5}(n_{AB}+2n_{AA})-\sfrac\Delta{\bar n_B}n_{AA}-\OO(\D^2n_{AA})\right)+n_{aA}\sfrac f{2\Sigma_5}(n_{AB}+2n_{BB}+\OO(\d)).
\end{align}
By Lemma \ref{lem-level}, we have to show that as soon as $aB$ reaches the upper bound it decreases faster than the bound, thus we calculate the slope of the majorising process at this value: 
\begin{align}\nonumber
\dot{n}_{aB}&\leq-\sfrac f{2\S_5}\left(n_{AB}+2n_{BB}+\sfrac{2\D}c-\OO(\D^2n_{AA})\right)n_{aA}+\sfrac{\D\left(n_{AB}+2n_{BB}+2\D/c\right)}{\bar n_B(n_{AB}+2n_{AA})}n_{AA}n_{aA}+\sfrac {f(n_{AB}+2n_{BB}+\OO(\d))}{2\S_5}n_{aA}\nonumber
\intertext{Using Proposition \ref{prop-S5} and that $\sfrac{n_{AA}}{n_{AB}+2n_{AA}}\leq\sfrac12$ we get}\nonumber
\dot{n}_{aB}&\leq-\sfrac{\D f-\OO(\D^2n_{AA})}{c\S_5}n_{aA}+\sfrac{\D}{\S_5}\left(\sfrac12n_{AB}+n_{BB}+\sfrac{\D}{c}\right)n_{aA}\\\nonumber
&\leq\sfrac{\D+\OO(\D^2n_{AA})+\OO(\d)}{\S_5}n_{aA}\left(\bar n_B+\sfrac\D c-\sfrac fc\right)\\
&=-\sfrac{\D+\OO(\D^2n_{AA})+\OO(\d)}{c\S_5}\left(D-2\D\right)n_{aA}\leq0.
\end{align}
We have to show that, at the upper bound, $\dot{n}_{aB}<C(t)\dot{n}_{aA}+\dot{C}(t)n_{aA}$. Since the 3-system $(AA,AB,BB)$ converges towards $(0,0,\bar n_B)$,  we know from Proposition 3.4 in \cite {CMM13} that $C(t)$ is a monotone increasing function and hence $\dot{C}(t)\geq0$. Thus if we can show that $\dot{n}_{aB}< C(t)\dot{n}_{aA}$ we are done.
For this we have to calculate the slope of the minorising process on $aA$ when $aB$ reaches the upper bound. This process is given by:
\begin{align}
\dot{n}_{aA}&\geq-n_{aA}\left(\sfrac f2+\D-\eta n_{BB}+\sfrac f{2\Sigma_5}(n_{BB}-n_{AA})+\OO(\d)\right)+n_{aB}\sfrac f{2\Sigma_5}(n_{AB}+2n_{AA}).
\end{align}
The slope at the upper bound is:
\begin{align}\nonumber
\dot{n}_{aA}&\geq-n_{aA}\left(\sfrac f2+\D-\eta n_{BB}+\sfrac f{2\Sigma_5}(n_{BB}-n_{AA})-\sfrac f{2\S_5}\left(n_{AB}+2n_{BB}+\sfrac{2\D}c+\OO(\d)\right)\right)\\\nonumber
&\geq-n_{aA}\left(\D-\eta n_{BB}-\sfrac {\D f}{c\Sigma_5}+\OO(\d)\right)\\
&\geq n_{aA}\left(\D\sfrac{D-\D}{c\S_5}+\eta n_{BB}-\OO(\d)\right)\geq0.
\end{align}
Since $C(t)>0$ this finishes the proof.
\end{proof}

\begin{lemma}
	\label{AA=BB}
	We have $T_=< T_2$. Moreover,
	\begin{align}
	n_{AA}(T_=)\leq n_{BB}(T_=)+\OO(\D).
	\end{align}
\end{lemma}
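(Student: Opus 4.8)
The plan is to prove the two assertions in turn: first $T_=<T_2$, then the bound on $n_{AA}(T_=)$, which follows cheaply from the first together with Lemma~\ref{ub-aB-aA}. The whole content is to show that the $aB$ population catches up with the $aA$ population before $T_2$.

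For $T_=<T_2$ I would argue by contradiction: suppose $T_=\ge T_2$, so that $0<n_{aB}(t)\le n_{aA}(t)$ for every $t\in[T_1,T_2]$. Proposition~\ref{prop-T1T=} then gives $n_{aA},n_{aB}=\OO(\e)$ on this interval. On the other hand, Lemma~\ref{3system} says the effective $3$-system $(AA,AB,BB)$ is an $\OO(\delta)$-perturbation of the unperturbed one, which converges to $(0,0,\bar n_B)$ exactly as in \cite{BovNeu}; together with Proposition~\ref{prop-S5} this produces a time $t_0$, independent of $\e$ (depending only on $\e_0,\D,f$), after which $n_{AA}+n_{AB}$ is as small as we wish and $\S_5=\bar n_B+o(1)$. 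For $t\in[T_1+t_0,T_2]$, using $c\bar n_B=f-D+\D$ from \eqref{infit} and the upper bound $\S_5\le\bar n_B$ of Proposition~\ref{prop-S5}, the death rate \eqref{death-ratesaB} of $aB$ satisfies $d_{aB}/n_{aB}=D-\D+c\S_5\le f$, whereas \eqref{birthratesaB} together with $n_{aB}\le n_{aA}$ gives
\[
\frac{b_{aB}}{n_{aB}}\ \ge\ \frac{f}{\S_5}\Big(\tfrac12\tfrac{n_{aA}}{n_{aB}}+1\Big)n_{BB}\ \ge\ \frac{3f}{2}\Big(1-\frac{n_{aA}+n_{AA}+n_{aB}+n_{AB}}{\S_5}\Big)\ \ge\ \frac{9f}{8},
\]
the last step holding once $\e,\delta$ are small and $t_0$ large. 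Hence $\dot n_{aB}\ge\tfrac18 f\,n_{aB}$ on $[T_1+t_0,T_2]$, so $n_{aB}$ grows exponentially there, $n_{aB}(t)\ge n_{aB}(T_1+t_0)\,e^{f(t-T_1-t_0)/8}$. Since $n_{aB}(T_1+t_0)$ is bounded below by a fixed power of $\e$ — indeed $n_{aB}(T_1)\ge\OO(\e^2)$, because in Phase~1 the $aB$ newborns are produced by reproductions of $aA$ with $AB$ (cf.\ the proof of Proposition~\ref{prop-bounds}), and $n_{aB}$ can be divided by at most $e^{\OO(1)t_0}$ on $[T_1,T_1+t_0]$ — the population $n_{aB}$ reaches the level $\OO(\e)$, hence overtakes $n_{aA}$, within time $T_1+t_0+\OO(|\log\e|/f)$. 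But $T_2-T_1$ diverges much faster than $|\log\e|$ as $\e\to0$ (this is guaranteed throughout Phase~2; see the discussion following Lemma~\ref{aa-S2}), so for $\e$ small this time lies in $[T_1,T_2]$, contradicting $n_{aB}\le n_{aA}$ there. Hence $T_=<T_2$.

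Given $T_=<T_2$, Lemma~\ref{ub-aB-aA} applies at $t=T_=$: $n_{aB}(T_=)\le C(T_=)n_{aA}(T_=)$ with $C(t)=\tfrac{n_{AB}+2n_{BB}+2\D/c}{n_{AB}+2n_{AA}}$. Because $n_{aB}(T_=)=n_{aA}(T_=)>0$ by definition of $T_=$, this forces $C(T_=)\ge1$, i.e.\ $n_{AB}(T_=)+2n_{BB}(T_=)+2\D/c\ge n_{AB}(T_=)+2n_{AA}(T_=)$, which is exactly $n_{AA}(T_=)\le n_{BB}(T_=)+\D/c=n_{BB}(T_=)+\OO(\D)$.

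I expect the first part to be the real obstacle. The subtle points are that the delay $t_0$ depends on $\e_0$ (so one must keep $\D$ small and $f$ large fixed and only afterwards send $\e_0,\e\to0$, trading on the fact that the catch-up time is only logarithmic in $1/\e$ while $T_2-T_1$ is polynomial in $1/\e$), and that the lower bound on $n_{aB}(T_1+t_0)$ and the upper bound $n_{aA}=\OO(\e)$ must come with constants uniform in $\e$ — this is where Propositions~\ref{prop-T1T=} and \ref{prop-bounds}, Lemma~\ref{lem-expS} and the Phase-1 estimates are needed.
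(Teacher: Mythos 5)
Your second step (deducing $n_{AA}(T_=)\le n_{BB}(T_=)+\OO(\D)$ from Lemma \ref{ub-aB-aA} evaluated at $t=T_=$, where $n_{aB}=n_{aA}$ forces $C(T_=)\ge1$) is exactly the paper's argument. For the first step your overall strategy --- keep $n_{aB}\le n_{aA}=\OO(\e)$ via Proposition \ref{prop-T1T=}, let the effective $3$-system settle near $(0,0,\bar n_B)$ after a constant time $t_0$, show $n_{aB}$ then catches up quickly, and compare with the divergence of $T_2$ --- is also the paper's, but your quantitative mechanism has a genuine gap: the exponential-growth argument needs a lower bound on $n_{aB}(T_1+t_0)$ by a fixed power of $\e$, and the bound $n_{aB}(T_1)\ge\OO(\e^2)$ you invoke is nowhere established. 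The proof of Proposition \ref{prop-bounds} that you cite yields only \emph{upper} bounds on $n_{aB}$ (namely $n_{aB}\le n_{aA}n_{AB}$); no lower bound on $n_{aB}$ (nor on $n_{aA}$) during Phase 1 appears in the paper, and since $n_{aB}(0)=0$ such a bound would require an extra comparison argument (lower bounds on $n_{aA}$ and $n_{AB}$ in Phase 1, then integration of $\dot n_{aB}\ge -Cn_{aB}+c'\,n_{aA}n_{AB}$) that you would have to supply.

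The gap is avoidable, and the paper's route shows how: instead of discarding the cross term in \eqref{birthratesaB}, keep the contribution $f\,n_{aA}\bigl(\tfrac12 n_{AB}+n_{BB}\bigr)/\S_5$. Once $n_{BB}$ is macroscopic (after the constant time $t_0$) and while $n_{aB}\le n_{aA}=\OO(\e)$, this gives $\dot n_{aB}\ge \sfrac f{\S_5}\,n_{aA}\,(n_{BB}-n_{AA})-\OO(\e^2)\ge\OO(\e)$, so $n_{aB}$ reaches the level $\OO(\e)$ of $n_{aA}$ in a time of order $\OO(1)$; no lower bound on $n_{aB}$ itself is needed, and the catch-up time is even shorter than your $\OO(|\log\e|)$. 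One further point to tighten: the divergence of $T_2$ cannot be quoted as already ``guaranteed throughout Phase 2'' (unconditionally it is only proved later, in Theorem \ref{thm-T2}); inside the contradiction hypothesis you must derive it as the paper does, from $n_{aA},n_{aB}=\OO(\e)$ together with the $1/t$ and $1/t^2$ decay of $n_{AB}$ and $n_{AA}$, which makes $T^{aA=\delta AA}$ and $T^{aB=\delta AB}$ polynomially large in $1/\e$, while $T^{aa=aA\wedge aB}$ is excluded by Lemma \ref{aa-S2}.
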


\begin{proof}
	We first show that $T_=< T_2$. Using Proposition \ref{prop-S5} we construct two processes that provide an upper bound and a lower bound on $n_{aB}$. For the birth rates of these processes we use that until time $T_=$, $n_{aB}\leq n_{aA}=\OO(\e)$ (Proposition \ref{prop-T1T=}) and Lemma \ref{aa-S2}:
	\begin{align}
	b_{aB}&\geq f n_{aB}-\sfrac f{\S_5}n_{aB}(\sfrac12n_{AB}+n_{AA}+\OO(\e))+\sfrac f{\S_5}n_{aA}(\sfrac12n_{AB}+n_{BB}-\OO(\e^2)),\\
	b_{aB}&\leq f n_{aB}-\sfrac f{\S_5}n_{aB}(\sfrac12n_{AB}+n_{AA})+\sfrac f{\S_5}n_{aA}(\sfrac12n_{AB}+n_{BB}+\OO(\e^2)).
	\intertext{For the death rates we use Proposition \ref{prop-S5}:}
	d_{aB}&\leq n_{aB}f,\\
	d_{aB}&\geq n_{aB}(f-\sfrac{\D}{\bar n_B}n_{AA}-\OO(\D^2n_{AA})),\\
	\dot n_{aB}&\leq-n_{aB}\left(\frac {f(\sfrac12n_{AB}+n_{AA})}{\S_5}-\sfrac{\D}{\bar n_B}n_{AA}-\OO(\D^2n_{AA})\right)+n_{aA}\frac{f(\sfrac12n_{AB}+n_{BB}+\OO(\e^2))}{\S_5},\\
	\dot n_{aB}&\geq-n_{aB}\frac {f(\sfrac12n_{AB}+n_{AA}+\OO(\e))}{\S_5}+n_{aA}\frac {f(\sfrac12n_{AB}+n_{BB}-\OO(\e^2))}{\S_5}.\label{majo-aB}
	\end{align}
We first show that $T_=<\infty$.  We know that the 3-system $(AA,AB,BB)$ converges to $(0,0,\bar n_B)$ and that $n_{aB}\leq n_{aA}=\OO(\e)$ (Proposition \ref{prop-T1T=}), for $t\leq T_=$. If we assume that $n_{aB}<n_{aA}$, then  \eqref{majo-aB} implies that at some time $t_0$, where $n_{AB}+2n_{BB}$ is already macroscopic, we have
\begin{align}
\dot{n}_{aB}\geq\OO(\e), \quad n_{aB}\geq\OO(\e)t.
\end{align}
Thus the time  $n_{aB}$ needs to reach $n_{aA}=\OO(\e)$ is of order $\OO(1)$. This time is shorter than $T_{aA=\delta AA}$. Indeed, suppose the contrary, then by Proposition \ref{prop-T1T=} $n_{aA}$ does not exceed $\OO(\e)$ before $T_2$, and thus $T^{aA=\delta AA}\geq T^{AA}_{\OO(\e/\delta)}=\OO\left((\delta/\e)^2\right)$ which diverges with $\e$. A similar reasoning shows that $T_=<T^{aB=\delta AB}$.  Hence $T_=<T_2$.\\
It is left to show that $n_{AA}(T_=)\leq n_{BB}(T_=)+\OO(\D)$.
From Lemma \ref{ub-aB-aA} we deduce that at $T_=$, 
	\begin{align}		
	\sfrac12n_{AB}+n_{AA}&\leq\sfrac12n_{AB}+n_{BB}+\sfrac{\D}c\nonumber\\
	n_{AA}&\leq n_{BB}+\OO(\D).
	\end{align}
\end{proof}

\begin{lemma}
\label{bound-AB}
For all $t\in[T_1,T_2]$ the $AB$ population is  bounded by
\begin{enumerate}
\item
$n_{AB}\geq2\sqrt{\bar n_Bn_{AA}}-2n_{AA}\left(1+\sfrac\D{c\bar n_B}\right)$,
\item
$n_{AB}\leq2\sqrt{\bar n_Bn_{AA}\left(1+\sfrac\D f\right)}-2n_{AA}$.
\end{enumerate}
\end{lemma}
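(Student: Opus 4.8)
The plan is to prove each of the two inequalities with the barrier lemma (Lemma~\ref{lem-level}), applied to a suitably chosen squared quantity. Put $P(t)=\tfrac12 n_{AB}(t)+n_{AA}(t)$ and, for a parameter $\theta\ge0$,
\[
\phi_\theta(t)=\frac{\bigl(P(t)+\theta\,n_{AA}(t)\bigr)^2}{n_{AA}(t)} .
\]
Since $b_{AA}\ge\tfrac f{4\S_5}n_{AB}^2>0$ as long as $n_{AB}>0$, the coordinates $n_{AB},n_{BB},n_{AA}$ stay strictly positive on $[T_1,T_2]$, so $\phi_\theta$ is well defined and differentiable there. Squaring the two claimed bounds and using that their right‑hand sides are non‑negative, statement~(1) is equivalent to $\phi_{\D/(c\bar n_B)}(t)\ge\bar n_B$ and statement~(2) to $\phi_0(t)\le\bar n_B(1+\D/f)$ for all $t\in[T_1,T_2]$. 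I would first check these at $t=T_1$: inserting $n_{AB}(T_1)=\e_0$ and $n_{AA}(T_1)\in[\bar n_A-\OO(\e_0),\bar n_A]$ (Corollary~\ref{phase1}) and using $\bar n_B=\bar n_A+\D/c$, both reduce after expanding the square to elementary inequalities among $\bar n_A,\bar n_B,f,\D$ which hold for $\D$ and $\e_0$ sufficiently small (Assumptions~(C1),(C2)).

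The core of the argument is the sign of $\dot\phi_\theta$ at the barrier. From the explicit rates \eqref{birthratesAA},\eqref{birthratesAB},\eqref{death-ratesAA},\eqref{death-ratesAB}, together with $\S_6=\S_5+n_{aa}$, $n_{aa}=\OO(\delta^2n_{AA}^2)$ (Lemma~\ref{aa-S2}) and $n_{aA}\le\delta n_{AA}$, $n_{aB}\le\delta n_{AB}$ valid on $[T_1,T_2]$, one obtains
\[
b_{AA}=\tfrac f{\S_5}P^2+r_1,\qquad b_{AB}=\tfrac{2f}{\S_5}P\,(\S_5-P)+r_2,
\]
the second using $\tfrac12 n_{AB}+n_{BB}=\S_5-P-(n_{aA}+n_{aB})$, with $r_1,r_2$ controlled by Lemma~\ref{3system}. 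The decisive point is that in $\dot P=\tfrac12\dot n_{AB}+\dot n_{AA}$ the $P^2$‑terms cancel, leaving the clean identity
\[
\dot P=P\,(f-D-c\S_5)+\tfrac\D2 n_{AB}+r_3 .
\]
Inserting $c\S_5=c\bar n_B-\tfrac\D{\bar n_B}n_{AA}+\OO(\D^2n_{AA})$ from Proposition~\ref{prop-S5}, $c\bar n_B=f-D+\D$ and $n_{AB}=2P-2n_{AA}$ turns this into $\dot P=\D n_{AA}\bigl(\tfrac P{\bar n_B}-1\bigr)+\OO(\D^2n_{AA})+r_3$. Differentiating $\phi_\theta$, substituting its barrier value (which fixes $P$ in terms of $n_{AA}$, hence makes $n_{AB}=\OO(\sqrt{n_{AA}})$ at the instant of crossing) and bounding the remainders via Proposition~\ref{aB<AB} ($n_{AA}\le\bar n_A$, $n_{BB}\le\bar n_B$), a tedious expansion to first order in $\D$ gives $\dot\phi_{\D/(c\bar n_B)}\ge\D\bar n_B-(\text{err})>0$ at $\phi=\bar n_B$ and $\dot\phi_0\le-2\D\sqrt{\bar n_B(1+\D/f)\,n_{AA}}+(\text{err})<0$ at $\phi=\bar n_B(1+\D/f)$. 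Lemma~\ref{lem-level} then yields $\phi_{\D/(c\bar n_B)}\ge\bar n_B$ and $\phi_0\le\bar n_B(1+\D/f)$ on all of $[T_1,T_2]$, which are exactly the two asserted bounds.

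The main obstacle is precisely this last expansion: because the $\D$‑independent part of $\dot P$ cancels, the sign of $\dot\phi_\theta$ at the barrier is decided by $\OO(\D)$‑terms, so $\S_5$ and the birth rates $b_{AA},b_{AB}$ must be developed one order past leading and every remainder kept. The genuinely delicate verification is that the perturbative errors $r_1,r_2,r_3$ coming from the $aa,aA,aB$ populations — estimated through Lemma~\ref{3system}, \eqref{delta_bound}, $n_{aA}\le\delta n_{AA}$ and Lemma~\ref{aa-S2} — together with the $\OO(\D^2n_{AA})$ slack in Proposition~\ref{prop-S5}, are dominated by the surviving $\D\bar n_B$ (resp.\ $\D\sqrt{n_{AA}}$) terms. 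This forces $\delta$ to be taken small relative to $\D/f$, and — crucially near $t=T_2$, where $n_{AA}$ is already tiny — it uses that at the moment the barrier is reached one may replace $n_{AB}$ by its barrier value $\OO(\sqrt{n_{AA}})$, which makes these error terms of order at most $\delta\,n_{AA}^{3/2}$ rather than $\delta$.
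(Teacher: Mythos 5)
Your plan is correct, and its central claims check out: the reformulation of the two bounds as $\phi_{\D/(c\bar n_B)}\ge\bar n_B$ and $\phi_0\le\bar n_B(1+\D/f)$ is an exact restatement, the cancellation giving $\dot P=P(f-D-c\S_5)+\tfrac\D2 n_{AB}+r_3$ is an identity of the unperturbed $(AA,AB,BB)$ rates, and the resulting leading terms at the two barriers ($\ge\D\bar n_B$ resp. $\le-2\D\sqrt{\bar n_B n_{AA}}$, up to errors of order $\delta\sqrt{n_{AA}}+\D^2$) have the signs you claim, so Lemma \ref{lem-level} closes the argument exactly as you describe. The route is, however, organized differently from the paper's. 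The paper runs the barrier argument directly on the curves $n_{AB}=2\sqrt{\bar n_Bn_{AA}}-2n_{AA}(1+\sfrac\D{c\bar n_B})$ and $n_{AB}=2\sqrt{\bar n_Bn_{AA}(1+\sfrac\D f)}-2n_{AA}$: it builds a minorising (resp.\ majorising) process for $n_{AB}$ and a majorising (resp.\ minorising) one for $n_{AA}$, checks the sign of each at the contact point, and then compares $\dot n_{AB}$ with the time derivative of the bound — the same template as Lemmas \ref{aa-S2} and \ref{ub-aB-aA}, with Proposition \ref{prop-S5} and the $T_1$ data from Corollary \ref{phase1} as the only inputs. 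Your version trades this coordinate-wise bookkeeping for the allele-count variable $P=\tfrac12n_{AB}+n_{AA}$ and the single scalar $\phi_\theta=(P+\theta n_{AA})^2/n_{AA}$; the payoff is that the $\OO(f)$ parts of the rates cancel identically in $\dot P$, so the $\OO(\D)$ terms that decide the sign are isolated from the start rather than extracted from two separate comparison processes. The price is that the $1/n_{AA}$ factor in $\phi_\theta$ still couples in $\dot n_{AA}$, so at the barrier you cannot get away with the crude bound $|\dot n_{AA}|=\OO(f n_{AA})$ (that would ruin the $\theta\dot n_{AA}$ and $-\bar n_B\dot n_{AA}/n_{AA}$ terms): you need the same two-sided, $\OO(\D n_{AA}+\delta n_{AA}^{3/2})$-precise control of $\dot n_{AA}$ at the contact point that the paper obtains from its majorising/minorising processes, using $\S_5\le\bar n_B$ and Lemma \ref{aa-S2} for the death rate. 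Make that step explicit when you write the "tedious expansion", and the proof is complete; your remark that at the barrier $n_{AB}=\OO(\sqrt{n_{AA}})$, which keeps all $a$-perturbations at order $\delta n_{AA}^{3/2}$ even when $n_{AA}$ is tiny near $T_2$, is exactly the right observation and is implicitly what the paper relies on as well.
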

\begin{proof}
\item[(1)] The proof works like the one of Lemma \ref{aa-S2}. First observe that the bound holds at $t=T_1$ which can be proven by using Corollary \ref{phase1} and Proposition \ref{prop-S5}.
Then we calculate a minorising process on $AB$:
\begin{align}
	b_{AB}&\geq f(2n_{AA}+n_{AB})-\sfrac f{\S_5}(2n_{AA}+n_{AB})(n_{AA}+\sfrac12n_{AB}+\OO(\d)),\\
	d_{AB}&\leq fn_{AB},\\
	\dot n_{AB}&\geq-n_{AB}\sfrac f{\S_5}\left(\sfrac12n_{AB}+n_{AA}+\OO(\d)\right)+2fn_{AA}-\sfrac{2f}{\S_5}n_{AA}\left(\sfrac12n_{AB}+n_{AA}+\OO(\d)\right). 
	\end{align}
We use Proposition \ref{prop-S5} and show that this minorising process would increase quicker than the lower bound if $AB$  reaches it:
\begin{align}
\dot n_{AB}\geq&-\sfrac {2f}{\S_5}\left(\sqrt{\bar n_Bn_{AA}}-n_{AA}\left(1+\sfrac\D{c\bar n_B}\right)\right)\left(\sqrt{\bar n_Bn_{AA}}-\sfrac\D{c\bar n_B}n_{AA}+\OO(\d)\right)\nonumber\\\nonumber
&+2fn_{AA}-\sfrac{2f}{\S_5}n_{AA}\left(\sqrt{\bar n_Bn_{AA}}-\sfrac\D{c\bar n_B}n_{AA}+\OO(\d)\right)\\
\geq&\sfrac{2f}{\S_5}\sfrac\D{c\bar n_B}n_{AA}\left(2 \sqrt{\bar n_Bn_{AA}}-\sfrac{\D}{c\bar n_B}n_{AA}+\OO(\d)\right)>0.
\end{align}
From Lemma \ref{lem-level} it is left to show that at the lower bound,
\begin{align}\label{lb-AB}
\dot{n}_{AB}\geq\frac{\bar n_B\dot{n}_{AA}}{\sqrt{\bar n_Bn_{AA}}}-2\dot{n}_{AA}\left(1+\sfrac\D{c\bar n_B}\right).
\end{align}
For this we calculate a majorising process on $AA$ using Proposition \ref{prop-S5} and \eqref{delta_bound}:
\begin{align}
b_{AA}&\leq\sfrac f{\S_5}n_{AA}(n_{AA}+n_{AB})+\sfrac f{4\S_5}n_{AB}^2+\OO(\d),\\
d_{AA}&\geq (f-\OO(\D^2))n_{AA},\\
\dot{n}_{AA}&\leq-n_{AA}\left(f-\sfrac f{\S_5}(n_{AA}+n_{AB})-\OO(\D^2)\right)+\sfrac f{4\S_5}n_{AB}^2+\OO(\d).
\end{align}
If we now insert the lower bound and use Proposition \ref{prop-S5} we get, 
\begin{align}
\dot{n}_{AA}\leq-\sfrac{2f}{\S_5}\sfrac\D{c\bar n_B}n_{AA} \sqrt{\bar n_Bn_{AA}}+\OO(\D^2)<0.
\end{align}
Thus \eqref{lb-AB} is fulfilled. 
\item[(2)] First, observe that the upper bound is fulfilled at $t=T_1$. We then have to estimate a majorising process on $AB$. For the birth rate we use \eqref{delta_bound} and for the death rate we use Proposition \ref{prop-S5}.
\begin{align}
	b_{AB}&\leq f(2n_{AA}+n_{AB})-\sfrac f{\S_5}(2n_{AA}+n_{AB})(n_{AA}+\sfrac12n_{AB})+\OO(\d),\\
	d_{AB}&\geq n_{AB}(D-\D+c\bar n_B-\sfrac{\D}{\bar n_B}n_{AA}-\OO(\D^2n_{AA}))\\
	&= n_{AB}(f-\sfrac{\D}{\bar n_B}n_{AA}-\OO(\D^2n_{AA})),\\
	\dot n_{AB}&\leq-\sfrac f{2\bar n_B}n_{AB}^2-n_{AB}\sfrac{2f-\D}{\bar n_B}n_{AA}+2fn_{AA}-\sfrac{2f}{\bar n_B}n_{AA}^2+\OO(\D^2n_{AA}). 
	\end{align}
	As before we calculate the slope of this majorising process if it would reach the upper bound:
	\begin{align}
	\dot{n}_{AB}\leq-\sfrac{2\D}{\bar n_B}n_{AA}^2+\OO(\D^2n_{AA})<0.
	\end{align}
	By Lemma \ref{lem-level} we have to show that 
	\begin{align}
	\dot{n}_{AB}<\dot{n}_{AA}\left(\sfrac{\bar n_B\left(1+\D/f\right)}{\sqrt{\bar n_Bn_{AA}\left(1+\D/f\right)}}-2\right).
	\end{align}
	For this we calculate the slope of a minorising process on $AA$:
	\begin{align}
	b_{AA}&\geq\sfrac f{\S_5}n_{AA}(n_{AA}+n_{AB})+\sfrac f{4\S_5}n_{AB}^2-\OO(\d^2).
	\intertext{Using Proposition \ref{prop-S5} the death rate is bounded by:}
	d_{AA}&\leq n_{AA}(f+\D+\OO(\d^2)),\\
	\dot{n}_{AA}&\geq-n_{AA}\left(f-\sfrac f{\S_5}(n_{AA}+n_{AB})+\D+\OO(\d^2)\right)+\sfrac f{4\S_5}n_{AB}^2.
	\end{align}
	At the upper bound $AA$ would start to increases:
	\begin{align}
	\dot{n}_{AA}\geq\sfrac\D{\bar n_B}n_{AA}^2-\OO(\d^2)>0.
	\end{align}
	Thus we get
	\begin{align}
	\dot{n}_{AA}\left(\sfrac{\bar n_B\left(1+\D/f\right)}{\sqrt{\bar n_Bn_{AA}\left(1+\D/f\right)}}-2\right)-\dot{n}_{AB}\geq\sfrac{\D\left(1+\D/f\right)}{\sqrt{\bar n_Bn_{AA}\left(1+\D/f\right)}}n_{AA}^2-\OO(\D^2n_{AA})>0.
	\end{align}
This finishes the proof of (2).
\end{proof}

The following Proposition is a statement for the 3-system $(AA,AB,BB)$ but it remains valid until
time  $T_2$ in the 6-system $(aa,aA,AA,aB,AB,BB)$, for $\d<\D$.
\begin{proposition}
	\label{maxAB}
	The maximal value $n_{AB}^{max}$ of $n_{AB}$ in $[T_1,T_2]$ is bounded by
	\begin{align}
	\frac{\bar n_B}2-\OO(\D)\leq n_{AB}^{max}\leq\frac{\bar n_B}2+\OO(\D).
	\end{align}
	Moreover, let $T_{AB}^{max}$ be the time when $n_{AB}$ takes its maximum, then $n_{AA}$ and $n_{BB}$ are bounded by
	\begin{align}
	\frac{\bar n_B}4-\OO(\D)\leq n_{AA}(T_{AB}^{max})\leq \frac{\bar n_B}4+\OO(\D),\\
	\frac{\bar n_B}4-\OO(\D)\leq n_{BB}(T_{AB}^{max})\leq \frac{\bar n_B}4+\OO(\D).
	\end{align}
\end{proposition}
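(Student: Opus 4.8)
The whole argument takes place in the $6$-system on the interval $[T_1,T_2]$, where two earlier results do most of the work. Proposition~\ref{prop-S5} together with $n_{aA},n_{aB}=\OO(\delta)$ (recall $\delta<\Delta$) gives the sum relation
\[
n_{AA}+n_{AB}+n_{BB}=\bar n_B+\OO(\Delta)\qquad\text{on }[T_1,T_2],
\]
and Lemma~\ref{bound-AB} pins $n_{AB}$ to within $\OO(\Delta)$ of $\psi(n_{AA})$, where $\psi(x):=2\sqrt{\bar n_B\,x}-2x$ (Lemma~\ref{bound-AB}(1) being the lower side and Lemma~\ref{bound-AB}(2) the upper side after expanding the square root). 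The function $\psi$ is strictly concave on $[0,\infty)$, with unique maximum $\psi(\bar n_B/4)=\bar n_B/2$. This is the whole point: the trajectory of $(n_{AA},n_{AB},n_{BB})$ stays close to the Hardy--Weinberg curve, and $n_{AB}$ is maximal when $n_{AA}$ crosses $\bar n_B/4$.

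\emph{Bound on $n_{AB}^{max}$.} For the upper bound, Lemma~\ref{bound-AB}(2) gives $n_{AB}(t)\le 2\sqrt{\bar n_B\,n_{AA}(t)(1+\Delta/f)}-2n_{AA}(t)$ for every $t\in[T_1,T_2]$, and the right-hand side, maximised over $n_{AA}(t)\ge0$, equals $\tfrac12\bar n_B(1+\Delta/f)=\bar n_B/2+\OO(\Delta)$; hence $n_{AB}^{max}\le\bar n_B/2+\OO(\Delta)$. For the lower bound, $n_{AA}$ is continuous on $[T_1,T_2]$ with $n_{AA}(T_1)=\bar n_A-\OO(\e_0)>\bar n_B/4$ (for $\Delta,\e_0$ small), while $T_2$ diverges as $\e\to0$ and the $3$-system $(AA,AB,BB)$ relaxes to $(0,0,\bar n_B)$ by \cite{BovNeu}, so $n_{AA}$ falls below $\bar n_B/4$ before $T_2$. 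By the intermediate value theorem pick $t_1\in[T_1,T_2]$ with $n_{AA}(t_1)=\bar n_B/4$; Lemma~\ref{bound-AB}(1) then gives
\[
n_{AB}(t_1)\ \ge\ 2\sqrt{\bar n_B\cdot\tfrac{\bar n_B}{4}}-2\cdot\tfrac{\bar n_B}{4}\Bigl(1+\tfrac{\Delta}{c\bar n_B}\Bigr)\ =\ \tfrac{\bar n_B}{2}-\OO(\Delta),
\]
so $n_{AB}^{max}\ge n_{AB}(t_1)\ge\bar n_B/2-\OO(\Delta)$.

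\emph{Values at $T_{AB}^{max}$.} Evaluate three relations at $T_{AB}^{max}$: $n_{AB}=n_{AB}^{max}=\bar n_B/2+\OO(\Delta)$; the sum relation, giving $n_{AA}+n_{BB}=\bar n_B-n_{AB}+\OO(\Delta)=\bar n_B/2+\OO(\Delta)$; and the first-order condition $\dot n_{AB}=0$. Writing $b_{AB}=d_{AB}$ from \eqref{birthratesAB} and \eqref{death-ratesAB}, discarding the $\OO(\delta)$ contributions of $n_{aa},n_{aA},n_{aB}$, using $\Sigma_6=\Sigma_5+\OO(\delta)$ and $\Sigma_5=\bar n_B+\OO(\Delta)$, and substituting the sum relation, the stationarity identity collapses to the perturbed Hardy--Weinberg relation $n_{AB}^2=4\,n_{AA}n_{BB}+\OO(\Delta)$. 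With $n_{AB}=\bar n_B/2+\OO(\Delta)$ this forces $n_{AA}n_{BB}=\bar n_B^2/16+\OO(\Delta)$, and together with $n_{AA}+n_{BB}=\bar n_B/2+\OO(\Delta)$ the pair $(n_{AA},n_{BB})$ is pinned near the double root of $\lambda^2-\tfrac{\bar n_B}2\lambda+\tfrac{\bar n_B^2}{16}=(\lambda-\tfrac{\bar n_B}4)^2$; hence $n_{AA}(T_{AB}^{max})=\bar n_B/4+\OO(\Delta)$, and then $n_{BB}(T_{AB}^{max})=\bar n_B-n_{AA}-n_{AB}+\OO(\Delta)=\bar n_B/4+\OO(\Delta)$.

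\emph{Main obstacle.} The delicate step is controlling the error in the last paragraph at the stated order, because the maximum of $n_{AB}$ is degenerate: at $\Delta=0$ the system carries a whole line of fixed points joining $p_A$ to $p_B$ (Hardy--Weinberg), of which the actual trajectory is only a slow $\OO(\Delta)$-drift, so at $T_{AB}^{max}$ one has not merely $\dot n_{AB}=0$ but also $\dot n_{AA},\dot n_{BB}=\OO(\Delta)$. Inferring "$n_{AA}$ close to $\bar n_B/4$'' from "$n_{AB}$ close to its maximal value'' by concavity of $\psi$ alone would only give $\OO(\sqrt{\Delta})$ (a flat maximum), and even the solve-for-roots step above is on the edge; getting the sharp $\OO(\Delta)$ requires the center-manifold description of the $3$-system $(AA,AB,BB)$ as in \cite{BovNeu} (the $\Delta=0$ line of fixed points becoming an invariant line under the small perturbation), along which $n_{AB}=\psi(n_{AA})+\OO(\Delta)$ can be sharpened. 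This is precisely why the proposition is stated at the level of the $3$-system and proved there first, the passage to the full $6$-system on $[T_1,T_2]$ being a soft $\OO(\delta)$-perturbation built on Lemma~\ref{3system}. The remaining verifications (that the $\delta$- and $\e_0$-dependent remainders are genuinely negligible in the iterated limit $\e\to0$, then $\e_0\to0$, then $\delta\to0$ with $\Delta$ fixed small) are routine bookkeeping.
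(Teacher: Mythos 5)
Your handling of the first claim is essentially the paper's own argument: both proofs rest on the sandwich of Lemma \ref{bound-AB} and on maximising the two bounding curves $x\mapsto 2\sqrt{\bar n_B x}-2x\bigl(1+\sfrac\D{c\bar n_B}\bigr)$ and $x\mapsto 2\sqrt{\bar n_B x(1+\sfrac\D f)}-2x$, whose maxima are $\sfrac{\bar n_B}2\pm\OO(\D)$ attained at $x=\sfrac{\bar n_B}4\pm\OO(\D)$; your intermediate-value argument that $n_{AA}$ actually passes through $\bar n_B/4$ during $[T_1,T_2]$ is more explicit than the paper, which inserts the critical value into the lower bound without comment. For the second claim you take a genuinely different route: the paper simply identifies the maximiser of the bounding curves with $n_{AA}(T_{AB}^{max})$ (and gets $n_{BB}$ from Proposition \ref{prop-S5}), whereas you combine the sum relation with the stationarity condition $\dot n_{AB}=0$ to obtain the perturbed Hardy--Weinberg relation $n_{AA}n_{BB}=\sfrac14 n_{AB}^2+\OO(\D)$ -- a relation the paper in fact controls throughout Phase 2 via the quantity $\chi=n_{AA}n_{BB}-\sfrac14n_{AB}^2$ inside the proof of Proposition \ref{prop-S5}, so stationarity is not even needed for it -- and then solve the symmetric system. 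You are right that both your route and the naive concavity argument pin $n_{AA}(T_{AB}^{max})$ only to $\sfrac{\bar n_B}4\pm\OO(\sqrt\D)$, because the maximum of the Hardy--Weinberg parabola is flat (equivalently, the quadratic has a near-double root); and you are also right that the paper's written proof does not do better: it asserts the $\OO(\D)$ location by conflating the argmax of the bounding curve with the value of $n_{AA}$ at the time $n_{AB}$ is maximal, which is exactly the step your $\OO(\sqrt\D)$ objection targets. Since you only gesture at the center-manifold sharpening without carrying it out, your proof of the second display is incomplete at the stated $\OO(\D)$ precision, but it is incomplete precisely where the published proof is silent, and the $\OO(\sqrt\D)$ version you do establish would still serve the downstream uses (Corollary \ref{LBS}, Proposition \ref{phase2}, Theorem \ref{thm-T2}) up to an $\OO(\sqrt\D)$ adjustment of the constant in the threshold $\eta^\star=\sfrac{4\D}{\bar n_B}$.
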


\begin{proof}
From Lemma \ref{bound-AB} (1) we get that
\begin{align}
n_{AB}\geq2\sqrt{\bar n_Bn_{AA}}-2n_{AA}\left(1+\sfrac\D{c\bar n_B}\right).
\end{align}
We look for the value of $AA$ where the expression on the right hand side takes on its minimum, thus we have to derivate $n_{AA}$ and set it to zero:
	\begin{align}
	\frac{\bar n_B}{\sqrt{\bar n_Bn_{AA}}}-\left(2+\sfrac{2\D}{c\bar n_B}\right)&=0\\
	\bar n_B^2&=\left(4+8\sfrac\D{c\bar n_B}+\OO(\D^2)\right)\bar n_Bn_{AA}\\
	\frac{\bar n_B}4-\OO(\D)&=n_{AA}.
	\end{align}

	If we insert this in $n_{AB}$  we get the lower bound:
	\begin{align}
	n_{AB}\geq\sfrac{\bar n_B}2+\OO(\D).
	\end{align}
	For the upper bound on $n_{AB}$ we proceed similarly.
	From Lemma \ref{bound-AB} (2) we get 
	\begin{align}
n_{AB}\leq2\sqrt{\bar n_Bn_{AA}\left(1+\sfrac\D f\right)}-2n_{AA}.
\end{align}
Setting the derivation of the rhs to zero gives:
\begin{align}
0&=\sfrac{\bar n_B\left(1+\sfrac\D f\right)}{\sqrt{\bar n_Bn_{AA}\left(1+\sfrac\D f\right)}}-2\\
n_{AA}&=\frac{\bar n_B}4+\OO(\D).
\end{align}
Finally we get
	\begin{align}
	\label{davor}
	n_{AB}\leq\sfrac{\bar n_B}2-\OO(\D)\quad \text{ and } \quad
	n_{AA}=\sfrac{\bar n_B}4-\OO(\D).
	\end{align}
\end{proof}
\begin{remark}
	Note that by Proposition \ref{prop-S5}, \eqref{davor} and \eqref{delta_bound} $n_{AA}=n_{BB}\pm\OO(\D)=\sfrac{\bar n_B}4\pm\OO(\D)$ as soon as $n_{AB}$ reaches its maximal value.
\end{remark}

\begin{proposition}\label{phase2} 
\label{aA<e} For all $t\in[T_1,T_2]$, 
		\begin{align}
		n_{aA}\leq\OO(\e)\vee n_{aB}.
		\end{align}
\end{proposition}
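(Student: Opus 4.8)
The plan is to split $[T_1,T_2]$ at two instants: the crossing time $T_=$ of Lemma \ref{AA=BB}, and a later time $T_\star$ at which the auxiliary $3$-system $(AA,AB,BB)$ has already settled near $(0,0,\bar n_B)$. Since by Lemma \ref{3system} this $3$-system governs our system up to $\OO(\d)$-perturbations until $T_2$, the analysis of \cite{BovNeu} furnishes $T_\star$ with $T_\star-T_1=\OO(1)$ (a constant depending only on $\e_0$ and the fixed parameters) such that, for all $t\in[T_\star,T_2]$,
\begin{align}
n_{AA}(t),\ n_{AB}(t)\ \le\ \theta\,\bar n_A,\qquad n_{BB}(t)\ \ge\ (1-\theta)\,\bar n_B,
\end{align}
with $\theta>0$ to be fixed small at the end. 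On $[T_1,T_=]$ Proposition \ref{prop-T1T=} already yields $n_{aA}=\OO(\e)$, so only $[T_1,T_\star]$ and $[T_\star,T_2]$ remain.

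First I would show $n_{aA}=\OO(\e)$ on all of $[T_1,T_\star]$. The majorising process for $aA$ built in the proof of Proposition \ref{prop-T1T=} uses only Proposition \ref{prop-S5} and Lemma \ref{aa-S2}, hence is valid on $[T_1,T_2]$, and has the form $\dot n_{aA}\le-\alpha(t)n_{aA}+\beta(t)n_{aB}\le\beta(t)\,\S_{aA,aB}$ with $\beta$ bounded (by a constant depending on the fixed parameters) and $\S_{aA,aB}=n_{aA}+n_{aB}$. Since $\S_{aA,aB}\le\OO(\d)$ on $[T_1,T_2]$ by \eqref{delta_bound}, the bound $\dot\S_{aA,aB}\le(f-D+\OO(\D))\,\S_{aA,aB}+\OO(\S_{aA,aB}^2)$ from the proof of Lemma \ref{aa-S2} has a bounded exponential growth rate, so $\S_{aA,aB}(t)\le\S_{aA,aB}(T_1)\,e^{C'(t-T_1)}$ for a constant $C'$. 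As $T_\star-T_1=\OO(1)$ and $\S_{aA,aB}(T_1)=\OO(\e)$ by Corollary \ref{phase1}, this gives $n_{aA}(t)\le\S_{aA,aB}(t)=\OO(\e)$ on $[T_1,T_\star]$.

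Then on $[T_\star,T_2]$ I would apply Lemma \ref{lem-level} to $g(t):=n_{aA}(t)-C\e-n_{aB}(t)$, with $C$ chosen so large that $n_{aA}(T_\star)\le C\e$ (possible by the previous paragraph), so that $g(T_\star)\le0$; it then suffices to verify $\dot n_{aA}<\dot n_{aB}$ whenever $n_{aA}=n_{aB}+C\e$. Keeping only the first summand of $b_{aB}$ in \eqref{birthratesaB} and using $d_{aB}\le f n_{aB}$ (from $c\S_5\le c\bar n_B$, Proposition \ref{prop-S5}) one obtains, on $[T_1,T_2]$,
\begin{align}
\dot n_{aB}\ \ge\ \frac{f\,n_{aA}\left(\sfrac12n_{AB}+n_{BB}\right)}{2\S_5}-\frac{f\,n_{aB}\left(n_{aA}+n_{AA}+n_{aB}+\sfrac12n_{AB}\right)}{\S_5}.
\end{align}
Combining this with $\dot n_{aA}\le-\alpha n_{aA}+\beta n_{aB}$ and the estimates valid for $t\ge T_\star$ (the first display, $\bar n_A-\OO(\d)\le\S_5\le\bar n_B$, and $n_{aB}\le n_{aA}\le\OO(\d)$), one checks at $n_{aA}=n_{aB}+C\e$ that the positive part $\frac{f\,n_{aA}n_{BB}}{2\S_5}+\alpha n_{aA}\ge\sfrac f2(1-\theta)n_{aA}$ exceeds all negative contributions, whose sum is bounded by $n_{aA}\bigl(\OO(\theta f)+\OO(c\d)\bigr)$; after dividing by $f$ the factors of $f$ cancel, so it suffices to take $\theta$ and $\d$ small. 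Assumptions (C2)--(C3) enter in the construction of $\alpha$: for $\eta<c/2$ and $f$ large one has $f-\eta\S_5>D+\D>0$, which is exactly what makes $\alpha$ bounded below by a positive multiple of $n_{BB}$ in the proof of Proposition \ref{prop-T1T=}. Hence $\dot n_{aB}-\dot n_{aA}>0$ at $g=0$, Lemma \ref{lem-level} gives $n_{aA}\le n_{aB}+C\e$ on $[T_\star,T_2]$, and together with the previous paragraph (and $n_{aB}+C\e\le2(\OO(\e)\vee n_{aB})$) the Proposition follows.

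The step I expect to be the main obstacle is the handling of the transient window before $T_\star$. One might hope to run Lemma \ref{lem-level} directly from $T_=$, but at $T_=$ Lemma \ref{AA=BB} only provides $n_{BB}\ge n_{AA}-\D/c$, and for $\eta$ of order $\D$ this slack, together with the $\eta n_{BB}$ term appearing in $d_{aA}-d_{aB}$, can make $\dot n_{aA}>\dot n_{aB}$ at a crossing $n_{aA}=n_{aB}$; the crossing estimate genuinely holds only once the $3$-system is close to $(0,0,\bar n_B)$, so the intermediate window --- of length $\OO(1)$ --- must be absorbed into the crude bounded-growth estimate above rather than treated by a comparison argument. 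This is also where Assumption (C3) is essential: $\eta<c/2$ is precisely what forces $\eta n_{BB}<f/2<f\,n_{BB}/\S_5$ when $n_{BB}$ is near $\bar n_B$, consistently with the fact (noted after Theorem \ref{main-thm}) that $p_{aB}$ becomes unstable for larger $\eta$.
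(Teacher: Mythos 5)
Your argument is correct and rests on the same core device as the paper's proof --- a Lemma \ref{lem-level} crossing comparison between $n_{aA}$ and $n_{aB}$, built from the majorising process on $n_{aA}$ of Proposition \ref{prop-T1T=} (indeed valid on all of $[T_1,T_2]$, since it only uses Proposition \ref{prop-S5} and Lemma \ref{aa-S2}) and a minorising process on $n_{aB}$ --- but you decompose the time interval differently. The paper splits only at $T_=$: before $T_=$ it quotes Proposition \ref{prop-T1T=}, and from $T_=$ on it runs the comparison at the level $n_{aA}=n_{aB}$, where positivity of $\dot n_{aB}-\dot n_{aA}$ is delicate because $n_{AA}$ and $n_{BB}$ are still comparable there (Lemma \ref{AA=BB} only gives $n_{AA}\leq n_{BB}+\OO(\D)$); the rescue is the extra $\sfrac f{\S_5}n_{AB}$ term in the minorising process for $n_{aB}$, which is macroscopic around $T_=$ (Proposition \ref{maxAB}). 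You instead defer the comparison to a time $T_\star=T_1+\OO(1)$ at which the $\OO(\d)$-perturbed $3$-system already sits in a fixed $\theta$-neighbourhood of $(0,0,\bar n_B)$, absorb the transient window by the crude bound $\dot\S_{aA,aB}\leq C'\S_{aA,aB}$ over an $\OO(1)$ time span, and compare at the shifted level $n_{aA}=n_{aB}+C\e$, where positivity is immediate since $\sfrac f2(1-\theta)n_{aA}$ dominates the $\OO\bigl(f(\theta+\d)\bigr)n_{aA}$ error terms. What your route buys is robustness at the crossing (no fine cancellation near $T_=$, no appeal to Proposition \ref{maxAB}); what it costs is a heavier reliance on the assertion that the perturbed $3$-system reaches and then \emph{stays} in a fixed neighbourhood of $(0,0,\bar n_B)$ on all of the diverging interval $[T_\star,T_2]$ --- an input of exactly the type the paper itself imports from \cite{BovNeu} (the $1/t$ decay used in Theorem \ref{thm-T2}, or $n_{BB}\geq\bar n_B/4-\OO(\D)$ after $T_=$ in Corollary \ref{LBS}), so it is admissible, though the persistence is better justified via Proposition \ref{prop-S5} together with the decay of $n_{AA}+n_{AB}$ than by bare continuity in $\d$.

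Two small points. Corollary \ref{phase1} actually gives $\S_{aA,aB}(T_1)=\OO(\e^{1-\OO(\e_0)})$ rather than $\OO(\e)$ --- the same slippage the paper makes in Proposition \ref{prop-T1T=}, harmless since $\e_0\to0$ afterwards; and your closing claim that (C3) is what keeps the coefficient $\alpha$ positive overstates it: already $\eta<c$ from (B3) gives $f-\eta\S_5>D-\D>0$, the threshold $\eta<c/2$ only becomes binding in Phase 3. Finally, your conclusion is $n_{aA}\leq n_{aB}+C\e\leq 2\bigl(C\e\vee n_{aB}\bigr)$, which is what the $\OO$-statement of the Proposition means, as in the paper.
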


\begin{proof}
		For $t\leq T_=$ this follows from Proposition \ref{prop-T1T=}.
For $t>T_=$ we show this by constructing  a majorising process of $n_{aA}(t)$. For the birth rate we use \eqref{delta_bound} and Lemma \ref{aa-S2}:
		\begin{align}\nonumber
		b_{aA}\leq &f\frac {(n_{aA}+n_{aB})(2n_{AA}+n_{AB}+\OO(\d))}{2\S_5}\\
		\leq&\sfrac {f+\OO(\d)}2(n_{aA}+n_{aB})+\frac {f(n_{AA}-n_{BB})}{2\bar n_A}(n_{aA}+n_{aB}).
		\intertext{Using Proposition \ref{prop-S5} and that $\sfrac{n_{AA}}{\bar n_B}<1$ we get the death rate:}
		d_{aA}\geq&n_{aA}\left(D+c\bar n_B-\sfrac\D{\bar n_B}n_{AA}-\eta n_{BB}-\OO(\D^2n_{AA})\right)\nonumber\\
		\geq&n_{aA}(f-\eta n_{BB}),\\
		\dot n_{aA}\leq&-n_{aA}\left(\sfrac f2-\sfrac {f(n_{AA}-n_{BB})}{2\bar n_A}-\eta n_{BB}-\OO(\d)\right)+n_{aB}\left(\sfrac f2+\sfrac {f(n_{AA}-n_{BB}+\OO(\d))}{2\bar n_A}\right).\label{n_aA(T_=)}
		\end{align}
	By Lemma \ref{lem-level}, it is left to show that $\dot{n}_{aA}\leq\dot{n}_{aB}$ whenever $n_{aA}=n_{aB}$.  At this upper bound we have 
	$\dot{n}_{aA}\leq {n_{aB}} (\frac f{\bar n_A}(n_{AA}-n_{BB})+\eta n_{BB}+\OO(\d))$.
	We now calculate a minorising process on $n_{aB}$. The birth rate can be estimate by using Proposition \ref{prop-S5}, \eqref{delta_bound} and that $\sfrac{1/2n_{AB}+n_{BB}}{\S_5}\geq\sfrac12-\OO(\D)$, for $t>T_=$:
	\begin{align}
	b_{aB}&\geq\sfrac f{2\S_5}(n_{aA}+n_{aB})(n_{aB}+n_{AB}+2n_{BB}).\\
	\intertext{For the death rate Proposition \ref{prop-S5} is used:}
	d_{aB}&\leq n_{aB}(D-\Delta+c\bar n_B)=fn_{aB},\\
	\dot n_{aB}&\geq\sfrac f{2\S_5}n_{aA}(n_{aB}+n_{AB}+2n_{BB})-\sfrac f{2\S_5}n_{aB}(2n_{AA}+2n_{aA}-n_{aB}-n_{AB}).
	\end{align}
	Thus $\dot{n}_{aB}\geq\sfrac f{\S_5}n_{aB}(n_{BB}-n_{AA}+n_{AB})$ whenever $n_{aA}=n_{aB}$, and hence $\dot{n}_{aB}-\dot{n}_{aA}\geq\sfrac f{\bar n_A}n_{aB}(2n_{BB}-2n_{AA}+\eta n_{BB}-\OO(\Delta))>0$ by Proposition \ref{maxAB}. This finishes the proof.
\end{proof}

Now we show that the time $T^{aA=\delta AA}$ is finite and prove that it is smaller than or equal to $T^{aB=\delta BB}$.
To estimate the order of magnitude of the time $T_2$ we need bounds on $n_{aA}$ which depends on $\S_{aA,aB}$.

\begin{lemma}
	\label{aA-S2}
	For all $t\in[T_1,T_2]$ the $aA$ population is bounded by 
		\begin{equation}
\frac{f(n_{AB}+2n_{AA})}{4\bar n_B(f+\Delta)}\S_{aA,aB}\leq n_{aA}\leq\frac{f(n_{AB}+2n_{AA})}{\bar n_A(D-2\Delta)}\S_{aA,aB}.
\end{equation}
	\end{lemma}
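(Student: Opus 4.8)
The plan is to prove each of the two inequalities by the barrier-function argument used repeatedly in this section: apply Lemma~\ref{lem-level} (in the comparison form stated just after it) to $n_{aA}$ minus the candidate bound, check the inequality at $t=T_1$ via Corollary~\ref{phase1} (there $\S_{aA,aB}=n_{aA}(T_1)(1+\OO(\e_0))$, $n_{AB}+2n_{AA}=2\bar n_A+\OO(\e_0)$, and $\tfrac{2f}{D-2\D}>1>\tfrac{2f\bar n_A}{4\bar n_B(f+\D)}$), and verify at every contact point that $\dot n_{aA}$ lies strictly on the required side of the derivative of the bound.

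The one real ingredient is the leading term of $b_{aA}$. Grouping the terms of \eqref{birthratesaA} according to which allele an $aA$-newborn inherits, and discarding (using Lemma~\ref{aa-S2}) the recombinations of two $a$-carriers together with $\S_6-\S_5=n_{aa}=\OO(\S_{aA,aB}^2)$, the $a$-allele of an $aA$-newborn is to leading order recombined with an $A$-allele carried by $n_{AA}$ or $n_{AB}$, so that
\begin{equation}
b_{aA}=\frac{f(n_{AB}+2n_{AA})}{2\S_5}\,\S_{aA,aB}+\OO(\S_{aA,aB}^2),
\end{equation}
while \eqref{death-ratesaA} gives exactly $d_{aA}=n_{aA}\big(D+c\S_5+cn_{aa}-\eta n_{BB}\big)$. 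By Proposition~\ref{prop-S5}, $c\bar n_B=f-D+\D$ (see \eqref{infit}), $0\le\eta<c/2$ and $n_{BB}\le\S_5$, this yields $\tfrac{f+D}{2}-\OO(\D)\le D+\tfrac c2\S_5\le\tfrac1{n_{aA}}d_{aA}\le f+\D+\OO(\S_{aA,aB}^2)$.

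For the upper bound, put $U:=\tfrac{f(n_{AB}+2n_{AA})}{\bar n_A(D-2\D)}\S_{aA,aB}$ and suppose $n_{aA}=U$ at some $t\in[T_1,T_2]$. Inserting $\S_5\ge\bar n_A-\OO(\d^2)$ in the estimate of $b_{aA}$, the lower estimate of $d_{aA}$, and then $n_{aA}=U$, one obtains $\dot n_{aA}\le\big(\tfrac{D-2\D}{2}-\tfrac{f+D}{2}\big)U+\OO(\D)U+\OO(\d)U=-\tfrac f2U+\OO(\D)U+\OO(\d)U$. On the other hand $\dot U=U\big(\tfrac{\dot n_{AB}+2\dot n_{AA}}{n_{AB}+2n_{AA}}+\tfrac{\dot\S_{aA,aB}}{\S_{aA,aB}}\big)$, where by Lemma~\ref{lem-expS} the second ratio is $\ge-\D-\OO(\d)$, and the first is $\ge-\OO(\D f)$ because the $(AA,AB,BB)$ block moves only on the slow time-scale set by its invasion fitness $S_{AB,AA}=\D$: Lemma~\ref{bound-AB} gives $n_{AA}+\tfrac12 n_{AB}\ge\sqrt{\bar n_Bn_{AA}}(1-\OO(\D))$ and $n_{AB}\le2\sqrt{n_{AA}n_{BB}}(1+\OO(\D))$, whence $b_{AA}\ge(1-\OO(\D))fn_{AA}$ and $b_{AB}\ge(1-\OO(\D))fn_{AB}$, so $\dot n_{AA}\ge-\OO(\D f)n_{AA}$ and $\dot n_{AB}\ge-\OO(\D f)n_{AB}$ (with $\dot n_{AB}\ge0$ before $T_{AB}^{max}$). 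Since $\D$ is sufficiently small (C1) and $f$ sufficiently large (C2), the constants in these $\OO$'s being fixed, $\tfrac f2>\OO(\D f)+\OO(\D)+\OO(\d)$, so $\dot n_{aA}<\dot U$ and Lemma~\ref{lem-level} applies. The lower bound $L:=\tfrac{f(n_{AB}+2n_{AA})}{4\bar n_B(f+\D)}\S_{aA,aB}$ is treated symmetrically: an $aA$-newborn's $a$-allele is necessarily recombined with an $A$-allele, so the same leading term reappears in a minorising process for $b_{aA}$; combined with $d_{aA}\le n_{aA}(f+\D+\OO(\S_{aA,aB}^2))$, $\S_5\le\bar n_B$, and Proposition~\ref{maxAB} to control $n_{AA},n_{AB},n_{BB}$ near the maximum of $n_{AB}$, substituting $n_{aA}=L$ gives $\dot n_{aA}>0$ at the bound, the factor $\tfrac14$ (instead of the true $\tfrac12$) supplying the room for the $\OO(\D)$, $\OO(\d)$ and $\OO(\S_{aA,aB}^2)$ corrections and for $\dot L$, bounded below exactly as $\dot U$.

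I expect the genuine difficulty to be precisely the uniform (not merely asymptotic) bound $\tfrac{\dot n_{AB}+2\dot n_{AA}}{n_{AB}+2n_{AA}}\ge-\OO(\D f)$ on all of $[T_1,T_2]$ — that is, verifying that the $(AA,AB,BB)$ sub-dynamics, the sole source of time-dependence in the barrier, never decays at a rate exceeding $\OO(\D f)$; the quadratic comparison estimates of Lemma~\ref{bound-AB} are the right tool for this. Everything else is the by-now routine contact-point computation of this section, once the leading term of $b_{aA}$ is in hand and the deliberately generous constants $\tfrac14$ and $D-2\D$ are used to absorb the lower-order terms.
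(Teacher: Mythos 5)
Your upper-bound half is essentially sound, though it runs differently from the paper: you buy a large negative margin at the contact point ($\dot n_{aA}\leq -\sfrac f2 U+\OO(\D)U+\OO(\d)U$) by using $\eta<c/2$ and $n_{BB}\leq\S_5$ to push the per-capita death rate up to $\sfrac{f+D}2-\OO(\D)$, and you can then afford the crude estimate $\dot n_{AB}+2\dot n_{AA}\geq-\OO(\D f)(n_{AB}+2n_{AA})$ extracted from Lemma \ref{bound-AB} and Proposition \ref{prop-S5}. The paper instead only uses $d_{aA}\geq(D-2\D)n_{aA}$, so its margin is only $\OO(D)\,U$, and it must exploit the near-conservation of $n_{AB}+2n_{AA}$: adding the minorising processes, the $n_{AA}n_{BB}$ and $n_{AB}^2$ terms cancel and give the sharper $\dot n_{AB}+2\dot n_{AA}\geq-2\D n_{AA}-\OO(\d)(n_{AA}+n_{AB})$. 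Your variant is legitimate (and arguably simpler), at the price of invoking (C3) where the paper needs only $\eta<c$.

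The lower-bound half, however, has a genuine gap. To preserve $n_{aA}\geq L$ with $L=\sfrac{f(n_{AB}+2n_{AA})}{4\bar n_B(f+\D)}\S_{aA,aB}$ you must show $\dot n_{aA}>\dot L$ at contact, i.e.\ you need an \emph{upper} bound on $\dot L$, not the lower bound you announce (``$\dot L$, bounded below exactly as $\dot U$''), and the dominant contribution to $\dot L$ is not an $\OO(\D)$, $\OO(\d)$ or $\OO(\S_{aA,aB}^2)$ correction. Indeed
\begin{align}
\dot L=\frac{f}{4\bar n_B(f+\D)}\Bigl[(\dot n_{AB}+2\dot n_{AA})\,\S_{aA,aB}+(n_{AB}+2n_{AA})\,\dot\S_{aA,aB}\Bigr],
\end{align}
and $\dot\S_{aA,aB}$ contains the term $+\eta\, n_{aA}n_{BB}$, which at the contact point can be as large as $\sfrac c2\bar n_B\,L=\OO(f)\,L$ — the same order as your gain $(f+\D)L$; this is precisely the mechanism (Corollary \ref{LBS}) by which $\S_{aA,aB}$ \emph{grows} for $\eta$ not small, so no bound of the type $\dot\S_{aA,aB}\leq\OO(\D)\S_{aA,aB}$ is available. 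Whether the barrier survives therefore hinges on the exact constants: one needs a majorising process $\dot\S_{aA,aB}\leq\sfrac{\D}{\bar n_B}n_{AA}\S_{aA,aB}-n_{aA}(\D-\eta n_{BB})+\ldots$, to insert $n_{aA}=L$, and to check that $\sfrac{f(n_{AB}+2n_{AA})}{4\bar n_B(f+\D)}\cdot(\eta n_{BB}-\D)\,L$ stays below the gain, which works only because of the factor $\sfrac14$, the $(f+\D)$ in the denominator and the bound $n_{AB}+2n_{AA}\leq2\bar n_B$ (this is the computation \eqref{I}--\eqref{III} in the paper, together with a separate \emph{upper} bound on $\dot n_{AB}+2\dot n_{AA}$). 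Your proposal does not carry out, or even identify, this step; with a slightly less generous constant in $L$ the inequality would in fact fail, so it cannot be dismissed as routine absorption of error terms.
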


\begin{proof}
\item[(1)]
We start with the upper bound. First observe that it holds at $t=T_1$. By Lemma \ref{lem-level} it is enough to show that if $n_{aA}$ would reach the upper bound it would decrease faster than the bound.
Using Lemma \ref{aa-S2} and \eqref{delta_bound} the birth rate of a majorising process on $aA$ is given by
\begin{align}
b_{aA}&\leq\sfrac f{2\Sigma_5}\Sigma_{aA,aB}(n_{AB}+2n_{AA}+\OO(\d)).
\intertext{For the death rate we use that $\eta<c, \sfrac{n_{AA}}{\bar n_B}<1$ and Proposition \ref{prop-S5}:}
d_{aA}&\geq n_{aA}\left(D+c\bar n_B-\sfrac\Delta{\bar n_B}n_{AA}-\eta n_{BB}-\OO(\D^2n_{AA})\right)\geq n_{aA}(D-2\D),\\
\dot{n}_{aA}&\leq\frac {f(2n_{AA}+n_{AB}+\OO(\d))}{2\S_5}\Sigma_{aA,aB}-n_{aA}(D-2\Delta).
\end{align}
We calculate the slope of the majorising process at the upper bound using Proposition \ref{prop-S5}:
\begin{align}
\dot{n}_{aA}&\leq f(2n_{AA}+n_{AB})\Sigma_{aA,aB}\left(\sfrac1{2\S_5}-\sfrac1{\bar n_A}+\OO(\d)\right)\leq-\sfrac f{2\bar n_A}(2n_{AA}+n_{AB}+\OO(\d))\Sigma_{aA,aB}.
\end{align}
We have to show that at the upper bound,
\begin{align}
\label{dot aA}
\dot{n}_{aA}\leq\frac{f(\dot{n}_{AB}+2\dot{n}_{AA})}{\bar n_A(D-2\D)}\S_{aA,aB}+\frac{f(n_{AB}+2n_{AA})}{\bar n_A(D-2\D)}\dot{\S}_{aA,aB}.
\end{align}
To do this we calculate minorising processes on $n_{AB}$ and $n_{AA}$. For the birth rates we use Lemma \ref{aa-S2} and \eqref{delta_bound}:
\begin{align}
b_{AB}&\geq\sfrac f{\S_5}n_{AB}\left(\sfrac12n_{aA}+n_{AA}+\sfrac12n_{aB}+\sfrac12n_{AB}+n_{BB}-\OO(\d^2)\right)+\sfrac{2f}{\S_5}n_{AA}\left(\sfrac12n_{aB}+n_{BB}-\OO\left(\d^2\right)\right),\\
b_{AA}&\geq\sfrac f{\S_5}n_{AA}\left(n_{aA}+n_{AA}+n_{AB}-\OO\left(\d^2\right)\right)+\sfrac{f}{2\S_5}n_{AB}\left(n_{aA}+\sfrac12n_{AB}-\OO\left(\d^2\right)\right).
\intertext{Applying Proposition \ref{prop-S5}, Lemma \ref{aa-S2} and \eqref{delta_bound} we get the following death rates:}
d_{AB}&\leq n_{AB}f,\\
d_{AA}&\leq n_{AA}(f+\D+\OO(\d^2)).
\intertext{Hence, the minorising processes are given by:}
\dot{n}_{AB}&\geq-\sfrac f{\S_5}n_{AB}\left(\sfrac12n_{aA}+\sfrac12n_{aB}+\sfrac12n_{AB}+\OO\left(\d^2\right)\right)+\sfrac{2f}{\S_5}n_{AA}\left(\sfrac12n_{aB}+n_{BB}-\OO\left(\d^2\right)\right),\label{minorAB}\\
\dot{n}_{AA}&\geq-\sfrac f{\S_5}n_{AA}\left(n_{aB}+n_{BB}+\OO\left(\d^2\right)\right)-\D n_{AA}+\sfrac f{2\S_5}n_{AB}\left(n_{aA}+\sfrac12n_{AB}-\OO\left(\d^2\right)\right).\label{minorAA}
\end{align}
From \eqref{minorAB} and \eqref{minorAA} we get that
\begin{align}
\dot{n}_{AB}+2\dot{n}_{AA}&\geq \sfrac f{\S_5}n_{AB}\left(\sfrac12n_{aA}-\sfrac12n_{aB}-\OO\left(\d^2\right)\right)+\sfrac{2f}{\S_5}n_{AA}\left(-\sfrac12n_{aB}-\OO\left(\d^2\right)\right)-2\D n_{AA}\nonumber\\
&\geq-2\D n_{AA}-\OO\left(\d\right)(n_{AA}+n_{AB})\label{beide}.
\end{align}
By Lemma \ref{lem-expS}, we know that $\dot{\S}_{aA,aB}\geq-\D\S_{aA,aB}$.
Thus, using \eqref{beide} the right-hand side minus the left-hand side of \eqref{dot aA} is bounded from below by
\begin{align}
\label{show slope}\nonumber
&-\frac{2\D n_{AA}f\S_{aA,aB}}{\bar n_A(D-2\D)}-\frac{\D(n_{AB}+2n_{AA})\S_{aA,aB}}{\bar n_A(D-2\D)}+\frac {f\left(2n_{AA}+n_{AB}+\OO\left(\d\right)\right)\S_{aA,aB}}{2\bar n_A}\nonumber\\
&\hspace{10,8cm}-\OO\left(\d\right)\S_{aA,aB}(n_{AA}+n_{AB})\nonumber\\
&\geq\frac{fn_{AA}\S_{aA,aB}}{\bar n_A}\left(1-\frac{2\D+\OO(\d)}{D-2\D}\right)+\frac {fn_{AB}\S_{aA,aB}}{2\bar n_A}\left(1-\frac{2\D+\OO(\d)}{D-2\D}\right)>0.
\end{align}
This finishes the proof of (1).
\item[(2)]
For the lower bound we proceed similarly (using Lemma \ref{lem-level}). This time we show that if $n_{aA}$ would reach the lower bound it would start to increase faster than the bound. Thus we need a minorising process on $n_{aA}$. Using Lemma \ref{aa-S2} and \eqref{delta_bound} the birth rate of such a process is given by
\begin{align}
b_{aA}&\geq\sfrac f{2\Sigma_5}\Sigma_{aA,aB}\left(2n_{AA}+n_{AB}-\OO\left(\d^2\right)\right).
\intertext{With Proposition \ref{prop-S5} we get:}
d_{aA}&\leq n_{aA}\left(f+\Delta+\OO\left(\d^2\right)\right),\\
\dot{n}_{aA}&\geq\frac {f\left(2n_{AA}+n_{AB}-\OO\left(\d^2\right)\right)}{2\bar n_B}\Sigma_{aA,aB}-n_{aA}(f+\Delta).
\end{align}
We calculate the slope of the minorising process at the lower bound:
\begin{align}\nonumber
\dot{n}_{aA}&\geq\frac {f\left(2n_{AA}+n_{AB}-\OO\left(\d^2\right)\right)}{2\bar n_B}\Sigma_{aA,aB}-\frac {f(2n_{AA}+n_{AB})}{4\bar n_B}\Sigma_{aA,aB}\\
&=\frac {f\left(2n_{AA}+n_{AB}-\OO\left(\d^2\right)\right)}{4\bar n_B}\Sigma_{aA,aB}>0.\label{II}
\end{align}
Thus the minorising process on $n_{aA}$ would increase when the $aA$ population would reach the lower bound. To ensure this lower bound we have to show
\begin{align}
\label{schranke}
\dot{n}_{aA}\geq\frac{f(\dot{n}_{AB}+2\dot{n}_{AA})}{4\bar n_B(f+\D)}\S_{aA,aB}+\frac{f(n_{AB}+2n_{AA})}{4\bar n_B(f+\D)}\dot{\S}_{aA,aB}.
\end{align}
For this we consider a majorising process on $\Sigma_{aA,aB}$. The birth rate is the same as in \eqref{braa} and the death rate can be lower bounded by using Proposition \ref{prop-S5} and Lemma \ref{aa-S2}:
\begin{align}
d_{\S_{aA,aB}}&\geq\S_{aA,aB}\left(f-\sfrac\D{\bar n_B}n_{AA}-\sfrac{\D^2}{\bar n_B}n_{AA}\right)+n_{aA}(\D-\eta n_{BB})+\OO\left(\S_{aA,aB}^3\right).
\intertext{Hence, we get:}
\dot{\Sigma}_{aA,aB}&\leq\sfrac \Delta{\bar n_B}n_{AA}\Sigma_{aA,aB}-n_{aA}(\Delta-\eta n_{BB})+\OO(\D^2n_{AA})\S_{aA,aB}.
\end{align}
Using that $\eta<c$, the slope of this process if $n_{aA}$ reaches the lower bound is estimated by
\begin{align}\nonumber
\dot{\Sigma}_{aA,aB}&\leq\sfrac\Delta{\bar n_B}n_{AA}\Sigma_{aA,aB}-\frac{f(2n_{AA}+n_{AB})}{4\bar n_B(f+\Delta)}(\Delta-\eta n_{BB})\Sigma_{aA,aB}+\OO(\D^2n_{AA})\S_{aA,aB}\\
&\leq\frac{f(2n_{AA}+n_{AB})}{4\bar n_B}\frac{f-D}{f+\Delta}\Sigma_{aA,aB}+\sfrac\Delta{\bar n_B}n_{AA}\Sigma_{aA,aB}+\OO(\D^2n_{AA})\S_{aA,aB}.\label{I}
\end{align}
Moreover we need majorising processes on $AA$ and $AB$. Using \eqref{delta_bound} the birth rates of these processes can be bounded by:
\begin{align}
b_{AB}&\leq\sfrac f{\S_5}n_{AB}\left(\sfrac12n_{AB}+n_{AA}+n_{BB}\right)+\sfrac{2f}{\S_5}n_{AA}n_{BB}+\OO(\d),\\
b_{AA}&\leq\sfrac f{\S_5}n_{AA}\left(n_{AB}+n_{AA}\right)+\sfrac{f}{4\S_5}n_{AB}^2+\OO(\d).
\intertext{For bounding the death rates we apply Proposition \ref{prop-S5}:}
d_{AB}&\geq n_{AB}\left(f-\sfrac{\D(1+\D)}{\bar n_B}n_{AA}\right),\\
d_{AA}&\geq n_{AA}\left(f+\D-\sfrac{\D(1+\D)}{\bar n_B}n_{AA}\right).
\intertext{Hence we get:}
\dot{n}_{AB}&\leq-\sfrac f{2\S_5}n_{AB}^2+\sfrac{2f}{\S_5}n_{AA}n_{BB}+\sfrac{\D(1+\D)}{\bar n_B}n_{AA}n_{AB}+\OO(\d),\label{majorAB}\\
\dot{n}_{AA}&\leq-n_{AA}\left(\sfrac f{\S_5}n_{BB}+\D-\sfrac{\D(1+\D)}{\bar n_B}n_{AA}\right)+\sfrac f{4\S_5}n_{AB}^2+\OO(\d).\label{majorAA}
\end{align}
From \eqref{majorAB} and \eqref{majorAA} we get that
\begin{align}
\dot{n}_{AB}+2\dot{n}_{AA}\leq-\D n_{AA}\left(2-\sfrac{2n_{AA}+n_{AB}}{\bar n_B}\right)+\OO\left(\D^2n_{AA}\right)+\OO(\d)<\OO\left(\D^2n_{AA}+\d\right).
\end{align}
Thus for \eqref{schranke} it is enough to show that
\begin{align}
\dot{n}_{aA}&\geq\frac{f(2n_{AA}+n_{AB})}{4\bar n_B(f+\Delta)}\dot{\Sigma}_{aA,aB}+\left(\OO\left(\D^2n_{AA}\right)+\OO(\d)\right)\S_{aA,aB},\label{III}
\end{align}
using \eqref{I} and \eqref{II} the lhs minus the rhs of \eqref{III} is given by
\begin{align}
&\frac {f\left(2n_{AA}+n_{AB}-\OO\left(\d^2\right)\right)}{4\bar n_B}\Sigma_{aA,aB}-\frac{f^2(n_{AB}+2n_{AA})^2}{16\bar n_B^2(f+\D)}\frac{f-D}{f+\Delta}\Sigma_{aA,aB}\nonumber\\\nonumber
&\hspace{4.7cm}-\frac{f(n_{AB}+2n_{AA})}{4\bar n_B(f+\D)}\frac{\Delta}{\bar n_B}n_{AA}\Sigma_{aA,aB}-\OO\left(\D^2n_{AA}+\d\right)\S_{aA,aB}\\\nonumber
&\geq\frac {f\left(2n_{AA}+n_{AB}-\OO\left(\d^2\right)\right)}{4\bar n_B}\Sigma_{aA,aB}-\frac {f(2n_{AA}+n_{AB})}{8\bar n_B}\Sigma_{aA,aB}\left(1+\frac{2\D(1+\D)n_{AA}}{\bar n_B(f+\D)}\right)-\nonumber\\
&\hspace{4.7cm}-\OO\left(\D^2n_{AA}+\d\right)\S_{aA,aB}>0.
\end{align}
For the first inequality we use the rough estimations $\sfrac{f-D}{f+\D},\sfrac f{f+\D}<1$ and $\sfrac{2n_{AA}+n_{AB}}{\bar n_B}<2$.
This concludes the proof.
\end{proof}

\begin{proposition}\label{S2 bound}
For all $t\in[T_1,T_2]$ the process $\S_{aA,aB}$ is bounded by
\begin{itemize}
\item[(1)]$\dot{\S}_{aA,aB}\leq n_{aA}\left(\eta n_{BB}-\D\frac{n_{AB}+\OO(\D n_{AA})}{n_{AB}+2n_{AA}}\right).$
\item[(2)]$\dot{\Sigma}_{aA,aB}\geq n_{aA}(\eta n_{BB}-\D-\OO(\d)).$
\end{itemize}
\end{proposition}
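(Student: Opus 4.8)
The plan is to start from $\dot\Sigma_{aA,aB}=(b_{aA}+b_{aB})-(d_{aA}+d_{aB})$ and to treat the death part exactly. Summing the death rates \eqref{death-ratesaA} and \eqref{death-ratesaB} and using $\S_6=\S_5+n_{aa}$ produces the identity
\[
d_{aA}+d_{aB}=\Sigma_{aA,aB}(D-\D+c\S_5)+\D\, n_{aA}+c\, n_{aA}n_{aa}-\eta\, n_{aA}n_{BB},
\]
which is the one already used in the proof of Lemma \ref{lem-expS}. For the birth part I would sum \eqref{birthratesaA} and \eqref{birthratesaB} and regroup the summands according to their denominators $\S_3,\S_5,\S_6$; using $n_{aB}+n_{AB}+n_{BB}=\S_5-n_{aA}-n_{AA}$ and again $\S_6=\S_5+n_{aa}$, the principal parts collapse to $f\Sigma_{aA,aB}$ — this is the rigorous form of the heuristic that an $a$-carrying newborn takes its $a$ allele from $\Sigma_{aA,aB}$ itself and its dominant allele from a partner pool of size $\OO(1)$ — while every leftover term is either nonpositive (a negative multiple of $\Sigma_{aA,aB}^2/\S_5$) or bounded by $Cf\, n_{aa}$. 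Combined with the upper bound of Lemma \ref{aa-S2}, $n_{aa}=\OO(\Sigma_{aA,aB}^2)$, this gives $b_{aA}+b_{aB}=f\Sigma_{aA,aB}+\OO(\Sigma_{aA,aB}^2)$.

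Subtracting the two expressions and inserting the two-sided estimate of Proposition \ref{prop-S5}, together with $c\bar n_B=f-D+\D$ (see \eqref{infit}) so that $D-\D+c\S_5=f-\frac{\D}{\bar n_B}n_{AA}\pm\frac{\D^2}{\bar n_B}n_{AA}$, makes the $f\Sigma_{aA,aB}$ terms cancel and leaves
\[
\dot\Sigma_{aA,aB}=-\D\, n_{aA}+\frac{\D}{\bar n_B}n_{AA}\Sigma_{aA,aB}\,(1\pm\OO(\D))+\eta\, n_{aA}n_{BB}-c\, n_{aA}n_{aa}+\OO(\Sigma_{aA,aB}^2).
\]
For the lower bound (2) I would simply discard the nonnegative term $\frac{\D}{\bar n_B}n_{AA}\Sigma_{aA,aB}(1-\OO(\D))$, bound $c\, n_{aA}n_{aa}\le\OO(\d)\, n_{aA}$ using \eqref{delta_bound}, and absorb the $\OO(\Sigma_{aA,aB}^2)$ remainder with the help of Lemmas \ref{aa-S2} and \ref{aA-S2}, which yields $\dot\Sigma_{aA,aB}\ge n_{aA}(\eta n_{BB}-\D-\OO(\d))$.

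For the upper bound (1) the real work is to trade the positive term $\frac{\D}{\bar n_B}n_{AA}\Sigma_{aA,aB}$ back for a multiple of $n_{aA}$ with the sharp constant. Here I would use Lemma \ref{ub-aB-aA}, $n_{aB}\le C(t)\, n_{aA}$ with $C(t)=\frac{n_{AB}+2n_{BB}+2\D/c}{n_{AB}+2n_{AA}}$, together with the identity $n_{AB}+n_{AA}+n_{BB}=\S_5-\Sigma_{aA,aB}\le\bar n_B$, to deduce $n_{aA}\ge\frac{n_{AB}+2n_{AA}}{2\bar n_B}\Sigma_{aA,aB}\,(1-\OO(\D))$. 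Feeding this into $\frac{\D}{\bar n_B}n_{AA}\Sigma_{aA,aB}\le\frac{(2-\OO(\D))\D\, n_{AA}}{n_{AB}+2n_{AA}}\, n_{aA}$ and combining with $-\D\, n_{aA}$ gives precisely $-\D\, n_{aA}\frac{n_{AB}+\OO(\D n_{AA})}{n_{AB}+2n_{AA}}$; dropping $-c\, n_{aA}n_{aa}\le0$ and absorbing the $\OO(\Sigma_{aA,aB}^2)$ term then closes (1). I expect the main obstacle to be exactly this error accounting: one has to verify that the birth-rate remainder (of order $\Sigma_{aA,aB}^2=\OO(n_{aa})$ by Lemma \ref{aa-S2}) and the $\OO(\D^2 n_{AA})$ slack coming from Proposition \ref{prop-S5} are genuinely negligible against the very small right-hand sides $\D\, n_{aA}$ and $\D\, n_{aA}\frac{n_{AA}}{n_{AB}+2n_{AA}}$, which rests on the relative orders of $n_{aa},n_{aA},n_{AA},n_{AB}$ recorded in Lemmas \ref{aa-S2}, \ref{ub-aB-aA}, \ref{aA-S2} and Proposition \ref{maxAB}, and on the convention $\d<\D$.
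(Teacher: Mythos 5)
Your proposal is correct and follows essentially the same route as the paper's proof: the exact death-rate identity $d_{aA}+d_{aB}=\Sigma_{aA,aB}(D-\D+c\Sigma_5)+\D n_{aA}+c\,n_{aA}n_{aa}-\eta\,n_{aA}n_{BB}$, the birth-rate estimate $b_{aA}+b_{aB}=f\Sigma_{aA,aB}+\OO(\Sigma_{aA,aB}^2)$, Proposition \ref{prop-S5} to expand $D-\D+c\Sigma_5$, then for (1) Lemma \ref{ub-aB-aA} together with $n_{AA}+n_{AB}+n_{BB}\leq\bar n_B$ to trade the $\frac{\D}{\bar n_B}n_{AA}\Sigma_{aA,aB}$ term for the sharp multiple of $n_{aA}$, and for (2) simply discarding the positive terms. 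The error bookkeeping you flag as the main obstacle (absorbing the $\OO(\Sigma_{aA,aB}^2)$ remainder and the $\OO(\D^2 n_{AA})$ slack into $\OO(\d)\,n_{aA}$, which uses the comparisons of Lemmas \ref{aa-S2}, \ref{ub-aB-aA}, \ref{aA-S2} and the constraints $n_{aA}\leq\d n_{AA}$, $n_{aB}\leq\d n_{AB}$ defining $[T_1,T_2]$) is treated no more explicitly in the paper, so your argument matches both the paper's approach and its level of rigour.
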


\begin{proof}
	\item[(1)]
	 We construct a majorising process on $\Sigma_{aA,aB}$. For the birth rate we use Lemma \ref{aa-S2}:
		\begin{align}\nonumber
		b_{\Sigma_{aA,aB}}&\leq n_{aA}\frac{f(\sfrac12n_{aA}+n_{AA}+n_{aB}+n_{AB}+n_{BB})}{\S_5}+n_{aB}\frac{f(n_{AA}+\sfrac12n_{aB}+n_{AB}+n_{BB})}{\S_5}+\OO(\S_{aA,aB}^2)\nonumber\\
		&\leq f\Sigma_{aA,aB}+\OO(\S_{aA,aB}^2).
		\intertext{By Proposition \ref{prop-S5} the death rate is given by:}
		\nonumber
		d_{\Sigma_{aA,aB}}&\geq \Sigma_{aA,aB}(D-\D+c\S_5)+\D n_{aA}-\eta n_{aA}n_{BB}\\
		&\geq \Sigma_{aA,aB}(f-\sfrac{\D(1+\D)}{\bar n_B}n_{AA})+\D n_{aA}-\eta n_{aA}n_{BB},\\
		\dot{\Sigma}_{aA,aB}&\leq \sfrac{\D(1+\D)}{\bar n_B}n_{AA}n_{aB}-n_{aA}(\D-
		\sfrac{\D(1+\D)}{\bar n_B}n_{AA}-\eta n_{BB})+\OO(\S_{aA,aB}^2).
		\end{align}
		To bound ${n}_{aB}$ ,we use Lemma \ref{ub-aB-aA}:
		\begin{align}
		\dot{\S}_{aA,aB}&\leq n_{aA}\left(\frac{\D(1+\D) n_{AA}}{n_{AB}+2n_{AA}}\frac{n_{AB}+2n_{BB}+\sfrac{2\D}c}{\bar n_B}-\D+\sfrac{\D(1+\D)}{\bar n_B}n_{AA}+\eta n_{BB}+\OO(\d)\right)\\\nonumber
		&\leq n_{aA}\left(\eta n_{BB}+\frac{\D(n_{AA}(n_{AB}+2n_{BB})+n_{AA}(n_{AB}+2n_{AA})-\bar n_B(n_{AB}+2n_{AA}))+\OO\left(\D^2n_{AA}\right)}{\bar n_B(n_{AB}+2n_{AA})}\right)
		\intertext{Using that $n_{AA}+n_{AB}+n_{BB}\leq\bar n_B$ (Proposition \ref{prop-S5}) we get:}
		&\leq n_{aA}\left(\eta n_{BB}-\D\frac{n_{AB}+\OO(\D n_{AA})}{n_{AB}+2n_{AA}}\right).
		\end{align}
	\item[(2)] 
		This time we construct a minorising process on $\Sigma_{aA,aB}$ by using Proposition \ref{prop-S5} and Lemma \ref{aa-S2}: 
	\begin{align}\nonumber
		b_{\Sigma_{aA,aB}}&\geq f\frac{n_{aA}\left(\sfrac12n_{aA}+n_{AA}+n_{aB}+n_{AB}+n_{BB}\right)+n_{aB}\left(n_{AA}+\sfrac12n_{aB}+n_{AB}+n_{BB}\right)}{n_{aA}+n_{AA}+n_{aB}+n_{AB}+n_{BB}}-\OO\left(\d^2\right)\\
		&\geq f\Sigma_{aA,aB}-\OO\left(\d^2\right),\\
		d_{\Sigma_{aA,aB}}&\leq\Sigma_{aA,aB}(D-\D+c\S_5)-\eta n_{aA}n_{BB}+\left(\D+\OO\left(\d^2\right)\right) n_{aA}\nonumber\\
		&\leq f\Sigma_{aA,aB}-n_{aA}\left(\eta n_{BB}-\D-\OO\left(\d^2\right)\right),\\
		\dot{\Sigma}_{aA,aB}&\geq n_{aA}(\eta n_{BB}-\D-\OO(\d)).
		\end{align}
\end{proof}
From this Proposition we can deduce
\begin{corollary}\label{LBS}
There exists a $t^*\in[T_1,T_2]$, such that for all $t\in[ t^*,T_2]$ and $\eta>\frac{4\D}{\bar n_B}=: \eta^\star$,  
	\begin{align}
	\dot{\Sigma}_{aA,aB}(t)>0.
	\end{align}
\end{corollary}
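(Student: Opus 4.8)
The plan is to read the statement off the lower bound in Proposition~\ref{S2 bound}(2), which already says, for all $t\in[T_1,T_2]$,
\[
\dot\Sigma_{aA,aB}(t)\ \geq\ n_{aA}(t)\bigl(\eta\,n_{BB}(t)-\D-\OO(\d)\bigr).
\]
Since $n_{aA}>0$ on $[T_1,T_2]$ — this follows from $\Sigma_{aA,aB}>0$ there (apply Gr\"onwall to Lemma~\ref{lem-expS}, using $\Sigma_{aA,aB}(T_1)\geq n_{aA}(T_1)>0$) together with $n_{aA}\geq\Sigma_{aA,aB}/(1+C(t))$, which is exactly Lemma~\ref{ub-aB-aA} — it suffices to exhibit $t^\star\in[T_1,T_2]$ such that the bracket $\eta n_{BB}(t)-\D-\OO(\d)$ is strictly positive for all $t\in[t^\star,T_2]$ and all $\eta>\eta^\star=4\D/\bar n_B$.

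First I would locate $t^\star$ via the convergence of the effective $3$-system. By Lemma~\ref{3system} the coordinates $(n_{AA},n_{AB},n_{BB})$ track the unperturbed $3$-system $(AA,AB,BB)$ up to errors of order $\d$ on $[T_1,T_2]$, and by \cite{BovNeu} that $3$-system converges to $(0,0,\bar n_B)$. The point is that its initial data at time $T_1$ is $(\bar n_A\pm\OO(\e_0),\e_0,\OO(\e_0^2))$ by Corollary~\ref{phase1}, which depends on $\e_0$ and $\D$ but not on $\e$, and the unperturbed system carries no $\eta$; hence there is a time $t_0=t_0(\e_0,\D)$ after which the unperturbed $n_{BB}$ exceeds $\bar n_B/2$, and therefore $n_{BB}(t)\geq\bar n_B/2-\OO(\d)$ for all $t\in[T_1+t_0,T_2]$ in the full system. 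Since $T_2\to\infty$ as $\e\to0$ (for instance $T^{aA=\d AA}\geq T^{AA}_{\OO(\e/\d)}=\OO((\d/\e)^2)$), for $\e$ small enough $T_1+t_0<T_2$, and I set $t^\star:=T_1+t_0$.

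Then I would simply combine the two ingredients: for $t\in[t^\star,T_2]$ and any $\eta>\eta^\star=4\D/\bar n_B$,
\[
\eta\,n_{BB}(t)\ \geq\ \frac{4\D}{\bar n_B}\Bigl(\frac{\bar n_B}{2}-\OO(\d)\Bigr)\ =\ 2\D-\OO(\d),
\]
so the bracket in Proposition~\ref{S2 bound}(2) is at least $\D-\OO(\d)$, which is positive once $\d$ is chosen small enough against the fixed $\D$; multiplying by $n_{aA}(t)>0$ gives $\dot\Sigma_{aA,aB}(t)>0$ on $[t^\star,T_2]$, as claimed. Note that $t_0$ (hence $t^\star-T_1$) is uniform in $\eta\in[\eta^\star,c)$, because the unperturbed $3$-system is $\eta$-independent.

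I expect the only point demanding care — rather than any genuine difficulty — is checking that $t^\star$ honestly lies in $[T_1,T_2]$: one needs $n_{BB}$ to climb to order $\bar n_B$ within a time $t_0$ that does not grow as $\e\to0$, while $T_2$ itself diverges. Both facts are already available (the first from the $3$-system analysis of \cite{BovNeu}, the second from the divergence of the Phase~2 stopping times used throughout Section~\ref{subsec-phase2}), so no estimate beyond Proposition~\ref{S2 bound}(2) is required; the factor $4$ in the definition of $\eta^\star$ is precisely what upgrades ``$n_{BB}\geq\bar n_B/2$'' into a sign with enough room to absorb the $\OO(\d)$ error.
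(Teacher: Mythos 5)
Your proposal is correct and follows essentially the same route as the paper: its proof likewise reads the sign directly off the minorising bound $\dot\Sigma_{aA,aB}\geq n_{aA}(\eta n_{BB}-\D-\OO(\d))$ of Proposition \ref{S2 bound}(2) and then invokes a macroscopic lower bound on $n_{BB}$ valid up to $T_2$. The only (harmless) difference is the choice of $t^*$ and of that lower bound: the paper takes $t^*=T_=$ and uses Lemma \ref{AA=BB} together with Proposition \ref{maxAB} to get $n_{BB}\geq\bar n_B/4-\OO(\D)$, which is exactly where the threshold $4\D/\bar n_B$ comes from, whereas you wait an $\e$-independent extra time until $n_{BB}\geq\bar n_B/2-\OO(\d)$ via the 3-system convergence, which gives more slack for absorbing the error terms.
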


\begin{proof}
	A fine calculation will show that the competition $c-\eta$ felt by an $aA$ individual from a $BB$ individual allows the sum  $\Sigma_{aA,aB}$ to grow when $\eta$ is large enough, whereas it decreases when $\eta=0$. Note that we consider here the sum ${\Sigma}_{aA,aB}$ because the influence of $\eta$ cannot be seen in the rates of the $aB$ population alone. Heuristically, the growth of the $aB$ population happens due to the indirect influence (source of $a$ allele) of the less decaying $aA$ population.
%
		We prove that the minorising process of $\Sigma_{aA,aB}$ estimated in Proposition \ref{S2 bound}  starts to increase:
		\begin{align}
		\dot{\Sigma}_{aA,aB}&\geq n_{aA}(\eta n_{BB}-\D-\OO(\delta)).
		\end{align}
	As soon as $n_{BB}>\D/\eta$, the sum-process $\Sigma_{aA,aB}$ increases. 
	With the knowledge of the behaviour of the 3-system $(AA,AB,BB)$, (see \cite{CMM13}), Lemma \ref{AA=BB} and Proposition \ref{maxAB} we know that, for $t\geq T_=$, we have $n_{BB}\geq\sfrac{\bar n_B}4-\OO(\D)$. Hence, if we choose $\eta>\sfrac{4\D}{\bar n_B}$ the sum-process $\Sigma_{aA,aB}$ increases. 		
\end{proof}

Now we  calculate the time $T^{aA=\delta AA}\land T^{aB=\delta AB}$ and we will see that $T^{aA=\delta AA}\land T^{aB=\delta AB}=T^{aA=\delta AA}$.
\begin{theorem} \label{thm-T2}
	The time $T_2=\OO(\e^{-1/(1+\eta\bar n_{B}-\D)})$.
\end{theorem}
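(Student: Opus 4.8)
By Lemma~\ref{aa-S2} we already know $T_2=T^{aA=\delta AA}\wedge T^{aB=\delta AB}$, and the last assertion of the phase (``$T^{aA=\delta AA}\wedge T^{aB=\delta AB}=T^{aA=\delta AA}$'') will be checked at the end. The whole statement rests on a sharp two-sided estimate of $\Sigma_{aA,aB}(t)$ on $[T_1,T_2]$. I would argue by a bootstrap: assume provisionally that $T_2$ diverges as $\e\to0$ (this will be confirmed at the end), so that Phase~2 lasts long enough for the $(AA,AB,BB)$-block to enter its asymptotic regime. By Lemma~\ref{3system} this block is, until $T_2$, a $\OO(\delta)$-perturbation of the unperturbed $3$-system, for which the results of \cite{BovNeu} (obtained via the Center Manifold Theorem, cf.\ also Proposition~3.4 of \cite{CMM13}) give $n_{AB}(t)=\kappa\,t^{-1}(1+o(1))$, $n_{AA}(t)=\kappa'\,t^{-2}(1+o(1))$ with explicit constants $\kappa,\kappa'$, $n_{BB}(t)\to\bar n_B$, and in particular $n_{AA}\ll n_{AB}$ on the relevant range. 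Phase~1 contributes only a time-shift of size $T_1=\OO(|\log\e|)=o(t)$ on that range and an $\e_0$-dependent prefactor, both harmless.

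\textbf{The scalar ODE for $\Sigma_{aA,aB}$.} I would then combine three inputs. First, Lemma~\ref{aA-S2} gives $n_{aA}=\Theta\big((n_{AB}+2n_{AA})\Sigma_{aA,aB}\big)=\Theta(n_{AB}\Sigma_{aA,aB})$; sharpening this order-of-magnitude bracket into an asymptotic identity $n_{aA}=(1+o(1))\,h(t)\,\Sigma_{aA,aB}$ with $h(t)\asymp n_{AB}(t)$ uses that $n_{aA}$ is driven to its quasi-stationary value on a $\Theta(1)$ timescale (it feels competition from a macroscopic population), which is fast compared with the $\Theta(t)$ timescale of the slow drift of $\Sigma_{aA,aB}$. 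Second, Proposition~\ref{S2 bound} gives $\dot\Sigma_{aA,aB}=(\eta n_{BB}-\Delta+o(1)+\OO(\delta))\,n_{aA}$. Hence $\frac{d}{dt}\log\Sigma_{aA,aB}=(\eta\bar n_B-\Delta+o(1)+\OO(\delta))\,h(t)$; the point of the computation is that the leading constants conspire so that $\int_{t_0}^{t}h(s)\,ds=\log t+\OO(1)$, which after integration yields $\Sigma_{aA,aB}(t)=\OO(\e)\,t^{\,\eta\bar n_B-\Delta}$ on $[T_1,T_2]$. Third and throughout, all the ``$\approx$'' above are converted into genuine upper and lower envelopes by Lemma~\ref{lem-level}, keeping track of the $\OO(\delta),\OO(\e_0),o(1)$ errors so that they affect the prefactor but not the exponent (one first sends $\e\to0$, then $\e_0,\delta\to0$, then $\Delta$ fixed small, in that order).

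\textbf{Solving the stopping time and closing the bootstrap.} At $T_2$ the binding event (shown below) is $n_{aA}(T_2)=\delta\,n_{AA}(T_2)$. Inserting $n_{aA}=\Theta(n_{AB}\Sigma_{aA,aB})$ and $n_{AA}=\Theta(n_{AB}^2)$ turns this into $\Sigma_{aA,aB}(T_2)=\Theta(\delta\,n_{AB}(T_2))=\Theta(\delta/T_2)$. Combining with the power law of the previous paragraph, $\OO(\e)\,T_2^{\,\eta\bar n_B-\Delta}=\Theta(\delta/T_2)$, hence $T_2^{\,1+\eta\bar n_B-\Delta}=\Theta(\delta/\e)$ and $T_2=\OO\big(\e^{-1/(1+\eta\bar n_B-\Delta)}\big)$; since $1+\eta\bar n_B-\Delta>0$ by Assumptions~(C1) and (C3), this indeed diverges, which closes the bootstrap. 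It remains to see $T^{aA=\delta AA}\le T^{aB=\delta AB}$: from Lemma~\ref{ub-aB-aA}, $n_{aB}\le C(t)\,n_{aA}$ with $C(t)\sim 2\bar n_B/n_{AB}$ in the asymptotic regime, so $n_{aB}\lesssim\Sigma_{aA,aB}$; and $\Sigma_{aA,aB}(t)/n_{AB}(t)\asymp(\e/\bar n_B)\,t^{\,1+\eta\bar n_B-\Delta}$ is increasing, reaching order $\delta$ only at $T^{aA=\delta AA}$, so $n_{aB}(t)<\delta\,n_{AB}(t)$ for all $t\le T^{aA=\delta AA}$.

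\textbf{Main obstacle.} The delicate step is the second paragraph: upgrading the order-of-magnitude bracket of Lemma~\ref{aA-S2} for $n_{aA}$ to a sharp enough relation that the exponent comes out exactly $\eta\bar n_B-\Delta$ (the constants in $n_{aA}\asymp n_{AB}\Sigma_{aA,aB}$ and in $n_{AB}\asymp 1/t$ must cancel), and doing so while the underlying $3$-system asymptotics are only known on the (a priori unknown, and only later shown divergent) interval $[T_1,T_2]$. Concretely one runs the Lemma~\ref{lem-level} comparison on a generic interval $[T_1,T]\subset[T_1,T_2]$, derives the power law there, and only then identifies $T$ with the stopping time; a secondary nuisance is bookkeeping the error hierarchy $\e\ll\e_0\ll\delta\ll\Delta$ so that none of the accumulated errors enters the exponent.
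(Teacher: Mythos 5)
Your skeleton coincides with the paper's argument: reduce to the scalar quantity $\Sigma_{aA,aB}$, feed the relation $n_{aA}=\OO\big((n_{AB}+2n_{AA})\Sigma_{aA,aB}\big)$ of Lemma~\ref{aA-S2} into the differential inequalities of Proposition~\ref{S2 bound}, use the $1/t$ decay of $n_{AB}$ (via \cite{BovNeu} and Proposition~\ref{maxAB}) to integrate a power law for $\Sigma_{aA,aB}$ starting from $\OO(\e)$ at $T_1$, solve the crossing condition $n_{aA}=\delta n_{AA}=\OO(n_{AB}^2)$, check a posteriori that $1+\eta\bar n_B-\D>0$ so the time diverges and the use of the $3$-system asymptotics is legitimate, and finally verify that $n_{aB}=\delta n_{AB}$ occurs at the same order (both conditions amount to $\Sigma_{aA,aB}\asymp\delta\, n_{AB}$). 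All of this is exactly how the proof in the paper proceeds.

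The genuine gap is the step you yourself flag as the main obstacle: the claim that the bracket of Lemma~\ref{aA-S2} can be upgraded to an asymptotic identity $n_{aA}=(1+o(1))h(t)\Sigma_{aA,aB}$ with $\int_{t_0}^t h(s)\,ds=\log t+\OO(1)$, so that the exponent comes out \emph{exactly} $\eta\bar n_B-\D$. Nothing available supports this: the two constants in Lemma~\ref{aA-S2} are far apart ($\sfrac{f}{4\bar n_B(f+\D)}$ versus $\sfrac{f}{\bar n_A(D-2\D)}$), Proposition~\ref{S2 bound} controls the prefactor only through $\eta n_{BB}-\D$ with $n_{BB}$ pinned between $\sfrac{\bar n_B}4-\OO(\D)$ and $\bar n_B$, and \cite{BovNeu} is invoked only for a $\Theta(1/t)$ decay of $n_{AB}$, not for a sharp constant $\kappa$ with $n_{AB}=\kappa t^{-1}(1+o(1))$; the product of these constants entering the exponent is therefore not identified, and your "conspiracy of constants" is asserted, not proved. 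The paper does not attempt (and does not need) this sharpening: it keeps the exponent only up to multiplicative constants and obtains the sandwich $\OO\big(\e^{-1/(1+\eta\bar n_B-\D)}\big)\leq T^{aA=\delta AA}\leq\OO\big(\e^{-1/(1+\eta\bar n_B/4-\D)}\big)$, which is how the $\OO$ in the statement is to be read, with the clean exponent only in the case $\eta=0$. If you drop the exact-constant claim and run your comparison with one-sided constants (as you anyway must, via Lemma~\ref{lem-level}), your argument closes and reproduces the paper's proof; if you insist on the exact exponent, you would need sharp first-order asymptotics of the degenerate $3$-system and of the quasi-stationary relation for $n_{aA}$, neither of which is established here or in \cite{BovNeu}. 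A minor additional remark: your claimed strict ordering $T^{aA=\delta AA}\leq T^{aB=\delta AB}$ also only comes out as "same order of magnitude" from this kind of argument, which is all the paper asserts as well.
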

\begin{proof}

	From Proposition \ref{S2 bound} (2) we have a lower bound on $\dot{\S}_{aA,aB}$, and with Lemma \ref{aA-S2} we can further bound $n_{aA}$ (either from above or from below depending on the sign of the prefactor):
	\begin{align}\nonumber
	\dot{\S}_{aA,aB}&\geq (\eta n_{BB}-\D-\OO(\delta)) n_{aA}\\\nonumber
	&\geq (\eta n_{BB}-\D-\OO(\delta))\OO{(n_{AB}+2n_{AA})}\S_{aA,aB}\\
	&\geq\frac{\OO(\eta\bar n_{B}/4-\D)}{\OO(1)+\OO(1)t}\S_{aA,aB}.
	\end{align}
	where in the last line, the estimation on $n_{BB}$ comes from Proposition \ref{maxAB}, and the estimation on $n_{AB}$ comes from \cite{BovNeu} where it is proved that $n_{AB}$ starts to decrease like $1/t$ after a time of order $\OO(1)$.	As ${\Sigma}_{aA,aB}(T_1)=\OO(\e)$, the solution of the ODE that gives a lower bound is:
	\begin{align}\label{LBSigma}
	{\Sigma}_{aA,aB}(t)\geq{\OO(\e)}(\OO(1)+\OO(1)t)^{\OO(\eta\bar n_{B}/4-\D)}.
	\end{align}
	By using Proposition \ref{S2 bound} (1),  we get the same kind of solution as an upper bound on ${\Sigma}_{aA,aB}$ (note on the last step we can upper bound $n_{BB}$ by $\bar n_B$): 
	\begin{align}\label{UBSigma}
	{\Sigma}_{aA,aB}(t)\leq{\OO(\e)}(\OO(1)+\OO(1)t)^{\OO(\eta\bar n_{B}-\D)}
	\end{align}
	Using \eqref{LBSigma} and the lower bound in Lemma \ref{aA-S2} (together with the trivial estimation $n_{AA}\geq0$) we get a minorising process on $aA$:
	\begin{align}
	n_{aA}(t)\geq \OO(n_{AB}	{\Sigma}_{aA,aB})\geq{{\OO(\e)}(\OO(1)+\OO(1)t)^{\OO(\eta\bar n_{B}/4-\D)}}/{(\OO(1)+\OO(1)t)}.
	\end{align}
	The corresponding majorising process has an $\bar n_B$ instead of $\bar n_B/4$. By solving $n_{aA}=\delta n_{AA}=\OO(n_{AB}^2)$ we get the order of magnitude of $T_{aA=\delta AA}$:
	\begin{align}
	\OO\left(\e^{-1/(1+\eta\bar n_{B}-\D)}\right) \leq T_{aA=\delta AA}\leq \OO\left(\e^{-1/(1+\eta\bar n_{B}/4-\D)}\right).
	\end{align}
	Note that $1+\eta\bar n_B-\D>0$ for $\D$ small enough, and thus $T_{aA=\delta AA}$ diverges with $\e$ and the order calculations above are justified.
	
		It is left to ensure that $aB$ does not exceed $\delta n_{AB}$ in this time. It follows from Lemma \ref{aA-S2} that during the time interval $[T_1,T_2]$, we have ${\Sigma}_{aA,aB}=\OO(n_{aB})$. Thus, solving $n_{aB}=\delta n_{AB}$ amounts to solving $\OO({\Sigma}_{aA,aB})=\OO(1)/(\OO(1)+\OO(1)t)$ which gives the same order of magnitude as for $T_{aA=\delta AA}$. Thus the two times are of the same order. 
	
		Note that for $\eta=0$,  ${\Sigma}_{aA,aB}(T_2)=\OO\left(\e ^{1+{\D}/{(1-\D)}}\right)$. 
	{ This proves point \ref{eta0decay} of Theorem \ref{main-thm}.}
	
\end{proof}

\begin{proposition}\label{prop-T2}
	$T_2=T^{aA=\delta AA}$.
\end{proposition}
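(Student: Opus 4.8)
The plan is to identify $T^{aA=\delta AA}$ as the smallest of the three times entering the definition of $T_2$. By the observation following Lemma~\ref{aa-S2} we already know $T_2=T^{aA=\delta AA}\wedge T^{aB=\delta AB}$, and Theorem~\ref{thm-T2} gives $T_2<\infty$, so it suffices to prove $n_{aB}(t)<\delta\,n_{AB}(t)$ for every $t\in[T_1,T_2]$: then $T^{aB=\delta AB}>T_2$ and hence $T_2=T^{aA=\delta AA}$. (On $[0,T_1]$ the Phase~1 bounds give $n_{aB}\le n_{aA}n_{AB}<\delta\,n_{AB}$ for $\e$ small, cf.\ Proposition~\ref{prop-bounds} and Corollary~\ref{phase1}, so nothing is crossed earlier.) The engine is Lemma~\ref{ub-aB-aA}: $n_{aB}\le C(t)\,n_{aA}$ with $C(t)=\frac{n_{AB}+2n_{BB}+2\Delta/c}{n_{AB}+2n_{AA}}$; since $n_{aA}\le\delta\,n_{AA}$ on $[T_1,T_2]$ by definition of $T^{aA=\delta AA}$, the task reduces to the pointwise inequality $C(t)\,n_{AA}(t)<n_{AB}(t)$. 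This inequality \emph{fails} at the start of Phase~2, where $n_{AA}\approx\bar n_A$ is of order one while $n_{AB}=\e_0$ and $n_{BB}=\OO(\e_0^2)$ are small, so I would split $[T_1,T_2]$ into a constant-length window $[T_1,T_1+\tau^\star]$ and the tail $[T_1+\tau^\star,T_2]$, with $\tau^\star=\OO(1)$ fixed below.

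\emph{Early window.} On $[T_1,T_1+\tau^\star]$ a short bootstrap shows $T_2>T_1+\tau^\star$ and that the $a$-carrying populations stay $\so$: since every $a$-carrying newborn needs an $a$-carrying parent and the partner pools stay of order one there (as $n_{AA}$ does), the Mendelian birth rates give $\dot\Sigma_{aA,aB}\le O(1)\,\Sigma_{aA,aB}$ (using $n_{aa}=\OO(\Sigma_{aA,aB}^2)$ from Lemma~\ref{aa-S2}), so by Gronwall $\Sigma_{aA,aB}(t)\le e^{O(1)(t-T_1)}\Sigma_{aA,aB}(T_1)=\OO(\e^{1-\OO(\e_0)})=\so$. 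Hence $n_{aa},n_{aA},n_{aB}=\so$, so by Lemma~\ref{3system} the triple $(n_{AA},n_{AB},n_{BB})$ stays $\so$-close to the unperturbed $3$-system started from $(\approx\bar n_A,\e_0,\approx0)$, whose $AB$- and $AA$-coordinates are bounded below by positive constants on the compact interval $[0,\tau^\star]$. Thus $n_{AB}(t),n_{AA}(t)$ are bounded below by positive constants while $n_{aB},n_{aA}=\so$, giving $n_{aB}<\delta\,n_{AB}$ (and $n_{aA}<\delta\,n_{AA}$, $n_{aa}<n_{aA}\wedge n_{aB}$) on this window, and in particular $T_2>T_1+\tau^\star$.

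\emph{Tail.} Fix a small $\epsilon'>0$, then a small $\e_1>0$, and choose $\tau^\star=\OO(1)$ large enough that $n_{AA}(t)\le\e_1$ for $t\in[T_1+\tau^\star,T_2]$ (possible since the perturbed $3$-system decays towards $(0,0,\bar n_B)$). Then $n_{BB}\le\bar n_B$ (Proposition~\ref{aB<AB}), $n_{AB}\le 2\sqrt{\bar n_B\e_1(1+\Delta/f)}$ (Lemma~\ref{bound-AB}(2)), and, for $\e_1$ small relative to $\epsilon'$, $n_{AB}\ge(2-\epsilon')\sqrt{\bar n_B\,n_{AA}}$ (Lemma~\ref{bound-AB}(1)). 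Since $n_{AB}(n_{AB}+2n_{AA})\ge n_{AB}^2\ge(2-\epsilon')^2\bar n_B\,n_{AA}$, the inequality $(n_{AB}+2n_{BB}+2\Delta/c)\,n_{AA}<n_{AB}(n_{AB}+2n_{AA})$ follows once $2\sqrt{\bar n_B\e_1(1+\Delta/f)}+2\bar n_B+2\Delta/c<(2-\epsilon')^2\bar n_B$; as $2\Delta/c=\frac{2\Delta}{f-D+\Delta}\bar n_B$ this reads $\frac{2}{\bar n_B}\sqrt{\bar n_B\e_1(1+\Delta/f)}+\frac{2\Delta}{f-D+\Delta}<2-4\epsilon'+(\epsilon')^2$, which holds for $\epsilon',\e_1$ small and, by Assumptions (C1)--(C2), $\Delta$ small / $f$ large. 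Hence $C(t)\,n_{AA}<n_{AB}$ on the tail, so $n_{aB}\le C(t)\,n_{aA}\le C(t)\,\delta\,n_{AA}<\delta\,n_{AB}$ there.

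Combining the two regimes gives $n_{aB}(t)<\delta\,n_{AB}(t)$ for all $t\in[T_1,T_2]$, hence $T^{aB=\delta AB}>T_2$ and $T_2=T^{aA=\delta AA}$. I expect the main obstacle to be the tail inequality $C(t)\,n_{AA}<n_{AB}$: it rests on the sharp relation $n_{AB}\approx 2\sqrt{\bar n_B\,n_{AA}}$ of Lemma~\ref{bound-AB}, valid only once the $3$-system is near $(0,0,\bar n_B)$, together with the smallness of $\Delta$; the region where this relation is not yet in force must be absorbed into the $\OO(1)$-length early window, where the saving fact is that the $a$-carrying populations are still only $\so$.
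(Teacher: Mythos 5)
Your argument is correct (at the level of rigour used throughout this section), but it is not the route the paper takes. The paper's proof of this proposition is a one-line reference to Lemma \ref{aa-S2} (which discards $T^{aa=aA\wedge aB}$) and to Theorem \ref{thm-T2}, in whose proof the two remaining times are compared only through orders of magnitude: Lemma \ref{aA-S2} gives $\Sigma_{aA,aB}=\OO(n_{aB})$ on $[T_1,T_2]$, so solving $n_{aB}=\delta n_{AB}$ reduces to $\OO(\Sigma_{aA,aB})=\OO(1)/(\OO(1)+\OO(1)t)$, the same equation as for $n_{aA}=\delta n_{AA}$, and the two crossing times are concluded to be of the same order. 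You instead establish the strict pointwise inequality $n_{aB}(t)<\delta\,n_{AB}(t)$ on all of $[T_1,T_2]$, by combining the ratio bound $n_{aB}\le C(t)\,n_{aA}$ of Lemma \ref{ub-aB-aA} with $n_{aA}\le\delta n_{AA}$ (valid up to $T_2$ by definition) and the square-root relation of Lemma \ref{bound-AB}, after splitting off an $\OO(1)$ window in which the $a$-carrying populations are still $\so$ while $n_{AA}$ and $n_{AB}$ stay bounded below. This yields a genuinely sharper conclusion — $T^{aB=\delta AB}>T^{aA=\delta AA}$ strictly, rather than ``same order'' — and uses Theorem \ref{thm-T2} only to know that $T_2$ is finite; the price is the bookkeeping of the window splitting and of the constants $\epsilon',\e_1,\tau^\star$, whose choice must precede that of $\delta$ (you need $\delta$ small relative to $\e_1$ so that the $\OO(\delta)$-perturbed three-system of Lemma \ref{3system} indeed satisfies $n_{AA}\le\e_1$ on the tail), which is consistent with the paper's convention that $\delta$ is taken sufficiently small. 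Your handling of the third stopping time and of the Phase-1 segment leans on the same observation following Lemma \ref{aa-S2} that the paper itself invokes, so there is no gap relative to the paper's own standard.
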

\begin{proof}
	This follows from Theorem \ref{thm-T2} and Lemma \ref{aa-S2}.
\end{proof}

\begin{proposition} 
 At time $t= T_2$ and if $f$ is taken sufficiently large (Assumption C2), $n_{aa}$ starts to grow out of itself: there exists some positive constant $c_{T_2}>0$ such that
		\begin{align}
		\dot n_{aa}\geq c_{T_2}\cdot n_{aa}.
		\end{align}
\end{proposition}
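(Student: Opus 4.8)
The plan is to determine the sign of $\dot n_{aa}=b_{aa}-d_{aa}$ at the single instant $t=T_2$, from the explicit rates \eqref{birthratesaa}, \eqref{death-ratesaa} and the fact that, by Proposition \ref{prop-T2}, $T_2=T^{aA=\delta AA}$, so $n_{aA}(T_2)=\delta\,n_{AA}(T_2)$. For the birth rate I would keep only the first summand of \eqref{birthratesaa} (the one that "becomes of order $\OO(1)$" in the Phase 3 heuristics); since the remaining two summands are nonnegative,
\[
b_{aa}(T_2)\;\geq\; f\,\frac{n_{aa}\,(n_{aa}+\tfrac12 n_{aA})}{n_{aa}+n_{aA}+n_{AA}}\Big|_{t=T_2}.
\]
Substituting $n_{aA}(T_2)=\delta n_{AA}(T_2)$ and writing $r:=n_{aa}(T_2)/n_{AA}(T_2)\geq0$, the prefactor $(n_{aa}+\tfrac12 n_{aA})/\Sigma_3$ equals $(r+\tfrac\delta2)/(r+1+\delta)$, which is increasing in $r$ and hence bounded below by its value at $r=0$, namely $\tfrac{\delta/2}{1+\delta}\geq\tfrac\delta4$ (using $\delta<\Delta<1$). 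Thus $b_{aa}(T_2)\geq\tfrac{f\delta}{4}\,n_{aa}(T_2)$, a bound that needs no control on the relative sizes of $n_{aa},n_{aA},n_{AA}$ beyond the defining identity $n_{aA}(T_2)=\delta n_{AA}(T_2)$.

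For the death rate, \eqref{death-ratesaa} gives $d_{aa}(T_2)=n_{aa}(T_2)\bigl(D+\Delta+c\,\Sigma_3(T_2)\bigr)$, so it remains to show that $\Sigma_3(T_2)=n_{aa}+n_{aA}+n_{AA}$ at $T_2$ is $o(1)$ as $\e\to0$. This is where the Phase 2 bookkeeping enters: Lemma \ref{aa-S2} bounds $n_{aa}(T_2)\leq\tfrac{f}{\bar n_A(D+\Delta)}\,\Sigma_{aA,aB}(T_2)^2$, and the proof of Theorem \ref{thm-T2} shows $\Sigma_{aA,aB}(T_2)\to0$; moreover, combining Lemmas \ref{aA-S2} and \ref{bound-AB} with $\Sigma_{aA,aB}(T_2)\to0$ (equivalently, using the $1/t^2$ decay of $n_{AA}$ on $[T_1,T_2]$ inherited from \cite{BovNeu} together with $T_2\to\infty$) gives $n_{AA}(T_2)\to0$, hence also $n_{aA}(T_2)=\delta n_{AA}(T_2)\to0$. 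Therefore $\Sigma_3(T_2)\to0$, so $c\,\Sigma_3(T_2)\leq\Delta$ for $\e$ small enough and $d_{aa}(T_2)\leq n_{aa}(T_2)(D+2\Delta)$.

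Combining the two estimates, $\dot n_{aa}(T_2)\geq n_{aa}(T_2)\bigl(\tfrac{f\delta}{4}-D-2\Delta\bigr)$, and Assumption (C2) — $f$ chosen large relative to the fixed constants $\delta,D,\Delta$ — makes $c_{T_2}:=\tfrac{f\delta}{4}-D-2\Delta>0$, which is the assertion. The main (essentially the only) obstacle is the death-rate step: one must know that at $T_2$ the competitive contribution $c\Sigma_3$ to $d_{aa}$ has already become negligible, so that the birth term $\tfrac{f\delta}{4}n_{aa}$ only has to dominate the bare natural death rate $D+\Delta$ rather than the much larger rate $\approx f+\Delta$ that the a priori bound $\Sigma_3\leq\bar n_A+\OO(\d)$ would give; this is precisely what forces one to invoke the Phase 2 control on $\Sigma_{aA,aB}(T_2)$ and on the decay of $n_{AA}$, and what makes Assumption (C2) indispensable.
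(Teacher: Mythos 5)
Your proof is correct and follows essentially the same route as the paper: you lower-bound $b_{aa}(T_2)$ by the $\Sigma_3$-term using $n_{aA}(T_2)=\delta n_{AA}(T_2)$ (the paper gets $\tfrac{\delta f}{2(1+\OO(\delta))}n_{aa}$ where your monotonicity-in-$r$ trick gives $\tfrac{f\delta}{4}n_{aa}$), and you control $d_{aa}(T_2)$ by noting that $c\Sigma_3(T_2)=o(1)$ from the Phase~2 estimates (the paper simply quotes $n_{AA}(T_2)=\OO(\e^{2/(1+\eta\bar n_B-\Delta)})$, which is the same fact you re-derive via Lemmas \ref{aA-S2}, \ref{bound-AB} and $\Sigma_{aA,aB}(T_2)\to0$), before concluding with Assumption (C2). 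The differences are only cosmetic refinements of the same argument.
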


\begin{proof}
	We have $n_{AA}(T_2)=\OO\left(\e^{2/(1+\eta\bar n_B-\Delta)}\right)$. Thus, at the end of the second phase,
	\begin{align}
	b_{aa}&\geq fn_{aa}\frac{\sfrac12\d n_{AA}}{n_{AA}(1+\OO(\d))}=\frac{\delta f n_{aa}}{2(1+\OO(\delta))},\\
	d_{aa}&\leq n_{aa}(D+\D+n_{AA}(1+\OO(\d)))=n_{aa}(D+\Delta+\OO\left(\e^{2/(1+\eta\bar n_B-\Delta)}\right),\\
	\dot{n}_{aa}&\geq n_{aa}\left(\sfrac {\d f}2-D-\D-\OO\left(\e^{2/(1+\eta\bar n_B-\Delta)}\right)\right),
	\end{align} 
	the right-hand side is positive for $f$ large enough.
\end{proof}

\subsection{Phase 3:  Exponential growth of $aa$ until co-equilibrium with $BB$  }\label{subsec-phase3}\emph{}\\

\begin{figure}[b]
	\begin{center}
		\includegraphics[width=0.45\textwidth]{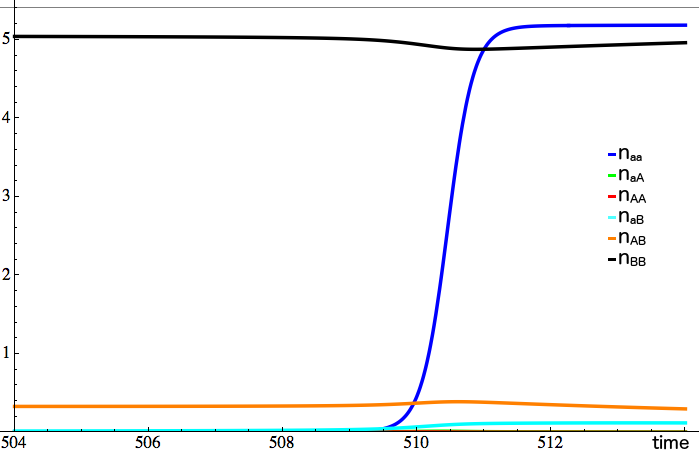}
		\includegraphics[width=0.45\textwidth]{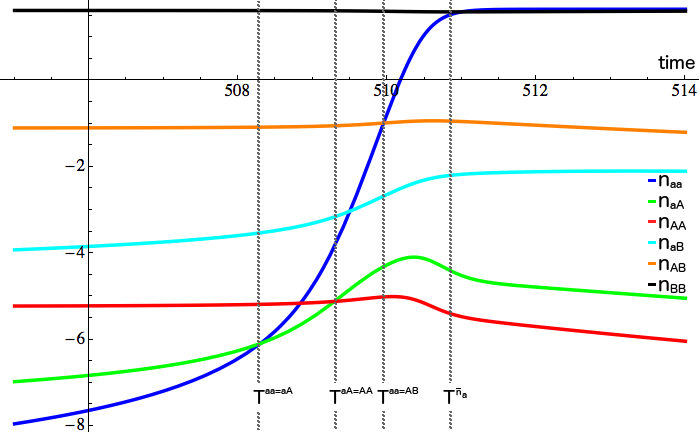}
		\caption{zoom-in when $aa$ recovers, general qualitative behaviour of $\{n_{i}\left(t\right),i\in\mathcal{G}\}$ (lhs) and log-plot (rhs).}
		\label{zoom-in}
	\end{center}
\end{figure}

Since $aa$ is growing now also out of itself it will influence the sum-process $\S_5=n_{aA}+n_{AA}+n_{aB}+n_{AB}+n_{BB}$ and we need new lower bounds on $\S_5$ in the following steps, the proof of this works similar to the one of Proposition \ref{prop-S5} by taking into account all contributing populations. Let us compute the ODE to which $\Sigma_5$ is the solution:

\begin{proposition}
	\label{S5-3Phase} The sum-process $\S_5$ is the solution to
	\begin{align}
	\dot{\S}_5=&\S_5\left(f-D-\D-c\S_5\right)-\D\left(n_{aA}+n_{AA}\right)-cn_{aa}\left(n_{aA}+n_{AA}\right)+2\eta n_{aA}n_{BB}\nonumber\\
	&+\sfrac{f}{\S_3}n_{aa}\left(\sfrac12n_{aA}+n_{AA}\right)-\sfrac{f}{4\S_5}n_{aB}(n_{aA}+n_{aB})-\sfrac{f}{4\S_6}n_{aA}\left(2n_{aa}+n_{aA}+n_{aB}\right).
	\end{align}
\end{proposition}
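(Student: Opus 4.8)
The plan is to derive the identity by simply summing the scalar ODEs $\dot n_i=b_i-d_i$ from \eqref{dyn-syst} over the five genotypes $i\in\{aA,AA,aB,AB,BB\}$ making up $\S_5$, inserting the explicit birth rates \eqref{birthratesaa}--\eqref{birthratesBB} and death rates \eqref{death-ratesaa}--\eqref{death-ratesBB}, and using repeatedly the elementary identities $\S_6=\S_5+n_{aa}$ and $\S_3=n_{aa}+n_{aA}+n_{AA}$. Everything is an exact algebraic manipulation, so there is no genuine obstacle; the only real work is organising the five birth numerators, and the one shortcut worth taking is described next.

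Rather than expanding \eqref{birthratesaA}--\eqref{birthratesBB} term by term, I would use the structural fact that, under Assumption (B1), every individual reproduces at rate exactly $f$, so that the total birth rate of the whole six-genotype population equals $\sum_{i\in\GG}b_i=f\,\S_6$. Since $\{aA,AA,aB,AB,BB\}=\GG\setminus\{aa\}$, this gives $\sum_{i\neq aa}b_i=f\S_6-b_{aa}=f\S_5+\bigl(fn_{aa}-b_{aa}\bigr)$. Substituting $b_{aa}$ from \eqref{birthratesaa} and simplifying the first term via $\S_3-n_{aa}-\sfrac12 n_{aA}=\sfrac12 n_{aA}+n_{AA}$ yields exactly
\begin{equation*}
\sum_{i\neq aa}b_i=f\S_5+\sfrac f{\S_3}n_{aa}\bigl(\sfrac12 n_{aA}+n_{AA}\bigr)-\sfrac f{4\S_5}n_{aB}(n_{aA}+n_{aB})-\sfrac f{4\S_6}n_{aA}(2n_{aa}+n_{aA}+n_{aB}),
\end{equation*}
which already produces the three rational terms in the statement. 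For a proof that avoids this bookkeeping, the same expression can be obtained by brute-force collection of \eqref{birthratesaA}--\eqref{birthratesBB} over the common denominators $\S_3,\S_5,\S_6$; this is the only laborious step and is purely mechanical.

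The death contribution is polynomial. Summing \eqref{death-ratesaA}--\eqref{death-ratesBB}, the competition part is $c\S_6(n_{aA}+n_{AA})+c\S_5(n_{aB}+n_{AB}+n_{BB})$, which collapses to $c\S_5^2+cn_{aa}(n_{aA}+n_{AA})$ once $\S_6=\S_5+n_{aa}$ is inserted; the off-diagonal terms of $d_{aA}$ and $d_{BB}$ together contribute $-2\eta n_{aA}n_{BB}$; and the natural-death part $D(n_{aA}+n_{AA})+(D-\D)(n_{aB}+n_{AB}+n_{BB})$ rewrites, using $n_{aB}+n_{AB}+n_{BB}=\S_5-(n_{aA}+n_{AA})$, as $(D-\D)\S_5+\D(n_{aA}+n_{AA})$. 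Subtracting $\sum_{i\neq aa}d_i$ from $\sum_{i\neq aa}b_i$ and absorbing the $\S_5$-linear and $\S_5^2$ terms into a single factor $\S_5\bigl(f-D+\D-c\S_5\bigr)$ gives the asserted ODE. I would double-check the sign of the $\D\,\S_5$ coefficient along the way: the accounting above yields $\S_5(f-D+\D-c\S_5)-\D(n_{aA}+n_{AA})-\dots$, consistent with the Phase~2 identity of Proposition~\ref{prop-S5} (whose analogous leading term is $\S_5(f-D+\D)-\D n_{AA}$), so the display should carry $f-D+\D$ rather than $f-D-\D$.
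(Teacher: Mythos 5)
Your proposal is correct and is essentially the paper's own computation: the paper likewise just sums the five birth and death rates, and its death-rate bookkeeping (competition collapsing to $c\S_5^2+cn_{aa}(n_{aA}+n_{AA})$ via $\S_6=\S_5+n_{aa}$, the $-2\eta n_{aA}n_{BB}$ term, and the natural-death rewriting $(D-\D)\S_5+\D(n_{aA}+n_{AA})$) is identical to yours. The only genuine difference is your shortcut on the birth side: instead of the paper's direct regrouping of \eqref{birthratesaA}--\eqref{birthratesBB} over the denominators $\S_3,\S_5,\S_6$, you use $\sum_{i\in\GG}b_i=f\S_6$ (valid under (B1) since each individual with a nonempty partner pool reproduces at total rate $f$ and the Mendelian proportions sum to one -- a fact one can also confirm directly from \eqref{birthratesaa}--\eqref{birthratesBB}) and subtract $b_{aa}$; this yields exactly the paper's expression $b_{\S_5}=f\S_5+\sfrac f{\S_3}n_{aa}(\sfrac12 n_{aA}+n_{AA})-\sfrac f{4\S_5}n_{aB}(n_{aA}+n_{aB})-\sfrac f{4\S_6}n_{aA}(2n_{aa}+n_{aA}+n_{aB})$ with less bookkeeping. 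Your sign remark is also right: combining this $b_{\S_5}$ with $d_{\S_5}=\S_5(D-\D+c\S_5)+(cn_{aa}+\D)(n_{aA}+n_{AA})-2\eta n_{aA}n_{BB}$ gives the leading term $\S_5(f-D+\D-c\S_5)$, so the $f-D-\D$ in the displayed statement is a typo; this is confirmed by Lemma \ref{S5-3P}, which uses $f-D+\D$, and by the fact that $\bar n_B=(f-D+\D)/c$ must be the zero of the leading term.
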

\begin{proof}
	We calculate the birth- and the death-rate of $\S_5$ under consideration of the $aa$ population:
	\begin{align}
	b_{\S_5}=&\sfrac f{\S_3}n_{aa}\left(\sfrac12n_{aA}+n_{AA}\right)+\sfrac f{\S_5}\left(\left(n_{aB}+n_{AB}+n_{BB}\right)\S_5-\sfrac14n_{aB}\left(n_{aA}+n_{aB}\right)\right)\nonumber\\\nonumber&
	+\frac f{\S_6}\left(\left(n_{aA}+n_{AA}\right)\S_6-n_{aA}\left(\sfrac12n_{aa}+\sfrac14n_{aA}+\sfrac14n_{aB}\right)\right)\\
	=&f\S_5+\sfrac f{\S_3}n_{aa}\left(\sfrac12n_{aA}+n_{AA}\right)-\sfrac f{4\S_5}n_{aB}\left(n_{aA}+n_{aB}\right)-\sfrac f{4\S_6}n_{aA}\left(2n_{aa}+n_{aA}+n_{aB}\right),\\
	d_{\S_5}=&\S_5\left(D-\D+c\S_5\right)+(cn_{aa}+\D)\left(n_{aA}+n_{AA}\right)-2\eta n_{aA}n_{BB},
	\end{align}
	which gives the result.
\end{proof}

We introduce some notation for the order of magnitude of $n_{AA}(T_2)$. We write $n_{AA}(T_2)=\OO(\e^\gamma)$ with
\begin{equation}
\gamma:=2/(1+\eta\bar n_B-\Delta).
\end{equation}
Thus the initial condition of the third Phase can be written as:
\begin{align}
n_{aa}(T_2)&\leq\OO(\e^2),\\
n_{aA}(T_2)&=\OO(\d n_{AA})=\OO(\d\e^\g),\\
n_{AA(T_2)}&=\OO(\e^\g),\\
n_{aB}(T_2)&=\OO(\d n_{AB})=\OO(\d\e^{\g/2}),\\
n_{AB}(T_2)&=\OO(\e^{\g/2}),\\
n_{BB}(T_2)&=\bar n_B-\OO(\e^{\g/2}),
\end{align}

Let 
\begin{equation}
T_3:=T^{aa}_{\bar n_a-\e_0}=\inf\left\{t>T_2:n_{aa}(t)=\bar n_a-\e_0\right\}.
\end{equation}

{We define the stopping time 
\begin{align}
T_+:=\min\left\{T_{\e ^{\gamma/10}}^{AB},T^{aA=AB},T^{AA=AB}\right\},
\end{align}
where:
\begin{align}
T_{\e ^{\gamma/10}}^{AB}&:=\inf\left\{t>T_2: n_{AB}(t)\geq\e ^{\gamma/10}\right\},\\
T^{aA=AB}&:=\inf\left\{t>T_2: n_{aA}(t)=n_{AB}(t)\right\},\\
T^{AA=AB}&:=\inf\left\{t>T_2: n_{AA}(t)=n_{AB}(t)\right\}.
\end{align}
Note that for the following arguments, ${\gamma/10}$ can be replaced by any positive power smaller than $\gamma/2$.}

We have to ensure that the $aa$ population grows until a neighbourhood of its equilibrium. 
From Theorem \ref{thm-T2} and Proposition \ref{prop-T2}, at the end of the second Phase we have 
 that $n_{aA},n_{AA},n_{aB}\leq n_{AB}<\D$. We start to bound $\S_5$:

\begin{lemma}
\label{S5-3P}
{For all $t\in[T_2,T_+]$,}
the sum-process $\S_5$ is bounded from above and below by
\begin{align}
\bar n_B-\sfrac{f+3\D}{c\bar n_B}n_{AB}\leq\S_5\leq\bar n_B+\sfrac{4(f+\D)}{c\bar n_B}n_{AB}.
\end{align}
\end{lemma}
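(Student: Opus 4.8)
The plan is to run the argument of Proposition~\ref{prop-S5}, now with $n_{AB}$ playing the role that $n_{AA}$ played there: apply Lemma~\ref{lem-level} to the two functions
\[
g_-(t):=\bar n_B-\frac{f+3\D}{c\bar n_B}\,n_{AB}(t)-\S_5(t),\qquad
g_+(t):=\S_5(t)-\bar n_B-\frac{4(f+\D)}{c\bar n_B}\,n_{AB}(t),
\]
and show each stays negative on $(T_2,T_+]$, using the explicit field for $\S_5$ from Proposition~\ref{S5-3Phase} together with the a~priori bounds available on $[T_2,T_+]$. First I would collect those bounds: by Proposition~\ref{aB<AB}, $n_{BB}\le\bar n_B$ and $n_{aB}\le n_{AB}$ for all $t>0$; by the definition of $T_+$ (and since $n_{aA}(T_2),n_{AA}(T_2)=\OO(\e^{\g})\ll n_{AB}(T_2)=\OO(\e^{\g/2})$), one has $n_{aA}\le n_{AB}$ and $n_{AA}\le n_{AB}\le\e^{\g/10}$ on $[T_2,T_+]$; and all six coordinates stay in a fixed compact set. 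The upper bound $g_+<0$ is then immediate, with no comparison argument needed at all: since $n_{aA}+n_{AA}+n_{aB}\le 3n_{AB}$ and $n_{BB}\le\bar n_B$,
\[
\S_5=n_{aA}+n_{AA}+n_{aB}+n_{AB}+n_{BB}\le\bar n_B+4n_{AB}\le\bar n_B+\frac{4(f+\D)}{c\bar n_B}n_{AB},
\]
the last inequality because $c\bar n_B=f-D+\D\le f+\D$.

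For the lower bound I would first check $g_-(T_2)<0$: Proposition~\ref{prop-S5} at $t=T_2$ with $n_{AA}(T_2)=\OO(\e^{\g})$ gives $\S_5(T_2)\ge\bar n_B-\OO(\e^{\g})$, while $n_{AB}(T_2)=\OO(\e^{\g/2})\gg\e^{\g}$, so $\bar n_B-\frac{f+3\D}{c\bar n_B}n_{AB}(T_2)\le\bar n_B-\OO(\e^{\g/2})<\S_5(T_2)$ for $\e$ small. It then remains to verify the derivative condition: whenever $\S_5=\bar n_B-\frac{f+3\D}{c\bar n_B}n_{AB}$ one must have $\dot\S_5>-\frac{f+3\D}{c\bar n_B}\dot n_{AB}$. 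The dominant contribution is the Lotka--Volterra part of the field of Proposition~\ref{S5-3Phase}, which, since $c\bar n_B=f-D+\D$, at this value of $\S_5$ equals $c\S_5(\bar n_B-\S_5)=(f+3\D)n_{AB}-\frac{(f+3\D)^2}{c\bar n_B^2}n_{AB}^2$. The remaining terms of the field are bounded with the a~priori estimates: $2\eta n_{aA}n_{BB}$ and $\frac f{\S_3}n_{aa}(\tfrac12 n_{aA}+n_{AA})$ are nonnegative, $\frac f{4\S_6}n_{aA}(2n_{aa}+n_{aA}+n_{aB})\le\tfrac f2 n_{AB}$ (because $2n_{aa}+n_{aA}+n_{aB}\le 2\S_6$), $\frac f{4\S_5}n_{aB}(n_{aA}+n_{aB})\le\frac f{2\S_5}n_{AB}^2$, and $\D(n_{aA}+n_{AA})\le 2\D n_{AB}$; on the right-hand side one uses $|\dot n_{AB}|=\OO(n_{AB})$, which follows at once from comparing \eqref{birthratesAB} with \eqref{death-ratesAB} (both are $\OO(n_{AB})$ under the a~priori bounds, e.g.\ $\frac{2f}{\S_5}n_{AA}n_{BB}\le\frac{2f\bar n_B}{\S_5}n_{AB}$).

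The step I expect to be the real obstacle is controlling the two terms of the $\S_5$-field that involve the $aa$-population, $cn_{aa}(n_{aA}+n_{AA})$ and the $n_{aa}$-part of $\frac f{4\S_6}n_{aA}(2n_{aa}+n_{aA}+n_{aB})$: because $n_{aa}$ grows throughout Phase~3, the crude bound $n_{aa}\le\bar n_a$ combined with $n_{aA}+n_{AA}\le 2n_{AB}$ is not strong enough to keep the coefficient of $n_{AB}$ in $\dot\S_5$ positive once $f$ is large. To close the estimate one has to exploit that $aA$ and $AA$ remain much smaller than $n_{AB}$ over essentially all of $[T_2,T_+]$ --- quantitatively, that $n_{aa}(n_{aA}+n_{AA})=\OO(\d\,n_{AB})$ there --- which should be obtainable by feeding the Phase~2 values $n_{aA}(T_2),n_{AA}(T_2)=\OO(\e^{\g})=\OO(n_{AB}(T_2)^2)$ into a further Lemma~\ref{lem-level} comparison run up to $T_+$. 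Once that refinement is in place, at the crossing point $\dot\S_5\ge(\tfrac f2+\D-\OO(\d))n_{AB}-\OO(n_{AB}^2)$, which for $f$ large (Assumption~(C2)) and $\e,\d$ small exceeds $\frac{f+3\D}{c\bar n_B}|\dot n_{AB}|=\OO(n_{AB})$; hence $\dot g_-<0$ at every zero of $g_-$, and Lemma~\ref{lem-level} gives $g_-<0$ on $(T_2,T_+]$, which is the claimed lower bound.
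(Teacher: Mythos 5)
Your upper bound and the initial-condition check are fine (the upper bound is even simpler than the paper's comparison argument: on $[T_2,T_+]$ one has $n_{aA},n_{AA},n_{aB}\le n_{AB}$ and $n_{BB}\le\bar n_B$, so $\S_5\le\bar n_B+4n_{AB}\le\bar n_B+\sfrac{4(f+\D)}{c\bar n_B}n_{AB}$ because $c\bar n_B=f-D+\D\le f+\D$). The problem is the derivative step for the lower bound, and it originates in your decision to discard the nonnegative term $\sfrac f{\S_3}n_{aa}\bigl(\sfrac12 n_{aA}+n_{AA}\bigr)$ of the field in Proposition~\ref{S5-3Phase}. The "obstacle" you then see in $cn_{aa}(n_{aA}+n_{AA})$ is not really there: grouping the two $n_{aa}$-terms and using only the a priori bound $c\S_3\le f$ (uniform boundedness of $n_{aa}$, the same fact the paper invokes) gives $\sfrac f{\S_3}n_{aa}\bigl(\sfrac12 n_{aA}+n_{AA}\bigr)-cn_{aa}(n_{aA}+n_{AA})\ge n_{aa}n_{aA}\bigl(\sfrac f{2\S_3}-c\bigr)+n_{aa}n_{AA}\bigl(\sfrac f{\S_3}-c\bigr)\ge-\sfrac f2 n_{aA}\ge-\sfrac f2 n_{AB}$. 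Combined with $\sfrac f{4\S_6}n_{aA}(2n_{aa}+n_{aA}+n_{aB})\le\sfrac f2 n_{AB}$, $\D(n_{aA}+n_{AA})\le 2\D n_{AB}$ and the $\OO(n_{AB}^2)$ term, this yields $\dot\S_5\ge c\S_5(\bar n_B-\S_5)-(f+2\D)n_{AB}-\OO(n_{AB}^2)$, hence $\dot\S_5\ge\D n_{AB}-\OO(n_{AB}^2)>0$ at the lower crossing: exactly the margin the paper works with, with no largeness of $f$ and no extra control on $n_{aa}(n_{aA}+n_{AA})$ needed.

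By contrast, the fix you actually propose, namely proving $n_{aa}(n_{aA}+n_{AA})=\OO(\d\, n_{AB})$ on all of $[T_2,T_+]$ by "a further Lemma~\ref{lem-level} comparison", is the genuine gap: it is asserted, not argued, and the natural route to it (that $n_{aA},n_{AA}=\OO(n_{AB}^2)$ on this interval, i.e.\ Lemma~\ref{AB^2bounds}) is established in the paper only downstream of, and using, the very $\S_5$-bounds you are proving, so as sketched your argument is circular or at best requires a joint bootstrap you have not set up. A second, smaller looseness: in the final comparison you invoke only $|\dot n_{AB}|=\OO(n_{AB})$, but the implied constant is of order $f$ (with only $n_{aA},n_{AA}\le n_{AB}$ available, $b_{AB}$ can be as large as $\OO(f)\,n_{AB}$), i.e.\ of the same order as your claimed margin, so "take $f$ large" does not close the estimate. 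What makes the step work, and what the paper uses, is sign- and location-specific: at the crossing $c\S_5=c\bar n_B-\sfrac{f+3\D}{\bar n_B}n_{AB}$, so $d_{AB}=n_{AB}(D-\D+c\S_5)\le fn_{AB}$ while $b_{AB}\ge fn_{AB}-\sfrac{9f}{4\S_5}n_{AB}^2$; hence the dangerous (negative) part of $\dot n_{AB}$ there is only $\OO(n_{AB}^2)$, negligible against $\D n_{AB}$, and a positive $\dot n_{AB}$ only helps. With these two repairs your scheme coincides with the paper's proof.
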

\begin{proof}
We use Lemma \ref{lem-level} and construct minorising and majorising processes on $\S_5$ and $n_{AB}$. First observe that the bounds are satisfied at $t=T_2$ by Proposition \ref{prop-S5}. From Proposition \ref{S5-3Phase} we can deduce that
\begin{align}
 f\S_5-fn_{AB}~&\leq b_{\S_5}\leq f\S_5+2fn_{AB},\\
\S_5(D-\D+c\S_5)-(2f+\D)n_{AB}&\leq
d_{\S_5}\leq\S_5(D-\D+c\S_5)+2(f+\D)n_{AB},\\
\S_5(f-D+\D-c\S_5)-(f+2\D)n_{AB}&\leq
\dot\S_5\leq\S_5(f-D+\D-c\S_5)+(4f+\D)n_{AB}.
\end{align}
At the lower bound we get that the sum-process would increase:
\begin{align}
\dot\S_5\geq\D n_{AB}-\OO\left(n_{AB}^2\right)>0,
\end{align}
and at the upper bound it would decrease:
\begin{align}
\dot\S_5\leq-3\D n_{AB}-\OO\left(n_{AB}^2\right)<0.
\end{align}
It is left to show that
\begin{align}
\label{S5-dotbounds}
-\sfrac{(f+3\D)}{c\bar n_B}\dot n_{AB}\leq\dot\S_5\leq\sfrac{4(f+\D)}{c\bar n_B}\dot n_{AB}.
\end{align}
For this we construct a minorising process on $AB$ using that $n_{aA},n_{AA},n_{aB}\leq n_{AB}$, for the birth rate:
\begin{align}
b_{AB}&\geq\sfrac{f}{\S_5}n_{AB}\left(\sfrac14n_{aA}+\sfrac12n_{AA}+\sfrac12n_{aB}+\sfrac12n_{AB}+n_{BB}\right)\geq fn_{AB}-\sfrac {9f}{4\S_5}n_{AB}^2,\label{ABbirth}\\
d_{AB}&\leq n_{AB}(D-\D+c\S_5),\\
\dot n_{AB}&\geq n_{AB}\left(f-D+\D-c\S_5-\sfrac {9f}{4\S_5}n_{AB}\right).
\end{align}
At the lower bound we have $\dot n_{AB}\geq-\sfrac{5f+12\D}{4\bar n_B}n_{AB}^2+\OO\left(n_{AB}^3\right)$.\\ The lhs of \eqref{S5-dotbounds} $\leq\frac{(5f+12\D)(f+3\D)}{4c\bar n_B^2}n_{AB}^2+\OO\left(n_{AB}^3\right)<\D n_{AB}-\OO\left(n_{AB}^2\right)$. 
At the upper bound we get  $\dot n_{AB}\geq-\sfrac{25f+16\D}{4\bar n_B}n_{AB}^2-\OO\left(n_{AB}^3\right)$ and that the rhs of \eqref{S5-dotbounds} is larger or equal to $-\sfrac{4(f+\D)(21f+4\D)}{4c\bar n_B^2}n_{AB}^2-\OO\left(n_{AB}^3\right)>-3\D n_{AB}-\OO\left(n_{AB}^2\right)$.
\end{proof}


The following lemma ensures that the $aA$- and the $AA$ populations stay smaller than $\OO\left(n_{AB}^2\right)$ for all $t\in[T_2,T_+]$:
\begin{lemma}
\label{AB^2bounds}
\begin{enumerate}
\item For all $t\in[T_2,T_+]$, the $AA$ population is bounded from above by 
\begin{align}
n_{AA}\leq\sfrac 2{\bar n_B}n_{AB}^2,
\end{align}
\item For all $t\in[T_2,T_+]$ and $\eta<\sfrac c2\left(1-\sfrac{\OO(\D)}{f-D+\D}\right)$, the $aA$ population is bounded from above by 
\begin{align}
n_{aA}\leq\sfrac{10f}{\bar n_B(D-\D)}n_{AB}^2.
\end{align}
\end{enumerate}
In particular this implies $T_+=T^{AB}_{\e ^{\gamma/10}}$.
\end{lemma}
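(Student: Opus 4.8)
The plan is to establish (1) and then (2) by two applications of the barrier Lemma~\ref{lem-level}, working on $[T_2,T_+]$. On that interval the definition of $T_+$ together with Proposition~\ref{aB<AB}(2)--(3) automatically gives $n_{aA},n_{AA},n_{aB}\le n_{AB}\le\e^{\g/10}$ and $\bar n_B-\OO(n_{AB})\le n_{BB}\le\bar n_B$, while Lemma~\ref{S5-3P} gives $\S_5=\bar n_B+\OO(n_{AB})$ and $\S_6=\S_5+n_{aa}$; a useful preliminary consequence, obtained by combining the lower estimate \eqref{ABbirth} on $b_{AB}$ with $d_{AB}=n_{AB}(D-\D+c\S_5)\le fn_{AB}+\OO(n_{AB}^2)$ (note $D-\D+c\bar n_B=f$), is that $\dot n_{AB}\ge-\OO(n_{AB}^2)$ throughout. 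Both candidate inequalities hold at $t=T_2$: from $n_{AB}\ge2\sqrt{\bar n_Bn_{AA}}-2n_{AA}(1+\sfrac\D{c\bar n_B})$ (Lemma~\ref{bound-AB}(1)), equivalently from the $1/t$--$1/t^2$ asymptotics of the $3$-system near $p_B$ in \cite{BovNeu}, one gets $n_{AA}(T_2)\le\sfrac{n_{AB}(T_2)^2}{4\bar n_B}(1+\OO(\D)+\OO(\e^{\g/2}))\ll\sfrac2{\bar n_B}n_{AB}(T_2)^2$, and then $n_{aA}(T_2)=\OO(\d\,n_{AA}(T_2))\ll\sfrac{10f}{\bar n_B(D-\D)}n_{AB}(T_2)^2$ for $\e$ small.

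For (1), suppose $n_{AA}=\sfrac2{\bar n_B}n_{AB}^2$ at some $t\in[T_2,T_+]$. Since an $AA$ offspring requires an $A$ allele from each parent, the remaining factors in both summands of \eqref{birthratesAA} are $\OO(n_{AB})$, so $b_{AA}\le\sfrac f{\bar n_B}n_{AB}^2(1+\OO(n_{AB}))$, whereas $d_{AA}=n_{AA}(D+c\S_6)\ge n_{AA}(f+\D-\OO(n_{AB}))$; hence $\dot n_{AA}\le-\sfrac{f+2\D}{\bar n_B}n_{AB}^2+\OO(n_{AB}^3)<0$ for $\e$ small. The barrier has slope $\sfrac4{\bar n_B}n_{AB}\dot n_{AB}$, which is nonnegative if $\dot n_{AB}\ge0$ and is $\ge-\OO(n_{AB}^3)$ otherwise (preliminary bound); in both cases $\dot n_{AA}<\sfrac4{\bar n_B}n_{AB}\dot n_{AB}$, so Lemma~\ref{lem-level} yields (1).

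For (2) we invoke (1). Suppose $n_{aA}=\sfrac{10f}{\bar n_B(D-\D)}n_{AB}^2$ at some $t\in[T_2,T_+]$. Split \eqref{birthratesaA} into the terms carrying a factor $n_{aA}$, namely $\sfrac f2 n_{aA}(\sfrac{n_{aa}}{\S_3}+\sfrac{n_{aa}}{\S_6})+\OO(n_{aA}n_{AB})$, and the rest, which by (1) and $n_{aB}\le n_{AB}$ is $\OO(n_{AA}+n_{AB}^2)=\OO(n_{AB}^2)$; the death rate \eqref{death-ratesaA} obeys $d_{aA}/n_{aA}\ge D+cn_{aa}+(c-\eta)\bar n_B-\OO(n_{AB})$ with $D+(c-\eta)\bar n_B=f+\D-\eta\bar n_B$. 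For $f$ large the additional competitive death $cn_{aa}$ dominates the reproduction-dilution gains $\sfrac f2\,\sfrac{n_{aa}}{\S_3}$ and $\sfrac f2\,\sfrac{n_{aa}}{\S_6}$ uniformly in $n_{aa}\in[0,\bar n_a]$, so the comparison is governed by its worst case; the restriction on $\eta$ in the statement --- which, like Assumption~(C3), keeps $D+(c-\eta)\bar n_B$ above roughly $\sfrac{f+D}2$ --- then forces the coefficient of $n_{aA}$ in $\dot n_{aA}$ to be negative by a definite amount, so that, inserting the barrier value of $n_{aA}$, $\dot n_{aA}\le-\OO(n_{AB}^2)<0$. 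Since $\sfrac{20f}{\bar n_B(D-\D)}n_{AB}\dot n_{AB}\ge-\OO(n_{AB}^3)$ by the preliminary bound (and is positive when $\dot n_{AB}>0$), we get $\dot n_{aA}<\sfrac{20f}{\bar n_B(D-\D)}n_{AB}\dot n_{AB}$, and Lemma~\ref{lem-level} yields (2).

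Finally, (1) and (2) force $n_{aA},n_{AA}=\OO(n_{AB}^2)<n_{AB}$ strictly on $[T_2,T_+]$ --- including at $t=T_+$, since $n_{AB}\le\e^{\g/10}$ stays small --- so neither $T^{aA=AB}$ nor $T^{AA=AB}$ can be attained at or before $T_+$, whence $T_+=T^{AB}_{\e^{\g/10}}$. I expect the delicate point to be step (2): $b_{aA}$ contains contributions of genuine size $\sfrac f2 n_{aA}$ with the large constant $f$, so producing a strictly negative drift for $n_{aA}$ requires simultaneously exploiting the reduced competition $(c-\eta)n_{BB}$ that $aA$ feels from $BB$ (hence the hypothesis on $\eta$) and the cancellation between the extra death $cn_{aa}$ and the dilution factors $n_{aa}/\S_3,n_{aa}/\S_6$; making this comparison hold uniformly in $n_{aa}\in[0,\bar n_a]$ is the bulk of the work, the $n_{AA}$ bound (1) being comparatively routine because $b_{AA}$ is genuinely quadratic in the small populations.
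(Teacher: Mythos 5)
Your treatment of part (1) is correct and is essentially the paper's argument: the same barrier $n_{AA}=\sfrac2{\bar n_B}n_{AB}^2$, the same bound $b_{AA}\leq \sfrac f{\bar n_B}n_{AB}^2(1+\OO(n_{AB}))$ and $d_{AA}\geq n_{AA}(f+\D-\OO(n_{AB}))$, and the same observation that the barrier's slope is only $-\OO(n_{AB}^3)$ (via $\dot n_{AB}\geq-\OO(n_{AB}^2)$ from \eqref{ABbirth} and Lemma \ref{S5-3P}) while the drift at the barrier is $-\OO(n_{AB}^2)$; the initial-condition check via Lemma \ref{bound-AB}(1) and the final deduction $T_+=T^{AB}_{\e^{\g/10}}$ are also fine.

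Part (2), however, has a genuine gap. Your key justification --- that ``$cn_{aa}$ dominates the reproduction-dilution gains $\sfrac f2\,\sfrac{n_{aa}}{\S_3}$ and $\sfrac f2\,\sfrac{n_{aa}}{\S_6}$ uniformly in $n_{aa}\in[0,\bar n_a]$'' --- is false for the $\S_3$ term: domination would require $\S_3\geq\sfrac f{2c}$, i.e.\ $n_{aa}$ already macroscopic, whereas in the early part of this phase $n_{aa}$ is microscopic but can exceed $n_{aA}+n_{AA}=\OO(n_{AB}^2)$, so $\sfrac f2\,\sfrac{n_{aa}}{\S_3}$ is close to $\sfrac f2$ while $cn_{aa}$ is negligible (this is exactly the mechanism by which $aa$ later recovers). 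The correct route, and the one the paper takes, is to bound $\sfrac{n_{aa}}{\S_3}\leq1$, use $cn_{aa}\geq\sfrac f2\,\sfrac{n_{aa}}{\S_6}$ only for the $\S_6$ term (valid since $\S_6\geq\bar n_B\geq\sfrac f{2c}$ for $f$ large), and obtain the per-capita decay $\sfrac f2+\D-\eta\bar n_B$; this is precisely where the hypothesis on $\eta$ enters, which your text contradicts (if your domination claim held, negativity of the coefficient would need no condition like $\eta<c/2$). Moreover, asserting ``negative by a definite amount, so $\dot n_{aA}\leq-\OO(n_{AB}^2)$'' skips the decisive computation: the source term has the explicit size $\sfrac{9f}{2\bar n_B}n_{AB}^2$ (using (1) and $n_{aB}\leq n_{AB}$), and one must check that the decay coefficient times the barrier constant, $\left(\sfrac f2+\D-\eta\bar n_B\right)\sfrac{10f}{\bar n_B(D-\D)}$, exceeds it (plus the $\OO(n_{AB})$ correction from the barrier slope). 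Under the stated bound on $\eta$ this reduces to roughly $\sfrac{5f}{\bar n_B}$ versus $\sfrac{9f}{2\bar n_B}$ --- it works, but only by this slim margin, which is the very reason the constant $10$ appears in the statement; an unquantified ``definite amount'' does not settle it. Finally, the difficulty you flag (uniformity in $n_{aa}$) is in fact handled trivially by $\sfrac{n_{aa}}{\S_3}\leq1$; the real delicacy is the tight constant comparison above, which your proposal leaves out.
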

\begin{proof}
The proof uses again Lemma \ref{lem-level}. First observe that by Lemma \ref{bound-AB} (1) and since $n_{aA}(T_2)=\d n_{AA}(T_2)$ the bounds are satisfied at $t=T_2$.
\begin{enumerate}
\item We have to show that 
\begin{align}
	\label{AAdotboundup}
	\dot n_{AA}<\sfrac4{\bar n_B}n_{AB}\dot n_{AB}. 
\end{align}
We construct a majorising process on $n_{AA}$:
\begin{align}
b_{AA}&\leq\sfrac f{\S_5}n_{AA}(n_{aA}+n_{AA}+n_{AB})+\sfrac f{4\S_5}(n_{aA}+n_{AB})^2.
\intertext{For the death rate we use Lemma \ref{S5-3P}:}
d_{AA}&\geq n_{AA}(f-\OO(\D)),\\
\dot n_{AA}&\leq-n_{AA}\left(\sfrac f{\S_5}(n_{aB}+n_{BB})-\OO(\D)\right)+\sfrac f{\S_5}n_{AB}^2.
\end{align}
At the upper bound we thus have: $\dot n_{AA}\leq-\sfrac f{\bar n_B}n_{AB}^2+\OO\left(\D n_{AB}^2\right)$.\\
We now construct a minorising process on $n_{AB}$. As in \eqref{ABbirth} until $T_+$ we have a minoration of the birth rate:
\begin{align}
b_{AB}&\geq\sfrac{f}{\S_5}n_{AB}\left(\sfrac14n_{aA}+\sfrac12n_{AA}+\sfrac12n_{aB}+\sfrac12n_{AB}+n_{BB}\right)\geq fn_{AB}-\sfrac{9f}{4\S_5}n_{AB}^2.
\intertext{Using Lemma \ref{S5-3P} the death rate is given by:}
d_{AB}&\leq n_{AB}(D-\D+c\S_5)\leq fn_{AB}+\sfrac{4(f+\D)}{\bar n_B}n_{AB}^2,\\
\dot n_{AB}&\geq-\sfrac{25f+\OO(\D)}{4\bar n_B}n_{AB}^2. \label{minorABb}
\end{align}  
Thus the rhs of \eqref{AAdotboundup} is larger or equal to $-\sfrac{25f+\OO(\D)}{\bar n_B^2}n_{AB}^3$. Since $n_{AB}<\e ^{\gamma/10}$ until time $T_+$, we indeed have, for $\e$ small enough:
\begin{align}
	\dot n_{AA}\leq -\sfrac f{\bar n_B}n_{AB}^2+\OO\left(\D n_{AB}^2\right) < -\sfrac{25f+\OO(\D)}{\bar n_B^2}n_{AB}^3 \leq \sfrac4{\bar n_B}n_{AB}\dot n_{AB}. 
\end{align}

\item Similarly to (1) we construct a majorising process on $aA$. For the birth rate we use the result of (1):
\begin{align}
b_{aA}&\leq\sfrac f{2\S_3}n_{aa}n_{aA}+\sfrac {9f}{2\bar n_B}n_{AB}^2+\sfrac f{2\S_6}n_{aa}n_{aA}+\OO(\D n_{AB}^2).
\intertext{Applying Lemma \ref{S5-3P} we get for the death rate:}
d_{aA}&\geq n_{aA}\left(f+\D-\eta\bar n_B+cn_{aa}-\OO(n_{AB})\right),\\
\dot n_{aA}&\leq-n_{aA}\left(\sfrac {f}{2}-\eta\bar n_B+\sfrac{f-2(D-\D)}{2\S_5}n_{aa}\right)+\sfrac {9f}{2\bar n_B}n_{AB}^2+\OO(\D n_{AB}^2),\label{grow-aAb}\\
\dot n_{aA}&\leq-n_{aA}\sfrac {f-2\eta\bar n_B}2+\sfrac {9f+\OO(\D)}{2\bar n_B}n_{AB}^2.
\end{align}
At the upper bound we get: $\dot n_{aA}\leq n_{AB}^2\left(\sfrac {9f+\OO(\D)}{2\bar n_B}-\sfrac {10f(f-2\eta\bar n_B)}{2\bar n_B(D-\D)}\right)$.
We will show that 
\begin{align}
\label{aAdotboundupb}
\dot n_{aA}\leq\sfrac{20f}{\bar n_B(D-\D)}n_{AB}\dot n_{AB}. 
\end{align}
For this we use the minorising process on $AB$ constructed in \eqref{minorABb},  given by $\dot n_{AB}\geq-\sfrac{25f+\OO(\D)}{4\bar n_B}n_{AB}^2$. Thus the rhs of \eqref{aAdotboundupb} is larger or equal to $-\sfrac{105f+\OO(\D)}{\bar n_B(D-\D)}n_{AB}^3.$ Thus we have to ensure that $\left(\sfrac {9f+\OO(\D)}{2\bar n_B}-\sfrac {10f(f-2\eta\bar n_B)}{2\bar n_B(D-\D)}\right)<-\sfrac{105f+\OO(\D)}{\bar n_B(D-\D)}\D$. This yields the condition on $\eta$:
\begin{align}
\eta\leq\sfrac {f-D-\OO(\D)}{2\bar n_B}=\sfrac c2\left(1-\sfrac{\OO(\D)}{f-D+\D}\right).
\end{align}
\end{enumerate}
\end{proof}

\begin{remark}
Observe that the condition $\eta<\sfrac c2\left(1-\sfrac{\OO(\D)}{f-D+\D}\right)$ in Lemma \ref{AB^2bounds} prevents $aA$ to grow exponentially fast. Inequality \eqref{grow-aAb} shows that for larger values for $\eta$, $n_{aA}$ would start to grow out of itself and thus the system would converge towards the 6-point equilibrium as we checked numerically.  Hence, the assumption $\eta<\sfrac c2\left(1-\sfrac{\OO(\D)}{f-D+\D}\right)$ is essential in this phase and propagates to the following lemmata since we need therein the  $n_{AB}^2$-dependent bound on $aA$. 
\end{remark}

Using Lemma \ref{AB^2bounds} we can also compute a lower bound for $AA$:
\begin{lemma}
\label{AB^2lb}
For $\eta\leq\sfrac c2-\OO(\D)$ and $t\in[T_2,T_+]$, the $AA$ population is bounded from below by
\begin{align}
\sfrac1{8\bar n_B}n_{AB}^2\leq n_{AA}.
\end{align}
\end{lemma}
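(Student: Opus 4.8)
The plan is to apply Lemma~\ref{lem-level} to the barrier function $g(t):=\sfrac1{8\bar n_B}n_{AB}(t)^2-n_{AA}(t)$, so that the claim is equivalent to $g\le0$ on $[T_2,T_+]$. At the starting time, Lemma~\ref{bound-AB}(2) gives $n_{AB}(T_2)^2\le 4\bar n_Bn_{AA}(T_2)(1+\sfrac\D f)\le 8\bar n_Bn_{AA}(T_2)$ for $\D$ small, i.e.\ $g(T_2)\le0$. It then remains to check the sign hypothesis of Lemma~\ref{lem-level}: whenever $n_{AA}=\sfrac1{8\bar n_B}n_{AB}^2$ one must have $\dot n_{AA}>\sfrac1{4\bar n_B}n_{AB}\dot n_{AB}$.

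First I would record the crude upper bound $n_{aa}\le\bar n_B$ on $[T_2,T_+]$. This follows from Lemma~\ref{lem-level} exactly as in Proposition~\ref{aB<AB}: using $\S_3\ge n_{aa}$, $n_{aA},n_{AA}=\OO(n_{AB}^2)$ (Lemma~\ref{AB^2bounds}) and $n_{aB}\le n_{AB}$ (Proposition~\ref{aB<AB}), one gets $b_{aa}\le fn_{aa}+\OO(n_{AB}^2)$ and $d_{aa}\ge n_{aa}(D+\D+cn_{aa})$, hence $\dot n_{aa}\le-2\D\bar n_B+\OO(n_{AB}^2)<0$ at $n_{aa}=\bar n_B$, while $n_{aa}(T_2)=\OO(\e^2)<\bar n_B$. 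This is the only point where an upper control on the exploding population $n_{aa}$ is needed, and it is the delicate step of the argument, since $n_{aa}$ feeds into $d_{AA}$ through $\S_6$.

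Next I would build a minorising process on $n_{AA}$. Keeping only the first summand of $b_{AA}$ and discarding the non-$AB$ terms in its numerator gives $b_{AA}\ge\sfrac f{4\S_5}n_{AB}^2\ge\sfrac f{4\bar n_B}n_{AB}^2-\OO(n_{AB}^3)$ by Lemma~\ref{S5-3P}; using $\S_6=\S_5+n_{aa}$, Lemma~\ref{S5-3P} and $c\bar n_B=f-D+\D$ gives $d_{AA}=n_{AA}(D+c\S_6)\le n_{AA}(f+\D+cn_{aa}+\OO(n_{AB}))$. Evaluating $\dot n_{AA}=b_{AA}-d_{AA}$ at the barrier and invoking $n_{aa}\le\bar n_B$,
\begin{align*}
\dot n_{AA}\ \ge\ \sfrac{n_{AB}^2}{8\bar n_B}\big(2f-f-\D-cn_{aa}\big)-\OO(n_{AB}^3)\ \ge\ \sfrac{(D-2\D)\,n_{AB}^2}{8\bar n_B}-\OO(n_{AB}^3)\ >\ 0 .
\end{align*}
The mechanism is that the production coefficient $\sfrac14$ in $b_{AA}$ is twice the barrier coefficient $\sfrac18$, which is precisely what cancels the leading $f$ of $d_{AA}$ and leaves the strictly positive margin $D-2\D$.

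Finally, for the right-hand side I would note $b_{AB}\le fn_{AB}+\OO(n_{AB}^2)$ and, by Lemma~\ref{S5-3P} together with $c\bar n_B=f-D+\D$, $d_{AB}=n_{AB}(D-\D+c\S_5)\ge n_{AB}(f-\OO(n_{AB}))$, so that $\dot n_{AB}\le\OO(n_{AB}^2)$ and hence $\sfrac1{4\bar n_B}n_{AB}\dot n_{AB}\le\OO(n_{AB}^3)$. Since $n_{AB}<\e^{\g/10}$ throughout $[T_2,T_+]$, for $\e$ small enough all the $\OO(n_{AB}^3)$ terms are dominated by $\sfrac{(D-2\D)}{8\bar n_B}n_{AB}^2$, giving the strict inequality $\dot n_{AA}>\sfrac1{4\bar n_B}n_{AB}\dot n_{AB}$ at the barrier; Lemma~\ref{lem-level} then yields $g\le0$ on $[T_2,T_+]$, which is the assertion. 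The hypothesis $\eta\le\sfrac c2-\OO(\D)$ enters only through Lemma~\ref{AB^2bounds}, i.e.\ to ensure the bounds $n_{aA},n_{AA}=\OO(n_{AB}^2)$ used above (and thereby $n_{aa}\le\bar n_B$).
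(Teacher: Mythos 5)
Your proof is correct and follows essentially the same route as the paper: the same barrier $n_{AA}\geq\sfrac1{8\bar n_B}n_{AB}^2$ checked via Lemma \ref{lem-level}, the initial condition from Lemma \ref{bound-AB}(2), the minorising process $b_{AA}\geq\sfrac f{4\S_5}n_{AB}^2$, $d_{AA}\leq n_{AA}(D+c\S_5+cn_{aa})$ with Lemma \ref{S5-3P}, and the comparison $\dot n_{AA}\geq\OO(n_{AB}^2)>\OO(n_{AB}^3)\geq\sfrac1{4\bar n_B}n_{AB}\dot n_{AB}$ using Lemma \ref{AB^2bounds} and $n_{AB}\leq\e^{\gamma/10}$. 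The only (welcome) difference is that you make explicit the crude bound $n_{aa}\leq\bar n_B$, which the paper uses implicitly when asserting the positive margin at the barrier.
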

\begin{proof}
By Lemma \ref{lem-level} we construct a minorising process on $AA$ using Lemma \ref{S5-3P} for the death rate. The bound is satisfied at $t=T_2$ by Lemma \ref{bound-AB} (2).
\begin{align}
b_{AA}&\geq\sfrac f{4\S_5}n_{AB}^2,\\
d_{AA}&\leq n_{AA}(D+c\S_5+cn_{aa}),\\
\dot n_{AA}&\geq-n_{AA}\left(f+\D+\sfrac{4(f+\D)}{\bar n_B}n_{AB}+cn_{aa}\right)+\sfrac f{4\S_5}n_{AB}^2.
\end{align}
At the lower bound we have $\dot n_{AA}\geq\sfrac{D-\OO(\D)}{8\bar n_B}n_{AB}^2=\OO(n_{AB}^2)$.
It is left to show that
\begin{align}
\label{AAdotboundlowb}
\dot n_{AA}\geq \sfrac1{4\bar n_B}n_{AB}\dot n_{AB}.
\end{align}
We construct a majorising process on $AB$:
\begin{align}
b_{AB}&\leq\sfrac f{\S_5}(n_{aA}+2n_{AA}+n_{AB})\left(\sfrac 12n_{aB}+\sfrac12n_{AB}+n_{BB}\right)\nonumber\\
	&\leq\sfrac f{\S_5}n_{AB}\left(\sfrac12n_{aA}+n_{AA}+\sfrac12n_{aB}+\sfrac12n_{AB}+n_{BB}\right)+f(n_{aA}+2n_{AA})\nonumber\\
&\leq fn_{AB}+f(n_{aA}+2n_{AA})-\sfrac f{4\S_5}n_{AB}^2.\label{MajorAB}
\intertext{For the death rate we use Lemma \ref{S5-3P}:}
d_{AB}&\geq n_{AB}\left(f-\sfrac{(f+3\D)}{\bar n_B}n_{AB}\right),\\
\dot n_{AB}&\leq \sfrac{3f+12\D}{\bar n_B}n_{AB}^2+f(n_{aA}+2n_{AA}).
\end{align}
 Using Lemma \ref{AB^2bounds} we get that the rhs of \eqref{AAdotboundlowb} is smaller or equal to $\sfrac f{4\bar n_B}n_{AB}n_{aA}+\OO\left(n_{AB}^3\right)\leq\OO\left(n_{AB}^3\right)<\OO(n_{AB}^2)$ for $\e$ small enough since $n_{AB}\leq\e ^{\gamma/10}$ for $t\in[T_2,T_+]$.
\end{proof}

With all these lemmata we are now able to show that $n_{AB}$ stays below $\e ^{\gamma/10}$ until $T_3$ when $n_{aa}$ reaches the neighbourhood of its equilibrium.
\begin{lemma}
\label{ABstay}
For $t\in[T_2,T_+]$, it holds
\begin{align}
T_+=T_{\e ^{\gamma/10}}^{AB}\geq\OO\left(\e^{-\g/2}\right).
\end{align}
\end{lemma}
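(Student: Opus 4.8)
The plan is to reduce $T_+$ to the single hitting time $T^{AB}_{\e^{\gamma/10}}$ and then to trap $n_{AB}$, on $[T_2,T_+]$, below the solution of a scalar Riccati equation $\dot y=C y^2$, whose passage through the level $\e^{\gamma/10}$ only occurs on the time scale $\e^{-\gamma/2}$. For the reduction I would invoke the last assertion of Lemma \ref{AB^2bounds}: on $[T_2,T_+]$ one has $n_{aA}\le\sfrac{10f}{\bar n_B(D-\Delta)}n_{AB}^2$ and $n_{AA}\le\sfrac{2}{\bar n_B}n_{AB}^2$, and since $n_{AB}\le\e^{\gamma/10}\to0$ these prefactors are eventually far below $1/n_{AB}$; hence $n_{aA},n_{AA}<n_{AB}$ throughout, so neither $T^{aA=AB}$ nor $T^{AA=AB}$ can be reached before $T^{AB}_{\e^{\gamma/10}}$, and it remains to bound this last time from below.

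Next I would produce a quadratic majorant for $\dot n_{AB}$ on $[T_2,T_+]$. For the birth rate I would reuse the estimate \eqref{MajorAB} from the proof of Lemma \ref{AB^2lb}, $b_{AB}\le fn_{AB}+f(n_{aA}+2n_{AA})-\sfrac{f}{4\S_5}n_{AB}^2$; for the death rate I would combine the lower bound on $\S_5$ from Lemma \ref{S5-3P} with the identity $D-\Delta+c\bar n_B=f$ to get $d_{AB}=n_{AB}(D-\Delta+c\S_5)\ge n_{AB}\bigl(f-\sfrac{f+3\Delta}{\bar n_B}n_{AB}\bigr)$. The linear parts $fn_{AB}$ cancel, and bounding $n_{aA}+2n_{AA}\le C' n_{AB}^2$ via Lemma \ref{AB^2bounds}, with $C'$ depending only on $f,D,\Delta,\bar n_B$, yields
\[
\dot n_{AB}\le\Bigl(fC'+\sfrac{f+3\Delta}{\bar n_B}\Bigr)n_{AB}^2=:C\,n_{AB}^2,\qquad C>0,\qquad t\in[T_2,T_+].
\]

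Finally, since $n_{AB}(T_2)=\OO(\e^{\gamma/2})$, a routine comparison (a version of Lemma \ref{lem-level}, applied after the standard infinitesimal perturbation of the majorant to make the differential inequality strict) gives $n_{AB}(t)\le y(t)=\frac{n_{AB}(T_2)}{1-C\,n_{AB}(T_2)(t-T_2)}$ on $[T_2,T_+]$. The dominating function $y$ reaches the value $\e^{\gamma/10}$ only at $t-T_2=\frac{1}{C\,n_{AB}(T_2)}\bigl(1-n_{AB}(T_2)\,\e^{-\gamma/10}\bigr)$, and since $n_{AB}(T_2)\,\e^{-\gamma/10}=\OO(\e^{2\gamma/5})\to0$ this time is of order $\e^{-\gamma/2}$ and strictly precedes the blow-up of $y$; hence $n_{AB}(t)<\e^{\gamma/10}$ for all $t<T_2+\OO(\e^{-\gamma/2})$. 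If $T^{AB}_{\e^{\gamma/10}}<\infty$, then $n_{AB}$ equals $\e^{\gamma/10}$ there by continuity, which forces $T_+=T^{AB}_{\e^{\gamma/10}}\ge\OO(\e^{-\gamma/2})$; if $T^{AB}_{\e^{\gamma/10}}=\infty$ the bound is trivial.

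The only genuine obstacle is obtaining the clean inequality $\dot n_{AB}\le C n_{AB}^2$: this rests on the exact cancellation of the linear growth rate of $n_{AB}$ (via $c\bar n_B=f-D+\Delta$ together with the $\OO(n_{AB})$-closeness of $\S_5$ to $\bar n_B$ from Lemma \ref{S5-3P}) and, most importantly, on the $a$-carrying, $AB$-feeding populations $n_{aA}$ and $n_{AA}$ remaining of order $n_{AB}^2$, i.e.\ on Lemma \ref{AB^2bounds}, which is precisely where the hypothesis $\eta<\sfrac c2\bigl(1-\OO(\Delta)\bigr)$ is needed. With these inputs in hand, the Riccati comparison of the last step is entirely routine.
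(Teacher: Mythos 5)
Your proposal is correct and follows essentially the same route as the paper: the reduction $T_+=T^{AB}_{\e^{\gamma/10}}$ is already the "in particular" conclusion of Lemma \ref{AB^2bounds}, and the core step — cancelling the linear terms via \eqref{MajorAB} and Lemma \ref{S5-3P} to get $\dot n_{AB}\leq \OO(n_{AB}^2)$, then integrating the Riccati-type majorant from $n_{AB}(T_2)=\OO(\e^{\gamma/2})$ — is exactly the paper's argument, which yields $n_{AB}(t)\leq \OO(1)/(\OO(\e^{-\gamma/2})-\OO(1)t)$ and hence $T^{AB}_{\e^{\gamma/10}}\geq\OO(\e^{-\gamma/2})$. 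Your write-up merely makes the comparison-lemma step and the explicit hitting-time computation more detailed than the paper does.
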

\begin{proof}
As in \eqref{MajorAB} we construct a majorising process on $AB$ {using Lemma \ref{AB^2bounds}:
\begin{align}
b_{AB}&\leq fn_{AB}+f(n_{aA}+2n_{AA})-\sfrac f{4\S_5}n_{AB}^2,\nonumber\\
&\leq fn_{AB}+\OO\left(n_{AB}^2\right).
\intertext{By Lemma \ref{S5-3P} the death rate can be bounded by:}
d_{AB}&\geq fn_{AB}-\OO\left(n_{AB}^2\right),\\
\dot n_{AB}&\leq\OO\left(n_{AB}^2\right).
\end{align}}
With the initial condition $n_{AB}(T_2)=\OO\left(\e^{\gamma/2}\right)$, we get the upper bound 
\begin{align}\label{UB-AB}
n_{AB}(t)\leq\frac{\OO(1)}{\OO\left(\e^{-\g/2}\right)-\OO(1)t}.
\end{align}
Thus $T_{\e ^{\gamma/10}}^{AB}\geq\OO\left(\e^{-\g/2}\right)$. Observe that it follows that $T_{\e ^{\gamma/5}}^{AA}=\OO(\e^{-\g/2})$.
\end{proof}

To ensure the exponential growth of $aa$ we need that the $aA$ population does not decay under the order $\OO(\e^{\gamma})$.

\begin{lemma}
\label{aA-AB^2}
For $\eta\leq\sfrac c2-\OO(\D)$ and for all $t\in[T_2,T_2+\OO(\e ^{-\gamma/2})]$, if $n_{aa}\leq n_{AB}$, then
\begin{align}\label{bound-aA}
n_{aA}\geq \OO\left(\delta n_{AB}^2\right).
\end{align}
\end{lemma}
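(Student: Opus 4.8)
The plan is to establish the bound by a comparison (barrier) argument of the type encoded in Lemma~\ref{lem-level}, run simultaneously for $n_{aA}$ and $n_{aa}$. Concretely, I would show $n_{aA}(t)\ge c\delta\,n_{AB}(t)^2$ and $n_{aa}(t)\ge c'\delta\,n_{AB}(t)^2$ for suitable small constants $0<c'\ll c$, on the set where $n_{aa}\le n_{AB}$ and $t\le T_+$ — which by Lemma~\ref{ABstay} contains $[T_2,T_2+\OO(\e^{-\gamma/2})]$. The initial estimate at $T_2$ is for free: by Proposition~\ref{prop-T2} we have $n_{aA}(T_2)=\delta\,n_{AA}(T_2)$, and $\tfrac1{8\bar n_B}n_{AB}(T_2)^2\le n_{AA}(T_2)\le\tfrac2{\bar n_B}n_{AB}(T_2)^2$ by Lemmas~\ref{AB^2bounds}--\ref{AB^2lb}; together with $n_{aa}(T_2)=\OO(\S_{aA,aB}(T_2)^2)=\OO(\delta^2 n_{AB}(T_2)^2)$ (from Lemmas~\ref{aa-S2} and \ref{aA-S2}) this fixes both barriers at $t=T_2$ once $c,c'$ are chosen small enough (of order $\delta$).

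For the barrier step, fix $t$ in the window and suppose one of the two lower bounds is attained. If $n_{aA}=c\delta n_{AB}^2$, I would lower-bound $\dot n_{aA}=b_{aA}-d_{aA}$ by keeping in \eqref{birthratesaA} only the Mendelian reproduction $aa\times(aA\text{ or }AA)$, i.e. $b_{aA}\ge f\,n_{aa}(\tfrac12 n_{aA}+n_{AA})/\S_3\ge f\,n_{aa}n_{AA}/\S_3$; since $\S_3=n_{aa}+n_{aA}+n_{AA}=n_{aa}+\OO(n_{AB}^2)$ and $n_{AA}\ge\tfrac1{8\bar n_B}n_{AB}^2$, this is $\gtrsim f\,n_{AB}^2$ when $n_{aa}\ge n_{AB}^2$ and $\gtrsim f\,n_{aa}\gtrsim f\,c'\delta n_{AB}^2$ when $n_{aa}<n_{AB}^2$. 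The death rate is benign, $d_{aA}\le n_{aA}(D+c\bar n_B+\OO(n_{AB}))=c\delta n_{AB}^2(f+\Delta+o(1))$, so for $c\delta$ small $\dot n_{aA}>0$, and in fact $\dot n_{aA}>2c\delta n_{AB}\dot n_{AB}$ because $\dot n_{AB}\le\OO(n_{AB}^2)$ (Lemma~\ref{ABstay}) makes the right-hand side $\OO(\delta n_{AB}^3)=o(n_{AB}^2)$. If instead $n_{aa}=c'\delta n_{AB}^2$, then since on the window we already have $n_{aA}\ge c\delta n_{AB}^2\ge\tfrac{c\delta\bar n_B}2 n_{AA}$, the factor $(n_{aa}+\tfrac12 n_{aA})/\S_3$ in $b_{aa}$ is of order $\delta$, whence $b_{aa}\gtrsim \delta f\,n_{aa}$; as $f$ is large (Assumption~(C2)) this beats $d_{aa}=n_{aa}(D+\Delta+\OO(n_{AB}))$, so $\dot n_{aa}>0>2c'\delta n_{AB}\dot n_{AB}$ up to the same $o(n_{AB}^2)$ correction. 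This closes the bootstrap via Lemma~\ref{lem-level}. The condition $\eta\le c/2-\OO(\Delta)$ enters through Lemma~\ref{AB^2bounds}(2), which supplies the upper bound $n_{aA}\le\tfrac{10f}{\bar n_B(D-\Delta)}n_{AB}^2$ used to pin $\S_3=n_{aa}+\OO(n_{AB}^2)$.

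The main obstacle is exactly this coupling: the lower bound on $n_{aA}$ is what keeps $n_{aa}$ growing (via the ratio $(n_{aa}+\tfrac12 n_{aA})/\S_3$ in $b_{aa}$), while the lower bound on $n_{aa}$ is what feeds $n_{aA}$ (via the $n_{aa}n_{AA}/\S_3$ term in $b_{aA}$), so neither can be treated in isolation — one must run the two barriers at once and verify that whichever one is reached first has the right sign of the derivative. The delicate bookkeeping is the power of $\delta$: the worst case is $t$ near $T_2$, where $n_{aa}$ is only $\OO(\delta^2 n_{AB}^2)$ and the feeding of $aA$ is weakest, which is what forces both $c$ and $c'$ to be of order $\delta$; this is the sense in which ``$n_{aA}\ge\OO(\delta n_{AB}^2)$'' has to be read. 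A secondary technical point is checking that $n_{AB}$ is (essentially) non-increasing on $[T_2,T_+]$, so that the time-varying barrier $c\delta n_{AB}^2$ only shrinks and the initial estimate at $T_2$ genuinely propagates.
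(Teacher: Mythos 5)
Your overall strategy --- a coupled two-barrier bootstrap via Lemma \ref{lem-level}, keeping the birth feedback $aa\to aA$ (the term $f\,n_{aa}(\tfrac12 n_{aA}+n_{AA})/\S_3$ in \eqref{birthratesaA}) and $aA\to aa$ --- is genuinely different from the paper's proof, which never uses this coupling: there one simply discards all births of $aA$, uses $d_{aA}\le n_{aA}(f+\D+\OO(n_{AB}))$ and $n_{aA}\le\OO(n_{AB}^2)$ (Lemmas \ref{S5-3P} and \ref{AB^2bounds}) to get $\dot n_{aA}\ge-\OO(n_{AB}^2)$, and integrates this against the bounds \eqref{UB-AB} and \eqref{min-proc-AB} on $n_{AB}$ over the window. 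However, your verification at the $aA$ barrier contains a genuine error. In the regime $n_{aa}<n_{AB}^2$, the only birth term you retain gives $b_{aA}\ge f\,n_{aa}n_{AA}/\S_3\ge \tfrac{f}{8\bar n_B C_1}\,c'\d\,n_{AB}^2$ (with $C_1$ the constant in $\S_3\le C_1 n_{AB}^2$, supplied by Lemma \ref{AB^2bounds} and $n_{aa}<n_{AB}^2$), while your death bound at the barrier is $d_{aA}\le c\d\,n_{AB}^2\,(f+\D+o(1))$. With your stipulation $c'\ll c$ the death term dominates, so neither $\dot n_{aA}>0$ nor $\dot n_{aA}>\tfrac{d}{dt}\bigl(c\d\, n_{AB}^2\bigr)$ follows; and ``taking $c\d$ small'' cannot rescue this, because the barrier level and hence the death bound scale linearly with $c\d$ while your birth lower bound scales with $c'\d$ --- the issue is the ratio $c'/c$, not the common size. (Note also that you cannot rely on the barrier shrinking: $n_{AB}$ may still increase on $[T_2,T_+]$; only $\dot n_{AB}\le\OO(n_{AB}^2)$ is available.)

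The gap is repairable, but only by reversing your ordering of the constants: you need the $aa$ barrier to dominate, $c'\ge K c$ with $K$ of order $8\bar n_B C_1(1+\D/f)$, so that the $aa$-feed beats the death of $aA$ at its barrier (your case $n_{aa}\ge n_{AB}^2$ is fine as written); the $aa$ barrier still closes, since $(n_{aa}+\tfrac12 n_{aA})/\S_3\ge\OO(c\d)$ gives $b_{aa}\ge\OO(c\d f)\,n_{aa}>d_{aa}$ for $f$ large, exactly as in the paper's proposition at the end of Phase 2. But then the initial condition $n_{aa}(T_2)=\OO(\d^2 n_{AB}^2)$ (which you correctly derived from Lemmas \ref{aa-S2} and \ref{aA-S2}) forces $c'=\OO(\d)$ and hence $c=\OO(\d)$ as well, so the bound you actually obtain is $n_{aA}\ge\OO(\d^2 n_{AB}^2)$, and the condition at the $aa$ barrier becomes $f\d^2\gtrsim D+\D$ rather than $f\d\gtrsim D+\D$. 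Since $\d$ is fixed as $\e\to0$, this still yields the lemma in the paper's $\OO$-convention, but your argument as written --- with $c'\ll c$ and the claim that smallness of $c\d$ gives $\dot n_{aA}>0$ --- does not go through.
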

\begin{proof}
We construct a minorising process on $aA$ in a very rough way assuming that there is no birth. Since $n_{aa}\leq n_{AB}$ the death rate can be bounded by using Lemma \ref{S5-3P}:
\begin{align}
d_{aA}&\leq n_{aA}(f+\Delta+cn_{aa}+\OO(n_{AB}))\nonumber\\
&\leq n_{aA}(f+\D+\OO(n_{AB})).
\end{align}
By Lemma \ref{AB^2bounds} we get:
\begin{align}
\dot n_{aA}\geq-\OO\left(n_{AB}^2\right).
\end{align}
At time $T_2$ we know that $n_{aA}=\delta n_{AA}=\OO\left(\delta n_{AB}^2\right)=\OO(\delta\e ^\gamma)$. Thus, using \eqref{UB-AB} to bound $n_{AB}$, we conclude that $n_{aA}$ satisfies until $T_+$:
\begin{align}\label{min-proc-aA}
n_{aA}(t)\geq{\frac{\OO(1)}{\OO(1)t-\OO(\e^{-\gamma/2})}+\OO(\delta \e ^{\gamma})+\OO(\e ^{\gamma/2}).}
\end{align}

We have to ensure that the slope of a minorising process on $AB$ is of the same order.
Thus we construct a minorising process on $AB$ using Lemma \ref{AB^2bounds} and that $n_{aa}\leq n_{AB}$:
\begin{align}
b_{AB}&\geq\sfrac f{\S_5}n_{AB}\left(\sfrac12n_{aA}\!+\!n_{AA}\!+\!\sfrac12n_{aB}\!+\!\sfrac12n_{AB}\!+\!n_{BB}\right)\!+\!\sfrac f{\S_6}(n_{aA}\!+\!2n_{AA})\left(\sfrac12n_{aB}\!+\!\sfrac12n_{AB}\!+\!n_{BB}\right)\!-\!\OO\left(n_{AB}^3\right)\nonumber\\
&\geq fn_{AB}-\sfrac f{\S_5}n_{AB}^2-\OO\left(n_{AB}^3\right).
\end{align}
where for the last inequality we used that $n_{aB}\leq n_{AB}$. Using Lemma \ref{S5-3P} we get:
\begin{align}
d_{AB}&\leq n_{AB}(f+\OO(n_{AB})),\\
\dot n_{AB}&\geq-\OO(n_{AB}^2).
\end{align}
Thus the minorising process on $AB$ is given by:
\begin{align}\label{min-proc-AB}
n_{AB}(t)\geq{\frac{\OO(1)}{\OO(1)t+\OO(\e^{-\gamma/2})}}
\end{align}
	The minorising process \eqref{min-proc-AB} stays of order $\OO(\e ^{\gamma/2})$ during a time of order $\OO(\e ^{-\gamma/2})$.\\
	The minorising process \eqref{min-proc-aA} needs time of order $\OO\left(\e^{-\gamma/2}\right)$, to reach the order $\OO\left(\delta \e^{\gamma+\alpha}\right)$ for any $\alpha>0$.
	The bound \eqref{bound-aA} is thus ensured for a time of order $\OO(\e ^{-\gamma/2})$.
\end{proof}

Now we show that $n_{aa}$ increases to a neighbourhood of its equilibrium before time $T_+$.

\begin{lemma}
	\label{lem-aa-exp-LB}
For $\eta\leq\sfrac c2-\OO(\D)$ and all $t\in[T_2,T_+]$ the $aa$ population increases to a $\e_0$-neighbourhood of its equilibrium $\bar n_a$ exponentially fast, and $T_3<T_+$.
\end{lemma}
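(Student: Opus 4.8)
The plan is to establish, on $[T_2,T_3\wedge T_+)$, a uniform differential inequality $\dot n_{aa}\geq\kappa\,n_{aa}$ with $\kappa>0$ independent of $\e$, and then integrate it. Put $\Sigma_3=n_{aa}+n_{aA}+n_{AA}$ and $C_0:=\frac{10f}{\bar n_B(D-\D)}+\frac2{\bar n_B}$, and note $C_0$ stays bounded as $f\to\infty$. Since $\eta\leq\frac c2-\OO(\D)$, Lemma~\ref{AB^2bounds} gives $n_{aA}+n_{AA}\leq C_0\,n_{AB}^2$ on $[T_2,T_+]$, and Lemma~\ref{ABstay} gives $n_{AB}\leq\e^{\gamma/10}$ there, so $n_{aA}+n_{AA}\leq C_0\,\e^{\gamma/5}\to0$. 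Keeping only the first (nonnegative) term of $b_{aa}$ in \eqref{birthratesaa} and using $d_{aa}=n_{aa}(D+\D+c\Sigma_3)$ we get, for $t\in[T_2,T_3\wedge T_+)$,
\begin{equation}\label{plan-rate}
\frac{\dot n_{aa}}{n_{aa}}\ \geq\ f\,\frac{n_{aa}+\frac12 n_{aA}}{\Sigma_3}\ -\ D-\D\ -\ c\,\Sigma_3,\qquad \Sigma_3\leq n_{aa}+C_0\,n_{AB}^2 .
\end{equation}

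The crux is a lower bound on $R:=\frac{n_{aa}+\frac12 n_{aA}}{\Sigma_3}$, obtained by splitting $\{n_{aa}<\bar n_a-\e_0\}$ into three regimes. (I) If $n_{aa}\leq 2C_0\,n_{AB}^2$, then $n_{aa}<n_{AB}$ for $\e$ small, so Lemma~\ref{aA-AB^2} yields $n_{aA}\geq c_1\delta\,n_{AB}^2$ for a constant $c_1>0$, hence $R\geq\frac{c_1\delta}{6C_0}=:\rho_0>0$; since also $\Sigma_3\leq 3C_0\e^{\gamma/5}=o(1)$, \eqref{plan-rate} gives $\dot n_{aa}/n_{aa}\geq f\rho_0-D-\D-o(1)>0$ for $f$ large (Assumption~(C2)), because $f\rho_0\to\infty$ as $C_0$ is bounded. (II) If $2C_0\,n_{AB}^2<n_{aa}\leq\frac{4fC_0}{c\e_0}n_{AB}^2$, then $C_0\,n_{AB}^2<\frac12 n_{aa}$, so $R\geq\frac{n_{aa}}{n_{aa}+C_0 n_{AB}^2}\geq\frac23$, while $\Sigma_3\leq\frac32 n_{aa}\leq\frac{6fC_0}{c\e_0}\e^{\gamma/5}=o(1)$; thus $\dot n_{aa}/n_{aa}\geq\frac23 f-D-\D-o(1)>0$ for $f$ large. (III) If $n_{aa}>\frac{4fC_0}{c\e_0}n_{AB}^2$, then $R\geq1-\frac{C_0 n_{AB}^2}{n_{aa}}\geq1-\frac{c\e_0}{4f}$; using $n_{aa}\leq\bar n_a-\e_0$, so that $f-D-\D-c\,n_{aa}=c(\bar n_a-n_{aa})\geq c\e_0$, together with $c\,C_0 n_{AB}^2=o(1)$, \eqref{plan-rate} gives $\dot n_{aa}/n_{aa}\geq c\e_0-\frac{c\e_0}4-o(1)\geq\frac{c\e_0}4$ for $\e$ small. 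Taking $\kappa:=\min\{\frac12(f\rho_0-D-\D),\,\frac13 f,\,\frac{c\e_0}4\}>0$, we obtain $\dot n_{aa}\geq\kappa\,n_{aa}$ on $[T_2,T_3\wedge T_+)$.

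To finish, observe $n_{aa}(T_2)\leq\OO(\e^2)<\e_0<\bar n_a-\e_0$, so $T_3>T_2$; and by Lemma~\ref{aa-S2} with the lower bound on $\Sigma_{aA,aB}(T_2)$ from the proof of Theorem~\ref{thm-T2} one has $n_{aa}(T_2)\geq c''\e^{K}$ for constants $c'',K>0$. Integrating the inequality gives $n_{aa}(t)\geq c''\e^{K}e^{\kappa(t-T_2)}$ on $[T_2,T_3\wedge T_+)$, so $n_{aa}$ reaches $\bar n_a-\e_0$ no later than $T_2+\frac1\kappa\log\!\big(\frac{\bar n_a-\e_0}{c''\e^{K}}\big)=T_2+\OO(|\log\e|)$. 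As Lemma~\ref{ABstay} gives $T_+-T_2\geq\OO(\e^{-\gamma/2})$ and $|\log\e|=o(\e^{-\gamma/2})$, the stopping time $T_3\wedge T_+$ cannot be $T_+$: otherwise $n_{aa}<\bar n_a-\e_0$ would persist on all of $[T_2,T_+]$ while $n_{aa}(T_+)\geq c''\e^{K}e^{\kappa\,\OO(\e^{-\gamma/2})}\to\infty$. Hence $T_3<T_+$, and $n_{aa}$ grows (at rate $\geq\kappa$, i.e.\ exponentially fast) to the $\e_0$-neighbourhood of $\bar n_a$ in time $T_3-T_2=\OO(|\log\e|)$.

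The main obstacle is regime (III): since $f$ is large, the self-competition $c\,n_{aa}$ in \eqref{plan-rate} (reaching $c(\bar n_a-\e_0)=f-D-\D-c\e_0$) is a real brake near the equilibrium, so growth survives only because $R$ gets within $\OO(\e_0/f)$ of $1$ once $n_{aa}$ has passed order $\frac f{\e_0}n_{AB}^2$; making this precise and gluing it to the $\delta$-sized bound $R\geq\rho_0$ valid for $n_{aa}=\OO(n_{AB}^2)$ is exactly what forces the earlier lemmas of this phase — hence the hypothesis $\eta\leq\frac c2-\OO(\D)$ under which $n_{aA},n_{AA}=\OO(n_{AB}^2)$.
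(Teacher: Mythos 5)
Your proof is correct and follows essentially the same route as the paper: you lower-bound the Mendelian birth fraction $(n_{aa}+\tfrac12 n_{aA})/\Sigma_3$ by a case analysis resting on Lemmas \ref{AB^2bounds}, \ref{ABstay} and \ref{aA-AB^2}, deduce exponential growth of $n_{aa}$ at a rate bounded below, and conclude $T_3<T_+$ by comparing the $\OO(|\log\e|)$ growth time with $T_+-T_2\geq\OO(\e^{-\gamma/2})$. The only difference is cosmetic: you split into three regimes according to the size of $n_{aa}$ relative to $n_{AB}^2$ (handling the self-competition brake near equilibrium via $c(\bar n_a-n_{aa})\geq c\e_0$), whereas the paper distinguishes five orderings of $n_{aa},n_{aA},n_{AA},n_{AB}$.
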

\begin{proof}
We construct a minorising process on $aa$ and distinguish some cases. We remember that $n_{aa}(T_2)=\OO(\e ^2)$ if we do not have a better starting value.
First observe that by Lemma \ref{AB^2bounds} it holds that $d_{aa}\leq n_{aa}\left(D+\D+cn_{aa}+\OO\left(n_{AB}^2\right)\right)$.

\begin{enumerate}
\item If $n_{aa}\leq n_{aA}\leq n_{AA}$ or $n_{aA}\leq n_{aa}\leq n_{AA}$\\
In that case the birth-rate is given by $b_{aa}\geq fn_{aa}\sfrac{n_{aA}}{6n_{AA}}$. 
 With Lemma \ref{AB^2bounds} and \ref{aA-AB^2} we get $b_{aa}\geq \OO(\d)fn_{aa}$ and $\dot n_{aa}\geq n_{aa}\left(\OO(\d)f-D-\D-\OO\left(n_{AB}^2\right)\right)$. \\
Hence the time until $aa$ reaches $\bar n_a-\e _0$ satisfies $T_3\leq\OO\left(\ln(\e^{-2})\right)=\OO(\ln(1/\e ))$.

\item If $n_{aa},n_{AA}\leq n_{aA}$\\
In that case, by Lemma \ref{AB^2bounds} we get
$b_{aa}\geq \sfrac f6n_{aa}$ and $\dot n_{aa}\geq n_{aa}\left(\sfrac f6\!-\!D\!-\!\D\!-\!\OO\left(n_{AB}^2\right)\!\right)$. \\
Hence $T_3\leq \OO\left(\ln\left(\e^{-2}\right)\right)=\OO(\ln(1/\e ))$.

\item If $n_{aA}\leq n_{AA}\leq n_{aa}\leq n_{AB}$\\
In that case, by Lemma \ref{AB^2bounds}, $b_{aa}\geq \sfrac f3n_{aa}$ and $\dot n_{aa}\geq n_{aa}\!\left(\sfrac f3\!-\!D\!-\!\D\!-\!cn_{aa}\!-\!\OO\!\left(n_{AB}^2\right)\!\right)$. 
Hence  $T_3\leq \OO\left(\ln\left(\e^{-\g/5}\right)\right)=\OO(\ln(1/\e ))$.

\item If $n_{AA}\leq n_{aA}\leq n_{aa}\leq n_{AB}$\\
In that case, by Lemma \ref{AB^2bounds}, $b_{aa}\geq \sfrac f3n_{aa}$ and $\dot n_{aa}\!\geq n_{aa}\!\left(\sfrac f3\!-\!D\!-\!\D\!-\!cn_{aa}\!-\!\OO\left(n_{AB}^2\right)\!\right)$. 
Hence $T_3\leq \OO\left(\ln\left(\e^{-\g/5}\right)\right)=\OO(\ln(1/\e ))$.

\item If $n_{aa}> n_{AB}$\\
In that case, by Lemma \ref{AB^2bounds}, $b_{aa}\geq n_{aa}(f-\OO({n_{AB}^2}/{n_{aa}}))$ and $\dot n_{aa}\geq n_{aa}(f-D-\D-cn_{aa}-\OO({n_{AB}^2}/{n_{aa}}))$. 
Hence $T_3\leq \OO(\e^{\g/10}\ln(\e^{-\g/10}))=\OO(\e ^{\gamma/10}\ln(1/\e ))$.
\end{enumerate}
Thus, remembering Lemma \ref{ABstay}, we proved that $T_3\leq\OO(\ln(1/\e ))\leq \OO(\e ^{-\gamma/2})\leq T_+.$
\end{proof}

\subsection{Phase 4:   Convergence to $p_{aB}=(\bar n_a,0,0,0,0,\bar n_B)$ }\label{subsec-phase5}\emph{}

The Jacobian matrix of the field \eqref{dyn-syst} at the fixed point $p_{aB}$ has the 6 eigenvalues:   $0$ (double), and $ -(2f-D), -(f-D+\Delta), 
-(f-D-\Delta), -((f-D)(5f-4D)+f\Delta)/(4(f-D)+\eta\bar n_B)$ which are strictly negative under Assumptions (C). 
Because of the zero eigenvalues, $p_{aB}$ is a non-hyperbolic equilibrium point of the system and linearisation fails to determine its stability properties. Instead, we use the result of the center manifold theory \cite{H77, P01} that asserts that the qualitative behaviour of the dynamical system in a neighbourhood of the non-hyperbolic critical point $p_{aB}$ is determined by its behaviour on the center manifold near $p_{aB}$.

\begin{theorem}[The Local Center Manifold Theorem 2.12.1 in \cite{P01}]\label{thm-LCM} Let $f\in C^r(E)$, where $E$ is an open subset of $\R^n$ containing the origin and $r\geq1$. Suppose that $f(0)=0$ and $Df(0)$ has $c$ eigenvalues with zero real parts and $s$ eigenvalues with negative real parts, where $c+s=n$. Then the system $\dot z=f(z)$ can be written in diagonal form
	\begin{align}
		\dot x&=Cx+F(x,y),\\
		\dot y&=Py+G(x,y),
	\end{align}
	where $z=(x,y)\in\R^c\times\R^s$, $C$ is a $c\times c$-matrix with $c$ eigenvalues having zero real parts, $P$ is a $s\times s$-matrix with $s$ eigenvalues with negative real parts, and $F(0)=G(0)=0, DF(0)=DG(0)=0.$ Furthermore, there exists $\delta>0$ and a function, $h\in C^r(N_\d(0))$, where $N_\d(0)$ is the $\d$-neighbourhood of $0$, that defines the local center manifold and satisfies:
	\begin{align}
		\label{cme}
		Dh(x)[Cx+F(x,h(x))]-Ph(x)-G(x,h(x))=0,
	\end{align}
	for $|x|<\d$. The flow on the center manifold $W^c(0)$ is defined by the system of differential equations
	\begin{align}\label{flow}
		\dot x=Cx+F(x,h(x)),
	\end{align}
	for all $x\in\R^c$ with $|x|<\d$.
\end{theorem}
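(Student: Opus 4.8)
Theorem~\ref{thm-LCM} is a classical result quoted from \cite{P01}; for completeness, here is the plan one would follow to prove it, via the Lyapunov--Perron method.

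The plan is first to replace the system by a globally small perturbation and then to solve a fixed-point equation for $h$. I would fix a cutoff $\psi\in C^\infty(\R^c)$ with $\psi\equiv1$ on $\{|x|\le1\}$ and $\psi\equiv0$ on $\{|x|\ge2\}$, and for small $\rho>0$ set $F_\rho(x,y):=\psi(x/\rho)F(x,y)$, $G_\rho(x,y):=\psi(x/\rho)G(x,y)$. Because $F(0)=G(0)=0$ and $DF(0)=DG(0)=0$, the maps $F_\rho,G_\rho$ together with their Lipschitz constants in both variables become arbitrarily small as $\rho\to0$, while on $\{|x|<\rho\}$ the truncated system $\dot x=Cx+F_\rho$, $\dot y=Py+G_\rho$ agrees with the original one; hence any invariant graph of the truncated system over $\{|x|<\rho\}$ furnishes the desired local center manifold.

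Next I would set up the contraction. Put $\beta:=-\max\{\mathrm{Re}\,\lambda:\lambda\in\sigma(P)\}>0$, choose $0<\alpha<\beta$, and let $\mathcal X$ be the Banach space of bounded maps $h\colon\R^c\to\R^s$ with $h(0)=0$, $Dh(0)=0$ and Lipschitz constant at most $1$, equipped with the sup norm. For $h\in\mathcal X$ and $\xi\in\R^c$, let $x(\cdot,\xi,h)$ solve $\dot x=Cx+F_\rho(x,h(x))$, $x(0)=\xi$; this solution is global in time since $F_\rho$ is globally Lipschitz, and as the eigenvalues of $C$ are purely imaginary a Gronwall estimate gives $|x(s,\xi,h)|\le Ke^{\alpha|s|}$ for $s\le0$. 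I would then define
\begin{align}
(\mathcal T h)(\xi):=\int_{-\infty}^{0}e^{-Ps}\,G_\rho\bigl(x(s,\xi,h),h(x(s,\xi,h))\bigr)\,ds ,
\end{align}
which converges absolutely because $\|e^{-Ps}\|\le Me^{\beta s}$ for $s\le0$ whereas the remaining factor grows at most like $e^{-\alpha s}$ and $\alpha<\beta$. Routine estimates exploiting the smallness of the Lipschitz constants of $F_\rho,G_\rho$ show that $\mathcal T$ maps $\mathcal X$ into itself and is a contraction; its unique fixed point $h$ has the property that $\{(x,h(x)):|x|<\rho\}$ is locally invariant, differentiating the fixed-point identity along trajectories yields the invariance equation \eqref{cme}, and the flow restricted to the manifold is governed by \eqref{flow}, so one may take $\delta=\rho$.

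The last step, and the one I expect to be the real obstacle, is to upgrade $h$ from Lipschitz to $C^r$. The standard device is a fiber-contraction argument: one shows that the candidate for $Dh$ is the fixed point of a contraction acting on sections of a bundle over the graph of $h$, and then induces on the order of differentiation, the $k$-th step requiring the inequality $k\alpha<\beta$; this in turn forces the truncation radius to be chosen as $\rho=\rho(r)$ small enough only after $r$ is prescribed. Existence and Lipschitz regularity of $h$ are routine once the cutoff is in place; the difficulty is entirely in propagating $r$ derivatives through the fixed-point equation, which needs the more delicate bundle formulation and careful control of the exponents $k\alpha<\beta$, $1\le k\le r$.
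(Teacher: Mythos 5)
The paper does not prove this statement at all: Theorem \ref{thm-LCM} is quoted verbatim as Theorem 2.12.1 of \cite{P01} and is used as a black box in Section \ref{subsec-phase5}, so there is no internal proof to compare yours against. Your Lyapunov--Perron outline is the standard textbook route (essentially the proof in Carr's monograph, to which Perko's text itself defers): truncate the nonlinearities so their Lipschitz constants are small, characterise the centre manifold as the fixed point of the integral operator $\mathcal T$, obtain local invariance and the equation \eqref{cme} by differentiating $y(t)=h(x(t))$ along trajectories at $t=0$, and upgrade the regularity to $C^r$ by a fibre-contraction induction under the spectral-gap conditions $k\alpha<\beta$, $1\le k\le r$, with the truncation radius chosen only after $r$ is fixed. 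This is sound as a plan, with two small caveats: (i) your space $\mathcal X$ imposes $Dh(0)=0$ on maps that are merely Lipschitz, which is not meaningful at that stage --- one should work with Lipschitz graphs satisfying $h(0)=0$ and $\mathrm{Lip}(h)\le 1$ and recover the tangency $Dh(0)=0$ a posteriori from the $C^1$ step; (ii) the Gronwall bound on $x(s,\xi,h)$ should carry its dependence on $|\xi|$, and the ``routine estimates'' you defer (that $\mathcal T$ maps $\mathcal X$ into itself and contracts) are exactly where the smallness of the cutoff radius relative to $\alpha$ and $\beta$ enters, so a complete write-up cannot omit them. Since the theorem is imported rather than proved in the paper, citing \cite{P01} (or Carr, Vanderbauwhede) remains the appropriate course, and your sketch is consistent with the proof given there.
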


\begin{theorem}\label{phase4}
	 The non-hyperbolic critical point $p_{aB}$ is a stable fixed point and the flow on the center manifold near the critical point approaches $p_{aB}$ with speed $\frac1t$.
\end{theorem}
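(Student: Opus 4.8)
The plan is to invoke the Local Center Manifold Theorem (Theorem \ref{thm-LCM}) to reduce the six-dimensional flow \eqref{dyn-syst} near $p_{aB}$ to a planar flow, and then to analyse that planar flow by hand. First I would translate $p_{aB}$ to the origin and bring the linearisation $J_F(p_{aB})$ to the block form of Theorem \ref{thm-LCM}: its spectrum is $\{0,0\}$ together with the four strictly negative numbers $-(2f-D)$, $-(f-D+\Delta)$, $-(f-D-\Delta)$, $-((f-D)(5f-4D)+f\Delta)/(4(f-D)+\eta\bar n_B)$, so with $\R^6 = E^c\oplus E^s$, $\dim E^c = 2$, $\dim E^s = 4$, the system becomes $\dot x = Cx + F(x,y)$, $\dot y = Py + G(x,y)$ in coordinates $(x,y)\in\R^2\times\R^4$, where $C$ is nilpotent (spectrum $\{0\}$), $P$ is invertible with negative spectrum, and $F,G$ vanish to second order at the origin.

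Next I would compute the center manifold $y=h(x)$ to the order needed. Since $h(0)=0$, $Dh(0)=0$ and $C$ has only the eigenvalue $0$, the invariance identity \eqref{cme} determines $h$ order by order; matching quadratic terms gives $h(x)=P^{-1}G_2(x)+O(|x|^3)$, where $G_2$ is the vector-valued quadratic part of $G(x,0)$. Substituting into \eqref{flow} yields the reduced planar system $\dot x = Cx + F_2(x) + O(|x|^3)$, $F_2$ being the quadratic part of $F(x,0)$ (the correction coming from $h$ is cubic). One then writes $F_2$ explicitly in terms of $f,D,\Delta,c,\eta$ and, after a near-identity change of the center coordinates and in the regime $f\to\infty$ (Assumption (C2)), brings it to a normal form in which the relevant coefficient ratio is the rational function \eqref{r} of $\eta/c$. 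This structure is consistent with the polynomial decays $n_{AB}\propto 1/t$, $n_{AA}\propto 1/t^2$ inherited from \cite{BovNeu}: on the center manifold one coordinate is slaved to the square of the other.

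Finally I would analyse the leading quadratic field $\dot x = F_2(x)$ on $\R^2$ near the origin, the $O(|x|^3)$ remainder being negligible there. Passing to blown-up coordinates $x=r\,\omega$ with $r>0$ and $\omega$ on the unit circle gives $\dot r = r^2\rho(\omega)+O(r^3)$ and $\dot\omega = r\,\psi(\omega)+O(r^2)$; the key point is to show $\rho(\omega)<0$ uniformly on the arc in which trajectories arrive, which is exactly the inequality encoded by $\eta < c\cdot r_{max}$ with $r_{max}\simeq 0.593644$ the maximum of \eqref{r} (and in particular is implied by Assumption (C3), since $c/2 < c\cdot r_{max}$). Then $r$ is strictly decreasing and $\dot r \asymp -r^2$ forces $r(t)=\Theta(1/t)$, while the angular equation shows $\omega(t)$ converges; pulling this back through Theorem \ref{thm-LCM} gives local asymptotic stability of $p_{aB}$ with approach rate $1/t$. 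Since the Phase 3 exit estimates (Lemma \ref{lem-aa-exp-LB}) place the trajectory inside the domain $|x|<\delta$ of validity of the reduction, the local statement upgrades to convergence from the initial data \eqref{init-condAA}--\eqref{init-condaB}.

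The main obstacle is this last step: showing that the reduced quadratic planar field is genuinely attracting, which is not automatic — planar quadratic systems may possess invariant rays, centers, or hyperbolic sectors — and pinning down the threshold. This forces the explicit computation of $F_2$ (hence of \eqref{r}), with the hypothesis $f$ large used to discard the subleading contributions to the center-manifold coefficients; the condition $\eta < c\cdot r_{max}$ is precisely what guarantees $\rho(\omega)<0$, and for $\eta$ beyond the threshold an invariant repelling direction appears, matching the numerically observed convergence to a six-population equilibrium. A secondary technicality is controlling the cubic remainder uniformly down to the origin so that the sign of $\dot r$ is preserved.
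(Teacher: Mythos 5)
Your proposal follows essentially the same route as the paper: center manifold reduction at $p_{aB}$, explicit computation of the quadratic planar field on the center manifold, attractivity for large $f$ via negativity of the radial component on the relevant sector (your blown-up condition $\rho(\omega)<0$ is precisely the paper's scalar-product criterion $s(y_1,\lambda y_1)<0$ for $\lambda>0$), yielding the threshold $\eta<c\cdot r_{max}$ from \eqref{r} and the $1/t$ approach from $\dot r\asymp-r^2$. The only cosmetic differences are that the paper computes the quadratic coefficients of $h$ explicitly (your observation that they do not affect the quadratic part of the reduced flow is correct) and verifies that the zero eigenvalue has two independent eigenvectors, i.e. the center block is $C=0$, which your leading-quadratic analysis implicitly uses.
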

\begin{proof}
	We apply the Local Center Manifold Theorem \ref{thm-LCM}. All the calculations below were done with the program Mathematica 11.0.0.0 Student Edition. We do not write the results of all the intermediary calculations as they would take a few pages and bring no more information. Instead, we describe as precisely as possible the calculations we did so that they can be checked by the reader (using a similar computer program).

By the affine transformation $(n_{aa},n_{BB})\mapsto (n_{aa}-\bar n_a,n_{BB}-\bar n_B)$ we get a translated system $\tilde F(n)$
which has a critical point at the origin. We compute the two eigenvectors corresponding to 0 eigenvalues of the Jacobian matrix of $\tilde F$ at the fixed point $(0,0,0,0,0,0)$, which are
\begin{align}
	EV_1=(0,0,0,1,0,-1)\quad and \quad
	EV_2=(0,0,0,-1,1,0).
\end{align}
We perform a new change of variable to work in the basis of eigenvectors of $\tilde F(n)$. Let us call the new coordinates $x_1,\ldots, x_6$. 
Let $h(x_1,x_2)$ be the local center manifold (still unknown). We shall look at its local shape near $(0,0)$ and expand it up to second order. Let us write  
\begin{align}
h(x_1,x_2)=\begin{pmatrix}
{\lambda_3} x_1^2+{\nu_3} {x_1} {x_2}+{\mu_3} x_2^2\\
{\lambda_4} x_1^2+{\nu_4} {x_1} {x_2}+{\mu_4} x_2^2\\
{\lambda_5} x_1^2+{\nu_5} {x_1} {x_2}+{\mu_5} x_2^2\\
{\lambda_6} x_1^2+{\nu_6} {x_1} {x_2}+{\mu_6} x_2^2
\end{pmatrix}+O\left( x^3\right).
\end{align} 
We then substitute the series expansions into the center manifold equation \eqref{cme} which gives us 12 equations for the 12 unknowns $\lambda_3,\ldots,\mu_6$.
Substitution of the explicit second order approximation of the center manifold equation into \eqref{flow} yields 
the (order 2 approximation of the) flow on the local center manifold:
{\begin{align}
\dot x_1&=	\frac{A_1}{B_1}{x_1} {x_2} +\frac{ C_1}{D_1}{x_2}^2+\frac{E_1}{F_1}{x_1}^2+O\left(x^3\right),\\
\dot x_2&=	\frac{A_2}{B_2}{x_1} {x_2} +\frac{ C_2}{D_2}{x_2}^2+\frac{E_2}{F_2}{x_1}^2+O\left(x^3\right),
\end{align}}
where we obtain
{\footnotesize\begin{align}
A_1&=3 c^2 D f^2-c^2 {\Delta} f^2-3 c^2 f^3,\\
B_1&=(D-{\Delta}-f) \left(4 c D^2-9 c D f+c {\Delta} f+5 c f^2-4 D^2 {\eta}+4 D {\Delta} {\eta}+8 D {\eta} f-4 {\Delta} {\eta} f-4 {\eta} f^2\right),\\
C_1&=12 c^2 D^3 f^2-4 c^2 D^2 {\Delta} f^2-39 c^2 D^2 f^3+12 c^2 D {\Delta} f^3+42 c^2 D f^4-c^2 {\Delta}^2 f^3-8 c^2 {\Delta} f^4\nonumber\\
&\quad-15 c^2 f^5+12 c D^3 {\eta} f^2-16 c D^2 {\Delta} {\eta} f^2-36 c D^2 {\eta} f^3+4 c D {\Delta}^2 {\eta} f^2+32 c D {\Delta} {\eta} f^3\nonumber\\
&\quad+36 c D {\eta} f^4-4 c {\Delta}^2 {\eta} f^3-16 c {\Delta} {\eta} f^4-12 c {\eta} f^5,\\
D_1&=8 (D-2 f) (D-f) (D-{\Delta}-f) \times\nonumber\\
&\quad\times\left(4 c D^2-9 c D f+c {\Delta} f+5 c f^2-4 D^2 {\eta}+4 D {\Delta} {\eta}+8 D {\eta} f-4 {\Delta} {\eta} f-4 {\eta} f^2\right),\\
E_1&=c f,\quad
F_1=2 (-D+{\Delta}+f),
\end{align}}
and
{\footnotesize\begin{align}
A_2&=2 c^2 D^2 f-3 c^2 D f^2+c^2 f^3-2 c D^2 {\eta} f+2 c D {\Delta} {\eta} f+4 c D {\eta} f^2-2 c {\Delta} {\eta} f^2-2 c {\eta} f^3,\\
B_2&=(D-{\Delta}-f) \left(4 c D^2-9 c D f+c {\Delta} f+5 c f^2-4 D^2 {\eta}+4 D {\Delta} {\eta}+8 D {\eta} f-4 {\Delta} {\eta} f-4 {\eta} f^2\right),\\
C_2&=-3 c D {\eta} f^2+c {\Delta} {\eta} f^2+3 c {\eta} f^3,\\
D_2&=2 (D-2 f) \left(4 c D^2-9 c D f+c {\Delta} f+5 c f^2-4 D^2 {\eta}+4 D {\Delta} {\eta}+8 D {\eta} f-4 {\Delta} {\eta} f-4 {\eta} f^2\right),\\
E_2&=0,\quad F_2=1.
\end{align}}
It is left to show that the above system flows toward the origin, at least for $\eta$ smaller than a certain constant. To do that, we perform another change of variables which allows us to work in the positive quadrant. We call the new coordinates (on the center manifold) $y_1$ and $y_2$, and the new field $\hat F$. Observe that it is sufficient to prove that the scalar product of the field with the position is negative. We thus consider the function
\begin{equation}
s(y_1,y_2)=\left(\hat F(y_1,y_2), (y_1,y_2)\right),
\end{equation}
which is a quadratic form in $y_1$ and $y_2$.
As the field $\hat F$ is homogeneous of degree 2 in its variables, it is enough to consider any direction given by $y_2=\lambda y_1$, and prove that $s(y_1,\lambda y_1)<0$ for all $\lambda>0$. As the expressions are so ugly, we work perturbatively in $f$ and consider it as large as needed. Observe that the numerator and the denominator of $s(y_1,\lambda y_1)$ are polynomials of degree 5 in $f$. We thus compute the coefficient in front of $f^5$, and obtain by a series expansion:
\begin{align}
s(y_1,\lambda y_1)=\frac{c {y_1}^3 \left(c \left(16 \lambda ^3+7 \lambda ^2+16 \lambda +40\right)-4 {\eta} \left(5 \lambda ^3+8 \lambda ^2+8 \lambda +8\right)\right)}{64 {\eta}-80 c}f^5+\OO\left(f^4\right).
\end{align}
Observe that the denominator is always negative (because by the  assumption that $\eta\leq c$).
We finally compute the minimal value of the ratio 
\begin{equation}\label{r}
{r}(\lambda )\text{:=}\frac{16 \lambda ^3+7 \lambda ^2+16 \lambda +40}{4 \left(5 \lambda ^3+8 \lambda ^2+8 \lambda +8\right)},
\end{equation}
and obtain $r_{max}\simeq0.593644$. Thus, asymptotically as $f\to\infty$, the field is attractive for $\eta<c\cdot r_{max}$.
Thus we see that $p_{aB}$ is a stable fixed point which is approached with speed $\frac 1{t}$ as long as $\eta<c\cdot r_{max}$. \\
Figure \ref{pic-flow} shows the flow in the center manifold of the fixed point $p_{aB}$, for two values of $\eta$, one below the threshold $cr_{max}$ and one above. We see that the flow is attractive in the first case and repulsive in the second one.
\end{proof}

\begin{figure}[t]
	\begin{center}
		\includegraphics[width=.45\textwidth]{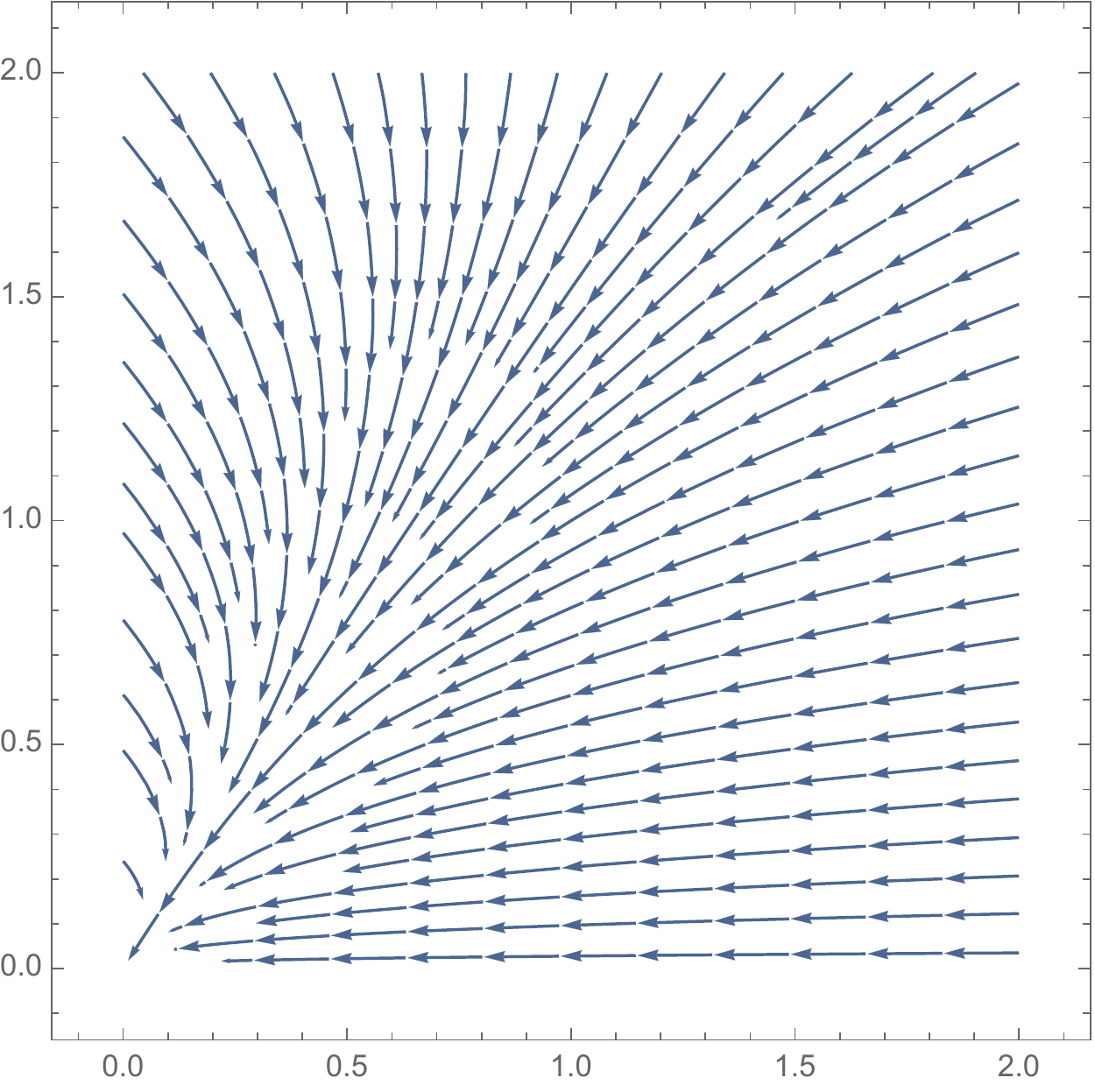}
		\includegraphics[width=.45\textwidth]{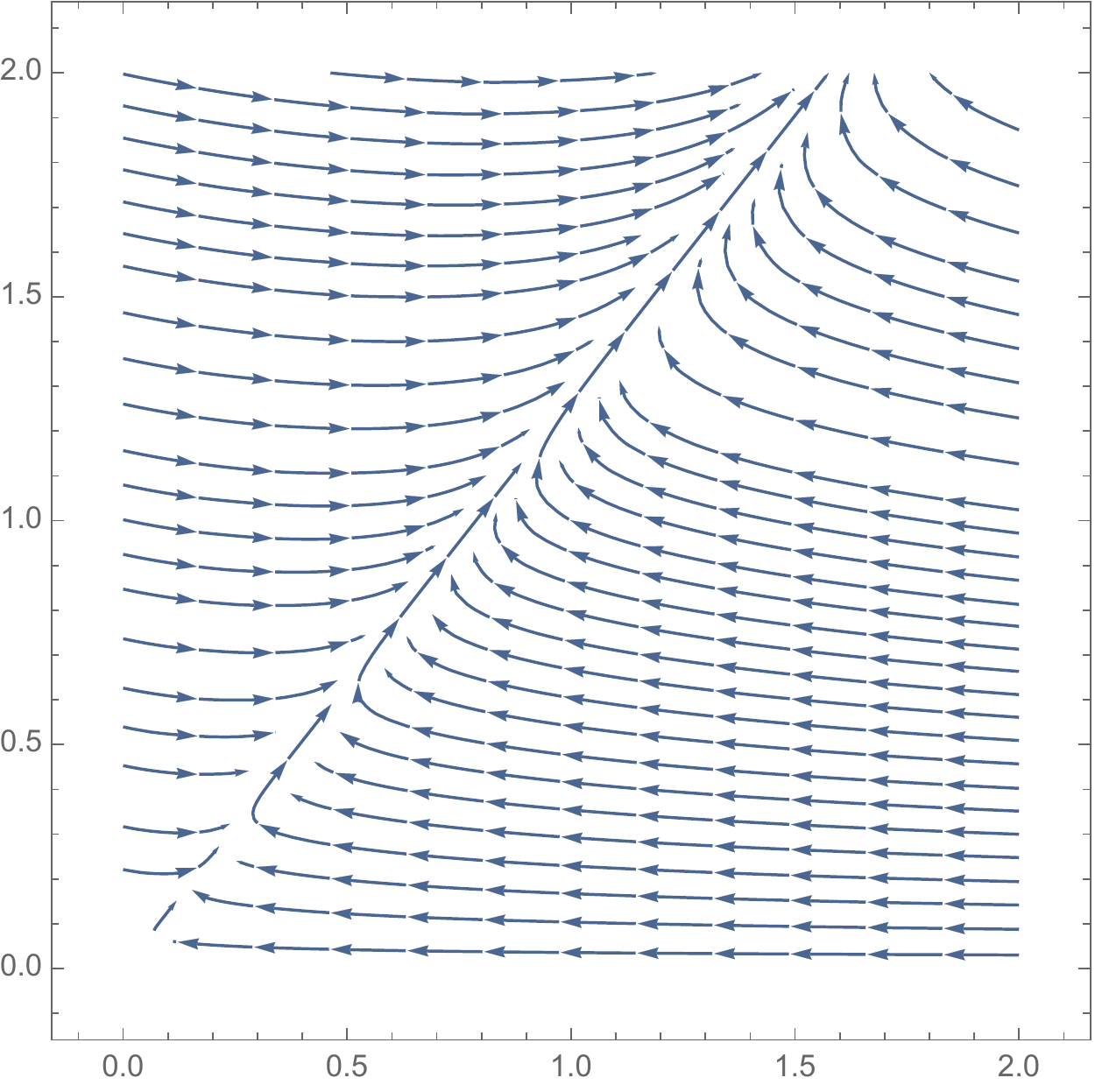}
		\caption{Flow of the dynamical system in the center manifold of the fixed point $p_{aB}$, for $\eta=0.02$ (left) and $\eta=0.6$ (right). }
		\label{pic-flow}
	\end{center}
\end{figure}

\section{Discussion}
\label{discussion}
In the rigorous results we have presented in this paper, we made some particular assumptions on the parameters of our model in order to simplify the analysis of the (already difficult) dynamical system.
In this section we discuss which of these assumptions can be relaxed, based on heuristic considerations and numerical simulations.



\medskip
\paragraph{The no-reproduction-small-competition model.}
In the model considered so far, we  assume  that the mutation to the $B$ allele produces a new species different to the one of phenotype $a$. 
This is done by  the \emph{no reproduction} assumption between individuals of phenotype $a$ and of phenotype $B$. 

These requirements are not needed to observe the recovery of the $aa$ population. In fact, what we require is that the invasion fitness of the $aa$ population into a resident $BB$ population is positive. 
Therefore, we can relax the no-competition assumption  and  add a small competition, $c_{aB}$, 
between $aa$ individuals and $BB$ individuals. 
This additional competition increases the time until $aa$ can reinvade and also modify the two-
population fixed point $p_{aB}$  
(see Figure \ref{no-nono-model}). 

\begin{figure}[h!]
	\begin{center}
		\includegraphics[width=0.47\textwidth]{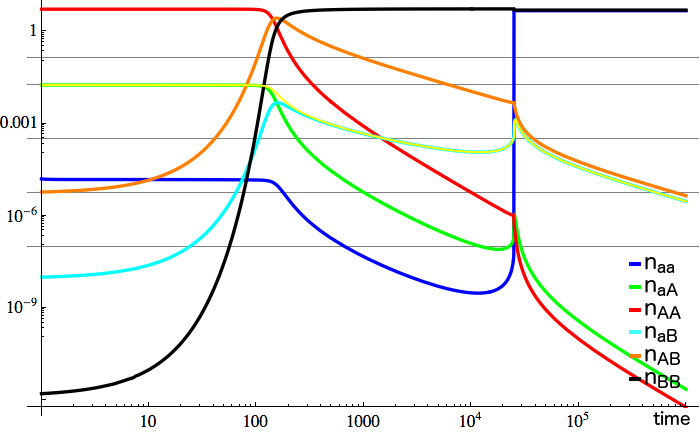}
		\caption{Numerical solution of the deterministic process, loglogplot for $\eta=0$
		 and $c_{aB}=0.1$.}
		\label{no-nono-model}
	\end{center}
\end{figure}

%

Adding the factor $\eta$  accelerates the process of recovery, and, consequently, allows to increase the competition rate $c_{aB}$ (see Figure \ref{eta-big}).

\begin{figure}[h!]
	\begin{center}
		\includegraphics[width=0.46\textwidth]
		{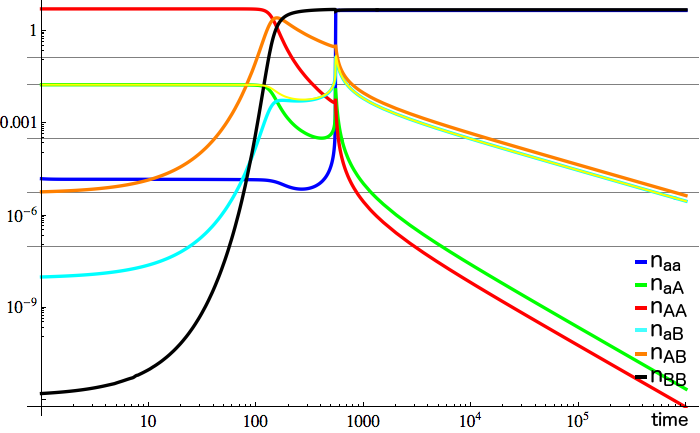}
		\includegraphics[width=0.46\textwidth]{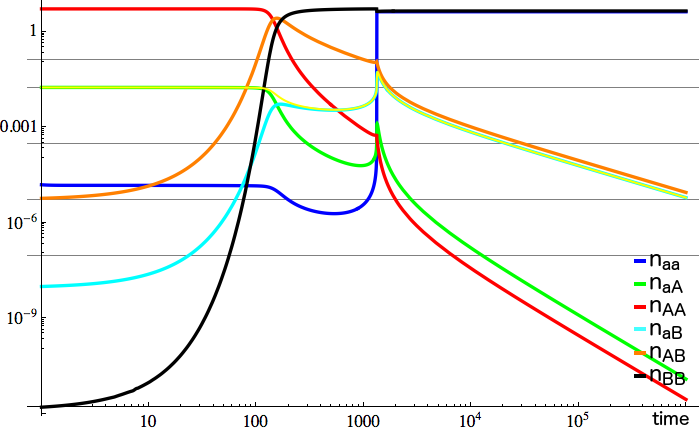}
		\caption{Numerical solution of the deterministic process, loglog-plot (left) for $\eta=0.01$ 
		and $c_{aB}=0.1$, (right) for $\eta=0.01$ and $c_{aB}=0.2$.}
		\label{eta-big}
	\end{center}
\end{figure}



For small $\eta$ we end up in a $aa$-$BB$ equilibrium, but by accelerating (increasing $\eta$ or decreasing $c_{aB}$) the process even more,  we can also end up in a 6-point equilibrium (all six population coexist) (see Figure \ref{eta6}). 

\begin{figure}[h!]
	\begin{center}
		\includegraphics[width=0.46\textwidth]
		{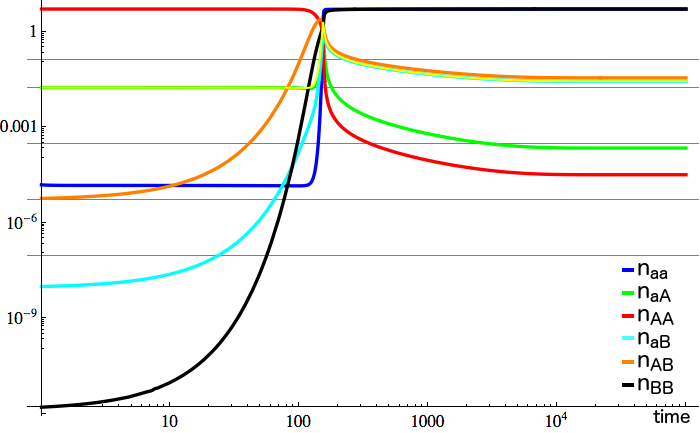}
		\includegraphics[width=0.46\textwidth]{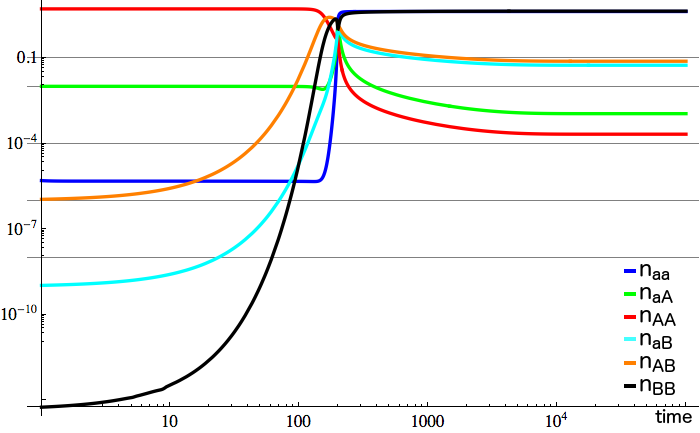}
		\caption{Numerical solution of the deterministic process, loglog-plot (left) for $\eta=0.56$ 
		and $c_{aB}=0$, (right) for $\eta=0.17$ and $c_{aB}=0.2$.}
		\label{eta6}
	\end{center}
\end{figure}

If there is competition between individuals of phenotype $a$ and of phenotype $B$ ($c_{aB}>0$) the $aa$ and $BB$ populations have smaller equilibria as the no-competition equilibria $\bar n_{a}$ and $\bar n_B$, obtained when $c_{aB} =0$. Thus the competition felt from $aA$ by $aa$ and $BB$ is lower and a smaller $\eta$ is enough to observe the 6-point equilibrium.


\medskip
\paragraph{The all-with-all model.}
The assumption of no reproduction between individuals of phenotype $a$ and of phenotype $B$ is 
also not really necessary for the recovery of the $aa$ population. 
Let us discuss the \emph{all-with-all model} where all phenotypes can reproduce among themselves, 
that is, where the reproductive compatibility is $R_i(j)=1$, for all $i,j\in\mathcal G$.

If we analyse the invasion fitness of the $aa$ population in the macroscopic $BB$ population, it is positive (and we can observe the recovery of $aa$)  if the fecundity $f$ scales with $\e$ in such a way that $f\cdot\e^2>D+\Delta$. Indeed, in this model the whole population acts as potential partner for each individual, and the birth rate of $aa$ scales with $\S_6$ instead of $\S_3$.

If this requirement on $f$ is fulfilled, then numerical simulations show that most of the results carry over to this 
model, but with the main difference that the 2-points equilibrium is replaced by a 3-points equilibrium.
The reason for this is that the reproduction between $a$ and $B$ individuals will always give 
birth to $aB$ individuals and thus the $aB$ population also survives (see Figure \ref{eta} (up)). 

We can also add a small competition between individuals of phenotype $a$ and  individuals of phenotype $B$ and still get the 3-point equilibrium (see Figure \ref{eta}(up)).

As in the no-reproduction model, adding the factor $\eta$ results in accelerating the process (see Figure \ref{eta} (middle-left)).

\begin{figure}[h!]
	\begin{center}
		\includegraphics[width=0.46\textwidth]
		{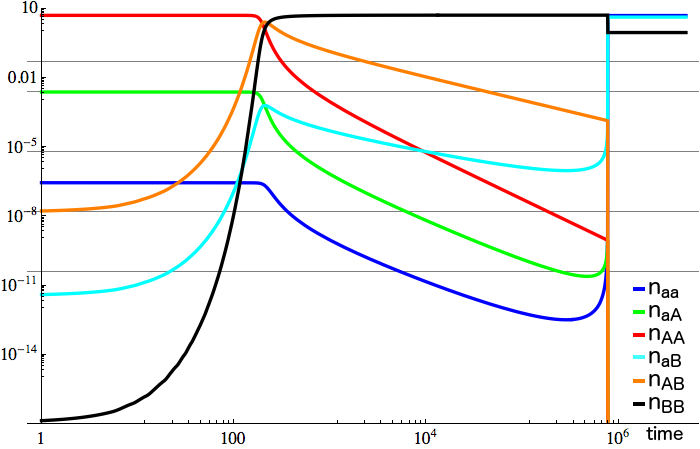}
		\includegraphics[width=0.46\textwidth]
		{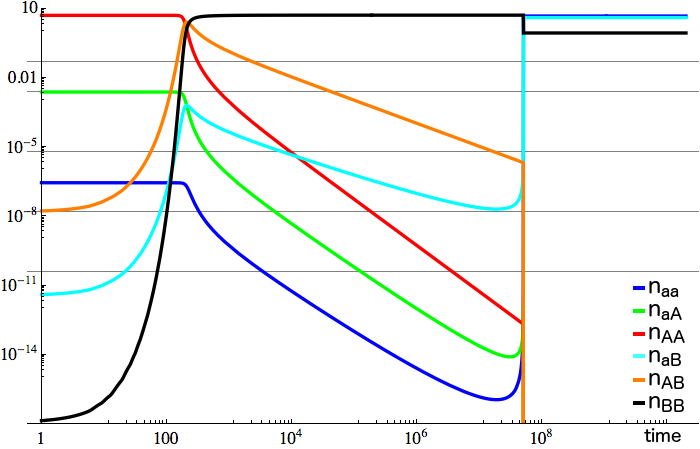}
		\includegraphics[width=0.46\textwidth]
		{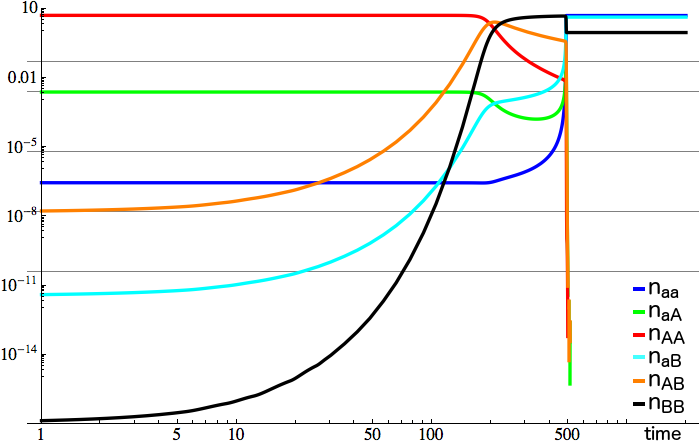}
		\includegraphics[width=0.46\textwidth]
		{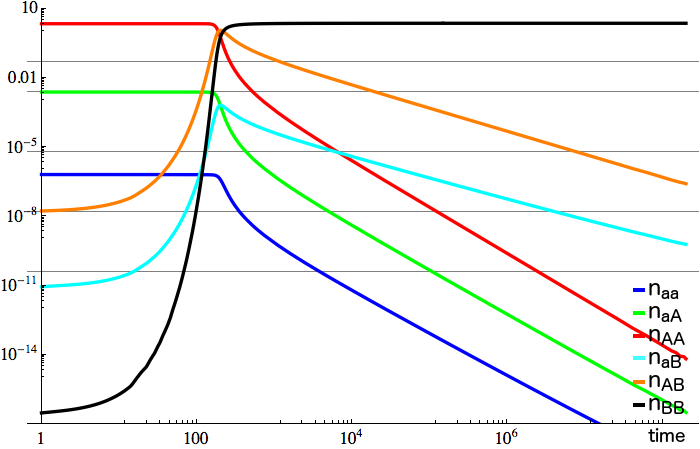}
		\includegraphics[width=0.46\textwidth]
		{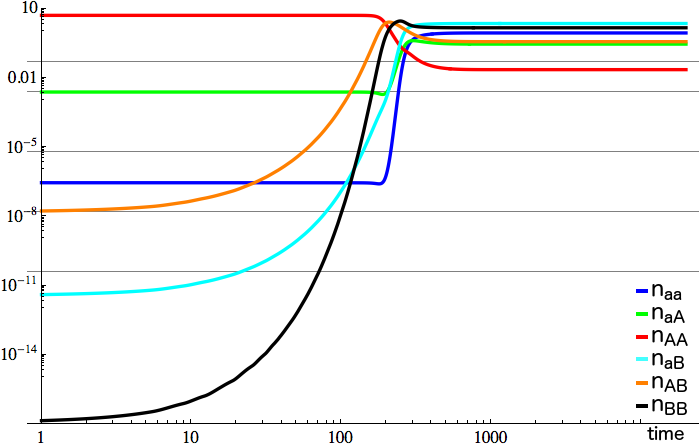}
		\includegraphics[width=0.46\textwidth]
		{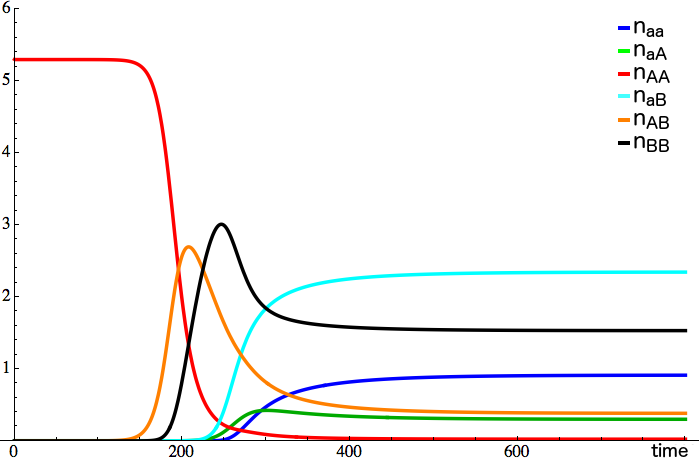}
		\caption{Numerical solution of the all-with-all deterministic process, loglog-plot:
			(up-left) for $f=6$, $\eta=0$ and $c_{aB}=0$, (up-right) for $f=6$, $\eta=0$ and $c_{aB}=0.05$;
			 (middle-left)  for $\eta=0.02$ and $c_{aB}=0$, (middle-right) for $\eta=0$, $c_{aB}=0$ and $f=3$;
			(down-left) for $\eta=0.17$ and $c_{aB}=0.925$, (down-right) rescaled individual plot for $\eta=0.17$ and $c_{aB}=0.925$.}
		\label{eta}
	\end{center}
\end{figure}

With  reasonable choices for $\eta$ and $c_{aB}$,  we end up in a 6-point-equilibrium where all 
populations coexist (see Figure \ref{eta} (down)). Observe, the $aB$ population can be bigger than the $AB$ population, because it gets an additional birth factor from the reproduction of individuals of genotype $aa$ with individuals of phenotype $B$ which outcompetes the birth of $AB$ individuals by reproduction of individuals of genotype $AA$ and of phenotype $B$.

\renewcommand\thesection{\Alph{section}}
\setcounter{section}{0}
\renewcommand{\theequation}{\Alph{section}.\arabic{equation}}

\section{Appendix}\label{appendix}

We collect in this appendix all the important definitions of times separating phases or subphases of the process, and provide a proof summary with all the implications. Recall Definition \ref{def-stop-times}, and Figures \ref{detsys} and \ref{zoom-in}. We write $N_\delta(x)$ for the $\delta$-neighbourhood of $x\in\R^6$.
We consider 
\begin{equation}
\D>\delta>\e_0>\e>0.
\end{equation}
and
	\begin{longtable}{ll}
		$T_1$ 	&\hspace{-0.2cm}$:=T_{\e_0}^{aa+aA+aB+AB+BB}$,\\
		$T_=$  &\hspace{-0.2cm}$ :=T^{aA=aB}$,\\
		$T_2$  &\hspace{-0.2cm}$:=T^{aA=\delta AA}\land T^{aB=\delta AB}\land T^{aa= aA\wedge aB}$,\\
		$T_3$  &\hspace{-0.2cm}$:=T^{aa}_{\bar n_a-\e^{\g/2}}$.
	\end{longtable}
where $\gamma:=2/(1+\eta\bar n_B-\Delta)$ is the order of magnitude of $n_{AA}$ at $T_2$ :  $n_{AA}(T_2)=\OO(\e^\gamma)$.\\
We summarise below the detailed structure of the proof (we abbreviate Lemma, Proposition and Theorem by L,P, and T respectively):
\begin{enumerate}
     		\item[Phase 1:]		$t\in[0,T_1]$ \\
     		Initial conditions :
     		$(n_{aa},n_{aA},n_{AA},n_{aB},n_{AB},n_{BB})=(\OO(\e ^2),\e ,\bar n_A\pm\OO(\e ), 0,\e ^3, 0)$.\\
     		With those initial conditions the following bounds hold:
     		\begin{align*}
     		n_{AA}\leq \bar n_A,\quad n_{BB}\leq\bar n_B, \quad n_{aB}\leq n_{AB}\quad\text{(Proposition \ref{aB<AB}) }
     		\end{align*}
     			
     	Here are the main steps of the proof of Propostion \ref{prop-bounds} and Corollary \ref{phase1}. Until the time $T_1$,
     	\begin{align*}
     	\text{(P\ref{aB<AB})}\Rightarrow\quad	&n_{BB}\leq n_{AB}^2 \quad\text{(1)}\\
     	\text{(1)}\Rightarrow\quad	&n_{aB}\leq n_{aA}n_{AB} \quad\text{(2)}\\
     	\text{(1)(2)}\Rightarrow\quad	&n_{aa}\leq n_{aA}^2 \quad\text{(3)}\\
     	\text{(1)(2)(3)}\Rightarrow\quad	&T_1= T^{AB}_{\e _0}=\OO(\log(\e _0/\e ^3)^{1/(\Delta-\OO(\e _0))})\quad\text{(Proposition \ref{prop-bounds})}\\
     	\\
     	\text{(P\ref{prop-bounds})}\Rightarrow\quad&\dot n_{AB}=\OO(\Delta) n_{AB} \quad\text{(Corollary \ref{phase1}(1))}\\
     	\text{(P\ref{prop-bounds})}\Rightarrow\quad&\bar n_A-\OO(\e) \leq \S_5\leq \bar n_A+2\Delta\e_0  \quad\text{(4)}\\
     		\text{(4)(P\ref{prop-bounds})}\Rightarrow\quad&  n_{aB}\leq\OO(\e^{1-\OO(\e_0)}\e_0),  
     																					n_{aA}\leq\OO(\e^{1-\OO(\e_0)}),\\
     																			&		n_{aa}\leq \OO(\e^{2-\OO(\e_0)}),
     																					n_{AA}=\bar n_A\pm \OO(\e_0) \quad\text{(Corollary \ref{phase1}(3)))}\\
     	\end{align*}

     		\item[Phase 2:] $t\in[T_1,T_2]$ \\
     		Initial conditions : \\
     		$(n_{aa},n_{aA},n_{AA},n_{aB},n_{AB},n_{BB})=(\OO(\e ^{2-\OO(\e_0)}),\OO(\e ^{1-\OO(\e_0)}),\bar n_A\pm\OO(\e_0 ), \OO(\e ^{1-\OO(\e_0)}\e_0),\e_0, \OO(\e_0^2)$.\\
     		As the process stays uniformly bounded in time, until $T_2$ we have 
     		\begin{equation*}
     		n_{aa},n_{aA},n_{aB}\leq\OO(\delta)\qquad(*)
     		\end{equation*}
     		\begin{align*}
     		(*)&\Rightarrow\quad	(n_{AA},n_{AB},n_{BB})=(n^{up}_{AA},n^{up}_{AB},n^{up}_{BB})+\OO(\delta) \quad\text{(Lemma \ref{3system})}\\
     		\text{L\ref{3system},}(*)&\Rightarrow\quad	\S_5=\bar n_B-\D n_{AA}/{c\bar n_B}+ {\OO(\D^2n_{AA})} \quad\text{(Proposition \ref{prop-S5})}\\
     		\text{P\ref{prop-S5},}(*)&\Rightarrow\quad	\dot\Sigma_{aA,aB}\geq-\OO(\D)\Sigma_{aA,aB} \quad\text{(Lemma \ref{lem-expS})}\\
     		\text{P\ref{prop-S5},}(*)&\Rightarrow\quad n_{aa}=\OO(\Sigma_{aA,aB}^2)	\Rightarrow T_2=T^{aA=\delta AA}\wedge T^{aB=\delta AB} \quad\text{(Lemma \ref{aa-S2})}\\
     		\text{P\ref{prop-S5},L\ref{aa-S2}}&\Rightarrow\quad \text{Until }T_=,\quad n_{aB}\leq n_{aA}=\OO(\e ) \quad\text{(Proposition \ref{prop-T1T=})}\\
     		\text{P\ref{prop-S5},}(*)&\Rightarrow\quad n_{aB}\leq\frac{n_{AB}+2n_{BB}+\sfrac{2\Delta}c}{n_{AB}+2n_{AA}}n_{aA} \quad\text{(Lemma \ref{ub-aB-aA})}\\
     		\text{P\ref{prop-S5},L\ref{aa-S2},P\ref{prop-T1T=},}&\Rightarrow\quad T_=<T_2 \quad\text{(Lemma \ref{AA=BB})}\\
     		\text{L\ref{ub-aB-aA}}&\Rightarrow\quad n_{AA}(T_=)\leq n_{BB}(T_=)+\OO(\Delta) \quad\text{(Lemma \ref{AA=BB})}\\
     		\text{P\ref{prop-S5},}(*)&\Rightarrow\quad \max_{t\in[T_1,T_2]}n_{AB}=:n_{AB}(T^{max}_{AB})=\bar n_B/2+\OO(\Delta) \quad\text{(Proposition \ref{maxAB})}\\
     		\text{P\ref{prop-S5},}(*)&\Rightarrow\quad n_{AA}(T^{max}_{AB}),n_{BB}(T^{max}_{AB})=\bar n_B/4+\OO(\Delta) \quad\text{(Proposition \ref{maxAB})}\\
     	    \text{L\ref{aa-S2},P\ref{prop-S5},}(*)&\Rightarrow\quad n_{aA}\leq n_{aB}\vee\OO(\e ) \quad\text{(Proposition \ref{phase2})}\\
     	    \text{L\ref{lem-expS},\ref{aa-S2},P\ref{prop-S5},}(*)&\Rightarrow\quad n_{aA}= \Sigma_{aA,aB} \OO(n_{AB}+2n_{AA}) \quad\text{(Lemma \ref{aA-S2})}\\
     	    \text{L\ref{aa-S2},P\ref{prop-S5},}(*)&\Rightarrow\quad  \dot{\Sigma}_{aA,aB}=n_{aA}(\eta n_{BB}-\OO(\Delta))\quad\text{(Proposition \ref{S2 bound})}\\
     	    \text{L\ref{aA-S2},P\ref{maxAB},\ref{S2 bound},}&\Rightarrow\quad  T_2=\OO\left(\e ^{1/(1+\eta\bar n_B-\Delta)}\right)\quad\text{(Theorem \ref{thm-T2})}\\
     	    \text{L\ref{aa-S2},T\ref{thm-T2}}&\Rightarrow\quad  T_2=T^{aA=\delta AA}\quad\text{(Propostion \ref{prop-T2})}\\
     		\end{align*}

     		\item[Phase 3:]$t\in[T_2,T_3]$\\
     			Initial conditions :\\
     		$(n_{aa},n_{aA},n_{AA},n_{aB},n_{AB},n_{BB})=(\OO(\e^2),\OO(\d\e^\g),\OO(\e ^\gamma),\OO(\delta \e ^{\gamma/2} ),\OO(\e ^{\gamma/2}),\bar n_B-\OO(\e ^{\gamma/2}))$.
     				\begin{align*}
     			\text{P\ref{S5-3Phase}}&\Rightarrow\quad	\S_5=\bar n_B\pm\OO(n_{AB}) \quad\text{(Lemma \ref{S5-3P})}\\
     			\text{L\ref{S5-3P}}&\Rightarrow\quad	n_{AA},n_{aA}\leq\OO(n_{AB}^2) \quad\text{(Lemma \ref{AB^2bounds})}\\
     			\text{L\ref{S5-3P},\ref{AB^2bounds},\ref{ABstay}}&\Rightarrow\quad	n_{AA},n_{aA}\geq\OO(n_{AB}^2) \quad\text{(Lemmas \ref{AB^2lb} and \ref{aA-AB^2})}\\
     			\text{L\ref{S5-3P},\ref{AB^2bounds},\ref{AB^2lb},\ref{aA-AB^2}}&\Rightarrow\quad	n_{AB}\leq\e ^{\gamma/10} \quad\text{(Lemmas \ref{ABstay} and \ref{lem-aa-exp-LB})}\\
     			\text{L\ref{AB^2bounds},\ref{aA-AB^2}}&\Rightarrow\quad	\dot n_{aa}\geq cn_{aa}\text{ with } c>0 \quad\text{(Lemma \ref{lem-aa-exp-LB})}
     				\end{align*}
 					
 			\item[Phase 4:]$t\in[T_3,\infty]$\\
 					Initial conditions : \\
 					$(n_{aa},n_{aA},n_{AA},n_{aB},n_{AB},n_{BB})=(\bar n_a-\e_0,\OO(\e ^{\gamma/5}), \OO(\e ^{\gamma/5}), \OO(\e ^{\gamma/10}), \OO(\e ^{\gamma/10}),\bar n_B\pm\OO(\e ^{\gamma/10}) ) $.\\
 					T\ref{thm-LCM}$\Rightarrow$ For $\eta<c\cdot 0.593644$, the fixed point $p_{aB}=(\bar n_a,0,0,0,0,\bar n_B)$ is stable and for $\e ,\e_0$ small enough, the system converges to it at speed $1/t$ (Theorem \ref{phase4}).
 					
\end{enumerate}

\bibliography{Library-Biomaths}
\bibliographystyle{abbrv}

\end{document}